\theoremstyle{plain}
\newtheorem{theorem}{Theorem}[section]
\newtheorem{lemma}[theorem]{Lemma}
\newtheorem{corollary}[theorem]{Corollary}
\newtheorem{proposition}[theorem]{Proposition}
\theoremstyle{remark}
\newtheorem{conjecture}{Conjecture}
\newcommand{\rme}{\mathrm{e}}
\newcommand{\bbN}{\mathbb{N}}
\newcommand{\1}{\operatorname{\uppercase\expandafter{\romannumeral1}}}
\newcommand{\2}{\operatorname{\uppercase\expandafter{\romannumeral2}}}
\newcommand{\3}{\operatorname{\uppercase\expandafter{\romannumeral3}}}
\newcommand{\4}{\operatorname{\uppercase\expandafter{\romannumeral4}}}
\newcommand{\5}{\operatorname{\uppercase\expandafter{\romannumeral5}}}
\newcommand{\6}{\operatorname{\uppercase\expandafter{\romannumeral6}}}
\newcommand{\7}{\operatorname{\uppercase\expandafter{\romannumeral7}}}
\newcommand{\8}{\operatorname{\uppercase\expandafter{\romannumeral8}}}
\newcommand{\9}{\operatorname{\uppercase\expandafter{\romannumeral9}}}
\def\eqref#1{\textup{(\ref{#1})}}
\newcommand{\eref}[1]{Eq.~\textup{(\ref{#1})}}
\newcommand{\Eref}[1]{Equation~\textup{(\ref{#1})}}
\newcommand{\eqsref}[2]{Eqs.~(\ref{#1}) and (\ref{#2})}
\newcommand{\Eqsref}[2]{Equations~(\ref{#1}) and (\ref{#2})}
\newcommand{\eqssref}[3]{Eqs.~(\ref{#1}), (\ref{#2}), and (\ref{#3})}
\newcommand{\fref}[1]{Fig.~\ref{#1}}
\newcommand{\thref}[1]{Theorem~\ref{#1}}
\newcommand{\Thref}[1]{Theorem~\ref{#1}}
\newcommand{\thsref}[1]{Theorems~\ref{#1}}
\newcommand{\lref}[1]{Lemma~\ref{#1}}
\newcommand{\Lref}[1]{Lemma~\ref{#1}}
\newcommand{\lsref}[1]{Lemmas~\ref{#1}}
\newcommand{\pref}[1]{Proposition~\ref{#1}}
\newcommand{\psref}[1]{Propositions~\ref{#1}}
\newcommand{\crref}[1]{Corollary~\ref{#1}}
\newcommand{\Crref}[1]{Corollary~\ref{#1}}
\newcommand{\cref}[1]{Conjecture~\ref{#1}}
\newcommand{\Cref}[1]{Conjecture~\ref{#1}}
\newcommand{\sref}[1]{Sec.~\ref{#1}}
\newcommand{\Sref}[1]{Section~\ref{#1}}
\newcommand{\rcite}[1]{Ref.~\cite{#1}}
\newcommand{\rscite}[1]{Refs.~\cite{#1}}
\newcommand{\aref}[1]{Appendix~\ref{#1}}
\def\<{\langle}  
\def\>{\rangle}  
\begin{document}

\begin{frontmatter}
\title{Nearly tight universal bounds for the binomial tail probabilities}

\runtitle{Tight bounds for the binomial tail}

\begin{aug}

\author[A]{\fnms{Huangjun}~\snm{Zhu}\ead[label=e1]{zhuhuangjun@fudan.edu.cn}\orcid{0000-0001-7257-0764}}
\author[A]{\fnms{Zihao}~\snm{Li}\ead[label=e2]{zihaoli20@fudan.edu.cn}}
\and
\author[B]{\fnms{Masahito}~\snm{Hayashi}\ead[label=e3]{hayashi@sustech.edu.cn}
	\orcid{0000-0003-3104-1000}}

\address[A]{Department of Physics and State Key Laboratory of Surface Physics, Fudan University\printead[presep={,\ }]{e1,e2}}

\address[B]{Shenzhen Institute for Quantum Science and Engineering, Southern University of Science and Technology\printead[presep={,\ }]{e3}}
\end{aug}

\begin{abstract}
We derive simple but nearly tight upper and lower bounds for the binomial   lower tail probability (with straightforward generalization to the upper tail probability) that apply to the whole parameter regime. These bounds are easy to compute and are tight within a constant factor of $89/44$. Moreover, they are  asymptotically tight in the regimes of large deviation and moderate deviation. By virtue of a surprising connection with Ramanujan's equation, we also provide strong evidences  suggesting that the lower bound is 
 tight within a factor of $1.26434$. It may even be regarded as the natural lower bound, given its simplicity and appealing properties. Our bounds significantly outperform the familiar Chernoff bound and reverse Chernoff bounds known in the literature and may find applications in various research areas. 
\end{abstract}

\begin{keyword}[class=MSC]
\kwd[Primary ]{60F10}
\kwd{60F15}
\kwd[; secondary ]{60E15} \kwd{60C05}
\end{keyword}

\begin{keyword}
\kwd{binomial distribution}
\kwd{tail probabilities}
\kwd{upper and lower bounds}
\kwd{large deviation}
\end{keyword}

\end{frontmatter}
\date{\today}
\maketitle

\tableofcontents

\section{Introduction}
The evaluation of tail probabilities is one of central topics in probability theory 
because it is tied to many important applications, including hypothesis testing, statistical  inference, information theory,  statistical physics, machine learning, insurance, and risk management. However, it is in general not easy to derive accurate bounds for tail probabilities even for many simple probability distributions. Here we are particularly interested in the binomial distribution, which is one of the oldest probability distributions studied in the literature \cite{Bern1713book,Moiv1738book,Lapl1812book,Hald03book}. It characterizes the probability of obtaining $k$ successes after $n$ independent Bernoulli trials, assuming that the success probability of each trial is $p$. To be concrete this probability and the probability of obtaining at most $k$ successes are give by
\begin{align}\label{eq:Bnkp}
\quad b_{n,k}(p):= {n \choose k} p^k q^{n-k},\quad B_{n,k}(p):= \sum_{j=0}^k b_{n,j}(p)= \sum_{j=0}^k {n \choose j} p^j 
q^{n-j},
\end{align}
where $q=1-p$. To avoid trivial exceptions, we  assume that  $0<p<1$ (so $0<q<1$) unless stated otherwise. When  $k\leq pn$,
the probability $B_{n,k}(p)$ is referred to as a lower tail probability, which has been studied by numerous researchers in various research areas for a long history \cite{Bern1713book,Moiv1738book,Lapl1812book,Cher52,Hoef63,PeizP68,Litt69,McKa89,Hald03book,MacWS77book,Ash92book, JohnKK05,CsisK11,Csis98,Haya17book}.

One of the most popular upper bounds for $B_{n,k}(p)$ is the Chernoff bound \cite{Cher52,Hoef63},
\begin{align}
B_{n,k}(p)\leq \rme^{-n D(\frac{k}{n}\| p)}\quad \forall k\leq np \label{eq:ChernoffB},
\end{align}
which correctly characterizes the exponential decay rate of the tail probability. Here
\begin{align}
D(f\|p): =f \ln \frac{f}{p}+(1-f) \ln \frac{1-f}{1-p}
\end{align}
is the familiar relative entropy (information divergence).  Two popular reverse  Chernoff bounds are given by  
\begin{gather}
\frac{1}{n+1} \rme^{-nD(\frac{k}{n}\| p)}\leq 	b_{n,k}(p)\leq B_{n,k}(p), \label{eq:ChernoffRevType}\\
\frac{\sqrt{n}}{\sqrt{8k(n-k)}}\rme^{-nD(\frac{k}{n}\| p)}\leq b_{n,k}(p)\leq 
B_{n,k}(p). \label{eq:ChernoffRev2}
\end{gather}
Here the first bound can be derived    with the method of types \cite{CsisK11,Csis98}; the second bound  follows from Lemma~4.7.1  in Ref.~\cite{Ash92book} and from [Chapter 10, Lemma 7] in Ref.~\cite{MacWS77book}. 
Unfortunately,  the Chernoff bound has a major drawback: its ratio over the tail probability is not bounded by any given constant. This is the case even if we only consider the asymptotic regime in which $k,n\to \infty$. The two reverse Chernoff bounds in \eqsref{eq:ChernoffRevType}{eq:ChernoffRev2}  have a similar problem. Although  many alternative bounds are  known in the literature \cite{MacWS77book,Ash92book, JohnKK05,CsisK11,Csis98,Haya17book},  almost all bounds share the same problem unfortunately. Can we construct much better bounds?

The main goal of the current study is to establish good upper bound $B_{n,k}^{\uparrow}(p)$ and lower bound $B_{n,k}^{\downarrow}(p)$ for the tail probability $B_{n,k}(p)$ that bear  certain desired properties. 
To be concrete, such bounds should satisfy the following three reasonable criteria, which 
are related to criteria in \rcite{WataH17}.  Our criteria are applicable when both upper and lower bounds are available, but it is straightforward to formulate similar criteria for the upper bound or lower bound alone by replacing  $B_{n,k}^{\downarrow}(p)$ or $B_{n,k}^{\uparrow}(p)$ with $B_{n,k}(p)$.
\begin{description}
	\item[(C1)]
Computability: The bounds have computational complexity $O(1)$; in other words, they are  $O(1)$-computable.	
	
	\item[(C2)]
Universal boundedness: the ratio
$B_{n,k}^\uparrow(p) /B_{n,k}^\downarrow(p)$ with $k\leq pn$ is bounded by 
 a universal constant.
	
	\item[(C3)]
	Asymptotic tightness: The bounds are tight in the limit $n\to\infty$ when $0<k/n<p$ is fixed, that is, 
	\begin{align}
\lim_{n \to \infty}\frac{B_{n,fn}^\uparrow(p)}{B_{n,fn}^\downarrow(p)}=1\quad \forall 0<f<p.  \label{eq:ATratio}
	\end{align}
\end{description}
 Here we assume that elementary operations, such as addition and multiplication, are $O(1)$ when evaluating the computational complexity. Besides computability, we prefer bounds that are simple and explicit  functions that do not involve integration because merely numerical bounds for the tail probability are not enough for many applications.

To better understand the criterion of asymptotic tightness, we need to introduce some additional concepts. 
Let $\bbN$ be the set of natural numbers (positive integers) and $\bbN_0$ the set of nonnegative integers. Given a real number $f$, define
\begin{align}
\bbN_f:=\{n\in \bbN_0\,|\,fn\in \bbN_0\}. \label{eq:Nf}
\end{align}
When  $f$ is a rational number that satisfies $0<f<p$ and $n\in \bbN_f$, as a simple corollary of Theorem 2 in \rcite{AG} we can deduce that 
\begin{align}
&\lim_{n \to \infty}\sqrt{n} B_{n,fn}(p) \rme^{nD (f\|p)}
=\frac{1}{(1-r)\sqrt{2\pi  f(1 -f)} }=\frac{\sqrt{(1-f)}\,p}{\sqrt{2\pi f} \,(p -f) },\label{ACP}
\end{align}
where  
\begin{align}
r=r(f,p):=
\frac{f q }{(1-f)p}\label{eq:OddsRatio}
\end{align}
is the odds ratio. 	This result can also be derived by virtue of the theory of  strong large deviation  \cite{BlacH59,BahaR60,DembZ10} as shown in Appendix \ref{A1}.
Compared with the theory of large deviation  \cite{Cram38,Cher52,Vara84book,DembZ10}, which characterizes the exponential decay rate,   strong large deviation focuses on more accurate expansion of the tail probability that is beyond conventional large deviation.

In view of \eref{ACP}, the condition of asymptotic tightness in \eref{eq:ATratio} can also be formulated as follows, 
\begin{align}
\lim_{n \to \infty} \sqrt{n} B_{n,fn}^\downarrow(p) \rme^{nD (f\|p)}
=\lim_{n \to \infty}\sqrt{n} B_{n,fn}^\uparrow (p) \rme^{nD (f\|p)}
=\frac{\sqrt{(1-f)}\,p}{\sqrt{2\pi f}\, (p -f) }.\label{eq:TIGHT}
\end{align}
Such bounds are of special interest in the study of strong large deviation \cite{BlacH59,BahaR60,DembZ10}.  Recently such bounds have found numerous applications in classical and quantum information theory, 
including 
finite-length analysis for channel coding  \cite{Ferr21},
channel coding with higher orders \cite{Moul17,Haya18}, 
security analysis with higher orders \cite{Haya18, Haya19},
quantum thermodynamics \cite{TajiH17,ItoH18}, and local discrimination \cite{HayaO17}.
Unfortunately, it is in general not easy to derive bounds that satisfy the condition of asymptotic tightness, that is, criterion (C3).
Actually,  most bounds  for this regime \cite{Cher52,Hoef63,MacWS77book,Ash92book, JohnKK05,CsisK11,Csis98,WataH17} known in the literature satisfy criterion (C1), but few bounds satisfy criterion (C2) or (C3). Notably,  the Chernoff and reverse Chernoff bounds  reproduced in \eqssref{eq:ChernoffB}{eq:ChernoffRevType}{eq:ChernoffRev2} satisfy neither (C2) nor (C3). 
As exceptions, the bounds derived by McKay \cite{McKa89}  satisfy criteria (C2) and (C3), but does not satisfy (C1)
because the bounds involve the probability $b_{n-1,k-1}(p)$;
in addition, the bounds involve integrals and are not so explicit compared with the Chernoff and reverse Chernoff bounds mentioned above. 
The bounds derived recently by Ferrante \cite{Ferr21} satisfy criteria (C1) and (C3), but do not satisfy criterion (C2). 

	In addition to the regime of large deviation, 	the regime of moderate deviation \cite{Wu95,Acos97,DembZ10,WataH17} is of independent interest. 
Here $k$ behaves as 
	$pn -\alpha_n$ with the  sequence $\alpha_n$ satisfying the conditions 
	$ \alpha_n/n \to 0$ and $\alpha_n/\sqrt{n}\to \infty$. This regime interpolates between the regime of large deviation  and the regime of central limit theorem (CLT). 	
The asymptotics of this regime is useful to the analysis of various types of information processing \cite{AltuW14,PolyV10,Tan12,HayaW16,HayaW20,ChubTT17,ChenH18}. Although several works have studied the exponential decay rate of the tail probability in this regime \cite{Wu95,Acos97,DembZ10,WataH17}, few papers have derived its asymptotic behavior up to  constant multiplicative factors.

In this paper, to find the desired upper and lower bounds for 
$B_{n,k}(p)$,
we derive nearly tight  upper and lower bounds for 
the ratio $B_{n,k}(p)/b_{n,k}(p)$ in the first step. 
Then, we prepare various 
nearly tight  upper and lower bounds for the probability $b_{n,k}(p)$.
Combining these results, we derive 
nearly tight  upper and lower bounds for 
the tail probability $B_{n,k}(p)$ that satisfy criteria (C1-C3). Notably, our bounds are tight within a constant factor of $89/44$ and are  asymptotically tight in the regime of moderate deviation besides the regime of large deviation.  In addition, we conjecture that our lower bound for the ratio $B_{n,k}(p)/b_{n,k}(p)$ is tight within a factor of $180451625/143327232$. If this conjecture holds, then our lower bound $B_{n,k}^\downarrow(p)$ for the tail probability $B_{n,k}(p)$  is tight within a factor of $1.26434$. Furthermore
we prove this conjecture in a special case by virtue of a surprising connection with Ramanujan's equation \cite{Rama27,JogdS68}. This connection  indicates that our work is of interest beyond probability theory.

The rest of this paper is organized as follows.
\Sref{sec:summary} summarizes the main results.
\Sref{sec:BinProb} prepares fundamental knowledges on the binomial distribution.
\Sref{sec:Bbratio} 
proposes nearly tight  upper and lower bounds for 
the ratio $B_{n,k}(p)/b_{n,k}(p)$.
\Sref{sec:TailBounds} 
proposes nearly tight  upper and lower bounds for 
the tail probability $B_{n,k}(p)$ 
by virtue of good bounds for $b_{n,k}(p)$ and $B_{n,k}(p)/b_{n,k}(p)$.
\Sref{sec:conjecture} 
presents a conjecture on 
the tail probability and provides strong evidences based on a  connection with Ramanujan's equation \cite{Rama27,JogdS68}.
 \Sref{sec:conclusion} concludes this paper.

\section{Summary of results}\label{sec:summary}
\subsection{Evaluation of the ratio
$B_{n,k}(p)/b_{n,k}(p)$} In the first step, to evaluate the ratio $B_{n,k}(p)/b_{n,k}(p)$
 we define the following functions, assuming that $n\geq 0$,  $0<p<1$,  and $0\leq k\leq p n$. It is not necessary to assume that $k$ and $n$ are integers in the following definitions.
\begin{gather}
L(n,k,p):=\frac{k+1-p n+\sqrt{(p n-k+1)^2+4q k}}{2},\label{eq:Lnkp} \\
\kappa_1(n,p):=p(n+1)-\sqrt{p q(n+1)}, \label{eq:kappa1} \\
V(n,k,p,a):=a+\frac{p(n-k+a +1)}{p n+p-k+a},\label{eq:Vnkpa}\\
\begin{split}
U(n,k,p):=\min_{a\in \bbN_0} V(n,k,p,a)=\begin{cases}
V(n,k,p,0) &\! k< \kappa_1(n,p),\\
\min\bigl\{V(n,k,p,\lfloor \tilde{a} \rfloor), V(n,k,p,\lceil \tilde{a}\rceil)\bigr\}\!\! & k\geq \kappa_1(n,p),
\end{cases}\label{eq:Unkp}
\end{split}
\end{gather}
where $q=1-p$, $\tilde{a}= k-\kappa_1(n,p)$.
The second equality in \eref{eq:Unkp} follows from the fact that	$V(n,k,p,x)$ is strictly convex in $x$ for $x\geq k-p n-p$  and has a unique minimum point at $x=\tilde{a}$. In addition, it is easy to verify that 
\begin{align} 
V(n,k,p,\lceil \tilde{a}\rceil)\leq   1+V(n,k,p,\tilde{a} )=1+k-p n+2\sqrt{p q(n+1)}. 
\end{align}
If $0\leq f<p$ and $n$ is sufficiently large; then $k,f n<\kappa_1(n,p)$, so \eref{eq:Unkp} yields
\begin{align}
U(n,k,p)= V(n,k,p,0),\quad 
U(n,f n,p)=  V(n,f n,p,0). \label{eq:UnkpLargen}
\end{align}

Then, as shown in \sref{sec:Bbratio}, we have the following theorem.
\begin{theorem}\label{thm:BnkbnkLBUB}
	Suppose  $k\in \bbN_0$, $n\in \bbN$, $0<p<1$,   $k\leq p n$, and $f=k/n$. Then 
	\begin{gather}
	1\leq L(n,k,p)\leq 	\frac{B_{n,k}(p)}{b_{n,k}(p)}\leq U(n,k,p)< 2L(n,k,p),\label{eq:BnkbnkLBUB}
\end{gather}	
where all inequalities are strict when $k\geq1$. If in addition	$0<f<p$, then
	\begin{gather}
	1<L(n,k,p)<\frac{B_{n,k}(p)}{b_{n,k}(p)}< U(n,k,p)\leq V(n,k,p,0)<  \frac{(1-f)p}{p-f}=\frac{1}{1-r}.	\label{eq:BnkbnkUBasymp}
	\end{gather}
\end{theorem}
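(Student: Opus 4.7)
The plan is to rewrite the ratio as $\sigma := B_{n,k}(p)/b_{n,k}(p) = \sum_{i=0}^k t_i$ with $t_i := b_{n,k-i}(p)/b_{n,k}(p)$, so that $t_0 = 1$ and the consecutive-term ratio $r_i := t_{i+1}/t_i = (k-i)q/[(n-k+i+1)p]$ is strictly decreasing in $i$ and satisfies $r_0 = \alpha_k := kq/[(n-k+1)p] < 1$ whenever $k \leq pn$. The whole argument then exploits this decreasing-geometric-ratio structure.

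For the upper bound I will split $\sigma = \sum_{i<a} t_i + \sum_{i \geq a} t_i$ for each $a \in \bbN_0$, bounding the first sum by $a$ (since every $t_i \leq 1$) and the tail by $t_a/(1-r_a) \leq 1/(1-r_a)$ via $r_j \leq r_a$ for $j \geq a$; a direct calculation identifies $1/(1-r_a)$ with $p(n-k+a+1)/[p(n+1)-k+a]$, yielding $\sigma \leq V(n,k,p,a)$ for every $a$ and hence $\sigma \leq U(n,k,p)$, with strictness for $k \geq 1$ inherited from the strict decrease of $r_i$. For the lower bound I will use the recursion $\sigma_k = 1 + \alpha_k \sigma_{k-1}$ (where $\sigma_k := B_{n,k}(p)/b_{n,k}(p)$) and prove $\sigma_k \geq L_k := L(n,k,p)$ by induction on $k$, the base case $\sigma_0 = L_0 = 1$ being immediate. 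In the inductive step, the identity $L_{k-1} = p(n-k+1)/[L_{k-1}+pn-k]$ read off from the quadratic defining $L_{k-1}$ yields $1 + \alpha_k L_{k-1} = [L_{k-1}+p(n-k)]/[L_{k-1}+pn-k]$; then using the quadratic for $L_k$, the inequality $1 + \alpha_k L_{k-1} \geq L_k$ reduces to $L_{k-1}(1-L_k) \geq L_k(1-L_k)$, equivalent to $L_{k-1} \leq L_k$ when $L_k > 1$. Implicit differentiation of $L^2 + L(pn-k-1) = p(n-k)$ gives $\partial L/\partial k = (L-p)/(2L+pn-k-1) > 0$ since $L \geq 1 > p$, which furnishes the strict monotonicity needed to close the induction and also yields the strict inequalities $1 < L_k < \sigma_k$ for $k \geq 1$. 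The final step $V(n,k,p,0) < (1-f)p/(p-f) = 1/(1-r)$ for $0 < f < p$ is elementary algebra, reducing after clearing denominators to $kq > 0$.

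The main obstacle is the universal gap bound $U < 2L$, which I will establish by splitting into the regimes $k \geq \kappa_1(n,p)$ and $k < \kappa_1(n,p)$. The pivot for the first regime is the algebraic identity
\[
(pn-k+1)^2 + 4qk - 4pq(n+1) = (pn-k+2p-1)^2 \geq 0,
\]
which gives $2L \geq k+1-pn+2\sqrt{pq(n+1)}$. Together with the explicit evaluation $V(n,k,p,\tilde{a}) = k-pn+2\sqrt{pq(n+1)}$ and the estimate $V(n,k,p,\lceil \tilde{a}\rceil) - V(n,k,p,\tilde{a}) = \int_{\tilde{a}}^{\lceil \tilde{a}\rceil} V'(s)\,ds < \lceil \tilde{a}\rceil - \tilde{a} \leq 1$ (which uses $V'(s) = 1 - pq(n+1)/[p(n+1)-k+s]^2 < 1$), this chains to $U \leq V(n,k,p,\lceil \tilde{a}\rceil) < 1 + k - pn + 2\sqrt{pq(n+1)} \leq 2L$. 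For $k < \kappa_1$ one has $U = V(n,k,p,0) = 1 + kq/(m+p)$ with $m := pn-k$; squaring the inequality $V(n,k,p,0) < 2L$ reduces it to a concave quadratic in $X := kq$ of the form $-X^2 + 2(m+p)(m+2p)X + (2m+1)(m+p)^2 > 0$, whose positive root exceeds $2(m+p)^2$. Since $k < \kappa_1$ is exactly the condition $(m+p)^2 > pq(n+1)$, the chain $kq \leq pqn < pq(n+1) < (m+p)^2 < 2(m+p)^2$ safely places $X$ below this positive root. Careful bookkeeping of strict versus non-strict inequalities across these cases is the most delicate piece of the argument.
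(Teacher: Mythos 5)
Your proposal is correct, and it is worth separating its three pieces. The upper bound $B_{n,k}/b_{n,k}\leq V(n,k,p,a)$ (head bounded by $a$, tail dominated by a geometric series with ratio $r_a$, which sums to $p(n-k+a+1)/(pn+p-k+a)$) is exactly the paper's argument, and your treatment of $U<2L$ — the identity $(pn-k+1)^2+4qk-4pq(n+1)=[p(n+2)-k-1]^2$ for $k\geq\kappa_1$, and a quadratic in $X=kq$ with positive root beyond $2(m+p)^2$ for $k<\kappa_1$ — is a mild repackaging of the paper's two-case proof of its Lemma on $U<2L$ (the paper instead locates the zeros of $\Delta=2L-1-V(n,k,p,0)$ at $k=0$ and $k=\kappa_2(n,p)$ and argues by continuity; your explicit root comparison is arguably cleaner). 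Where you genuinely diverge is the lower bound $B_{n,k}/b_{n,k}\geq L(n,k,p)$: the paper introduces the partial mean $\mu_{n,k}=\sum_j jb_{n,j}/B_{n,k}$, proves $\mu_{n,k}\geq k+1-B_{n,k}/b_{n,k}$ and $\mu_{n,k}\leq\mu_{n+1,k}$, and combines these with the exact relation $\mu_{n+1,k}=p(n+1)(\gamma-1)/(\gamma-p)$ to land on the quadratic $\gamma^2+(pn-k-1)\gamma-(n-k)p\geq0$; you instead run an induction on $k$ through the recursion $\sigma_k=1+\alpha_k\sigma_{k-1}$ with $\alpha_k=kq/[p(n-k+1)]$, reducing the inductive step via the defining quadratics of $L_{k-1}$ and $L_k$ to $L_{k-1}(1-L_k)\geq L_k(1-L_k)$, i.e.\ to the strict monotonicity of $L$ in $k$, which you get from implicit differentiation (note the denominator $2L+pn-k-1$ equals $\sqrt{(pn-k+1)^2+4qk}>0$, so positivity is automatic). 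I checked the algebra — in particular $1+\alpha_kL_{k-1}=[L_{k-1}+p(n-k)]/(L_{k-1}+pn-k)$ with $L_{k-1}+pn-k=p(n-k+1)/L_{k-1}>0$ — and the induction closes, including the strictness claims for $k\geq1$. Your route is more elementary and self-contained; the paper's partial-mean detour is heavier but pays for itself later, since the same machinery yields its Proposition on bounds for $\mu_{n,k}$ and $B_{n,k}/B_{n+1,k}$. The only cosmetic gaps are that you do not state $L\geq1$ explicitly (immediate from $L-1=[-(pn-k+1)+\sqrt{(pn-k+1)^2+4qk}]/2\geq0$) and that $U\leq V(n,k,p,0)$ in the second display is needed but is trivial from the definition of $U$ as a minimum over $a\in\bbN_0$.
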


Here $r$ is the odds ratio defined in \eref{eq:OddsRatio}. The upper bound $U(n,k,p)$ and lower bound $L(n,k,p)$ can be computed in $O(1)$ time by definitions, assuming that elementary operations, such as addition and multiplication, are $O(1)$. In addition, they are  tight within a factor of 2 by \eref{eq:BnkbnkLBUB}.
Furthermore, the two bounds $U(n,k,p)$ and $L(n,k,p)$
are  asymptotically tight according to the following equations,
\begin{gather}
\lim_{n\rightarrow \infty} L(n,k,p)=\lim_{n\rightarrow \infty} U(n,k,p)=\lim_{n\rightarrow \infty} V(n,k,p,0)=1,\label{eq:LUnkplim}\\
\lim_{n\rightarrow \infty} L(n,f n,p)
=\lim_{n\rightarrow \infty} U(n,f n,p)=\lim_{n\rightarrow \infty} V(n,f n,p,0)=\frac{(1-f)p}{p-f}\quad \forall  0\leq f<p<1,
\label{eq:LUnfplim}
\end{gather} 
which follow from  Eqs.~\eqref{eq:Lnkp}-\eqref{eq:Unkp} and \eqref{eq:UnkpLargen}.  Note that \eref{eq:LUnfplim} still holds if $fn$ is replaced by $\lfloor fn\rfloor$. Numerical calculation illustrated in \fref{fig:BbLratio} further shows that the lower bound $L(n,k,p)$ is more accurate than what can be proved rigorously (cf. \cref{con:RatioConjecture} in \sref{sec:conjecture} for potential improvement).
The combination of \eqsref{eq:BnkbnkUBasymp}{eq:LUnfplim} also 
 implies the following result
 	\begin{align}
	\lim_{n\to \infty} \frac{B_{n,\lfloor fn\rfloor}(p)}{b_{n,\lfloor fn\rfloor}(p)}=\frac{(1-f)p}{p-f}=\frac{1}{1-r}\quad  \forall  0\leq f<p<1.  \label{eq:Bnkbnklim7}
	\end{align}

\begin{figure}
	\includegraphics[width=13.5cm]{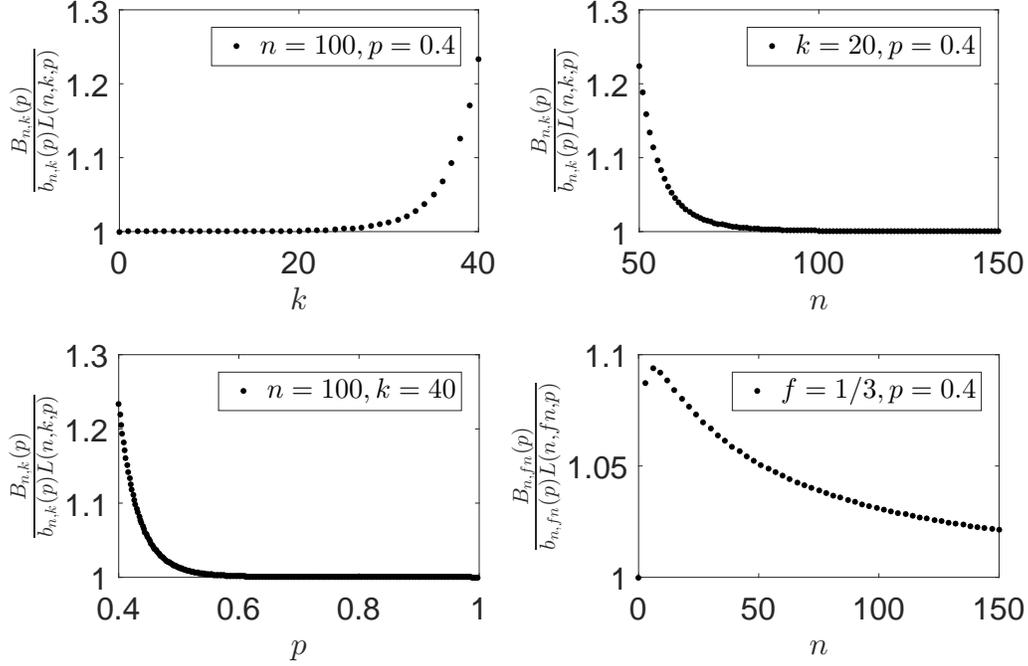}
	\caption{\label{fig:BbLratio} The ratios $B_{n,k}(p)/[b_{n,k}(p)L(n,k,p)]$ and $B_{n,fn}(p)/[b_{n,fn}(p)L(n,fn,p)]$. 
	}
\end{figure}

\subsection{Evaluation of the tail probability $B_{n,k}(p)$}
In the next step, 
to describe our upper and lower bounds for the binomial tail probability 
$B_{n,k}(p)$,  we introduce two quantities, assuming that $0<p<1$ and $1\leq k\leq n-1$,
\begin{equation}\label{eq:BnkpArrow}
\begin{aligned} 
B_{n,k}^{\downarrow}(p) &:=\frac{\sqrt{n}\,L(n,k,p)}{ \sqrt{2\pi k(n-k)}}
\rme^{-n D(\frac{k}{n}\| p )}\rme^{\frac{1}{12n}-\frac{1}{12k}-\frac{1}{12(n-k)}} ,\\
B_{n,k}^{\uparrow}(p)&:=\frac{\sqrt{n}\,U(n,k,p)}{ \sqrt{2\pi k(n-k)}}\rme^{-n D(\frac{k}{n}\| p )}\rme^{\frac{1}{12n+1}-\frac{1}{12k+1}-\frac{1}{12(n-k)+1}}.
\end{aligned}
\end{equation}
\if0
The two quantities
\begin{align} 
B_{n,k}^{\downarrow}(p):= \tilde{L}_-(n,k,p)n^{-\frac{1}{2}}\rme^{n D(\frac{k}{n}\| p )} 
\quad
B_{n,k}^{\uparrow}(p):= \tilde{U}_+(n,k,p)n^{-\frac{1}{2}}\rme^{n D(\frac{k}{n}\| p )}
\end{align}
\fi
With the above definition,
the limit formulas in \eref{eq:LUnfplim} guarantee that
 $B_{n,k}^{\downarrow}(p)$
and $B_{n,k}^{\uparrow}(p)$
satisfy the condition of asymptotic tightness in \eqsref{eq:ATratio}{eq:TIGHT}.
Then, as shown in  \sref{sec:TailBounds}, we have the following theorem.
\begin{theorem}\label{TH2}
Suppose  $n,k\in \bbN$, $0<p<1$,  $k\leq p n$, and $f=k/n$. Then 
\begin{gather}
\frac{\sqrt{n}\,L(n,k,p)}{\sqrt{8k(n-k)}}\rme^{-nD(\frac{k}{n}\| p)}<
B_{n,k}(p)< \frac{\sqrt{n}\, U(n,k,p)}{\sqrt{2\pi k(n-k)}}
\rme^{-nD(\frac{k}{n}\| p)}, \label{eq:TH2E0}\\
B_{n,k}^{\downarrow}(p)< B_{n,k}(p)< B_{n,k}^{\uparrow}(p)<\frac{89}{44} 
B_{n,k}^{\downarrow}(p),\label{eq:TH2E1}\\
B_{n,k}(p)< B_{n,k}^{\uparrow}(p)< \frac{\sqrt{(1-f)}\,p}{\sqrt{2\pi f} \,(p -f) }\rme^{-nD (f\|p)}, \quad k<pn. \label{eq:TH2E2}
\end{gather}
\end{theorem}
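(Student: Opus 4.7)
The plan is to combine the strict ratio bounds $L<B_{n,k}/b_{n,k}<U$ from \thref{thm:BnkbnkLBUB} with sharp Robbins-type Stirling estimates for $b_{n,k}(p)$; the principal obstacle is a refinement of the standard Robbins inequality. Writing $n!=\sqrt{2\pi n}(n/\rme)^n\rme^{\theta_n}$ with $\frac{1}{12n+1}<\theta_n<\frac{1}{12n}$, one has
\begin{equation*}
b_{n,k}(p)=\frac{\sqrt{n}}{\sqrt{2\pi k(n-k)}}\rme^{-nD(k/n\|p)}\rme^{\theta_n-\theta_k-\theta_{n-k}}.
\end{equation*}
The refined two-sided bound I need is
\begin{equation*}
\frac{1}{12n}-\frac{1}{12k}-\frac{1}{12(n-k)} \;<\; \theta_n-\theta_k-\theta_{n-k} \;<\; \frac{1}{12n+1}-\frac{1}{12k+1}-\frac{1}{12(n-k)+1},
\end{equation*}
which is strictly tighter than what the Robbins bounds yield when applied coordinatewise. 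Setting $\phi(m):=\frac{1}{12m}-\theta_m>0$ and $\psi(m):=\theta_m-\frac{1}{12m+1}>0$, both claims reduce to the strict subadditivities $\phi(n)<\phi(k)+\phi(n-k)$ and $\psi(n)<\psi(k)+\psi(n-k)$ for $1\leq k\leq n-1$. This is the main technical hurdle and will likely require a dedicated lemma, accessible either through Binet's integral representation $\theta_m=2\int_0^\infty \arctan(t/m)/(\rme^{2\pi t}-1)\,\rmd t$ together with convexity of $\arctan$ in $1/m$, or through the alternating asymptotic expansion $\theta_m=\frac{1}{12m}-\frac{1}{360m^3}+\frac{1}{1260m^5}-\cdots$, whose partial sums sandwich $\theta_m$ and imply $\phi(m)\sim\frac{1}{360m^3}$, $\psi(m)\sim\frac{1}{144m^2}$, both subadditive because $1/n^\alpha<1/k^\alpha+1/(n-k)^\alpha$ for $\alpha\geq 1$.

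Granting the refined Stirling bounds, the middle chain $B_{n,k}^{\downarrow}(p)<B_{n,k}(p)<B_{n,k}^{\uparrow}(p)$ in \eref{eq:TH2E1} follows by multiplying the refined lower (resp.\ upper) bound on $b_{n,k}(p)$ by $L(n,k,p)$ (resp.\ $U(n,k,p)$) and invoking the strict ratio inequalities of \thref{thm:BnkbnkLBUB}. The coarser bounds in \eref{eq:TH2E0} drop out quickly: the lower bound combines $L\leq B_{n,k}/b_{n,k}$ with the reverse Chernoff bound \eref{eq:ChernoffRev2}; the upper bound uses that the correction exponent $\frac{1}{12n+1}-\frac{1}{12k+1}-\frac{1}{12(n-k)+1}$ in $B_{n,k}^{\uparrow}(p)$ is strictly negative (since $\frac{1}{12k+1}+\frac{1}{12(n-k)+1}\geq \frac{4}{12n+2}>\frac{1}{12n+1}$ by AM--HM for $k,n-k\geq 1$), so $B_{n,k}^{\uparrow}(p)<\sqrt{n}\,U(n,k,p)/\sqrt{2\pi k(n-k)}\cdot\rme^{-nD(k/n\|p)}$.

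For the outer ratio bound in \eref{eq:TH2E1}, the identity $\frac{1}{12m}-\frac{1}{12m+1}=\frac{1}{12m(12m+1)}$ gives
\begin{equation*}
\frac{B_{n,k}^{\uparrow}(p)}{B_{n,k}^{\downarrow}(p)}=\frac{U(n,k,p)}{L(n,k,p)}\exp\left(\frac{1}{12k(12k+1)}+\frac{1}{12(n-k)(12(n-k)+1)}-\frac{1}{12n(12n+1)}\right).
\end{equation*}
The first factor is strictly below $2$ by \thref{thm:BnkbnkLBUB}. A short monotonicity check in $k$ (fixing $n$) and then in $n$ (at $k=1$) shows the exponent is maximized at the corner case $k=n-k=1$, $n=2$, where it equals $\frac{29}{2600}<\ln(89/88)$; the product therefore stays below $2\cdot\frac{89}{88}=\frac{89}{44}$.

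Finally, \eref{eq:TH2E2} follows immediately from what is already in hand: the bound $U(n,k,p)<(1-f)p/(p-f)$ from \eref{eq:BnkbnkUBasymp}, the negativity of the correction in $B_{n,k}^{\uparrow}(p)$, and $\sqrt{k(n-k)}=n\sqrt{f(1-f)}$ together yield
\begin{equation*}
B_{n,k}^{\uparrow}(p)<\frac{(1-f)p}{p-f}\cdot\frac{1}{\sqrt{2\pi n f(1-f)}}\rme^{-nD(f\|p)}=\frac{\sqrt{1-f}\,p}{\sqrt{2\pi f}\,(p-f)\sqrt{n}}\rme^{-nD(f\|p)}\leq \frac{\sqrt{1-f}\,p}{\sqrt{2\pi f}\,(p-f)}\rme^{-nD(f\|p)},
\end{equation*}
using $n\geq 1$ in the last step.
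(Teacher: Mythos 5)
Your proposal is correct and follows essentially the same route as the paper: the decomposition $B_{n,k}=(B_{n,k}/b_{n,k})\,b_{n,k}$ with Theorem~\ref{thm:BnkbnkLBUB} controlling the ratio, the refined two-sided Stirling correction controlling $b_{n,k}$, and the computation $\xi(2,1)=\tfrac{29}{2600}$ together with $2\rme^{29/2600}<\tfrac{89}{44}$ for the outer constant are all exactly what the paper does (Theorem~\ref{thm:TailBound} via Proposition~\ref{pro:bnkpLUB} and Lemmas~\ref{lem:phi}, \ref{lem:varphipm}, \ref{lem:xi}). The superadditivity you single out as the main hurdle is precisely the paper's inequality $\varphi_-(n,k)<\varphi(n,k)<\varphi_+(n,k)$ in \eref{eq:varphiLUB2}, which the paper establishes through strict complete monotonicity of Mortici's functions $\omega_\pm$ (Lemmas~\ref{lem:SCM} and \ref{lem:SCMomega}) rather than through Binet's integral or the alternating Stirling series, but either of your sketched routes leads to the same lemma.
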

The upper and lower bounds in \eref{eq:TH2E0}   are tight within a factor of $4/\sqrt{\pi}\approx 2.25676$ given that $U(n,k,p)<2L(n,k,p)$ by \thref{thm:BnkbnkLBUB}; the lower bound  improves over the popular reverse Chernoff bound in \eref{eq:ChernoffRev2} given that $L(n,k,p)>1$ under the assumptions in \thref{TH2}.
By definitions the bounds $B_{n,k}^{\downarrow}(p)$
and $B_{n,k}^{\uparrow}(p)$  are $O(1)$-computable and thus comply with criterion (C1). In addition, they 
are tight within  a factor of $89/44$ by \eref{eq:TH2E1} and thus comply with   criterion (C2). Furthermore,  they  are  asymptotically tight because they satisfy  \eqsref{eq:ATratio}{eq:TIGHT} and thus comply with  criterion (C3), which also yields another proof of \eref{ACP}.
In a word, the bounds $B_{n,k}^{\downarrow}(p)$
and $B_{n,k}^{\uparrow}(p)$ satisfy all  three criteria of good bounds. The
second upper bound in \eref{eq:TH2E2} is equivalent to an upper bound derived in \rcite{Ferr21}.
 Alternatively bounds for $B_{n,k}(p)$ can be constructed from \thref{thm:TailBound} in \sref{sec:TailBounds}.

In addition, Theorem \ref{TH2} implies the following result in the regime of moderate deviation.
\begin{corollary}\label{cor:ModerateDev}
Suppose $n\in \bbN$ and $\alpha_n$ is a sequence with the properties $\alpha_n/n \to 0$ and $\alpha_n/\sqrt{n}\to \infty$ when $n\to\infty$. Let $ k_n=pn-\alpha_n$ and $f_n=k_n/n$; then 
	\begin{align}
&\lim_{n \to \infty}
	B_{n,\lfloor k_n\rfloor}^{\downarrow}(p)
	\frac{\alpha_n}{\sqrt{n }}
	\rme^{nD(f_n\|p)}=\lim_{n \to \infty}
	B_{n,\lfloor k_n\rfloor}(p)
	\frac{\alpha_n}{\sqrt{n }}
	\rme^{nD(f_n\|p)}\\
	&= \lim_{n \to \infty}
	B_{n,\lfloor k_n\rfloor}^{\uparrow}(p)
	\frac{\alpha_n}{\sqrt{n }}
	\rme^{nD(f_n\|p)}
	= \sqrt{\frac{pq }{2\pi}}. 	\nonumber
	\end{align}
\end{corollary}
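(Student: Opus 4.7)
The plan is to apply a sandwich argument. By \thref{TH2} we have $B_{n,k}^{\downarrow}(p)<B_{n,k}(p)<B_{n,k}^{\uparrow}(p)$ for every admissible $k$, so it suffices to show that both flanking quantities $B_{n,\lfloor k_n\rfloor}^{\downarrow}(p)\tfrac{\alpha_n}{\sqrt n}\rme^{nD(f_n\|p)}$ and $B_{n,\lfloor k_n\rfloor}^{\uparrow}(p)\tfrac{\alpha_n}{\sqrt n}\rme^{nD(f_n\|p)}$ converge to $\sqrt{pq/(2\pi)}$. The stated limit for $B_{n,\lfloor k_n\rfloor}(p)$ will then follow from the squeeze theorem. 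So the real work is an asymptotic analysis of the two explicit expressions defined in \eref{eq:BnkpArrow}.

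Write $k:=\lfloor k_n\rfloor$ and $\theta_n:=k_n-k\in[0,1)$, so that $f_n-k/n=\theta_n/n$, and $k/n\to p$ since $\alpha_n/n\to 0$. Substituting the definitions reduces the lower-bound limit to that of
\begin{equation*}
\frac{L(n,k,p)\,\alpha_n}{\sqrt{2\pi k(n-k)}}\,\rme^{n[D(f_n\|p)-D(k/n\|p)]}\,\rme^{\frac{1}{12n}-\frac{1}{12k}-\frac{1}{12(n-k)}},
\end{equation*}
and to a directly analogous expression with $U(n,k,p)$ in place of $L(n,k,p)$ and the slightly shifted Stirling correction. Three ingredients are then straightforward: $\sqrt{k(n-k)}=n\sqrt{pq}\,(1+o(1))$; the Stirling-type correction factor tends to $1$; and by the mean value theorem $|D(f_n\|p)-D(k/n\|p)|\le|D'(\xi\|p)|\,\theta_n/n=O(\alpha_n/n^2)$ for some $\xi\to p$, since $D'(p\|p)=0$ and $D'(\cdot\|p)$ is smooth near $p$, so the residual exponent $n[D(f_n\|p)-D(k/n\|p)]=O(\alpha_n/n)\to 0$.

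The crux is the asymptotics $L(n,k,p)\sim U(n,k,p)\sim pqn/\alpha_n$. Setting $A:=pn-k=\alpha_n+O(1)$, the definition of $L$ gives $L(n,k,p)=\tfrac12\bigl[1-A+\sqrt{(A+1)^2+4qk}\,\bigr]$. The hypothesis $\alpha_n/\sqrt n\to\infty$ forces $A^2\gg 4qk=\Theta(n)$, so expanding the square root yields $\sqrt{(A+1)^2+4qk}=(A+1)+2qk/(A+1)+o(1)$, whence $L(n,k,p)=qk/(A+1)+o(1)\sim pqn/\alpha_n$. For $U$, the same hypothesis guarantees $k<\kappa_1(n,p)$ for large $n$, so by \eref{eq:UnkpLargen} we have $U(n,k,p)=V(n,k,p,0)=p(n-k+1)/(pn+p-k)\sim pqn/\alpha_n$ by direct expansion. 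Combining these pieces yields
\begin{equation*}
\frac{pqn/\alpha_n}{n\sqrt{2\pi pq}}\cdot\alpha_n\cdot(1+o(1))=\sqrt{\frac{pq}{2\pi}}\cdot(1+o(1))
\end{equation*}
for both the $L$- and $U$-versions, as required. The principal obstacle is the expansion of $L(n,k,p)$: one must recognise that $\alpha_n^2/n\to\infty$ is precisely what makes the $A^2$ term dominate $4qk$ inside the radical, and then track the delicate cancellation between $-A$ and the leading $A+1$ in the expanded square root so that the $O(1/A)$ sub-leading correction becomes the actual leading term in $L$.
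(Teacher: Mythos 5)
Your proposal is correct and takes essentially the same route as the paper: the paper proves the corollary by combining the sandwich in Theorem~\ref{TH2} with the limits $\lim_n n[D(\lfloor k_n\rfloor/n\|p)-D(f_n\|p)]=0$ and $\lim_n L(n,\lfloor k_n\rfloor,p)\,\alpha_n/n=\lim_n U(n,\lfloor k_n\rfloor,p)\,\alpha_n/n=pq$ (stated in Eq.~\eqref{eq:LUlimModerate} without detailed computation), which is exactly the decomposition you carry out. One small imprecision: the error in your expansion of the radical is $O\bigl(k^2/(A+1)^3\bigr)=O(n^2/\alpha_n^3)$, which need not be $o(1)$ (take $\alpha_n=n^{3/5}$), but it is $o(n/\alpha_n)$ and hence negligible against the leading term $qk/(A+1)$, so the asymptotic $L(n,k,p)\sim pqn/\alpha_n$ and the final limit are unaffected.
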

\Crref{cor:ModerateDev} shows that 
\begin{align}
B_{n,\lfloor k_n\rfloor}(p)=\biggl[ \sqrt{\frac{pq }{2\pi}}+o(1)\biggr]
\frac{\sqrt{n}}{\alpha_n}\rme^{-nD(k_n\|p)}.
\end{align}
This corollary follows from \thref{TH2} and the following equations [cf. \eref{eq:LUnfplim}]:
\begin{align}
&\lim_{n\to\infty}\bigl[nD(\lfloor k_n\rfloor/n\|p)-nD(f_n\|p)\bigr]=0,\\
&\lim_{n\rightarrow \infty} L(n,\lfloor k_n\rfloor,p)\frac{\alpha_n}{n}=\lim_{n\rightarrow \infty} L(n,k_n,p)\frac{\alpha_n}{n}
=\lim_{n\rightarrow \infty} U(n,\lfloor k_n\rfloor,p)\frac{\alpha_n}{n}\label{eq:LUlimModerate}\\
&=\lim_{n\rightarrow \infty} U(n,k_n,p)\frac{\alpha_n}{n}
=pq.\nonumber
\end{align}

\section{Binomial probabilities}\label{sec:BinProb}

\subsection{Preliminary results}
To obtain upper and lower bounds for the tail probability
$B_{n,k}(p)$, here we prepare several preliminary results on  $b_{n,k}(p)$ and $B_{n,k}(p)$ as well as their relations, assuming that $k,n\in \bbN_0$, $k\leq n$, and $0<p<1$.  If there is no danger of confusion, we shall use $b_{n,k}$ and $B_{n,k}$ as shorthands for $b_{n,k}(p)$ and $B_{n,k}(p)$, respectively.

The definitions in \eref{eq:Bnkp} imply that 
\begin{gather}
B_{n,k}=B_{n,k-1}+b_{n,k},\quad 
B_{n+1,k}=p B_{n,k-1}+q B_{n,k}, \label{eq:Bn+1k=Bnk-1+Bnk}\\
B_{n,k}-B_{n+1,k}=p (B_{n,k}-B_{n,k-1})=p b_{n,k}, \label{eq:Deltank}\\
\frac{b_{n,k-1}}{b_{n,k}}=\frac{q k}{p(n-k+1)}, \quad \frac{B_{n,k}}{b_{n,k}}\geq1+\frac{b_{n,k-1}}{b_{n,k}}\geq 1+\frac{q k}{p n },
\label{eq:bnk-1/bnk}\\ \frac{b_{n+1,k}}{b_{n,k}}=\frac{(n+1)q}{n+1-k}. \label{eq:bn+1k/bnk}
\end{gather}
Here it is understood that $B_{n,k-1}=b_{n,k-1}=0$ whenever $k=0$. Based on these simple observations we can derive various preliminary results on $b_{n,k}$ and $B_{n,k}$ as follows.

\begin{lemma}\label{lem:Bbnk-jp}
	Suppose  $j,k,n\in \bbN_0$ satisfy  $0\leq j\leq k\leq n$ and $k\geq 1$, and $0<p<1$. Then  $B_{n,k-j}(p)/b_{n,k}(p)$ is strictly increasing in  $k$, but strictly decreasing in $n$ and $p$. If in addition $j\geq 1$, then  $b_{n,k-j}(p)/b_{n,k}(p)$ and $B_{n,k-j}(p)/B_{n,k}(p)$ are strictly increasing in $k$, but strictly decreasing in $n$ and $p$. Furthermore,
	\begin{align}\label{eq:Bbnk-jplim}
	\lim_{n\to\infty}\frac{b_{n,k-j}(p)}{b_{n,k}(p)}=\lim_{n\to\infty}\frac{B_{n,k-j}(p)}{b_{n,k}(p)}=\lim_{n\to\infty}\frac{B_{n,k-j}(p)}{B_{n,k}(p)}=\begin{cases} 1 & j=0,\\
	0 &j\geq 1.
	\end{cases}
	\end{align}	
\end{lemma}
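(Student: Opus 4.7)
The plan is to handle the three ratios in turn, using the basic recursion \eqref{eq:bnk-1/bnk} as the main engine and promoting it into increasingly global statements. Iterating \eqref{eq:bnk-1/bnk} gives the product formula
\begin{align*}
\frac{b_{n,k-j}(p)}{b_{n,k}(p)}=\prod_{m=1}^{j}\frac{q(k-m+1)}{p(n-k+m)},
\end{align*}
so for $j\geq 1$ every factor is strictly monotone (increasing in $k$, decreasing in $n$, and, through $q/p=1/p-1$, decreasing in $p$) and of size $O(1/n)$, yielding both the monotonicity and the limit claim for this ratio at once. I then treat $B_{n,k-j}/b_{n,k}=\sum_{i=j}^{k}b_{n,k-i}/b_{n,k}$ by summing: every summand with $i\geq 1$ inherits the monotonicity just proved and vanishes as $n\to\infty$, while the lone $i=0$ term (present only when $j=0$) is the constant $1$ that accounts for the unit limit in that case. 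Since $k\geq 1$ there is at least one summand with $i\geq 1$, so the strict monotonicity transfers to the sum.

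The third ratio $B_{n,k-j}/B_{n,k}$ with $j\geq 1$ is the main obstacle; each of its three monotonicities uses a different trick. For strict increase in $k$, cross-multiplying reduces the claim $B_{n,k-j}/B_{n,k}<B_{n,k+1-j}/B_{n,k+1}$ to
\begin{align*}
B_{n,k-j}(p)\,b_{n,k+1}(p)<b_{n,k+1-j}(p)\,B_{n,k}(p),
\end{align*}
and the key input is log-concavity of the binomial pmf: the product formula above shows that $b_{n,l}/b_{n,l+j}$ is strictly increasing in $l$, whence $b_{n,l}\,b_{n,k+1}<b_{n,l+j}\,b_{n,k+1-j}$ for every $0\leq l\leq k-j$; summing over $l$ yields $b_{n,k+1}B_{n,k-j}<b_{n,k+1-j}(B_{n,k}-B_{n,j-1})<b_{n,k+1-j}B_{n,k}$, as required. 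For strict decrease in $n$, the recursion \eqref{eq:Bn+1k=Bnk-1+Bnk} realizes $B_{n+1,k-j}/B_{n+1,k}$ as a weighted mediant of $B_{n,k-j-1}/B_{n,k-1}$ and $B_{n,k-j}/B_{n,k}$, and the $k$-monotonicity just established places the mediant strictly below the larger of the two. For strict decrease in $p$, substituting $t=q/p=1/p-1$ and writing $b_{n,k-i}/b_{n,k}=C_i t^i$ with $C_i>0$ casts the ratio as
\begin{align*}
\frac{B_{n,k-j}(p)}{B_{n,k}(p)}=\frac{R(t)}{Q(t)+R(t)},\quad Q(t):=\sum_{i=0}^{j-1}C_it^i,\quad R(t):=\sum_{i=j}^{k}C_it^i,
\end{align*}
and a direct calculation gives $R'(t)Q(t)-R(t)Q'(t)=\sum_{i_1\geq j,\,i_2\leq j-1}(i_1-i_2)C_{i_1}C_{i_2}\,t^{i_1+i_2-1}>0$, so $R/Q$ strictly increases in $t$, hence $B_{n,k-j}/B_{n,k}$ strictly increases in $t$ and thereby strictly decreases in $p$.

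Finally, the three limits in \eqref{eq:Bbnk-jplim} drop out at once: for $b_{n,k-j}/b_{n,k}$ from the $O(n^{-j})$ estimate, for $B_{n,k-j}/b_{n,k}$ because the finite sum of vanishing terms vanishes except for the constant $i=0$ term when $j=0$, and for $B_{n,k-j}/B_{n,k}=(B_{n,k-j}/b_{n,k})/(B_{n,k}/b_{n,k})$ by dividing the two previous limits. The hardest step is unambiguously the $k$-monotonicity of $B_{n,k-j}/B_{n,k}$: naive cross-multiplication produces a ratio of sums that refuses to split, and I expect the real effort to lie in recognizing that a term-by-term log-concavity comparison followed by summation is the right way in.
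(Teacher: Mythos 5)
Your proof is correct, and for the first two ratios it coincides with the paper's argument (product formula from \eqref{eq:bnk-1/bnk}, then summation for $B_{n,k-j}/b_{n,k}$). Where you genuinely diverge is the third ratio $B_{n,k-j}/B_{n,k}$. The paper disposes of all three monotonicities at once by writing
\begin{align*}
\frac{B_{n,k}}{B_{n,k-j}}=1+\sum_{i=0}^{j-1}\frac{b_{n,k-i}}{B_{n,k-j}},
\end{align*}
and observing that each reciprocal summand $B_{n,k-j}/b_{n,k-i}=B_{n,k'-j'}/b_{n,k'}$ with $k'=k-i$, $j'=j-i\geq 1$ is exactly of the form already handled, so the monotonicity in $k$, $n$, and $p$ transfers in one line by bootstrapping. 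You instead prove the three monotonicities by three separate devices: log-concavity of the pmf plus cross-multiplication for $k$, the mediant inequality applied to the recursion \eqref{eq:Bn+1k=Bnk-1+Bnk} for $n$ (which, note, consumes the $k$-monotonicity as input, so the order of your steps matters), and a rational-function-in-$t=q/p$ computation for $p$. All three are sound — including the edge case $j=k$ in the mediant step, where $B_{n,-1}=0$ makes the smaller ratio vanish — but the paper's reduction is substantially shorter and makes clear that the third ratio carries no new content beyond the second. Your techniques are more portable (the log-concavity comparison and the $R/Q$ derivative identity would survive in settings where the clean identity $B_{n,k}/B_{n,k-j}-1=\sum_i b_{n,k-i}/B_{n,k-j}$ is unavailable), at the cost of roughly tripling the work on the hardest part.
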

Here we assume that $n,k,p$ can vary independently under the constraint specified in the lemma. To be concrete, the monotonicity of $B_{n,k-j}(p)/b_{n,k}(p)$ with respect to $k$ means 
\begin{align} 
\frac{B_{n,k-j}}{b_{n,k}}< \frac{B_{n,k'-j}}{b_{n,k'}},\quad 0\leq j\leq k< k'\leq n. \label{eq:bnkmjbnk}
\end{align}
Similar remarks apply to other conclusions concerning monotonicity properties.

\begin{proof}
	From \eref{eq:bnk-1/bnk} we can deduce that
	\begin{align}
	\frac{b_{n,k-j}}{b_{n,k}}&=\prod_{i=0}^{j-1}\frac{b_{n,k-i-1}}{b_{n,k-i}}=\prod_{i=0}^{j-1}
	\frac{q (k-i)}{p(n-k+i+1)}, \quad 1\leq j\leq k,\label{eq:bnk-j/bnk}
	\end{align}
	which implies that $b_{n,k-j}/b_{n,k}$ is strictly increasing in $k$, but strictly decreasing in $n$ and $p$ when $1\leq j\leq k$. Then, according to the following equation,
	\begin{align}
	\frac{B_{n,k-j}}{b_{n,k}}=\sum_{i=j}^{k} \frac{b_{n,k-i}}{b_{n,k}}, \quad \frac{B_{n,k}}{B_{n,k-j}}=1+\sum_{i=0}^{j-1} \frac{b_{n,k-i}}{B_{n,k-j}},
	\end{align}
	$B_{n,k-j}/b_{n,k}$ is strictly increasing in $k$, but strictly decreasing in $n$ and $p$ when $0\leq j\leq k$; by contrast, $B_{n,k-j}/B_{n,k}$ is strictly increasing in $k$, but strictly decreasing in $n$ and $p$ when $1\leq j\leq k$.
	
	Furthermore, \eref{eq:bnk-j/bnk} implies that
	\begin{align}
	\lim_{n\to\infty}\frac{b_{n,k-j}}{b_{n,k}}=\begin{cases} 1 & j=0,\\
	0 &j\geq 1,
	\end{cases}
	\end{align}
	which in turn implies \eref{eq:Bbnk-jplim}. 
\end{proof}

\begin{lemma}\label{lem:Bnkpmono}
	Suppose  $k,n\in \bbN_0$ satisfy  $0\leq k\leq n$ and $0<p<1$. Then 	$B_{n,k}(p)$ is strictly increasing in $k$, but strictly decreasing in $n$. In addition, $B_{n,k}(p)$ is strictly decreasing in $p$ when $k<n$. 
\end{lemma}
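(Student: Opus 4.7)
The first two monotonicity claims are immediate from the recurrences already collected in this section, and I would dispatch them at once. From \eref{eq:Bn+1k=Bnk-1+Bnk} one has $B_{n,k}-B_{n,k-1}=b_{n,k}(p)$, and from \eref{eq:Deltank} one has $B_{n,k}-B_{n+1,k}=p\,b_{n,k}(p)$; since $0<p<1$ and $0\le k\le n$ force $b_{n,k}(p)>0$, both differences are strictly positive, giving strict monotonicity in $k$ and in $n$ respectively.

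The strict monotonicity in $p$ on $k<n$ is the one step that requires a small calculation, and my plan is to obtain it from a closed-form derivative. I would differentiate $B_{n,k}(p)=\sum_{j=0}^k\binom{n}{j}p^j q^{n-j}$ term by term and apply the standard identities $j\binom{n}{j}=n\binom{n-1}{j-1}$ and $(n-j)\binom{n}{j}=n\binom{n-1}{j}$. After shifting the summation index in the first of the two resulting sums, the terms pairwise cancel and only the boundary contribution at $j=k$ survives, yielding
\[
\frac{\mathrm{d}}{\mathrm{d}p}B_{n,k}(p)=-n\binom{n-1}{k}p^k q^{n-k-1},
\]
which is strictly negative on $0<p<1$ whenever $k<n$. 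This gives the desired strict monotonicity in $p$.

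As a back-up I would keep in mind a monotone coupling argument: for $p<p'$, realise $S\sim\mathrm{Bin}(n,p)$ and $S'\sim\mathrm{Bin}(n,p')$ on a common probability space with $S\le S'$ almost surely, and observe that $\Pr(S\le k<S')>0$ whenever $k<n$. I prefer the derivative route because it stays within the combinatorial style of \sref{sec:BinProb} and produces a compact formula that may be useful later. There is no substantive obstacle in either approach; the only mild nuisance is the usual endpoint bookkeeping in the telescoping, namely that the $j=0$ summand is absent from one of the two sums and the surviving boundary term arises at the top end $j=k$, requiring exactly the hypothesis $k<n$ so that $q^{n-k-1}$ is well defined.
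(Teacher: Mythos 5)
Your proof is correct. The monotonicity in $k$ and $n$ is handled exactly as in the paper, via \eqsref{eq:Bn+1k=Bnk-1+Bnk}{eq:Deltank} and the positivity of $b_{n,k}(p)$. For the monotonicity in $p$, however, you take a genuinely different route: the paper simply writes $B_{n,k}(p)=B_{n,k}(p)/B_{n,n}(p)$ (using $B_{n,n}\equiv 1$) and invokes \lref{lem:Bbnk-jp}, which already established that $B_{n,k-j}(p)/B_{n,k}(p)$ is strictly decreasing in $p$ for $j\geq 1$ — here with $j=n-k\geq 1$. Your telescoping computation of the derivative is also correct; the two sums obtained from $j\binom{n}{j}=n\binom{n-1}{j-1}$ and $(n-j)\binom{n}{j}=n\binom{n-1}{j}$ cancel except at the top index, giving
\begin{align}
\frac{\rmd}{\rmd p}B_{n,k}(p)=-n\binom{n-1}{k}p^k q^{n-k-1}=-\frac{(n-k)}{q}\,b_{n,k}(p),
\end{align}
which is strictly negative precisely when $k<n$ (and $n\geq 1$, which is forced by $0\leq k<n$). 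The paper's argument is shorter given that \lref{lem:Bbnk-jp} has already been proved, and it fits the paper's systematic use of ratio monotonicity; your derivative argument is self-contained, does not depend on \lref{lem:Bbnk-jp}, and produces the classical closed-form derivative, which is a nice by-product. Both are sound.
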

If  $0\leq p\leq 1$ instead, then $B_{n,k}(p)$ is nondecreasing in  $k$ and nonincreasing in  $n$ and $p$ by continuity.

\begin{proof}
	According to \eqsref{eq:Bn+1k=Bnk-1+Bnk}{eq:Deltank}, $B_{n,k}(p)$ is strictly increasing in $k$, but strictly decreasing in $n$. When $k<n$, $B_{n,k}(p)=B_{n,k}(p)/B_{n,n}(p)$ is strictly decreasing in $p$ according to \lref{lem:Bbnk-jp}.
\end{proof}

\begin{lemma}\label{lem:Bnkpbnkpf}
	Suppose $0\leq  f\leq 1$, $0<p<1$, and  $n\in \bbN$. Then $B_{n,fn}(p)/b_{n,fn}(p)=1$ is independent of $n$ when $f=0$, but
 is strictly increasing in $n\in \bbN_f$ when $f>0$. If in addition  $f<p$, then
	\begin{align}
\frac{B_{n,\lfloor fn\rfloor}(p)}{b_{n,\lfloor fn\rfloor}(p)}\leq  \frac{1}{1-r}=\frac{(1-f)p}{p-f},\quad 	\lim_{n\to \infty} \frac{B_{n,\lfloor fn\rfloor}(p)}{b_{n,\lfloor fn\rfloor}(p)}=\frac{1}{1-r}=\frac{(1-f)p}{p-f}, \label{eq:BnfnbnfnUBlim}
	\end{align}	
 and the inequality is saturated iff $f=0$.
\end{lemma}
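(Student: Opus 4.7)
The plan is to reduce everything to the explicit product formula
\[
\frac{b_{n,k-j}(p)}{b_{n,k}(p)} = \prod_{i=0}^{j-1} \frac{q(k-i)}{p(n-k+i+1)}
\]
of \eref{eq:bnk-j/bnk}, taking $k=fn$ for the monotonicity claim and $k=\lfloor fn\rfloor$ for the upper bound and the limit. The case $f=0$ is immediate since $B_{n,0}(p)=b_{n,0}(p)=q^n$; henceforth assume $f>0$.

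For the strict monotonicity in $n\in\bbN_f$, I would treat $n$ as a continuous variable and set
\[
\phi_i(n):=\frac{q(fn-i)}{p((1-f)n+i+1)},\qquad g_j(n):=\prod_{i=0}^{j-1}\phi_i(n)=\frac{b_{n,fn-j}(p)}{b_{n,fn}(p)}
\]
for $j\ge 1$, with $g_0\equiv 1$. A one-line differentiation gives
\[
\phi_i'(n)=\frac{pq(f+i)}{[p((1-f)n+i+1)]^2}>0,
\]
so each $g_j$ is strictly increasing in $n$. For $n<n'$ in $\bbN_f$, the integrality of $fn,fn'$ forces $fn'\ge fn+1$, so
\[
\frac{B_{n',fn'}}{b_{n',fn'}}-\frac{B_{n,fn}}{b_{n,fn}}=\sum_{j=1}^{fn}\bigl[g_j(n')-g_j(n)\bigr]+\sum_{j=fn+1}^{fn'}g_j(n')>0,
\]
establishing strict monotonicity.

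For the upper bound in \eref{eq:BnfnbnfnUBlim}, a short calculation shows that $q(k-i)/[p(n-k+i+1)]\le r$ whenever $k\le fn$ and $i\ge 0$ (the inequality rearranges to $k\le fn+f+i$, which is immediate). Applied with $k=\lfloor fn\rfloor$, this gives $b_{n,k-j}(p)/b_{n,k}(p)\le r^j$, and summing the geometric series yields $B_{n,\lfloor fn\rfloor}(p)/b_{n,\lfloor fn\rfloor}(p)\le 1/(1-r)=(1-f)p/(p-f)$, with equality forcing $r=0$, i.e., $f=0$. The limit then follows by dominated convergence: with $k=\lfloor fn\rfloor$, each factor $q(k-i)/[p(n-k+i+1)]\to qf/[p(1-f)]=r$ as $n\to\infty$, so $b_{n,k-j}(p)/b_{n,k}(p)\to r^j$ pointwise in $j$; since these quantities are uniformly bounded by $r^j$ and $\sum_j r^j<\infty$, one concludes $B_{n,\lfloor fn\rfloor}(p)/b_{n,\lfloor fn\rfloor}(p)\to\sum_{j=0}^\infty r^j=1/(1-r)$.

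The only delicate point is the bookkeeping between $fn$ (integer, used for monotonicity on $\bbN_f$) and $\lfloor fn\rfloor$ (used for the upper bound and limit with $f$ allowed to be irrational). The three analytic facts needed—strict monotonicity of each $\phi_i$ in $n$, the pointwise bound by $r$, and summability with dominator $r^j$—all collapse to elementary inequalities of the form ``$f+i\ge 0$'' or ``$k\le fn$'', so nothing beyond \eref{eq:bnk-j/bnk} and a routine convergence argument is required.
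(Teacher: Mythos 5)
Your proposal is correct and follows essentially the same route as the paper: both reduce to the product formula \eqref{eq:bnk-j/bnk}, observe that each factor is strictly increasing in $n$ and bounded by the odds ratio $r$, and sum a geometric series for the upper bound and the limit (your dominated-convergence step is just a repackaging of the paper's squeeze between partial geometric sums and $1/(1-r)$). Your handling of the monotonicity is in fact slightly more careful than the paper's, since you explicitly account for the extra positive terms arising from $fn' \ge fn+1$.
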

Here $\bbN_f$ is defined in \eref{eq:Nf} and  $r=fq/[(1-f)p]$ is the odds ratio defined in \eref{eq:OddsRatio}.
The limit in \eref{eq:BnfnbnfnUBlim} recovers \eref{eq:Bnkbnklim7}.

\begin{proof}
	If $f=0$, then $r=0$ and $B_{n,fn}/b_{n,fn}=1$  is independent of $n$, so  \eref{eq:BnfnbnfnUBlim} holds and the inequality is saturated.
	
Next, we suppose $f>0$ and  $fn<1$; then $\lfloor fn\rfloor=0$ and  $B_{n,\lfloor fn\rfloor}/b_{n,\lfloor fn\rfloor}=1$. In addition, $0<r<1$ when  $0<f<p$, so the inequality in \eref{eq:BnfnbnfnUBlim} holds and is strict.

Next, suppose $f>0$ and  $fn\geq1$; then $B_{n,\lfloor fn\rfloor}/b_{n,\lfloor fn\rfloor}>1=B_{n,0}/b_{n,0}$. 
Let $j$ be a nonnegative integer that satisfies $j\leq \lfloor fn\rfloor-1$. By virtue of  \eref{eq:bnk-1/bnk} we can deduce that	
\begin{gather}
\frac{b_{n,\lfloor fn\rfloor-j-1}}{b_{n,\lfloor fn\rfloor-j}}=\frac{q (\lfloor fn\rfloor-j)}{p(n-\lfloor fn\rfloor+j+1)}\leq \frac{q (fn-j)}{p(n-fn+j+1)}. \label{eq:bnfnProof0}
\end{gather}
If in addition $n\in \bbN_f$, then 	$\lfloor fn\rfloor=fn$ and
$b_{n,fn-j-1}/b_{n,fn-j}$ is strictly increasing in $n$, so $B_{n,fn}/b_{n,fn}$ is strictly increasing in $n$.

If in addition $0<f<1$ (it is not necessary to assume that $n\in \bbN_f$), then \eref{eq:bnfnProof0} implies that 
\begin{gather}
\frac{b_{n,\lfloor fn\rfloor-j-1}}{b_{n,\lfloor fn\rfloor-j}}<\frac{fq}{(1-f)p}=r,  \quad 
\lim_{n\to \infty} \frac{b_{n,\lfloor fn\rfloor-j-1}}{b_{n,\lfloor fn\rfloor-j}}=\frac{fq}{(1-f)p}=r. \label{eq:bnfnProof2}
\end{gather}	
If in addition  $0<f<p$, then $0<r<1$ and \eref{eq:bnfnProof2} implies that
\begin{align} 
\frac{B_{n,\lfloor fn\rfloor}}{b_{n,\lfloor fn\rfloor}}&< \sum_{l=0}^{\infty}r^l=\frac{1}{1-r}=\frac{(1-f)p}{p-f},\quad 
\lim_{n\to \infty} \frac{B_{n,\lfloor fn\rfloor}}{b_{n,\lfloor fn\rfloor}}\leq \frac{1}{1-r}= \frac{(1-f)p}{p-f},\\
\lim_{n\to \infty} \frac{B_{n,\lfloor fn\rfloor}}{b_{n,\lfloor fn\rfloor}}&\geq 
\lim_{n\to \infty} \frac{B_{n,\lfloor fn\rfloor}-B_{n,\lfloor fn\rfloor-j}}{b_{n,\lfloor fn\rfloor}}=\sum_{l=0}^{j-1}r^l=\frac{(1-f)p}{p-f}(1-r^j)\quad \forall j\in \bbN.
\end{align}
The two equations above imply \eref{eq:BnfnbnfnUBlim}, and the inequality in \eref{eq:BnfnbnfnUBlim} is saturated iff $f=0$ given the above discussion.
\end{proof}

\begin{lemma}\label{lem:Bbn+mkp}
	Suppose  $k,m,n\in \bbN_0$ satisfy  $0\leq k\leq n$ and $m\geq 1$,  and $0<p<1$.	Then $b_{n+m,k}(p)/b_{n,k}(p)$ and  $B_{n+m,k}(p)/B_{n,k}(p)$ are strictly increasing in $k$ and  strictly decreasing in  $p$. In addition,
	$b_{n+m,k}(p)/b_{n,k}(p)=B_{n+m,k}(p)/B_{n,k}(p)=q^m$ is independent of $n$ when $k=0$, but	
	$b_{n+m,k}(p)/b_{n,k}(p)$ and  $B_{n+m,k}(p)/B_{n,k}(p)$ are strictly decreasing in  $n$ when $k\geq 1$. Furthermore,
	\begin{align}\label{eq:Bbn+mkp}
	q^m =\frac{b_{n+m,0}(p)}{b_{n,0}(p)}\leq  \frac{B_{n+m,k}(p)}{B_{n,k}(p)}\leq \frac{b_{n+m,k}(p)}{b_{n,k}(p)} \leq q^m\Bigl(\frac{n+1}{n+1-k}\Bigr)^m. 
	\end{align}	
\end{lemma}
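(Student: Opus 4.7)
My plan is to iterate the one-step recurrence \eqref{eq:bn+1k/bnk} to obtain the closed-form product
\[
\frac{b_{n+m,k}(p)}{b_{n,k}(p)} = \prod_{i=0}^{m-1} \frac{(n+i+1)\,q}{n+i+1-k},
\]
and then read off the claimed properties of this ratio directly from the formula. Each factor equals $q\left(1 + \frac{k}{n+i+1-k}\right)$, which makes the following trivial to verify: equality to $q$ (and hence the whole product equal to $q^m$) when $k=0$, with no $n$-dependence; strict monotonicity in $k$, $n$, and $p$ when $k\geq 1$; and the upper bound $b_{n+m,k}(p)/b_{n,k}(p) \leq q^m\bigl[(n+1)/(n+1-k)\bigr]^m$ from the fact that the fraction $(n+i+1)/(n+i+1-k) = 1 + k/(n+i+1-k)$ is maximized at $i=0$.

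To handle $B_{n+m,k}/B_{n,k}$ I would write it as the convex combination
\[
\frac{B_{n+m,k}(p)}{B_{n,k}(p)} = \sum_{j=0}^k \frac{b_{n,j}(p)}{B_{n,k}(p)}\cdot \frac{b_{n+m,j}(p)}{b_{n,j}(p)}.
\]
Since the ratios $b_{n+m,j}/b_{n,j}$ are strictly increasing in $j$ (by the previous paragraph), with minimum $q^m$ at $j=0$ and maximum $b_{n+m,k}/b_{n,k}$ at $j=k$, the sandwich $q^m \leq B_{n+m,k}/B_{n,k} \leq b_{n+m,k}/b_{n,k}$ follows at once, strictly when $k\geq 1$. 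Strict monotonicity of $B_{n+m,k}/B_{n,k}$ in $k$ then comes from a direct comparison of the convex combinations at $k$ and $k+1$: passing from $k$ to $k+1$ appends one extra term whose value $b_{n+m,k+1}/b_{n,k+1}$ strictly exceeds every previous one, so the weighted average strictly increases.

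Finally, for strict monotonicity of $B_{n+m,k}/B_{n,k}$ in $n$ (when $k\geq 1$) and in $p$ I would induct on $m$. For the base case $m=1$, identity \eqref{eq:Deltank} yields $B_{n+1,k}/B_{n,k} = 1 - p\cdot b_{n,k}/B_{n,k}$. \Lref{lem:Bbnk-jp} (with $j=0$) says that $b_{n,k}/B_{n,k}$ is strictly increasing in $n$ (when $k\geq 1$) and in $p$, so $1 - p\cdot b_{n,k}/B_{n,k}$ is strictly decreasing in $n$; it is also strictly decreasing in $p$ because both $p$ and $b_{n,k}/B_{n,k}$ are strictly increasing in $p$, so their product is too. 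For the inductive step $m \to m+1$, factor
\[
\frac{B_{n+m+1,k}(p)}{B_{n,k}(p)} = \frac{B_{n+m+1,k}(p)}{B_{n+1,k}(p)} \cdot \frac{B_{n+1,k}(p)}{B_{n,k}(p)},
\]
apply the base case to the second factor, and apply the inductive hypothesis (with the starting index shifted from $n$ to $n+1$) to the first. No step is genuinely delicate; the main thing to keep straight is that the $k=0$ case yields equalities in place of strict inequalities, consistent with the statement that strict monotonicity in $n$ requires $k\geq 1$.
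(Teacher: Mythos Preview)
Your proposal is correct and follows essentially the paper's route: the same product formula for $b_{n+m,k}/b_{n,k}$, the same convex-combination identity for the sandwich inequalities, and the same reduction of the $B$-ratio monotonicities to \lref{lem:Bbnk-jp} via a one-step recursion plus telescoping/induction on $m$. The only cosmetic difference is that the paper writes $B_{n+1,k}/B_{n,k}=q+p\,B_{n,k-1}/B_{n,k}$ (from \eref{eq:Bn+1k=Bnk-1+Bnk}) and invokes \lref{lem:Bbnk-jp} with $j=1$ for all three monotonicities, whereas you use the equivalent form $1-p\,b_{n,k}/B_{n,k}$ (from \eref{eq:Deltank}) with $j=0$ for the $n$- and $p$-monotonicity and read off the $k$-monotonicity directly from the convex-combination formula.
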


\begin{proof}
	From \eref{eq:bn+1k/bnk} we can deduce that	
	\begin{align}\label{eq:bn+mkp}
\frac{b_{n+m,k}}{b_{n,k}}&=\prod_{l=0}^{m-1}\frac{b_{n+l+1,k}}{b_{n+l,k}}=q^m\prod_{l=0}^{m-1}
\frac{(n+l+1)}{n+l+1-k}\leq q^m
\Bigl(\frac{n+1}{n+1-k}\Bigr)^m, 
\end{align}			
which implies that $b_{n+m,k}/b_{n,k}$ is strictly increasing $k$, but strictly decreasing in $p$. In addition,
	$b_{n+m,k}/b_{n,k}=B_{n+m,k}/B_{n,k}=q^m$ is independent of $n$ when $k=0$, but	
	$b_{n+m,k}/b_{n,k}$ is strictly decreasing in  $n$ when $k\geq 1$.

From \eref{eq:Bn+1k=Bnk-1+Bnk} we can deduce that	
	\begin{align}
	\frac{B_{n+1,k}}{B_{n,k}}&=\frac{p B_{n,k-1}+q B_{n,k}}{B_{n,k}}=q+p \frac{B_{n,k-1}}{B_{n,k}}=1-p\biggl(1-\frac{B_{n,k-1}}{B_{n,k}}\biggr).
	\end{align}
According to \lref{lem:Bbnk-jp}, 
$B_{n+1,k}/B_{n,k}$ is strictly increasing  in $k$, but strictly decreasing in  $p$, and so does $B_{n+m,k}/B_{n,k}$. 
	If in addition $k\geq 1$, then 
	$B_{n+1,k}/B_{n,k}$ is strictly decreasing in $n$, and so does $B_{n+m,k}/B_{n,k}$. In the special case $k=0$, $B_{n+m,k}/B_{n,k}=b_{n+m,k}/b_{n,k}=q^m$ is independent of $n$; note that $B_{n,-1}=0$.
	
	Next, we consider \eref{eq:Bbn+mkp}. The equality and third inequality in \eref{eq:Bbn+mkp} follow from \eref{eq:bn+mkp}.  The first and second inequalities in \eref{eq:Bbn+mkp} follow from  the following equation 
	\begin{align}
	\frac{B_{n+m,k}}{B_{n,k}}
	=\frac{\sum_{j=0}^k b_{n+m,j}}{\sum_{j=0}^k b_{n,j}}
	=\frac{\sum_{j=0}^k \frac{b_{n+m,j}}{b_{n,j}} b_{n,j}}{\sum_{j=0}^k b_{n,j}}
	\end{align}
	and the fact that $b_{n+m,j}/b_{n,j}$ is strictly increasing in $j$ as proved above.  In addition, both inequalities are strict when $k\geq 1$.
\end{proof}

\subsection{Connection with the partial mean}
 The partial mean is defined as 
\begin{align}
\mu_{n,k}(p):=\sum_{j=0}^k \frac{j b_{n,j}(p)}{B_{n,k}(p)},  \label{eq:munk}
\end{align}
which is abbreviated as $\mu_{n,k}$ if there is no danger of confusion. Here we establish a simple but important  connection between the ratio $B_{n,k}/b_{n,k}$ and the partial mean $\mu_{n,k}$, which will play a crucial role in evaluating the the tail probability $B_{n,k}$ as we shall see later.

By definition the partial mean  $\mu_{n,k}$ satisfies
$0\leq \mu_{n,k}\leq k$, where both inequalities are saturated when $k=0$, but are strict when $k\geq 1$. Additional properties of the partial mean is summarized in the  following lemma.
\begin{lemma}\label{lem:munk}
	Suppose  $k,n\in \bbN_0$, $k\leq n$, and $0<p<1$. Then $\mu_{n,k}(p)$ and $k-\mu_{n,k}(p)$ are strictly increasing in $k$. In addition,  $\mu_{n,k}(p)=0$  for $k=0$, while   $\mu_{n,k}(p)$ is strictly increasing in $n$ and $p$ for $k\geq 1$. Moreover, 
	\begin{align}
	k+1-\frac{B_{n,k}(p)}{b_{n,k}(p)}\leq \mu_{n,k}(p)\leq pn, \label{eq:munkBnkbnk}
	\end{align}
	where the lower bound is saturated iff $k=0$, while the upper bound is saturated iff $k=n$. 	
\end{lemma}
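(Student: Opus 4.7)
My plan rests on two workhorse identities derived at the start. Abel summation on $\sum_{j=0}^k j\,b_{n,j}$ yields $(k+1-\mu_{n,k})B_{n,k}=\sum_{l=0}^k B_{n,l}$, and the factorial identity $j\,b_{n,j}=np\,b_{n-1,j-1}$ yields $\mu_{n,k}B_{n,k}=np\,B_{n-1,k-1}$. Applying the first identity at $k$ and at $k+1$ gives the telescoping recursion $(k+1-\mu_{n,k+1})B_{n,k+1}=(k+1-\mu_{n,k})B_{n,k}$, which I rewrite as
\begin{equation}\label{eq:prop-cvxcomb}
\mu_{n,k+1}=\frac{B_{n,k}}{B_{n,k+1}}\,\mu_{n,k}+\frac{b_{n,k+1}}{B_{n,k+1}}(k+1),
\end{equation}
exhibiting $\mu_{n,k+1}$ as a convex combination of $\mu_{n,k}$ and $k+1$ with strictly positive weights; since $\mu_{n,k}\leq k<k+1$, this gives the first half of (a) immediately.

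The technical heart of the lemma is the log-concavity inequality
\begin{equation}\label{eq:prop-key}
b_{n,k+1}B_{n,k-1}<b_{n,k}B_{n,k}.
\end{equation}
I would prove it by expanding $b_{n,k}B_{n,k}-b_{n,k+1}B_{n,k-1}=\sum_{j=0}^{k}b_{n,j}b_{n,k}-\sum_{j=0}^{k-1}b_{n,j}b_{n,k+1}$, reindexing $j\mapsto j-1$ in the second sum, and applying the termwise inequality $b_{n,j-1}b_{n,k+1}\leq b_{n,j}b_{n,k}$ for $1\leq j\leq k$ (which follows from the monotonicity of $b_{n,l+1}/b_{n,l}$ in $l$, i.e.\ the log-concavity of binomial coefficients); what remains after matching is the strictly positive slack $b_{n,0}b_{n,k}$.

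Given \eqref{eq:prop-key}, the lower bound in (c), $(k+1-\mu_{n,k})b_{n,k}\leq B_{n,k}$, follows by induction on $k$: the base $k=0$ is an equality, and in the step the target at $k+1$ collapses, via \eqref{eq:prop-cvxcomb}, to $(k+1-\mu_{n,k})b_{n,k+1}\leq B_{n,k+1}$; multiplying the inductive hypothesis by $b_{n,k+1}/b_{n,k}$ and invoking \eqref{eq:prop-key} to absorb the excess closes the step strictly. For the second half of (a), a direct check via \eqref{eq:prop-cvxcomb} shows that the strict increase $(k+1)-\mu_{n,k+1}>k-\mu_{n,k}$ is equivalent to $(k+1-\mu_{n,k})b_{n,k+1}<B_{n,k+1}$, i.e., precisely the strict target just established. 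The upper bound $\mu_{n,k}\leq pn$ in (c) follows from $\mu_{n,k}=np\,B_{n-1,k-1}/B_{n,k}$ and the rearrangement $B_{n,k}=pB_{n-1,k-1}+qB_{n-1,k}$ of \eqref{eq:Bn+1k=Bnk-1+Bnk}, which gives $B_{n,k}-B_{n-1,k-1}=q\,b_{n-1,k}\geq 0$, saturated iff $k=n$.

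The remaining monotonicities in (b) I would handle by standard devices. For monotonicity in $n$, couple $Y\sim\mathrm{Bin}(n+1,p)$ as $Y=X+Z$ with $X\sim\mathrm{Bin}(n,p)$ and $Z\sim\mathrm{Bern}(p)$ independent, split $\{Y\leq k\}=\{Z=0,X\leq k\}\cup\{Z=1,X\leq k-1\}$, and express $\mu_{n+1,k}$ as a convex combination of $\mu_{n,k}$ and $\mu_{n,k-1}+1$ with strictly positive weights when $k\geq 1$; the strict monotonicity of $k-\mu_{n,k}$ just established gives $\mu_{n,k-1}+1>\mu_{n,k}$, hence $\mu_{n+1,k}>\mu_{n,k}$. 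For monotonicity in $p$, differentiate: the formula $\partial b_{n,j}/\partial p=(j-np)b_{n,j}/(pq)$ yields $\partial\mu_{n,k}/\partial p=\mathrm{Var}(X\mid X\leq k)/(pq)$, which is strictly positive for $k\geq 1$ since the conditional distribution then has at least two points in its support. The one non-routine step is \eqref{eq:prop-key}; once it is in hand the lower bound in (c), the second half of (a), and the remaining monotonicities follow by short, mechanical arguments.
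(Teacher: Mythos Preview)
Your proof is correct and takes a genuinely different route from the paper's. The paper works almost entirely through \lref{lem:Bbnk-jp}: it writes $k-\mu_{n,k}=\sum_{j=1}^{k}B_{n,k-j}/B_{n,k}$ and reads off all three monotonicities (in $k$, $n$, and $p$) from the known monotonicity of these ratios; for the lower bound it sets $s=b_{n,k}/B_{n,k}$, uses \lref{lem:Bbnk-jp} again to get $B_{n,i-1}/B_{n,i}\leq 1-s$ for $i\leq k$, and bounds $k-\mu_{n,k}$ by a geometric series. You instead isolate the log-concavity inequality $b_{n,k+1}B_{n,k-1}<b_{n,k}B_{n,k}$ as the engine, prove the lower bound by induction on $k$, and then observe that the strict inductive target $(k+1-\mu_{n,k})b_{n,k+1}<B_{n,k+1}$ is exactly the increment $k-\mu_{n,k}\mapsto k+1-\mu_{n,k+1}$; monotonicity in $n$ you get by coupling (writing $\mu_{n+1,k}$ as a convex combination of $\mu_{n,k}$ and $\mu_{n,k-1}+1$ and using that $k-\mu_{n,k}$ is increasing), and monotonicity in $p$ from the variance identity $\partial_p\mu_{n,k}=\mathrm{Var}(X\mid X\leq k)/(pq)$. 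Your upper bound, via $\mu_{n,k}B_{n,k}=np\,B_{n-1,k-1}$ and $B_{n,k}-B_{n-1,k-1}=q\,b_{n-1,k}$, also differs from the paper's, which simply invokes $\mu_{n,k}\leq\mu_{n,n}=pn$.

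What each buys: the paper's approach is uniform---one ratio lemma drives everything---but it presupposes \lref{lem:Bbnk-jp}. Your argument is self-contained and more probabilistic in flavor; the coupling and variance steps are conceptually transparent, and the log-concavity inequality \eqref{eq:prop-key} is an interesting lemma in its own right (it is equivalent to the strict monotonicity of $B_{n,k}/b_{n,k}$ in $k$, i.e., the $j=0$ case of \lref{lem:Bbnk-jp}). One small note: in the inductive step you need $B_{n,k}b_{n,k+1}/b_{n,k}<B_{n,k+1}$, which rearranges exactly to $b_{n,k}B_{n,k}-b_{n,k+1}B_{n,k-1}>0$ via $B_{n,k+1}=B_{n,k}+b_{n,k+1}$ and $B_{n,k}=B_{n,k-1}+b_{n,k}$; you might make that one-line algebra explicit.
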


\begin{proof}[Proof of \lref{lem:munk}]
	According to the following equation,
	\begin{align}
	\mu_{n,k+1}=\sum_{j=0}^{k+1} \frac{j b_{n,j}}{B_{n,k+1}}
	=\frac{(k+1)b_{n,k+1}+\sum_{j=0}^kj b_{n,j}}{B_{n,k}+b_{n,k+1}}>\sum_{j=0}^k \frac{j b_{n,j}}{B_{n,k}}=\mu_{n,k},\quad k\leq n-1,
	\end{align}
	$\mu_{n,k}$ is strictly increasing in $k$. 
	When $k=0$, $\mu_{n,k}=k-\mu_{n,k}=0$ are independent of $n$ and $p$. When $k\geq1$, we have $0<\mu_{n,k}, k-\mu_{n,k}<k$. In addition,
	according to \lref{lem:Bbnk-jp} and 
	the following equation,
	\begin{align}
	k-\mu_{n,k}
	&=\sum_{j=0}^k \frac{(k-j) b_{n,j}}{B_{n,k}}
	=\sum_{j=0}^{k-1}\sum_{i=0}^j \frac{b_{n,i}}{B_{n,k}}
	=\sum_{j=0}^{k-1} \frac{B_{n,j}}{B_{n,k}}=\sum_{j=1}^{k} \frac{B_{n,k-j}}{B_{n,k}},
	\end{align}
	$k-\mu_{n,k}$ is strictly increasing in $k$, but strictly decreasing in $n$ and $p$, so $\mu_{n,k}$  is strictly increasing in $n$ and $p$.

	The upper bound in \eref{eq:munkBnkbnk} follows from the facts that $\mu_{n,k}\leq \mu_{n,n}$ and $\mu_{n,n}= pn$; it is saturated iff $k=n$ since the inequality  $\mu_{n,k}\leq \mu_{n,n}$ is saturated iff $k=n$.
	
Finally, we turn to the lower bound in \eref{eq:munkBnkbnk}. If $k=0$, then $\mu_{n,k}=0$, $B_{n,k}=b_{n,k}$, and $B_{n,k}/b_{n,k}=1$, so the lower bound in \eref{eq:munkBnkbnk} is saturated. 
	
	If $k\geq 1$,  let $s=b_{n,k}/B_{n,k}$;  then $0<s<1$. In addition,
	\lref{lem:Bbnk-jp} implies that  $b_{n,i}/B_{n,i}> s$ for $0\leq i< k$, which in turn implies that
	\begin{gather}
	\frac{B_{n,i-1}}{B_{n,i}}=1-\frac{b_{n,i}}{B_{n,i}}\leq 1-s,\quad  \frac{B_{n,i-1}}{B_{n,k}}\leq (1-s)^{k-i+1}
	\quad \forall 1\leq i\leq k,
	\end{gather}
	where both inequalities are strict when $i<k$.
	Therefore,
	\begin{align}
	k-\mu_{n,k}
	&=\sum_{j=0}^{k-1} \frac{B_{n,j}}{B_{n,k}}\leq \sum_{j=0}^{k-1} (1-s)^{k-j}
	=\frac{1-s-(1-s)^{k+1}}{s}< \frac{1-s}{s}=\frac{1}{s} -1,
	\end{align}
	which implies that 
	\begin{align}
	\mu_{n,k}>  k+1-\frac{1}{s}= k+1-\frac{B_{n,k}}{b_{n,k}}
	\end{align} 
and confirms the lower bound in \eref{eq:munkBnkbnk} with strict inequality. 	
This observation  completes the proof of \lref{lem:munk}. 
\end{proof}

Next, we clarify the relations between the partial mean and the ratios $B_{n,k}/B_{n+1,k}$,  $B_{n,k}/b_{n,k}$. By definitions  in \eref{eq:Bnkp} we can deduce that
\begin{align}
\frac{B_{n,k}}{B_{n+1,k}}&=\frac{\sum_{j=0}^kb_{n,j}}{\sum_{j=0}^kb_{n+1,j}}=\frac{\sum_{j=0}^k\frac{n+1-j}{(n+1)q}b_{n+1,j}}{\sum_{j=0}^kb_{n+1,j}}=\frac{n+1-\mu_{n+1,k}}{(n+1)q}. \label{eq:BnkBn+1kMunk}
\end{align}	
On the other hand, by virtue of \eref{eq:Bn+1k=Bnk-1+Bnk} we can deduce that 
\begin{align}
\frac{B_{n,k}}{B_{n+1,k}}&=\frac{B_{n,k}}{p B_{n,k-1}+q B_{n,k}}=\frac{B_{n,k}}{B_{n,k}-pb_{n,k}}=\frac{\frac{B_{n,k}}{b_{n,k}}}{\frac{B_{n,k}}{b_{n,k}}-p}.
\end{align}
The two equations above together imply that 
\begin{align}
\frac{B_{n,k}}{b_{n,k}}=\frac{p(n+1-\mu_{n+1,k})}{p(n+1)-\mu_{n+1,k}},\quad\mu_{n+1,k}=\frac{p(n+1)\bigl(\frac{B_{n,k}}{b_{n,k}}-1\bigr)}{\frac{B_{n,k}}{b_{n,k}}-p}.\label{eq:BnkbnkMunk}
\end{align}

\section{Nearly tight Bounds for the ratio $B_{n,k}(p)/b_{n,k}(p)$}\label{sec:Bbratio}

\subsection{Upper and lower bounds for $B_{n,k}(p)/b_{n,k}(p)$}\label{sec:MainBounds}
In this section,
we evaluate the ratio $B_{n,k}/b_{n,k}$ in preparation for the study of the tail probability $B_{n,k}(p)$.
The main goal  of this section is to prove \thref{thm:BnkbnkLBUB}.
To this end, we recall the functions $L(n,k,p)$,  $\kappa_1(n,p)$, $V(n,k,p,a) $,
and $U(n,k,p)$ defined in Eqs.~\eqref{eq:Lnkp}-\eqref{eq:Unkp}, and here we may consider wider parameter ranges. In addition, we prepare the following two lemmas,
which are proved in  \sref{S4-3}.

\begin{lemma}\label{lem:BnkbnkRatioLB}
	Suppose  $k\in \bbN_0$, $n\in \bbN$, $k\leq n$, $f=k/n$, and $0<p<1$.  Then 
	\begin{gather}
	\frac{B_{n,k}(p)}{b_{n,k}(p)}\geq L(n,k,p)\geq 1,
	\label{eq:BnkbnkRatioLB}
\end{gather}
where both inequalities are saturated when $k=0$, but are strict when $k\geq 1$.  In addition, the lower bound 
$L(n,k,p)$ satisfies
\begin{gather}	
 L(n,k,p)\geq
\begin{cases}
\max\Bigl\{1,\frac{(1-f)p}{p-f}-\frac{fpq(1-f) }{(p-f)^3n} \Bigr\} & k<pn, \\[1ex]
\frac{1}{2}(1+\sqrt{4q k+1}\,) & k\geq pn. \label{eq:BnkbnkRatioLB2}
\end{cases}
\end{gather}
\end{lemma}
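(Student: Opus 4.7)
Set $R_k := B_{n,k}(p)/b_{n,k}(p)$ and $L_k := L(n,k,p)$. The lemma splits into three parts: (a) $L_k \geq 1$, with equality iff $k=0$; (b) $R_k \geq L_k$, with equality iff $k=0$; and (c) the two refined lower bounds in \eqref{eq:BnkbnkRatioLB2}. Part (a) follows by reading off the closed form: writing $L_k = 1 + (\sqrt{x^2+4qk}-x)/2$ with $x := pn-k+1$, the bound $\sqrt{x^2+4qk} \geq x$ is trivial when $x \leq 0$ and reduces by squaring to $4qk \geq 0$ when $x > 0$, with equality iff $k=0$.

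For (b) the cornerstone is the identity
\[
L_{k-1}(L_{k-1} + pn - k) = p(n-k+1),
\]
which is just the rearrangement of the defining quadratic $\phi_{k-1}(L_{k-1})=0$, where $\phi_j(L) := L^2 + (pn-j-1)L - (pn-j+qj)$. Combining the recurrence $R_k = 1 + \frac{qk}{p(n-k+1)} R_{k-1}$ (from $B_{n,k} = B_{n,k-1}+b_{n,k}$ together with \eqref{eq:bnk-1/bnk}) with the analogous relation $L_k = 1 + qk/(pn-k+L_k)$ yields
\[
R_k - L_k = \frac{qk\,[\,R_{k-1}(pn-k+L_k) - p(n-k+1)\,]}{p(n-k+1)(pn-k+L_k)}.
\]
Assuming inductively $R_{k-1} \geq L_{k-1}$ and applying the cornerstone identity, this reduces the induction step to the strict monotonicity $L_k > L_{k-1}$ for $k \geq 1$. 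To establish the latter I would substitute $L_{k-1}$ into $\phi_k$ and eliminate $L_{k-1}^2$ using $\phi_{k-1}(L_{k-1})=0$; the terms telescope to the remarkably clean
\[
\phi_k(L_{k-1}) = p - L_{k-1},
\]
which is negative because $L_{k-1} \geq 1 > p$ by (a). Since $\phi_k$ is a convex parabola with positive root $L_k$ and nonpositive other root, this places $L_{k-1}$ strictly below $L_k$. Positivity of $pn-k+L_k$, needed to clear the denominator above, follows from the same quadratic identity applied at $k$.

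For (c), in the regime $k \geq pn$ I would set $y := k-pn \geq 0$ so that $2L_k = y+1+\sqrt{(1-y)^2+4qk}$; squaring reduces $2L_k \geq 1 + \sqrt{1+4qk}$ to $2y[\sqrt{(1-y)^2+4qk} - (1-y)] \geq 0$, which holds because $\sqrt{(1-y)^2+4qk} \geq |1-y| \geq 1-y$. In the regime $k<pn$ I would set $\alpha := p-f > 0$ and $\beta := qf$ (noting $\alpha+\beta = p(1-f)$) and exploit the algebraic identity
\[
(\alpha n+1)^2 + 4\beta n = \Bigl(\alpha n+1+\tfrac{2\beta}{\alpha}\Bigr)^2 - \frac{4\beta(\alpha+\beta)}{\alpha^2},
\]
so that with $A := \alpha n+1+2\beta/\alpha$ and $B := 4\beta(\alpha+\beta)/\alpha^2$ the discriminant of $L_k$ equals $\sqrt{A^2-B}$. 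The elementary inequality $\sqrt{A^2-B} \geq A - B/(2A)$ (valid for $A > 0$, $B \leq A^2$), combined with the crude bound $A \geq \alpha n$, then delivers exactly the claimed first-order correction $(1-f)p/(p-f) - fpq(1-f)/[(p-f)^3 n]$.

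The main technical obstacle I anticipate is the telescoping cancellation $\phi_k(L_{k-1}) = p - L_{k-1}$, which is what collapses the induction step to the trivial bound $L_{k-1} > p$; once this is spotted, the rest of (b) is mechanical. Part (c) similarly hinges on the non-obvious rewriting of the discriminant as $\sqrt{A^2-B}$ in the $k < pn$ regime, engineered to produce exactly the right $O(1/n)$ correction; without it one has to resort to a Taylor-style expansion, which is harder to turn into a rigorous one-sided bound.
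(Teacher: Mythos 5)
Your argument for \eref{eq:BnkbnkRatioLB} is correct but follows a genuinely different route from the paper. The paper derives the quadratic inequality $\gamma^2+(pn-k-1)\gamma-(n-k)p\geq 0$ for $\gamma=B_{n,k}/b_{n,k}$ from the partial mean: it combines the identity \eref{eq:BnkbnkMunk} with the bounds $\mu_{n,k}\geq k+1-B_{n,k}/b_{n,k}$ and $\mu_{n,k}\leq \mu_{n+1,k}$ of \lref{lem:munk}, then solves the quadratic. Your induction on $k$, driven by the two recurrences $R_k=1+\frac{qk}{p(n-k+1)}R_{k-1}$ and $L_k=1+\frac{qk}{pn-k+L_k}$ together with the telescoping identity $\phi_k(L_{k-1})=p-L_{k-1}<0$, is self-contained and avoids the partial-mean machinery entirely (the paper needs that machinery anyway for \pref{pro:munkBBnkLBUB}, so nothing is lost there, but your route is the shorter path to this particular lemma). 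I checked the key identities: $pn-k+qk=p(n-k)$ makes your quadratic agree with the paper's, $\phi_k(L)-\phi_{k-1}(L)=p-L$, and $pn-k+L_k\geq 1$ follows from $L_k(L_k+pn-k-1)=p(n-k)\geq0$; the induction step and the strictness for $k\geq1$ both go through. Your direct verification of the $k\geq pn$ branch of \eref{eq:BnkbnkRatioLB2} is likewise correct and more elementary than the paper's, which obtains it from the monotonicity of $L$ in $n$ (\lref{lem:Lnkpmono}).

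There is, however, a genuine error in the $k<pn$ branch of \eref{eq:BnkbnkRatioLB2}: the ``elementary inequality'' $\sqrt{A^2-B}\geq A-B/(2A)$ is false for $0<B\leq A^2$. Squaring gives $\bigl(A-B/(2A)\bigr)^2=A^2-B+B^2/(4A^2)\geq A^2-B$, so the inequality runs the other way (it is the statement $\sqrt{1-x}\leq 1-x/2$). As written, your chain $\sqrt{A^2-B}\geq A-B/(2A)\geq A-B/(2\alpha n)$ collapses at the first step. The target bound $\sqrt{A^2-B}\geq A-B/(2\alpha n)$ is nonetheless true, and your completion of the square is exactly the right setup; the fix is to rationalize instead of Taylor-expand: $\sqrt{A^2-B}=A-\frac{B}{A+\sqrt{A^2-B}}$, and since $A=\alpha n+1+2\beta/\alpha>\alpha n$ while $\sqrt{A^2-B}=\sqrt{(\alpha n+1)^2+4\beta n}>\alpha n$, the denominator exceeds $2\alpha n$, which yields $\sqrt{A^2-B}\geq A-B/(2\alpha n)$ as required. (The paper instead proves this branch in \lref{lem:Lnfpmono} by showing that $n\bigl[(1-f)p/(p-f)-L(n,fn,p)\bigr]$ is strictly increasing in $n$ with the stated limit, via a derivative computation; your completed-square argument, once repaired, is shorter.)
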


\begin{lemma}\label{lem:BnkbnkUB2L}
	Suppose $k,n\in \bbN_0$,  $0<p<1$, and $k\leq pn$.  Then 
	\begin{gather}
	\frac{B_{n,k}(p)}{b_{n,k}(p)}\leq U(n,k,p) < 2L(n,k,p),	\label{eq:BnkbnkUB2L}
\end{gather}
where the first inequality is saturated iff $k=0$, and the upper bound
$U(n,k,p)$ satisfies
\begin{gather}		
 U(n,k,p)\leq   1+2\sqrt{q (k+p)}. 	\label{eq:BnkbnkUB2L2}
\end{gather}
\end{lemma}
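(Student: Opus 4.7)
My plan is to establish the three claims of \lref{lem:BnkbnkUB2L} in turn. Throughout I abbreviate $T_k := B_{n,k}(p)/b_{n,k}(p)$ and $u_0 := pn+p-k$, and I use the rearrangement $V(n,k,p,a) = a + p + pq(n+1)/(u_0+a)$ which follows by elementary algebra from \eqref{eq:Vnkpa} and which exhibits $V$ as a strictly convex function of the real variable $a$ on $(-u_0,\infty)$, with unique minimum at $\tilde{a}$.

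\emph{Upper bound $T_k \leq U(n,k,p)$.} The strategy is to show $T_k \leq V(n,k,p,a)$ for every $a \in \bbN_0$ and then minimize. The base case $a=0$ uses the ratios $\rho_m := b_{n,m-1}(p)/b_{n,m}(p) = qm/[p(n-m+1)]$, which are increasing in $m$, so $b_{n,k-j}(p)/b_{n,k}(p) = \prod_{i=0}^{j-1}\rho_{k-i} \leq \rho_k^j$. Because $k \leq pn$ forces $\rho_k < 1$, summing the geometric series gives $T_k \leq 1/(1-\rho_k) = V(n,k,p,0)$. For $1 \leq a \leq k$ I would use the telescoping decomposition
\begin{align*}
T_k = \frac{B_{n,k}(p) - B_{n,k-a}(p)}{b_{n,k}(p)} + \frac{b_{n,k-a}(p)}{b_{n,k}(p)} \cdot \frac{B_{n,k-a}(p)}{b_{n,k-a}(p)},
\end{align*}
bounding the first summand by $a$ (each ratio $b_{n,k-j}(p)/b_{n,k}(p) \leq 1$), the factor $b_{n,k-a}(p)/b_{n,k}(p)$ by $1$, and applying the base case to $B_{n,k-a}(p)/b_{n,k-a}(p) \leq V(n,k-a,p,0) = p(n-k+a+1)/(pn+p-k+a)$; this yields $T_k \leq V(n,k,p,a)$. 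The remaining cases $a > k$ are covered by the trivial bound $T_k \leq k+1 < V(n,k,p,a)$. Taking the minimum over $a$ gives $T_k \leq U(n,k,p)$, with equality iff $k=0$ (every step is strict for $k \geq 1$ since $\rho_{k-i} < \rho_k$ for $i \geq 1$; for $k=0$, $T_0 = 1 = V(n,0,p,0) = U(n,0,p)$).

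\emph{The bound $U < 2L$.} The crux is the algebraic identity $V(n,k,p,L-1) = 2L - 1$, which follows from the quadratic relation $L(L+pn-k-1) = p(n-k)$ characterising $L$: substituting into $V(a) = a + p + pq(n+1)/(u_0+a)$ at $a = L-1$ one finds $pq(n+1)/(u_0+L-1) = L - p$, so $V(L-1) = (L-1) + p + (L-p) = 2L-1$. Because $L \geq 1$ by \lref{lem:BnkbnkRatioLB}, $\lceil L-1\rceil \in \bbN_0$, so $U \leq V(n,k,p,\lceil L-1\rceil)$. Setting $\beta = \lceil L-1\rceil - (L-1) \in [0,1)$ and expanding using the identity,
\begin{align*}
V(L-1+\beta) - 2L = \frac{\beta^2 + (u_0+p-2)\beta + (1 - L - u_0)}{L + u_0 - 1 + \beta}.
\end{align*}
The numerator is a convex quadratic in $\beta$, so on $[0,1]$ its maximum occurs at an endpoint: $1-L-u_0 < 0$ at $\beta=0$ (since $L \geq 1$ and $u_0 \geq p > 0$), and $p-L \leq p-1 < 0$ at $\beta=1$. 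Hence $V(L-1+\beta) < 2L$ throughout $[0,1]$, and in particular $U \leq V(\lceil L-1\rceil) < 2L$.

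\emph{The bound $U \leq 1 + 2\sqrt{q(k+p)}$.} The case $k=0$ is trivial since $U(n,0,p) = 1 \leq 1 + 2\sqrt{qp}$. For $k \geq 1$, the constraint $k \leq pn$ forces $n \geq 1/p$; I split on whether $\tilde{a} = k - \kappa_1(n,p)$ is negative (Case A, so $U = V(n,k,p,0)$) or nonnegative (Case B). In Case A, $U = 1 + qk/u_0$, and after squaring the desired inequality is $qk^2 \leq 4(k+p)u_0^2$, which follows from $u_0^2 > pq(n+1)$ (the defining inequality of Case A) together with the elementary estimate $k^2 \leq 4p(n+1)(k+p)$, valid since $k \leq pn < 2p(n+1)$ lies below the larger root of $k^2 - 4p(n+1)k - 4p^2(n+1)$. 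In Case B, $U \leq V(n,k,p,\lceil\tilde{a}\rceil) \leq V(n,k,p,\tilde{a}) + 1 = 1 + k - pn + 2\sqrt{pq(n+1)}$ by the inequality noted after \eqref{eq:Unkp}; after rationalising, the desired inequality reduces to $2\sqrt{q} \leq \sqrt{p(n+1)} + \sqrt{k+p}$ (with equality when $pn = k$), which follows from $n \geq 1/p$ and $k \geq 1$: $\sqrt{p(n+1)} \geq \sqrt{1+p}$ and $\sqrt{k+p} \geq \sqrt{1+p}$, so the left side is at least $2\sqrt{1+p} \geq 2\sqrt{q}$. The main technical hurdle I foresee is this Case B calculation, where extracting $n \geq 1/p$ from the joint hypotheses $k \geq 1$ and $k \leq pn$ is essential for small $p$; by contrast, the proof of $U < 2L$ is surprisingly elegant once one spots the identity $V(n,k,p,L-1) = 2L-1$.
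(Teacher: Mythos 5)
Your proof is correct, and two of its three parts take genuinely different routes from the paper. The first part ($B_{n,k}(p)/b_{n,k}(p)\le U(n,k,p)$, with equality iff $k=0$) is essentially the paper's argument: peel off $a$ terms and dominate the remaining sum by a geometric series; your detour through $B_{n,k-a}(p)/b_{n,k-a}(p)\le V(n,k-a,p,0)$ together with the identity $V(n,k,p,a)=a+V(n,k-a,p,0)$ is only a cosmetic reorganization of the same estimate. The bound $U<2L$, however, is proved quite differently: the paper (\lref{lem:U1nkp2L}) splits on $k\gtrless\kappa_1(n,p)$ and, in the regime $k<\kappa_1(n,p)$, introduces a second threshold $\kappa_2(n,p)$, locates the zeros of $2L-1-V(n,\cdot,p,0)$, and closes with a sign-and-continuity argument; your route via the exact identity $V(n,k,p,L-1)=2L-1$ (an immediate consequence of the quadratic $L(L+pn-k-1)=p(n-k)$ characterizing $L$, which gives $(L-p)(pn+p-k+L-1)=pq(n+1)$) plus the check that the convex quadratic numerator of $V(L-1+\beta)-2L$ is negative at $\beta=0$ and $\beta=1$ is shorter, avoids $\kappa_2$ entirely, and makes transparent why the constant $2$ is what it is: $V$ already equals $2L-1$ at the non-integer point $L-1$, and rounding up costs strictly less than one. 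Likewise for \eqref{eq:BnkbnkUB2L2}: the paper proves $U(n,k,p)$ is nonincreasing in $n$ (\lref{lem:U1nkpMonoUB}, which in turn needs the refined representation \eqref{eq:U1nkpAlt3} of the minimizer) and evaluates at $n=k/p$, whereas your direct two-case computation — $U=1+qk/u_0$ with $u_0^2>pq(n+1)$ when $\tilde{a}<0$, and $U\le 1+k-pn+2\sqrt{pq(n+1)}$ rationalized to $2\sqrt{q}\le\sqrt{p(n+1)}+\sqrt{k+p}$ when $\tilde{a}\ge 0$ — is self-contained and bypasses the monotonicity machinery. I verified the key computations ($qk^2\le 4(k+p)u_0^2$ in Case A, and the endpoint values $1-L-u_0$ and $p-L$ of the quadratic), and they are all sound; the trade-off is that the paper's extra lemmas yield monotonicity properties of $U$ reused elsewhere, while your argument proves only the lemma at hand, but more cleanly.
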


By virtue of  the two lemmas, we can establish \thref{thm:BnkbnkLBUB} as follows.
\begin{proof}[Proof of \thref{thm:BnkbnkLBUB}]
	\Eref{eq:BnkbnkLBUB} in Theorem~\ref{thm:BnkbnkLBUB} follows from 
	\eref{eq:BnkbnkRatioLB} in \lref{lem:BnkbnkRatioLB} and
	\eref{eq:BnkbnkUB2L} in  \lref{lem:BnkbnkUB2L}, note that all the inequalities in these equations are strict when $k\geq 1$. The first three inequalities in 
	\eref{eq:BnkbnkUBasymp} follow from \eref{eq:BnkbnkLBUB}, the fourth inequality follows from the definition in 
\eref{eq:Unkp}, and the fifth inequality follows from the limit formulas in \eref{eq:LUnfplim} and the fact  that $V(n,f n,p,0)$ is
strictly increasing in $n$ given that $0<f<p$.
\end{proof}

When $0<f<p$ and $n$ is sufficiently large, the bounds in \thref{thm:BnkbnkLBUB} [cf. Eqs.~\eqref{eq:Lnkp}-\eqref{eq:Unkp}] can be approximated as follows,
\begin{align}
L(n,k,p)&=1+\frac{kq }{p n}+\frac{q k(k-1)}{p^2 n^2}+O(n^{-3}),\\
U(n,k,p)&=V(n,k,p,0)=1+\frac{kq }{p n}+\frac{q k(k-p)}{p^2 n^2}+O(n^{-3}),\\
L(n,f n,p)&=\frac{(1-f)p}{p-f}-\frac{fp(1-f)q }{(p-f)^3n}+O(n^{-2}),\label{NM1}\\
U(n,f n,p)&=V(n,f n,p,0)=\frac{(1-f)p}{p-f}-\frac{fp q }{(p-f)^2n}+O(n^{-2}).\label{NM2}
\end{align}
These results complement the limit formulas in  \eqsref{eq:LUnkplim}{eq:LUnfplim}. Together with 
\thref{thm:BnkbnkLBUB} they imply that 
\begin{align}
\frac{B_{n,k}(p)}{b_{n,k}(p)}&=1+\frac{kq }{p n}+O(n^{-2}),\quad
\frac{B_{n,\lfloor fn\rfloor}(p)}{b_{n,\lfloor fn\rfloor}(p)}=\frac{(1-f)p}{p-f}+O(n^{-1}).
\end{align}

\subsection{Properties of upper and lower bounds}\label{S4-2}
In this section 
we clarify the properties of upper  and lower bounds for the ratio $B_{n,k}/b_{n,k}$ that appear in \thref{thm:BnkbnkLBUB} (and  \lsref{lem:BnkbnkRatioLB}, \ref{lem:BnkbnkUB2L}). Recall that the bounds $L(n,k,p)$, $V(n,k,p,a)$, and $U(n,k,p)$ are defined explicitly in  Eqs.~\eqref{eq:Lnkp}-\eqref{eq:Unkp}. 
As we shall see shortly, many properties of these bounds match the counterparts of the ratio $B_{n,k}/b_{n,k}$, which further corroborates the significance of  \thref{thm:BnkbnkLBUB}. 
Here we will focus on these bounds directly and do not consider $b_{n,k}$ and $B_{n,k}$ explicitly. 
Accordingly, we may consider wider parameter ranges and do not assume that $k$ and $n$ are integers, unlike \thref{thm:BnkbnkLBUB}, because technically  it is easier to deal with continuous variables than discrete variables. Notably, taking derivatives is very useful in technical analysis. 
The proofs of \lsref{lem:U1nkp2L}-\ref{lem:U1nfpMono} below are relegated to \aref{app:LUproperties}.
Some of the following lemmas will be used in the proofs of \lsref{lem:BnkbnkRatioLB} and \ref{lem:BnkbnkUB2L}. Other lemmas will be useful to deriving auxiliary results later. Nevertheless, only \lref{lem:U1nkp2L} is required to prove our key result \thref{thm:BnkbnkLBUB}.

\begin{lemma}\label{lem:U1nkp2L}
	Suppose $n\geq 0$, $0<p<1$ and $0\leq k\leq p n$; then
	\begin{align}\label{eq:U1nkp2L}
	U(n,k,p)< 2L(n,k,p).
	\end{align} 
\end{lemma}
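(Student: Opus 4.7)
The plan is to split the proof into two cases according to the piecewise definition \eqref{eq:Unkp} of $U(n,k,p)$, handling $k < \kappa_1(n,p)$ and $k \geq \kappa_1(n,p)$ separately. Setting $s := pn - k \geq 0$ (so that $p(n+1)-k = s+p$ and $pn-k+1 = s+1$), the first preparatory step is to observe the algebraic identity $(L+s)(L-1) = qk$ satisfied by $L := L(n,k,p)$ from \eqref{eq:Lnkp}, which can be rewritten as $L = 1 + kq/(L+s)$ and will drive both case analyses.

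In Case 1, $U = V(n,k,p,0) = 1 + kq/(s+p)$, so the target inequality $V(n,k,p,0) < 2L$ becomes $x+s < \sqrt{(s+1)^2 + 4qk}$ after subtracting $1$ and multiplying by $s+p$, with $x := kq/(s+p)$. Both sides are non-negative, so squaring and using $4qk = 4x(s+p)$ reduces this to the polynomial inequality
\begin{equation*}
x^2 - 2x(s+2p) - (2s+1) < 0.
\end{equation*}
This quadratic in $x$ has negative constant term and takes the value $-(s+2p)^2 - (2s+1) < 0$ at $x = s+2p$, so it is negative throughout $[0, s+2p]$. Thus it suffices to show $x < s+2p$. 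The Case 1 hypothesis $k < \kappa_1(n,p)$ is equivalent to $(s+p)^2 > pq(n+1)$, and combining this with $qk = q(pn-s) = pqn - qs$ produces $qk < (s+p)^2 - q(s+p) = (s+p)(s+p-q)$, giving $x < s+p-q < s+2p$, which closes this case.

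In Case 2, we use $U \leq V(n,k,p,\lceil\tilde{a}\rceil)$. Since $V(n,k,p,\cdot)$ is convex with derivative $V'(a) = 1 - pq(n+1)/[p(n+1)-k+a]^2 < 1$ at every finite $a \geq \tilde{a}$, and $\lceil\tilde{a}\rceil - \tilde{a} \in [0,1)$, integrating $V'$ gives the strict bound
\begin{equation*}
V(n,k,p,\lceil\tilde{a}\rceil) < V(n,k,p,\tilde{a}) + 1 = 1 + k - pn + 2\sqrt{pq(n+1)}.
\end{equation*}
The key algebraic identity
\begin{equation*}
(s+1)^2 + 4qk - 4pq(n+1) = (s+p-q)^2 \geq 0
\end{equation*}
then yields $2\sqrt{pq(n+1)} \leq \sqrt{(s+1)^2 + 4qk}$, i.e., $V(n,k,p,\tilde{a}) + 1 \leq 2L$. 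Chaining these two estimates gives $U \leq V(n,k,p,\lceil\tilde{a}\rceil) < 2L$.

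The main obstacle is the algebraic reduction in Case 1: identifying the substitution $x = kq/(s+p)$ that converts the inequality into a quadratic, and recognizing that the Case 1 hypothesis $(s+p)^2 > pq(n+1)$ translates cleanly into the required bound $x < s+p-q$. In Case 2, a delicate but manageable point is that the identity $(s+p-q)^2 \geq 0$ admits equality when $k = p(n+1)-q$, so the strict inequality $U < 2L$ is not supplied by this algebraic identity alone; it is instead supplied by the strict convexity bound $V(\lceil\tilde{a}\rceil) < V(\tilde{a})+1$, which is guaranteed because $V'$ is strictly below $1$.
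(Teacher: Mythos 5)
Your proof is correct, but only half of it follows the paper's route. Your Case 2 ($k\geq\kappa_1(n,p)$) is essentially the paper's argument: bound $U(n,k,p)\leq V(n,k,p,\lceil\tilde{a}\rceil)<1+V(n,k,p,\tilde{a})$ and then check $2L(n,k,p)\geq 1+V(n,k,p,\tilde{a})$ via the identity $(pn-k+1)^2+4qk-4pq(n+1)=[p(n+2)-k-1]^2$, which is exactly your $(s+p-q)^2$; your explicit justification of the strict step through $V'(a)<1$ is a small improvement, since the paper merely asserts that inequality. Your Case 1 is genuinely different. The paper sets $\Delta=2L(n,k,p)-1-V(n,k,p,0)$, solves $\Delta=0$ in $k$ to find the roots $k=0$ and $k=\kappa_2(n,p)$, checks that $\partial\Delta/\partial k>0$ at $k=0$, and concludes $\Delta\geq0$ on $[0,\kappa_2(n,p)]$ by continuity; this leans on an auxiliary lemma comparing $\kappa_1$ and $\kappa_2$ and on the claim that $\Delta$ has no interior zeros. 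Your substitution $x=kq/(s+p)$ with $s=pn-k$ reduces the target to the quadratic $x^2-2x(s+2p)-(2s+1)<0$, verified on $[0,s+2p]$ by evaluating at the endpoints, with the case hypothesis $(s+p)^2>pq(n+1)$ delivering $x<s+p-q$ directly. This is more elementary and self-contained, avoiding the root-location and sign-tracking machinery; what it gives up is the marginally stronger conclusion $V(n,k,p,0)\leq 2L(n,k,p)-1$ that the paper's $\Delta\geq0$ provides in this regime. Both arguments are valid proofs of the stated lemma.
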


\begin{lemma}\label{lem:Lnkpmono}
	Suppose  $n\geq  -1$,  $k\geq  0$, and $0\leq p\leq 1$.  Then $L(n,k,p)$ is nonincreasing and convex in $n$ and is nondecreasing and convex in $k$. In addition,  $L(n,k,p)$ is nonincreasing and convex in $p$ when $n\geq k$.  Furthermore, 
	\begin{gather}
	1\leq L(n,k,p)\leq 1+k, \label{eq:LnkpLBUB}\\ 
	\begin{split}
	L(n,k,p)&\leq \frac{1}{2}\bigl(1+\sqrt{4q k+1}\,\bigr) \quad \mbox{if} \quad p n\geq k,\\
	L(n,k,p)&\geq \frac{1}{2}\bigl(1+\sqrt{4q k+1}\,\bigr)\quad \mbox{if} \quad p n\leq  k.  
	\end{split}
	\label{eq:LnkpLBUB2}
	\end{gather} 	
\end{lemma}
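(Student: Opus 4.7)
The plan is to rest everything on a single algebraic identity. Squaring the defining formula $2L-(k+1-pn)=\sqrt{(pn-k+1)^2+4qk}$ and simplifying, one finds that $L$ is the larger root of the quadratic
\[
Q(x):=x^2+(pn-k-1)\,x-p(n-k)=0.
\]
Writing $s:=pn-k+1$ and $D:=\sqrt{s^2+4qk}$, this also yields the handy identity $D=2L-(k+1-pn)\geq 0$. Every assertion of the lemma drops out of this observation by routine calculus.

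For the pointwise bounds \eqref{eq:LnkpLBUB}, I would substitute the two candidate endpoints into $Q$. Direct computation yields $Q(1)=-qk\leq 0$ and $Q(1+k)=pk(n+1)\geq 0$, the latter using $n\geq-1$. Since $Q$ opens upward and $L$ is its larger root, the first sign forces $L\geq 1$, while the second, combined with the fact that the smaller root satisfies $L_{-}\leq 1\leq 1+k$, forces $L\leq 1+k$.

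For the monotonicity and convexity I would implicitly differentiate $Q(L)=0$. One obtains
\[
\partial_n L=-\frac{p(L-1)}{D},\qquad \partial_k L=\frac{L-p}{D},\qquad \partial_p L=\frac{n(1-L)-k}{D},
\]
with the desired signs because $L\geq 1\geq p$; the sign check for $\partial_p L$ at $n=-1$ is saved by $L\leq 1+k$, though under the hypothesis $n\geq k$ that actually governs the $p$-claim this case does not arise. A second round of differentiation, using the auxiliary identities $D\,\partial_n D=ps$, $D\,\partial_k D=2q-s$, and $D\,\partial_p D=ns-2k$, yields
\[
\partial_n^2 L=\frac{p^2(L-1)(D+s)}{D^3},\quad \partial_k^2 L=\frac{(L-p)(D+s-2q)}{D^3},\quad \partial_p^2 L=\frac{\bigl(n(L-1)+k\bigr)\bigl(n(D+s)-2k\bigr)}{D^3}.
\]
Nonnegativity of $\partial_n^2 L$ is immediate from $D\geq|s|$. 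For $\partial_k^2 L$, the inequality $D+s\geq 2q$ reduces after squaring to $p(n+1)\geq 0$, which holds. For $\partial_p^2 L$, the inequality $n(D+s)\geq 2k$ reduces, via $(D+s)(D-s)=4qk$ and a squaring of $D-s\leq 2nq$, to the arithmetic inequality $k\leq n(n-k+1)$, and this is exactly where and why the hypothesis $n\geq k$ is used.

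Finally, \eqref{eq:LnkpLBUB2} is immediate once we know $L$ is nonincreasing in $n$: at $n^{\ast}:=k/p$ the formula collapses to $L(n^{\ast},k,p)=\tfrac12\bigl(1+\sqrt{4qk+1}\bigr)$, and the two halves of the claim are just this monotonicity applied at $n\geq n^{\ast}$ and $n\leq n^{\ast}$, respectively. The hard part will be the convexity in $p$: the condition $k\leq n(n-k+1)$ actually fails for some $n<k$ (for instance $n=0$, $k=1$, $p=\tfrac12$ gives $L=1+1/\sqrt{2}$, which is strictly concave in $p$), so the hypothesis $n\geq k$ is genuinely needed rather than a technical convenience.
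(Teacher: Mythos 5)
Your argument is correct, and it shares the paper's starting point --- $L$ is the larger root of $Q(x)=x^2+(pn-k-1)x-p(n-k)=0$ --- but executes the convexity claims quite differently. The paper never differentiates $L$ twice: it inverts the relation coordinate by coordinate, writing $k$, $n$, and $p$ each as an explicit function of $L$ (e.g.\ $n=\frac{k-L}{p}+\frac{kq}{p(L-1)}$), checks that each such scalar function is monotone and convex or concave in $L$, and then invokes the fact that the inverse of a decreasing convex (resp.\ increasing concave) function is decreasing convex (resp.\ increasing convex). You instead compute the second partials of $L$ by implicit differentiation; this costs more algebra but yields reusable closed forms, and your sign analysis correctly isolates $pq(n+1)\ge 0$ and $k\le n(n-k+1)$, i.e.\ $(n+1)(n-k)\ge 0$, as the two inequalities doing the work --- the latter being exactly where $n\ge k$ enters, matching the paper's separate treatment of $n=k$ versus $n>k$. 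Your derivation of \eqref{eq:LnkpLBUB} from the signs $Q(1)=-qk\le 0$ and $Q(1+k)=pk(n+1)\ge 0$ is arguably cleaner than the paper's route via $L(-1,k,p)=1+k$ together with monotonicity in $n$, and your closing observation that convexity in $p$ genuinely fails for $n<k$ is a worthwhile addition. Two small points to make explicit: the differentiation is only legitimate where $D=\sqrt{s^2+4qk}>0$, so you should (as the paper does) restrict to the interior of the parameter range and extend to the degenerate set where $s=0$ and $qk=0$, and to the boundary, by continuity of $L$; and in \eqref{eq:LnkpLBUB2} the threshold $n^{*}=k/p$ is undefined at $p=0$, where the claim should be checked directly, using $L(n,k,0)=k+1\ge\tfrac12\bigl(1+\sqrt{4k+1}\bigr)$.
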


\begin{lemma}\label{lem:LnkpLUB2}
	Suppose $n>0$, $0< p< 1$,  and $0\leq k\leq p n$. Then
	\begin{gather}
	\frac{k(\sqrt{1+4q k}-1)}{2p n}+1\leq L(n,k,p)\leq 
	\frac{q k}{p n}+1 \quad \mbox{if}\quad  0\leq k\leq 1, \label{eq:LnkpUBkleq1}\\[1ex]
	{\min}\biggl\{\frac{q k}{p n},\frac{k(\sqrt{1+4q k}-1)}{2p n} \biggr\}
	+1\leq L(n,k,p)\leq 
	\frac{k\sqrt{q k}}{p n}+1 \quad \mbox{if}\quad  1\leq  k\leq p n, \label{eq:LnkpUBkg1}
	\end{gather}
	where  
	\begin{align}
	{\min}\biggl\{\frac{q k}{p n},\frac{k(\sqrt{1+4q k}-1)}{2p n } \biggr\}
	=\begin{cases}
	\frac{q k}{p n } &k \geq 1+q, \\[1ex]
	\frac{k(\sqrt{1+4q k}-1)}{2p n } &k \leq 1+q. 
	\end{cases}
	\end{align}
\end{lemma}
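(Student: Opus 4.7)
The plan is to build the proof around the quadratic identity
\[
(L-1)(L+pn-k)=qk,
\]
which follows from \eref{eq:Lnkp} by isolating the square root and squaring. Setting $M:=L-1\geq 0$, this yields $M^2+M(pn-k+1)=qk$, and hence
\[
M=\frac{2qk}{\sqrt{(pn-k+1)^2+4qk}+(pn-k+1)}.
\]
Each of the four inequalities in the lemma then becomes an upper or lower bound on the denominator $D:=\sqrt{(pn-k+1)^2+4qk}+(pn-k+1)$, and that is the form I will work with throughout.

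In the regime $0\leq k\leq 1$, the upper bound $M\leq qk/(pn)$ follows from $D\geq 2(pn-k+1)\geq 2pn$, using $k\leq 1$. For the lower bound I use $(pn-k+1)^2+4qk\leq (pn-k+1)^2(1+4qk)$, valid because $pn-k+1\geq 1$, to obtain $D\leq (pn-k+1)(\sqrt{1+4qk}+1)$ and hence $M\geq (\sqrt{1+4qk}-1)/[2(pn-k+1)]$; the elementary inequality $k(pn-k+1)\leq pn$, which reduces to $(1-k)(pn-k)\geq 0$, then completes the bound.

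In the regime $1\leq k\leq pn$, abbreviate $s:=pn$. The upper bound $M\leq k\sqrt{qk}/s$ is equivalent to $D\geq 2\sqrt{q}\,s/\sqrt{k}$. If $2\sqrt{q}\,s/\sqrt{k}\leq s-k+1$ this is trivial; otherwise squaring the inequality $\sqrt{(s-k+1)^2+4qk}\geq 2\sqrt{q}\,s/\sqrt{k}-(s-k+1)$ and simplifying reduces it to $\sqrt{k}\,s(s-k+1)\geq \sqrt{q}(s^2-k^2)$. Since $\sqrt{q}\leq 1$, it suffices to handle the case $\sqrt{q}=1$; viewed as a quadratic in $s$ with nonnegative leading coefficient $\sqrt{k}-1$, its minimizer $(k+\sqrt{k})/2$ lies in $[0,k]$ for $k\geq 1$, so on $s\geq k$ the quadratic attains its minimum at $s=k$ with value $k^{3/2}\geq 0$.

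For the lower bound in the regime $1\leq k\leq pn$, a short calculation shows that $\min\{qk/s,\,k(\sqrt{1+4qk}-1)/(2s)\}$ equals $qk/s$ iff $k\geq 1+q$, so I treat the two cases separately. When $k\geq 1+q$, the bound $M\geq qk/s$ reduces via $D\leq 2s$ to $qk\leq s(k-1)$, which holds because $s\geq k$ and $k-1\geq q$. When $1\leq k\leq 1+q$, after clearing denominators the bound $M\geq k(\sqrt{1+4qk}-1)/(2s)$ becomes $s\sqrt{1+4qk}-k\sqrt{(s-k+1)^2+4qk}\geq (k-1)(s-k)$; I square (both sides nonnegative), divide out the positive factor $s-k$, verify that the surviving left-hand side is positive for $k\leq 1+q$, square a second time, and use the identity $(s+k)(s-k+1)+k(k-1)=s(s+1)$, after which the whole chain collapses to
\[
q(s+k)^2\geq (k-1)\,s(s+1).
\]
This follows from $k-1\leq q$ together with $(s+k)^2\geq (s+1)^2\geq s(s+1)$. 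The main obstacle is this final reduction: the doubly squared form is cluttered with nested radicals and looks forbidding, and the argument really only works because of the fortunate simplification via the identity $(s+k)(s-k+1)+k(k-1)=s(s+1)$; once that is spotted, the remainder is routine bookkeeping.
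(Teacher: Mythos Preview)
Your proof is correct and takes a genuinely different route from the paper.

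\textbf{Comparison with the paper.} The paper studies $g(n):=n\bigl(L(n,k,p)-1\bigr)$ as a function of $n$, computes $\partial g/\partial n$, and shows $g$ is monotone in $n$ for $0\le k\le 1$ and unimodal for $k>1$. The four bounds then drop out as the values $g(k/p)=k(\sqrt{1+4qk}-1)/(2p)$, $g(\infty)=qk/p$, and (for the upper bound with $k\ge1$) the maximum $g(n_0)$, which is compared to $k\sqrt{qk}/p$ via a short case analysis. Your argument instead rationalises $L-1=2qk/D$ and reduces each inequality to an elementary algebraic inequality on $D$, avoiding calculus entirely. The paper's approach is more conceptual and explains the structure (the bounds are literally the extremal values of $g$ on $[k/p,\infty)$); yours is more self-contained but pays for it with the heavier algebra in the final sub-case.

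\textbf{Two small points.} First, in the sub-case $1\le k\le 1+q$ you assert ``both sides nonnegative'' before the first squaring, but $X:=s\sqrt{1+4qk}-k\sqrt{(s-k+1)^2+4qk}\ge 0$ needs a line of justification: the difference $s^2(1+4qk)-k^2[(s-k+1)^2+4qk]$ is convex in $s$ on this range, vanishes at $s=k$, and has positive derivative there. Second, your verbal description of that sub-case (``divide out $s-k$, verify positivity, square a second time'') is slightly out of order; the clean path is: after the first squaring one obtains $Q\ge s\sqrt{AB}$ with $Q=(1+2q)s^2-(k-1)s+2qk^2>0$, square again, and then observe the exact factorisation
\[
Q^2-s^2AB=4q\,(s-k)^2\bigl[q(s+k)^2-(k-1)s(s+1)\bigr],
\]
which is where your identity $(s+k)(s-k+1)+k(k-1)=s(s+1)$ is implicitly used. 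This makes the reduction to $q(s+k)^2\ge(k-1)s(s+1)$ transparent.
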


\begin{lemma}\label{lem:Lnfpmono}
	Suppose $0\leq f,p\leq 1$ and $n\geq 0$.  Then $L(n,f n,p)$  is nondecreasing and concave in $n$, nondecreasing and convex in $f$,  and   nonincreasing and convex in $p$. If in addition $0<f,p<1$, then $L(n,f n,p)$ is strictly increasing and strictly concave in $n$. 
	If in addition $n>0$ and  $0\leq f<p<1$, then
	\begin{gather}
	\max\biggl\{1,\frac{(1-f)p}{p-f}-\frac{fpq(1-f) }{(p-f)^3n} \biggr\}\leq L(n,f n,p)\leq \frac{(1-f)p}{p-f}=\frac{1}{1-r}, 
	\label{eq:LnfpLBUB}
	\end{gather}	
where $r=fq/[(1-f)p]$ is the odds ratio defined in \eref{eq:OddsRatio}, and both inequalities are strict when $f>0$. 	
\end{lemma}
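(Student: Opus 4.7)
My plan has two ingredients. (i) For the two-sided bound I work directly from the explicit formula \eqref{eq:Lnkp}, using a difference-of-squares trick. (ii) For monotonicity and convexity/concavity I use the quadratic identity that $L(n,fn,p)$ satisfies, differentiating it implicitly in each variable. I handle the two-sided bound first, since its upper half feeds into one of the monotonicity arguments.

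Setting $u := (p-f)n + 1$ and $A := u + 2qf/(p-f)$, a short expansion gives the key identity $A^2 - u^2 - 4qfn = 4qfp(1-f)/(p-f)^2$, hence
\begin{align*}
\frac{(1-f)p}{p-f} - L(n,fn,p) = \frac{A - \sqrt{u^2+4qfn}}{2} = \frac{2qfp(1-f)/(p-f)^2}{A + \sqrt{u^2+4qfn}}.
\end{align*}
The right-hand side is nonnegative (strict for $f>0$), which proves the upper bound. Since $A > u > (p-f)n$ and $\sqrt{u^2+4qfn} \geq u > (p-f)n$, the denominator exceeds $2(p-f)n$, so the difference is strictly less than $fpq(1-f)/[(p-f)^3 n]$; combined with $L \geq 1$ from \lref{lem:Lnkpmono}, this yields the claimed maximum, with both inequalities strict exactly when $f>0$. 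Equality at $f=0$ is just $L(n,0,p) = 1 = p/p$.

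For the monotonicity and convexity/concavity I square the radical in \eqref{eq:Lnkp} to get the quadratic identity $L^2 + ((p-f)n-1)L - p(1-f)n = 0$, then differentiate implicitly in each of $n,f,p$. The three first partials take the clean form
\begin{align*}
L_n = \frac{p(1-f)-(p-f)L}{D},\quad L_f = \frac{n(L-p)}{D},\quad L_p = \frac{n(1-f-L)}{D},
\end{align*}
with $D := 2L + (p-f)n - 1 \geq 1$, the last estimate following from $2L \geq (f-p)n + 1 + |1-(f-p)n|$ read off from \eqref{eq:Lnkp}. The three signs now follow at once: $L_n \geq 0$ from the just-proved upper bound when $p > f$ and from $(p-f)L \leq 0$ when $p \leq f$; $L_f \geq 0$ from $L \geq 1 > p$; and $L_p \leq 0$ from $L \geq 1 \geq 1-f$. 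A second round of implicit differentiation, simplified by using the first-order identities $L_n D = p(1-f)-(p-f)L$ and its analogues to eliminate $D_n, D_f, D_p$, yields
\begin{align*}
L_{nn} = -\frac{2L_n(L_n + p - f)}{D},\quad L_{ff} = \frac{2L_f(n - L_f)}{D},\quad L_{pp} = -\frac{2L_p(n + L_p)}{D}.
\end{align*}
Each required sign reduces to a single scalar inequality, namely $L_n \geq f-p$, $L_f \leq n$, and $n + L_p \geq 0$; after substituting the first-order formulas, each of these becomes an inequality comparing $L + (p-f)n$ to a simple constant, and squaring the explicit formula \eqref{eq:Lnkp} collapses them respectively to the trivial statements $p(1-f) \geq 0$, $qp(n+1) \geq 0$, and $(n+1)(1-f) \geq 0$.

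The main obstacle is not any single step but the organizational work: one must lay out the dependence order carefully (the upper bound on $L$ must be in hand before $L_n \geq 0$ can be concluded in the case $p > f$), present the three second-order sign reductions in a parallel form so that the same ``one squaring'' pattern is manifest, and track which of the trivial inequalities is strict under the given hypotheses, since that is what delivers the strict monotonicity and strict concavity in $n$ under $0 < f, p < 1$, and the strictness/equality dichotomy in the two-sided bound.
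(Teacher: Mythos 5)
Your argument is correct, but it reaches the conclusions by a genuinely different route than the paper. For the two-sided bound \eref{eq:LnfpLBUB} you rationalize the radical directly: the identity $A^2-u^2-4qfn=4qfp(1-f)/(p-f)^2$ checks out (using $(p-f)+qf=p(1-f)$), and bounding the denominator $A+\sqrt{u^2+4qfn}>2(p-f)n$ gives both the upper bound and the $O(1/n)$ lower bound in one stroke, with the strictness dichotomy at $f=0$ falling out for free. The paper instead obtains the upper bound from strict monotonicity of $L(n,fn,p)$ in $n$ together with the $n\to\infty$ limit \eref{eq:LUnfplim}, and the lower bound by showing that $h(n,f,p)=n[(1-f)p/(p-f)-L(n,fn,p)]$ is increasing in $n$; your version is more elementary and quantitative (no limits), at the cost of the algebraic bookkeeping. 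For the monotonicity and convexity/concavity, the paper works with the relation \eref{eq:kL}, expressing $f$, $n$, and $p$ as explicit functions of $L$ and invoking convexity of inverse functions (plus \lref{lem:Lnkpmono} for the $p$-dependence), whereas you differentiate the quadratic $L^2+((p-f)n-1)L-p(1-f)n=0$ implicitly twice. Your first- and second-derivative formulas are correct ($F_L=D=2L+(p-f)n-1=\sqrt{((p-f)n+1)^2+4qfn}$, and the three sign reductions $L_n\geq f-p$, $L_f\leq n$, $L_p\geq -n$ do collapse, after squaring, to $4np(1-f)\geq0$, $4qp(n+1)\geq0$, and $4f(1-f)(n+1)\geq0$ respectively), and you correctly note that the upper bound must precede the case $p>f$ of $L_n\geq0$. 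Your scheme has the virtue of treating all three variables uniformly.

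One small repair: your justification of $D\geq1$ via $2L\geq(f-p)n+1+|1-(f-p)n|$ only yields $D\geq|(p-f)n+1|$, which can be less than $1$ when $-2<(p-f)n<0$. The clean route is $D^2=((p-f)n+1)^2+4qfn=(p-f)^2n^2+2n[p(1-f)+f(1-p)]+1\geq1$; in any case $D>0$ is all you need to read off the signs, so nothing downstream is affected.
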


\begin{lemma}\label{lem:U1nkpAlt}
	Suppose $n\geq 0$, $0<p<1$, and $0\leq k\leq p n$; then 
	\begin{align}
	U(n,k,p)&=\min_{a\in \bbN_0,\, a< k+1} V(n,k,p,a). \label{eq:U1nkpAlt}
	\end{align} 
	If in addition $p(n+2)\geq 2$, $k=0$, or $k\geq 1$, then 
	\begin{align}
	U(n,k,p)&=\min_{a\in \bbN_0,\, a\leq k} V(n,k,p,a).\label{eq:U1nkpAlt3}
	\end{align}
\end{lemma}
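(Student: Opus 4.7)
The plan is to exploit the strict convexity of $V(n,k,p,\cdot)$ in its last argument (already noted after \eref{eq:Unkp}) together with an explicit formula for its forward difference. By algebraic rearrangement $V(n,k,p,a) = (a+p) + \frac{pq(n+1)}{p(n+1)-k+a}$ on the region $p(n+1)-k+a>0$, from which a direct computation yields
\begin{gather}
V(n,k,p,a+1) - V(n,k,p,a) = 1 - \frac{pq(n+1)}{[p(n+1)-k+a][p(n+1)-k+a+1]}.
\end{gather}
Setting $y = p(n+1)-k+a$, the sign of this forward difference is controlled by whether $y(y+1) \geq pq(n+1)$, equivalently $y \geq y^* := \bigl(-1 + \sqrt{1+4pq(n+1)}\bigr)/2$. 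Since strict convexity makes the forward difference non-decreasing in $a$, once $V(n,k,p,a_0+1) \geq V(n,k,p,a_0)$ holds for some $a_0 \in \bbN_0$, the sequence $V(n,k,p,a_0), V(n,k,p,a_0+1), V(n,k,p,a_0+2), \ldots$ is non-decreasing, so the minimum of $V(n,k,p,\cdot)$ over $\bbN_0$ is attained at some integer $a \leq a_0$.

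I apply this first with $a_0 = \lceil k \rceil$ to prove \eref{eq:U1nkpAlt}. Then $y = p(n+1) - k + \lceil k \rceil \geq p(n+1)$, so $y(y+1) \geq p(n+1)[p(n+1)+1] \geq pq(n+1)$, using the trivial bound $p(n+1) + 1 \geq 1 > q$. Hence $V(n,k,p,\lceil k \rceil + 1) \geq V(n,k,p,\lceil k \rceil)$, and since $\lceil k \rceil < k+1$ for every real $k \geq 0$ (strict in both the integer and non-integer cases), the unconstrained minimizer lies in $\bbN_0 \cap [0, k+1)$, establishing \eref{eq:U1nkpAlt}.

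To prove \eref{eq:U1nkpAlt3} I apply the observation with $a_0 = \lfloor k \rfloor$, so $y = p(n+1) - \{k\}$ with fractional part $\{k\} := k - \lfloor k \rfloor \in [0,1)$, and verify $V(n,k,p,\lfloor k \rfloor + 1) \geq V(n,k,p,\lfloor k \rfloor)$ under each alternative hypothesis. The case $k=0$ reduces to the preceding paragraph since then $\{k\} = 0$. The case $p(n+2) \geq 2$ follows from the algebraic equivalence $p(n+2) \geq 2 \Leftrightarrow p(n+1) - 1 \geq y^*$, obtained by squaring $2p(n+1) - 1 \geq \sqrt{1+4pq(n+1)}$ (both sides non-negative under the hypothesis); this yields $y > p(n+1) - 1 \geq y^*$. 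The main obstacle is the remaining case $k \geq 1$ with $p(n+2) < 2$: from $\lfloor k \rfloor \geq 1$ and $k \leq pn$ one gets $\{k\} \leq pn - 1$, hence $y \geq 1 + p$; while $p(n+1) < 2 - p$ forces $pq(n+1) < q(2-p) = 2 - 3p + p^2 < (1+p)(2+p)$, yielding $y^* < 1 + p \leq y$. In all three situations $V(n,k,p,\lfloor k \rfloor + 1) \geq V(n,k,p,\lfloor k \rfloor)$, so the minimum is attained at some $a \leq \lfloor k \rfloor \leq k$, establishing \eref{eq:U1nkpAlt3}.
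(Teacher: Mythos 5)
Your proof is correct and follows essentially the same route as the paper's: both rest on the strict convexity of $V(n,k,p,\cdot)$ in its last argument together with a sign check on a single unit forward difference (the paper evaluates $V(n,k,p,k+1)-V(n,k,p,k)$ and $V(n,k,p,k)-V(n,k,p,k-1)$ in closed form, while you evaluate the integer forward differences at $\lceil k\rceil$ and $\lfloor k\rfloor$ via the threshold $y^*$). The only substantive variation is that you dispatch the case $k\geq 1$ with $p(n+2)<2$ uniformly through $y\geq 1+p$ and $pq(n+1)<(1+p)(2+p)$, where the paper subdivides further into $k\geq 2$ and $1\leq k<2$; your integer-difference formulation is, if anything, slightly more airtight in justifying why the minimizer over $\bbN_0$ cannot lie beyond $\lceil k\rceil$ or $\lfloor k\rfloor$ when $k$ is not an integer.
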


\begin{lemma}\label{lem:U1nkpMonoUB}
	Suppose $n\geq 0$, $0<p<1$, and $0\leq k\leq p n$. 
	Then $U(n,k,p)$ is strictly increasing in $k$ and
	\begin{align}\label{eq:U1nkpUB}
	U(n,k,p)\leq  1+2\sqrt{p q (n+1)}.
	\end{align}
	If in addition $p(n+2)\geq 2$,  $k=0$, or $k\geq 1$, then   $U(n,k,p)$ is  nonincreasing in $n$ and $p$, and  
	\begin{align}\label{eq:U1nkpUB2}
	U(n,k,p)&\leq 1+2\sqrt{q (k+p)}.
	\end{align}
\end{lemma}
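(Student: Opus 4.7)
The strategy is to analyze $V(n,k,p,a)$ via its partial derivatives and, under the additional hypothesis, to restrict the minimum via \Lref{lem:U1nkpAlt} to $a\le k$; the improved bound is then obtained by pushing $n$ down to a small reference value via monotonicity in $n$. A direct calculation gives $\partial V/\partial k=pq(n+1)/(pn+p-k+a)^2>0$, so each fixed-$a$ slice is strictly increasing in $k$; since $V\to\infty$ as $a\to\infty$, the minimum defining $U$ is attained at a finite index $a^\ast$, and the strict monotonicity transfers to $U$ via the elementary argument $U(n,k_1,p)\le V(n,k_1,p,a^\ast(k_2))<V(n,k_2,p,a^\ast(k_2))=U(n,k_2,p)$ for $k_1<k_2$. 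The unconditional bound $U\le 1+2\sqrt{pq(n+1)}$ then follows immediately from the preamble inequality $V(n,k,p,\lceil\tilde a\rceil)\le 1+k-pn+2\sqrt{pq(n+1)}$ together with $k\le pn$.

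For the monotonicity of $U$ in $n$ and $p$ under the stated hypothesis, \Lref{lem:U1nkpAlt} restricts the minimum to $a\in\bbN_0$ with $a\le k$, and differentiation yields
\[
\frac{\partial V}{\partial n}=\frac{-pq(k-a)}{(pn+p-k+a)^2},\qquad
\frac{\partial V}{\partial p}=\frac{-(n-k+a+1)(k-a)}{(pn+p-k+a)^2}.
\]
For $a\le k$ both numerators are nonpositive (note $n-k+a+1>0$ since $n\ge k$, which follows from $k\le pn$ and $p<1$), so each eligible slice is nonincreasing in $n$ and $p$, and the minimum $U$ inherits these monotonicities.

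For the improved bound $U\le 1+2\sqrt{q(k+p)}$, I would push $n$ down by monotonicity and then invoke the first bound at a small reference $n_0$. The case $k=0$ gives $U=1$, which is trivial. When $k\ge 1$ the hypothesis persists for all admissible $n$, so taking $n_0=k/p$ yields $U(n,k,p)\le U(n_0,k,p)\le 1+2\sqrt{pq(n_0+1)}=1+2\sqrt{q(k+p)}$. The delicate case is $0<k<1$ with only the hypothesis $p(n+2)\ge 2$; \Lref{lem:U1nkpAlt} then forces $a=0$ so $U=V(n,k,p,0)=1+kq/\bigl(p(n+1)-k\bigr)$, and the claim reduces to the algebraic inequality $k^2 q\le 4\bigl(p(n+1)-k\bigr)^2(k+p)$. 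If $k\ge 2-2p$, monotonicity still reaches $n_0=k/p$ and the target follows directly; otherwise monotonicity only reaches $n_1=2/p-2$, at which $p(n+1)-k\ge 2-p-k$, and elementary estimates (using $(2-p-k)\ge q$ with $k<1\le 4q$ when $p\le 1/2$, and $(2-p-k)\ge p$ with $k^2 q<1/2<4p^3$ when $p>1/2$) close the gap. The main obstacle is precisely this last algebraic step in the subcase $0<k<1$, $p(n+2)\ge 2$, $k<2-2p$: monotonicity in $n$ cannot push $n$ below $n_1$ without losing the hypothesis, and at $n_1$ the first bound only gives $1+2\sqrt{q(2-p)}$, which is strictly weaker than the target whenever $k<2-2p$, so the direct estimate is unavoidable.
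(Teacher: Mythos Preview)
Your approach is essentially the paper's: compute the partial derivatives of $V$, use \lref{lem:U1nkpAlt} to restrict the minimum to $a\le k$ under the extra hypothesis, and obtain \eqref{eq:U1nkpUB2} by pushing $n$ down to $k/p$ and invoking the first bound there. One step does not go through as written, though.

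For \eqref{eq:U1nkpUB} you assert $U\le V(n,k,p,\lceil\tilde a\rceil)$ directly at the given $k$, but this needs $\lceil\tilde a\rceil\in\bbN_0$. When $k<\kappa_1(n,p)$ one has $\tilde a<0$, and in fact $\tilde a\le -1$ is possible (for instance $k=0$ with $n$ large gives $\tilde a=-\kappa_1(n,p)\to-\infty$), in which case $\lceil\tilde a\rceil$ is negative and the inequality $U\le V(\lceil\tilde a\rceil)$ actually reverses: $V$ is increasing on $[\tilde a,\infty)$ and $U=V(0)$ in this regime, so $V(\lceil\tilde a\rceil)<V(0)=U$. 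The paper avoids this by first applying the monotonicity in $k$ you just proved to reach the boundary $k=pn$, where $\tilde a=\sqrt{pq(n+1)}-p>-1$, and only then using the preamble estimate:
\[
U(n,k,p)\le U(n,pn,p)\le V(n,pn,p,\lceil\tilde a\rceil)\le 1+V(n,pn,p,\tilde a)=1+2\sqrt{pq(n+1)}.
\]

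Your explicit handling of the subcase $0<k<1$, $p(n+2)\ge 2$, $k<2-2p$ for \eqref{eq:U1nkpUB2} is actually more careful than the paper, which writes $U(n,k,p)\le U(k/p,k,p)$ without separately checking whether the hypothesis of \lref{lem:U1nkpAlt} persists down to $n_0=k/p$; your direct algebraic estimate via $n_1=2/p-2$ is a clean way to fill that in.
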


\begin{lemma}\label{lem:U1nfpMono}
	Suppose $0<f\leq p<1$ and $n\geq 0$.  Then $U(n,f n,p)$ is strictly increasing in $n$ for $n\geq 0$ and is strictly increasing in  $f$ when $n>0$. If in addition $p(n+2)\geq 2$, $f n=0$, or $f n\geq 1$, then   $U(n,f n,p)$ is  nonincreasing in $p$. If in addition $f<p$, then 
	\begin{gather}
	U(n,f n,p)\leq V(n,f n,p,0)< \frac{(1-f)p}{p-f}=\frac{1}{1-r}. \label{eq:U12fUB}
	\end{gather}
\end{lemma}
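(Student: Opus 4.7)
The plan is to reduce each of the four assertions about $U(n,fn,p)$ to the corresponding property of the individual functions $V(n,fn,p,a)$ at fixed $a\in\bbN_0$, and then transfer it through the identity $U(n,fn,p)=\min_{a\in\bbN_0}V(n,fn,p,a)$.

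For strict monotonicity in $n$ (using $f>0$) and in $f$ (using $n>0$), I would compute $\partial_n V(n,fn,p,a)$ and $\partial_f V(n,fn,p,a)$ via the quotient rule, keeping $a$ fixed. After the cancellations from the expansion, the first simplifies to $pq(f+a)/[(p-f)n+p+a]^2$ and the second to $pqn(n+1)/[(p-f)n+p+a]^2$, both manifestly positive under the respective hypotheses. To lift this to $U$, note that the identity $V(n,fn,p,a)=a+1+q(fn-a)/[(p-f)n+p+a]$ shows $V\to\infty$ as $a\to\infty$, so the minimum over $\bbN_0$ is attained at some $a^*\in\bbN_0$. Then for $n_1<n_2$, taking $a^*$ optimal at $n_2$ yields the three-term chain
\begin{equation*}
U(n_2,fn_2,p)=V(n_2,fn_2,p,a^*)>V(n_1,fn_1,p,a^*)\geq U(n_1,fn_1,p),
\end{equation*}
and the analogous chain with $f_1<f_2$ handles strict monotonicity in $f$.

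The nonincreasing-in-$p$ claim follows at once from \lref{lem:U1nkpMonoUB} applied with $k=fn$, whose trichotomy $p(n+2)\geq 2$, $k=0$, $k\geq 1$ matches the hypothesis here. For \eref{eq:U12fUB}, the inequality $U(n,fn,p)\leq V(n,fn,p,0)$ is the definition of $U$ with the choice $a=0$, and the strict inequality $V(n,fn,p,0)<(1-f)p/(p-f)$ reduces to a one-line computation: the difference factors as $pfq/\{(p-f)[(p-f)n+p]\}$, which is strictly positive when $0<f<p<1$. The only step with any delicacy is the algebraic simplification of the two partial derivatives into the clean factored forms above, since naive expansion produces many cancelling terms; after that every remaining step is bookkeeping or an invocation of an earlier result.
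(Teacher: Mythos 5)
Your proposal is correct and, for the monotonicity claims, follows essentially the same route as the paper: both compute the same partial derivatives of $V(n,fn,p,a)$ (your factored forms $pq(f+a)/[(p-f)n+p+a]^2$ and $pqn(n+1)/[(p-f)n+p+a]^2$ match the paper's), and both then pass to $U=\min_a V$ — you via an attained minimizer $a^*$ (justified by $V\to\infty$ as $a\to\infty$), the paper by citing \lref{lem:U1nkpAlt} to restrict the minimum to a finite index set; these are interchangeable. For the $p$-monotonicity, invoking \lref{lem:U1nkpMonoUB} with $k=fn$ is exactly the alternative the paper itself mentions. The one genuinely different step is the strict inequality $V(n,fn,p,0)<(1-f)p/(p-f)$: the paper derives it from the fact that $V(n,fn,p,0)$ is strictly increasing in $n$ together with the limit formula \eref{eq:LUnfplim}, whereas you compute the difference directly and factor it as $pfq/\{(p-f)[(p-f)n+p]\}$ (which checks out). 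Your version is more self-contained and gives a quantitative gap of order $1/n$; the paper's version reuses machinery it needs anyway for \eref{eq:LUnfplim}. Either is fine.
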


\subsection{Proofs of \lsref{lem:BnkbnkRatioLB} and \ref{lem:BnkbnkUB2L}}\label{S4-3}

\begin{proof}[Proof of \lref{lem:BnkbnkRatioLB}]
	Let $\gamma=B_{n,k}/b_{n,k}$. Then \eref{eq:BnkbnkMunk} implies that 
	\begin{align}
	\gamma=\frac{p(n+1-\mu_{n+1,k})}{p(n+1)-\mu_{n+1,k}},\quad\mu_{n+1,k}=\frac{p(n+1)(\gamma-1)}{\gamma-p}.
	\end{align}
	By virtue of  \lref{lem:munk} we can further deduce that
	\begin{align}
	k+1\leq \gamma+\mu_{n,k} \leq \gamma+\mu_{n+1,k}	=\gamma+\frac{p(n+1)(\gamma-1)}{\gamma-p}, \label{eq:BnkbnkRatioProof}
	\end{align}
	which means 
	\begin{align}
	\gamma^2 +(p n-k-1)\gamma-(n-k)p\geq 0, \label{eq:BnkbnkRatioProof2}
	\end{align}
	given that  $\gamma=B_{n,k}/b_{n,k}\geq 1>p$. Solving this equation yields  
	\begin{align}
	\gamma\geq \frac{k+1-p n+\sqrt{(p n-k-1)^2+4(n-k)p}}{2}=L(n,k,p), \label{eq:BnkbnkRatioProof3}
	\end{align}
	which  confirms the first inequality in \eref{eq:BnkbnkRatioLB}. If $k=0$, then $B_{n,k}/b_{n,k}=L(n,k,p)=1$, so this inequality is saturated. If $k\geq 1$, then both inequalities in \eref{eq:BnkbnkRatioProof} are strict, and so are the inequalities in \eqsref{eq:BnkbnkRatioProof2}{eq:BnkbnkRatioProof3}, which means  the first inequality in \eref{eq:BnkbnkRatioLB} is strict. 
	
	The second  inequality in \eref{eq:BnkbnkRatioLB} follows from  the following equation,
	\begin{align}
	L(n,k,p)-1
	&=\frac{-(p n-k+1)+\sqrt{(p n-k+1)^2+4q k}}{2} \geq 0,
	\end{align}
where the inequality is saturated iff $k=0$.

\Eref{eq:BnkbnkRatioLB2} follows from \lsref{lem:Lnkpmono} and \ref{lem:Lnfpmono}.
\end{proof}

\begin{proof}[Proof of \lref{lem:BnkbnkUB2L}]
If $k=0$, then 	$B_{n,k}/b_{n,k}=1$. In addition, $V(n,k,p,0)=1$ and $V(n,k,p,a)>1$ when $a\geq 1$, which means $U(n,k,p)=1$. So the first inequality in \eref{eq:BnkbnkUB2L} holds and is saturated.

Next, suppose $k\geq 1$ and let $j,a$ be nonnegative integers. By \eref{eq:bnk-1/bnk},  $b_{n,j-1}/b_{n,j}< 1$ for $1\leq j\leq p n$, so 	$B_{n,k}/b_{n,k}<k+1$ given the assumption that $k\leq pn$. In addition,
\begin{align}
V(n,k,p,a)\geq k+1>\frac{B_{n,k}}{b_{n,k}}\quad \forall a\geq k,\quad
 \min_{a\in \bbN_0, a\geq k} V(n,k,p,a)\geq k+1>\frac{B_{n,k}}{b_{n,k}}. \label{eq:BnkbnkUB2Lproof}
\end{align}

If  $0\leq a\leq k-1$, then $b_{n,k-a-1}/b_{n,k-a}$ is strictly decreasing in $a$ and satisfies the inequalities  $0<b_{n,k-a-1}/b_{n,k-a}<1$ by \eref{eq:bnk-1/bnk}. Therefore,
	\begin{align}
	\frac{B_{n,k}}{b_{n,k}}&=\sum_{j=0}^k \frac{b_{n,j}}{b_{n,k}}\leq a+\sum_{j=0}^{k-a} \frac{b_{n,j}}{b_{n,k}}\leq  a+\sum_{j=0}^{k-a} \frac{b_{n,j}}{b_{n,k-a}}\leq  a+\sum_{j=0}^{k-a} \biggl(\frac{b_{n,k-a-1}}{b_{n,k-a}}\biggr)^{k-a-j}\\
	&< a+ \sum_{l=0}^\infty \biggl(\frac{b_{n,k-a-1}}{b_{n,k-a}}\biggr)^l 
=a+  \biggl(1-\frac{b_{n,k-a-1}}{b_{n,k-a}}\biggr)^{-1}\nonumber\\
	&=a+\biggl(1-\frac{q(k-a)}{p(n-k+a+1)}\biggr)^{-1}=a+\frac{p(n-k+a +1)}{p n+p-k+a}=V(n,k,p,a). \nonumber
	\end{align}
In conjunction with \eref{eq:BnkbnkUB2Lproof}, this equation shows that the first inequality in \eref{eq:BnkbnkUB2L} holds and is strict when $k\geq 1$. 

The second inequality in \eref{eq:BnkbnkUB2L} follows from \lref{lem:U1nkp2L}. \Eref{eq:BnkbnkUB2L2} follows from \eref{eq:U1nkpUB2} in 
\lref{lem:U1nkpMonoUB}.
\end{proof}

\subsection{Bounds for the partial mean $\mu_{n,k}$ and the ratio $B_{n,k}/B_{n+1,k}$}

The partial mean $\mu_{n,k}$ defined in \eref{eq:munk} 
plays a crucial role in the proof of \lref{lem:BnkbnkRatioLB} and \thref{thm:BnkbnkLBUB}, which reflects
 the importance of this quantity.
On the other hand, by virtue of  \lref{lem:BnkbnkRatioLB}, we can derive pretty good bounds for the partial mean   $\mu_{n,k}$ and the ratio $B_{n,k}/B_{n+1,k}$ as shown in the following proposition and proved in \aref{app:munkBBnkLBUB}. This result will be useful in studying statistical sampling and quantum verification\footnote{Quantum verification is actually the original motivation that leads to this work.}.

\begin{proposition}\label{pro:munkBBnkLBUB}
	Suppose  $k\in \bbN_0$, $n\in \bbN$, $k\leq n$, and $0<p<1$. Then 
	\begin{align}
	\mu_{n,k}(p)&\geq k+1-L(n-1,k,p)=\frac{pn+k+q-\sqrt{(pn-k+q)^2+4qk}}{2}	,\label{eq:mnkLBGen}\\
	&\frac{n-k+1}{(n+1)q}\leq \frac{B_{n,k}(p)}{B_{n+1,k}(p)}\leq \frac{n-k+L(n,k,p)}{(n+1)q} \label{eq:BnkBn+1kLBUB1} \\
  &\hphantom{\frac{B_{n,k}(p)}{B_{n+1,k}(p)}}	=\frac{(2-p )n-k+1+\sqrt{(p n-k+1)^2+4q k}}{2(n+1)q}. \nonumber
\end{align}
	If in addition $k\leq p n$, then 
	\begin{align}
	\mu_{n,k}(p)\geq &k+\frac{q}{2}-\frac{1}{2}\sqrt{q (4k+q )}\geq k-\sqrt{q k}, \label{eq:mnkLBsp}\\
	\frac{n-k+1}{(n+1)q}\leq &\frac{B_{n,k}(p)}{B_{n+1,k}(p)}\leq \frac{n-k+1+\frac{k\sqrt{q k}}{pn}}{(n+1)q }\leq \frac{n-k+1+\sqrt{q k}}{(n+1)q }. \label{eq:BnkBn+1kLBUB2}
	\end{align}
	If in addition $f=k/n<p$, then	
	\begin{align}
	\frac{n-k+1}{(n+1)q}&\leq \frac{B_{n,k}(p)}{B_{n+1,k}(p)}\leq \frac{n-k+\frac{(1-f)p}{p-f}}{(n+1)q }. \label{eq:BnkBn+1kLBUB3}
	\end{align}		
\end{proposition}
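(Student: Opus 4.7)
The plan is to route every bound through equation \eqref{eq:BnkbnkMunk}, which expresses $\mu_{n+1,k}$ as the M\"obius transformation $\mu_{n+1,k}=p(n+1)(\gamma-1)/(\gamma-p)$ of $\gamma=B_{n,k}(p)/b_{n,k}(p)$. Shifting indices $n\to n-1$ gives $\mu_{n,k}=pn(\gamma'-1)/(\gamma'-p)$ with $\gamma'=B_{n-1,k}(p)/b_{n-1,k}(p)$, and the map $\gamma'\mapsto pn(\gamma'-1)/(\gamma'-p)$ has derivative $pn(1-p)/(\gamma'-p)^2>0$ for $\gamma'>p$, so is strictly increasing. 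Plugging in the lower bound $\gamma'\geq L(n-1,k,p)$ from \lref{lem:BnkbnkRatioLB} and using the quadratic $L^2+(p(n-1)-k-1)L-(n-1-k)p=0$ satisfied by $L(n-1,k,p)$ (this is just the defining equation from the proof of \lref{lem:BnkbnkRatioLB} with $n$ shifted), one checks that $(k+1-L)(L-p)=pn(L-1)$ is equivalent to the same quadratic, so the fraction collapses to $k+1-L(n-1,k,p)$. Expanding the square root then yields the explicit form in \eqref{eq:mnkLBGen}.

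The bounds on $B_{n,k}/B_{n+1,k}$ in \eqref{eq:BnkBn+1kLBUB1} now drop out of \eqref{eq:BnkBn+1kMunk}, which reads $B_{n,k}/B_{n+1,k}=(n+1-\mu_{n+1,k})/[(n+1)q]$: the lower bound uses only $\mu_{n+1,k}\leq k$, while the upper bound is \eqref{eq:mnkLBGen} applied with $n$ replaced by $n+1$; the second equality is merely $L(n,k,p)$ written out via \eqref{eq:Lnkp}.

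The substantive step is the refinement \eqref{eq:mnkLBsp} under $k\leq pn$, which amounts to showing $L(n-1,k,p)\leq (1+p+\sqrt{q^2+4qk})/2$. Setting $s:=p(n-1)-k$, the formula in \eqref{eq:Lnkp} becomes $L(n-1,k,p)=(1-s+\sqrt{(s+1)^2+4qk})/2$, whose $s$-derivative equals $\frac{1}{2}\bigl[-1+(s+1)/\sqrt{(s+1)^2+4qk}\bigr]$ and is strictly negative for $k\geq 1$ (and zero for $k=0$). The hypothesis $k\leq pn$ translates to $s\geq -p$, so $L(n-1,k,p)$ is maximized over the admissible range at $s=-p$, whose value is exactly $(1+p+\sqrt{q^2+4qk})/2$. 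Feeding this upper bound into the general inequality $\mu_{n,k}\geq k+1-L(n-1,k,p)$ gives the first inequality of \eqref{eq:mnkLBsp}, and the second reduces after squaring to the trivial $4q\sqrt{qk}\geq 0$. Then \eqref{eq:BnkBn+1kLBUB2} follows by combining \eqref{eq:BnkBn+1kLBUB1} with the bound $L(n,k,p)\leq 1+k\sqrt{qk}/(pn)$ supplied by \eqref{eq:LnkpUBkg1} in \lref{lem:LnkpLUB2} (the $k=0$ case being trivial), and the final estimate uses $k/(pn)\leq 1$; similarly, \eqref{eq:BnkBn+1kLBUB3} uses the bound $L(n,fn,p)\leq (1-f)p/(p-f)$ from \lref{lem:Lnfpmono}.

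The main obstacle is exactly the $s$-monotonicity step: \lref{lem:Lnkpmono}'s clean bound $L(n,k,p)\leq (1+\sqrt{1+4qk})/2$ is only guaranteed under $pn\geq k$, which after shifting $n\to n-1$ becomes the strictly stronger condition $p(n-1)\geq k$ and cannot be invoked directly under our hypothesis; the derivative computation provides the extra headroom, pinning down the extremal configuration $k=pn$ (i.e.\ $s=-p$). One further minor point to settle is the boundary case $k=n$ for \eqref{eq:mnkLBGen}, where \eqref{eq:BnkbnkMunk} shifted to $n-1$ sits on the edge of its stated domain; this is handled by directly using $\mu_{n,n}=pn$ together with the explicit evaluation of $L(n-1,n,p)$.
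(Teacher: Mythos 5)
Your proposal is correct and follows essentially the same route as the paper: both pass through the identity \eqref{eq:BnkbnkMunk} linking $\mu$ to $B/b$, insert the lower bound $L$ from \lref{lem:BnkbnkRatioLB}, and obtain the refinement under $k\leq pn$ by maximizing $L(n-1,k,p)$ over $n$ at the extremal point $n=k/p$ (your derivative-in-$s$ computation is just a re-derivation of the monotonicity already recorded in \lref{lem:Lnkpmono}). The only cosmetic deviations are that you get the lower bound in \eqref{eq:BnkBn+1kLBUB1} from $\mu_{n+1,k}\leq k$ rather than from \lref{lem:Bbn+mkp}, and you handle the boundary case $k=n$ by direct evaluation exactly as the paper does.
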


\section{Nearly tight bounds for the tail probabilities}\label{sec:TailBounds}

\subsection{Bounds for the probability $ b_{n,k}(p)$}\label{S5-1}
To evaluate the tail probability $B_{n,k}$, we need to clarify the properties of  the probability $b_{n,k}$
in this section.  Similar to \sref{S4-2},
here we do not assume that $k$ and $n$ are integers
except for  \psref{pro:bnkpphi} and \ref{pro:bnkpLUB},
because technically  it is easier to deal with continuous variables than discrete variables.
The proofs of \lsref{lem:phi} and \ref{lem:varphipm} below are relegated to \sref{sec:phiProof}. 

To start with we define the following functions for $n>0$ and $0\leq k\leq n$. 
\begin{align}
\rho(n)&:=\frac{\rme^n\Gamma(n+1)}{n^n},\quad 
\varrho(n):=\frac{n^{n+1/2}}{\rme^n\Gamma(n+1)},\label{eq:rhovarrho} \\
\phi(n,k)&:=\frac{\rho(n)}{\rho(k)\rho(n-k)}=\frac{\Gamma(n+1)}{\Gamma(k+1)\Gamma(n-k+1)}\frac{k^{k}(n-k)^{n-k}}{n^{n}}, \label{eq:phi(nk)}\\
\varphi(n,k)&:=\frac{\varrho(k)\varrho(n-k)}{\varrho(n)}=\frac{\Gamma(n+1)}{\Gamma(k+1)\Gamma(n-k+1)}\frac{k^{k+1/2}(n-k)^{n-k+1/2}}{n^{n+1/2}},\label{eq:varphi(nk)}
\end{align}
where it is understood that $0^0=1$. 
The following proposition clarifies the relation between $b_{n,k}$ and the functions $\phi(n,k)$ and $\varphi(n,k)$, which can be verified by simple calculation.
\begin{proposition}\label{pro:bnkpphi}
Suppose $n,k\in \bbN$; then  
\begin{gather}
\phi(n,k) =\binom{n}{k}\frac{k^{k}(n-k)^{n-k}}{n^{n}},\quad 
\varphi(n,k)=\binom{n}{k}\frac{k^{k+1/2}(n-k)^{n-k+1/2}}{n^{n+1/2}}, \label{eq:binomphi}\\[1ex]
\binom{n}{k}=\frac{n^{n}}{k^{k}(n-k)^{n-k}}\phi(n,k)
=\frac{n^{n+1/2}}{k^{k+1/2}(n-k)^{n-k+1/2}}\varphi(n,k),\\[1ex]
b_{n,k}(p)\rme^{nD(\frac{k}{n}\| p)}=\phi(n,k)=\frac{\sqrt{n}}{\sqrt{ k(n-k)}} \varphi(n,k). \label{eq:bnkphi}
\end{gather}
\end{proposition}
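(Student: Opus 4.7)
The plan is to prove this by direct substitution, since each identity unpacks the definitions of $\rho$, $\varrho$, $\phi$, $\varphi$, $b_{n,k}(p)$, and $D(\cdot\|\cdot)$. There is no real obstacle here; the content is purely computational verification, and the proposition's role is to anchor the notation introduced in \eqref{eq:rhovarrho}--\eqref{eq:varphi(nk)} to the binomial coefficient and the probability $b_{n,k}(p)$ so that later sections can manipulate $\phi(n,k)$ and $\varphi(n,k)$ freely.

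First I would unfold $\phi(n,k)=\rho(n)/[\rho(k)\rho(n-k)]$ using \eqref{eq:rhovarrho}. The factor $\rme^{n}$ in the numerator cancels exactly with $\rme^{k}\rme^{n-k}$ in the denominator, while $\Gamma(n+1)=n!$, $\Gamma(k+1)=k!$, $\Gamma(n-k+1)=(n-k)!$ combine to form $\binom{n}{k}$ after collecting the powers $n^{n}$, $k^{k}$, $(n-k)^{n-k}$. This yields the first equality in \eqref{eq:binomphi}. The identity for $\varphi(n,k)$ is analogous: the exponential factors cancel in the same way, but now the residual powers from $\varrho$ are $n^{n+1/2}$, $k^{k+1/2}$, $(n-k)^{n-k+1/2}$, producing the second equality in \eqref{eq:binomphi}. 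Solving each of those two equations for $\binom{n}{k}$ gives the two expressions in the second displayed line.

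For the last line, I would compute $nD\bigl(\frac{k}{n}\,\big\|\,p\bigr)$ explicitly,
\begin{align*}
n D\Bigl(\tfrac{k}{n}\,\Big\|\,p\Bigr)=k\ln\frac{k}{np}+(n-k)\ln\frac{n-k}{nq},
\end{align*}
so that
\begin{align*}
\rme^{nD(\frac{k}{n}\|p)}=\frac{k^{k}(n-k)^{n-k}}{n^{n}\,p^{k}q^{n-k}}.
\end{align*}
Multiplying by $b_{n,k}(p)=\binom{n}{k}p^{k}q^{n-k}$ from \eqref{eq:Bnkp} cancels the $p^{k}q^{n-k}$ factors and leaves exactly $\phi(n,k)$ by the first half of \eqref{eq:binomphi}. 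Finally, dividing the two representations in \eqref{eq:binomphi} gives $\phi(n,k)/\varphi(n,k)=\sqrt{n}/\sqrt{k(n-k)}$, which supplies the remaining equality in \eqref{eq:bnkphi}. The only minor care point is the convention $0^{0}=1$, which is needed to make the formulas consistent at the edge cases $k=0$ and $k=n$; since the proposition restricts to $n,k\in\bbN$ with $k\leq n$ implicit, this is a one-line remark rather than a genuine difficulty.
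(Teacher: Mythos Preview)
Your proposal is correct and matches the paper's approach exactly: the paper does not write out a proof but simply says the proposition ``can be verified by simple calculation,'' and what you have outlined is precisely that calculation. The only tiny quibble is that since $\bbN$ here means positive integers, the edge case $k=0$ does not arise, while at $k=n$ the last equality in \eqref{eq:bnkphi} becomes a $0/0$ form; but this is a notational edge case the paper itself leaves implicit, not a defect in your argument.
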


Thanks to this proposition, $b_{n,k}$ can be evaluated by using 
$\phi(n,k)$ and $\varphi(n,k)$.
As upper and lower bounds for $\varphi(n,k)$ (cf. \lref{lem:phi} below), we define 
\begin{align}
\varphi_+(n,k)&:=\frac{1}{\sqrt{2\pi}}\rme^{\frac{1}{12n+1}-\frac{1}{12k+1}-\frac{1}{12(n-k)+1}},\quad \varphi_-(n,k):=\frac{1}{\sqrt{2\pi}}\rme^{\frac{1}{12n}-\frac{1}{12k}-\frac{1}{12(n-k)}}. \label{eq:varphipm(nk)}
\end{align}
By virtue of the Stirling approximation \cite{Robb55,Mort10}
\begin{align}\label{eq:Stirling}
\sqrt{2\pi}\,x^{x+1/2}\rme^{-x}\rme^{\frac{1}{12x+1}}< \Gamma(x+1)< \sqrt{2\pi}\,x^{x+1/2}\rme^{-x}  \rme^{\frac{1}{12x}}\quad \forall x>0,
\end{align}
it is straightforward to prove that
\begin{gather}
\lim_{n\to \infty}n^{-1/2}\rho(n)=\sqrt{2\pi},\quad \lim_{n\to\infty}\varrho(n)=\frac{1}{\sqrt{2\pi}}, \label{eq:rhoLim} \\ 
\begin{split}
\lim_{n\to\infty}\phi(n,k)&=\frac{1}{\rho(k)}=\frac{k^k}{\rme^k\Gamma(k+1)},\quad 
\lim_{n\to\infty}\varphi(n,k)=\varrho(k)=\frac{k^{k+1/2}}{\rme^k\Gamma(k+1)}, \\
\lim_{n\to\infty} \varphi(n,fn)&=\lim_{n\to\infty} \varphi_+(n,fn)=\lim_{n\to\infty} \varphi_-(n,fn)=\frac{1}{\sqrt{2\pi}}\quad \forall 0<f<1. \label{eq:philim}
\end{split}
\end{gather}
When $n$ is large, $\varphi(n,fn)$ can be expressed as 
\begin{align}
\varphi(n,fn)=\frac{1}{\sqrt{2\pi}}\biggl(1-\frac{1-f+f^2}{12f(1-f)n}\biggr)+O(n^{-2}). 
\end{align}

Additional useful properties of $\phi(n,k)$ and $\varphi(n,k)$ are summarized in the following lemma, which is proved in \sref{sec:phiProof}.
\begin{lemma}\label{lem:phi}
	Suppose $0<k<n$ and $0<f<1$. Then $\phi(n,k)$ 	is  strictly decreasing in $n$  and strictly  logarithmically convex in $n$ and $k$, while  $\phi(n,f n)$ is strictly decreasing in $n$. By contrast,
	$\varphi(n,k)$ is strictly increasing in $n$ and	  strictly  logarithmically concave in $n$ and $k$, while  $\varphi(n,f n)$ is strictly increasing in $n$.
	Furthermore,
	\begin{gather}
	0<\frac{k^k \rme^{-k}}{\Gamma(k+1)}< \phi(n,k)<1, \label{eq:phiLUB}\\
	0<\varphi(n,k)<
	\frac{k^{k+\frac{1}{2}} \rme^{-k}}{\Gamma(k+1)}
	< \frac{1}{\sqrt{2\pi}}\rme^{-\frac{1}{12k+1}}<\frac{1}{\sqrt{2\pi}}, \label{eq:varphiLUB}\\
\varphi_-(n,k)< 
	\varphi(n,k)< \varphi_+(n,k)\leq \varphi_+(n,n/2)= \frac{1}{\sqrt{2\pi}}\rme^{-\frac{18n+1}{(6n+1)(12n+1)}}.	 \label{eq:varphiLUB2}
	\end{gather}
	If in addition $k\geq 1$, then
	\begin{align}
	\phi(n,k)> \frac{k^k \rme^{-k}}{\Gamma(k+1)}\geq \frac{1}{\rme\sqrt{k}}. \label{eq:phiLB2}
	\end{align}
	If in addition $j\leq k\leq n-j$ with $j\geq 1$, then
	\begin{align}
	\frac{1}{\sqrt{8}}\leq \varphi(2j,j)\leq  \varphi(n,k)<\frac{1}{\sqrt{2\pi}}. \label{eq:varphiLUB4}
	\end{align}
\end{lemma}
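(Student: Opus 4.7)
The plan is to reduce every claim to sign properties of the first three derivatives of the single-variable functions $L(x):=\log\rho(x)$ and $M(x):=\log\varrho(x)$. The identity $\rho(x)\varrho(x)=\sqrt{x}$ gives $L(x)+M(x)=\tfrac{1}{2}\log x$, and the definitions \eqref{eq:phi(nk)} and \eqref{eq:varphi(nk)} rewrite as
\begin{align*}
\log\phi(n,k) = L(n)-L(k)-L(n-k),\qquad \log\varphi(n,k) = M(k)+M(n-k)-M(n),
\end{align*}
so every monotonicity/convexity claim about $\phi$ or $\varphi$ translates into a sign statement on one of $L',L'',L'''$ or $M',M'',M'''$. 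For $L(x)=x+\log\Gamma(x+1)-x\log x$ I would use the polygamma series $\psi^{(m)}(x+1)=(-1)^{m+1}m!\sum_{n\geq 1}(x+n)^{-m-1}$ and bound each monotone sum by the dominating integral $\int_0^\infty(x+u)^{-m-1}\,du$, yielding the strict inequalities $\psi'(x+1)<1/x$ and $-\psi''(x+1)<1/x^2$, whence $L''(x)<0$ and $L'''(x)>0$ for every $x>0$. For $M$ I would invoke Binet's second formula
\[
M(x) = -\tfrac{1}{2}\log(2\pi) - 2\int_0^\infty \frac{\arctan(t/x)}{\rme^{2\pi t}-1}\,dt,
\]
and differentiate under the integral: since $x\mapsto\arctan(t/x)$ is strictly decreasing and strictly convex on $x>0$ for each fixed $t>0$, one reads off $M'(x)>0$ and $M''(x)<0$ at once.

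The main obstacle is the sixth sign fact, $M'''(x)>0$. To handle it I would set $\theta(x):=-M(x)-\tfrac{1}{2}\log(2\pi)$, so that $\theta''(x)=4x\int_0^\infty t\,dt/[(x^2+t^2)^2(\rme^{2\pi t}-1)]$, and then substitute $t=xu$ to rewrite this as $\theta''(x)=4h(x)/x$ with $h(x):=\int_0^\infty u\,du/[(1+u^2)^2(\rme^{2\pi xu}-1)]$. The $h$-integrand is manifestly strictly decreasing in $x$ for every fixed $u>0$, so $h(x)>0$ and $h'(x)<0$; therefore $\theta'''(x)=(4/x^2)[xh'(x)-h(x)]<0$, i.e.\ $M'''(x)>0$. (Before the rescaling the $\theta'''$-integrand oscillates in sign at $t=x\sqrt{3}$; the substitution $t=xu$ is what exposes the underlying monotonicity.) With the six sign facts in hand every monotonicity and convexity claim drops out mechanically: $\partial_n\log\phi=L'(n)-L'(n-k)<0$ (from $L''<0$), $\partial_n^2\log\phi=L''(n)-L''(n-k)>0$ (from $L'''>0$), $\partial_k^2\log\phi=-L''(k)-L''(n-k)>0$ (from $L''<0$), with the parallel statements for $\varphi$ read off from $M$. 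For the composites $\phi(n,fn)$ and $\varphi(n,fn)$ I compute $\tfrac{d}{dn}\log\phi(n,fn)=L'(n)-fL'(fn)-(1-f)L'((1-f)n)$ and use that $L'$ is strictly decreasing on $(0,\infty)$ together with $n>\max(fn,(1-f)n)$ to get $fL'(fn)+(1-f)L'((1-f)n)>L'(n)$, hence strict decrease; strict increase of $\varphi(n,fn)$ follows verbatim with $M'$ in place of $L'$.

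For the explicit bounds I would combine these monotonicities with the limit formulas \eref{eq:philim} and Stirling's inequalities \eref{eq:Stirling}. The upper bound $\phi(n,k)<1$ is just $b_{n,k}(k/n)<1$ via \eref{eq:bnkphi} with $p=k/n$, and the matching lower bound $\phi(n,k)>1/\rho(k)$ comes from strict decrease in $n$ together with $\lim_{n\to\infty}\phi(n,k)=1/\rho(k)$; the refinement $1/\rho(k)\geq 1/(\rme\sqrt{k})$ for $k\geq 1$ reduces to $\rho(k)/\sqrt{k}\leq \rho(1)=\rme$, which holds because Binet's formula gives $\rho(x)/\sqrt{x}=\sqrt{2\pi}\exp\bigl(2\int_0^\infty\arctan(t/x)/(\rme^{2\pi t}-1)\,dt\bigr)$, strictly decreasing in $x$. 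Applying \eref{eq:Stirling} to the three Gamma factors in \eref{eq:varphi(nk)} yields simultaneously the upper bound $k^{k+1/2}\rme^{-k}/\Gamma(k+1)$ in \eref{eq:varphiLUB} (upon sending $n\to\infty$) and the bracketing $\varphi_-<\varphi<\varphi_+$ of \eref{eq:varphiLUB2}; the closed form $\varphi_+(n,n/2)=(2\pi)^{-1/2}\exp[-(18n+1)/((6n+1)(12n+1))]$ is one line of arithmetic, and $\varphi_+(n,k)\leq \varphi_+(n,n/2)$ follows from convexity of $x\mapsto(12x+1)^{-1}$ together with the $k\leftrightarrow n-k$ symmetry of $\varphi$. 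Finally, for \eref{eq:varphiLUB4} the log-concavity and symmetry of $k\mapsto\varphi(n,k)$ place its minimum on $[j,n-j]$ at the endpoint $k=j$, so $\varphi(n,k)\geq \varphi(n,j)\geq \varphi(2j,j)$ by strict increase of $\varphi(\cdot,j)$ in $n$; the seed $\varphi(2j,j)\geq \varphi(2,1)=1/\sqrt{8}$ follows by recognising $\varphi(2j,j)=\varphi(n,fn)|_{f=1/2}$ and invoking the strict increase in $n$ proved above.
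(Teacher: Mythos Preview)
Your reduction is structurally identical to the paper's: both proofs write $\log\phi(n,k)=L(n)-L(k)-L(n-k)$ and $\log\varphi(n,k)=M(k)+M(n-k)-M(n)$ with $L=\log\rho$, $M=\log\varrho$, and then read off every monotonicity/convexity claim from sign properties of the first few derivatives of $L$ and $M$. The difference is in how those sign facts are established. The paper invokes the stronger statement that $L'$ and $M'$ are \emph{strictly completely monotonic} (its Lemma on $1/\rho$ and $1/\varrho$, which in turn cites Qi's 2007 result that both $\psi(y)+1/y-\ln y$ and $\ln y-\psi(y)-1/(2y)$ are completely monotonic), and then deduces everything via a one-line general lemma about $\omega(fy)+\omega((1-f)y)-\omega(y)$ for completely monotonic $\omega$. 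Your route is more self-contained: you extract exactly the six signs $L''<0$, $L'''>0$, $M'>0$, $M''<0$, $M'''>0$ by hand, the first two from the integral-vs-sum comparison for polygamma, the next two from Binet's second formula, and the last by your rescaling $t=xu$ that turns the $\theta'''$ integrand into something manifestly negative. This buys you independence from the cited literature at the cost of a separate computation for each sign; the paper's complete-monotonicity framework is heavier to set up but is reused verbatim for the companion Lemma on $\varphi_{\pm}$.

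There is one small gap. Your argument for $\phi(n,k)<1$ via ``$b_{n,k}(k/n)<1$'' through \eqref{eq:bnkphi} only applies when $n,k\in\mathbb{N}$, whereas the lemma is stated for arbitrary real $0<k<n$. The fix is the one the paper uses: since $\rho(0^+)=1$ one has $\lim_{n\to k^+}\phi(n,k)=\rho(k)/[\rho(k)\rho(0^+)]=1$, and strict decrease in $n$ (which you have already established) then gives $\phi(n,k)<1$ for all $n>k$. Everything else in your outline is correct; in particular your handling of $\varphi_+(n,k)\le\varphi_+(n,n/2)$ via convexity of $x\mapsto(12x+1)^{-1}$ and the endpoint argument for \eqref{eq:varphiLUB4} match the paper's reasoning.
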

Note that (strict) logarithmic convexity implies (strict) convexity. In addition,
$\phi(n,k)=\phi(n,n-k)$ and $\varphi(n,k)=\varphi(n,n-k)$ by definition, so  \lref{lem:phi} implies that
$\phi(n,k)$ is strictly decreasing in $k$ when $0<k\leq n/2$ and strictly increasing in $k$ when $n/2\leq k<n$; by contrast, $\varphi(n,k)$ is strictly increasing in $k$ when $0<k\leq n/2$ and strictly decreasing in $k$ when $n/2\leq k<n$. 
The following lemma formalizes the intuition that the bounds $\varphi_\pm(n,k)$ in \eref{eq:varphiLUB2} become more and more accurate when $n$ increases and $k$ approaches $n/2$. 
\begin{lemma}\label{lem:varphipm}
	Suppose $0<k<n$ and $0<f<1$. Then the  functions $\varphi(n,k)/\varphi_-(n,k)$,
	$\varphi_+(n,k)/\varphi(n,k)$, and $\varphi_+(n,k)/\varphi_-(n,k)$	are strictly decreasing  in $n$ and strictly logarithmically convex in $n$ and $k$. Meanwhile, $\varphi(n,fn)/\varphi_-(n,fn)$,
	$\varphi_+(n,fn)/\varphi(n,fn)$, and $\varphi_+(n,fn)/\varphi_-(n,fn)$ 	are strictly decreasing in $n$ and strictly logarithmically convex in $f$. If in addition $1\leq k\leq n-1$, then
	\begin{gather}
	1< \frac{\varphi(n,n/2)}{\varphi_-(n,n/2)}\leq \frac{\varphi(n,k)}{\varphi_-(n,k)}\leq \frac{\varphi(2,1)}{\varphi_-(2,1)}= \frac{\sqrt{\pi}\,\rme^{1/8}}{2}, \label{eq:varphi/varphi-}\\
	1<
	\frac{\varphi_+(n,n/2)}{\varphi(n,n/2)}\leq \frac{\varphi_+(n,k)}{\varphi(n,k)}\leq \frac{\varphi_+(2,1)}{\varphi(2,1)}= \frac{2}{\sqrt{\pi}\,\rme^{37/325}}, \label{eq:varphi+/varphi}\\
	1< \frac{\varphi_+(n,n/2)}{\varphi_-(n,n/2)}\leq \frac{\varphi_+(n,k)}{\varphi_-(n,k)}\leq \frac{\varphi_+(2,1)}{\varphi_-(2,1)}= \rme^{29/2600}. \label{eq:varphi+/varphi-}
	\end{gather}	
\end{lemma}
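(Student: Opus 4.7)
The plan is to reduce the whole lemma to one-variable properties of three ``Stirling remainder'' functions. Let $\theta(x) := \ln\Gamma(x+1) - (x+\tfrac{1}{2})\ln x + x - \tfrac{1}{2}\ln(2\pi)$, so that $\varphi(n,k) = \frac{1}{\sqrt{2\pi}}\rme^{\theta(n) - \theta(k) - \theta(n-k)}$ and each of the three ratios in the statement equals $\exp F_i(n,k)$ with
\begin{equation*}
F_i(n,k) := h_i(k) + h_i(n-k) - h_i(n),
\end{equation*}
where $h_1(x) := \frac{1}{12x} - \theta(x)$, $h_2(x) := \theta(x) - \frac{1}{12x+1}$, and $h_3(x) := \frac{1}{12x} - \frac{1}{12x+1}$. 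The Stirling bounds \eqref{eq:Stirling} guarantee $h_i > 0$ on $(0,\infty)$, and the symmetry $F_i(n,k) = F_i(n,n-k)$ is evident from the definition.

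The main step is to prove each $h_i$ is completely monotonic, i.e., $(-1)^j h_i^{(j)}(x) > 0$ on $(0,\infty)$ for every $j \geq 0$. For $h_3(x) = 1/[12x(12x+1)]$ this is immediate: partial fractions give $h_3(x) = \int_0^\infty \frac{1 - \rme^{-t/12}}{12}\rme^{-xt}\,dt$, a Laplace transform of a positive function. For $h_1$ and $h_2$, I would use Binet's first formula
\begin{equation*}
\theta(x) = \int_0^\infty \frac{1}{t}\Bigl(\frac{1}{\rme^t - 1} - \frac{1}{t} + \frac{1}{2}\Bigr)\rme^{-xt}\,dt,
\end{equation*}
together with the analogous Laplace representations of $1/(12x)$ and $1/(12x+1)$, to rewrite $h_j(x) = \int_0^\infty M_j(t)\rme^{-xt}\,dt$ for $j=1,2$, reducing complete monotonicity to the pointwise positivity of the integrands $M_1,M_2$ on $(0,\infty)$.

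Once complete monotonicity is in hand (so $h_i > 0$, $h_i' < 0$, $h_i'' > 0$, $h_i''' < 0$), the structural claims follow by routine calculus. $\partial_n F_i = h_i'(n-k) - h_i'(n) < 0$ gives strict decrease in $n$ (since $h_i''>0$ makes $h_i'$ strictly increasing); $\partial_n^2 F_i = h_i''(n-k) - h_i''(n) > 0$ gives strict log-convexity in $n$ (since $h_i'''<0$ makes $h_i''$ strictly decreasing); and $\partial_k^2 F_i = h_i''(k) + h_i''(n-k) > 0$ gives strict log-convexity in $k$. For the restriction $F_i(n,fn) = h_i(fn) + h_i((1-f)n) - h_i(n)$, writing $\partial_n F_i(n,fn) = f[h_i'(fn) - h_i'(n)] + (1-f)[h_i'((1-f)n) - h_i'(n)]$ and $\partial_f^2 F_i(n,fn) = n^2[h_i''(fn) + h_i''((1-f)n)]$ yields strict decrease in $n$ and strict log-convexity in $f$. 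Finally, positivity $F_i(n,k) > 0$ for $0 < k < n$ follows from $h_i>0$ and $h_i$ strictly decreasing: $h_i(k), h_i(n-k) > h_i(n) > 0$, so $F_i > h_i(n) > 0$.

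To extract the explicit two-sided bounds, I locate the extrema of $F_i$ on $\{(n,k): 1 \leq k \leq n-1\}$. Strict convexity in $k$ combined with the symmetry forces the infimum over $k$ at fixed $n$ to sit at the continuous midpoint $k = n/2$, giving the stated lower bound (already strictly above $1$ by the previous paragraph). Strict decrease in $n$ then pins the global maximum to the smallest admissible corner $(n,k) = (2,1)$; direct evaluation gives $\varphi(2,1) = 1/\sqrt{8}$, $\varphi_-(2,1) = \rme^{-1/8}/\sqrt{2\pi}$, and $\varphi_+(2,1) = \rme^{-37/325}/\sqrt{2\pi}$, from which the three constants $\sqrt{\pi}\,\rme^{1/8}/2$, $2/(\sqrt{\pi}\,\rme^{37/325})$, and $\rme^{29/2600}$ drop out after small arithmetic. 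The main obstacle in this plan is the pointwise positivity of $M_1(t)$ and $M_2(t)$: both are immediate as $t \to 0$ (Taylor expansion) and as $t \to \infty$ (exponential dominance of the $1/(e^t-1)$ piece), but certifying no sign change in between is the one step that requires genuine analytic effort.
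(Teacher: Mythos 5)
Your decomposition is, up to sign conventions, exactly the paper's: your $h_1$, $h_2$, $h_3$ are the functions $-\omega_-$, $\omega_+$, $\tau$ of \sref{sec:phiProof}, your ``routine calculus'' paragraph is precisely the content of \lref{lem:SCM}, and your extremal argument (convexity plus symmetry in $k$ to reach $k=n/2$ or $k=1$, then monotonicity in $n$ to reach $n=2$) matches the paper's chain $\varphi(n,k)/\varphi_-(n,k)\leq \varphi(k+1,k)/\varphi_-(k+1,k)=\varphi(k+1,1)/\varphi_-(k+1,1)\leq\varphi(2,1)/\varphi_-(2,1)$. All of the structural steps, the positivity argument $F_i>h_i(n)>0$, and the evaluations at $(2,1)$ are correct.

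The one genuine gap is the step you flag yourself: the pointwise positivity of $M_2$ on $(0,\infty)$. Clearing denominators gives $M_2(t)=\nu(t)/[t^2(1-\rme^{-t})]$ with $\nu(t)=t-\bigl(1+\frac{t}{2}+\frac{t^2}{12}\rme^{-t/12}\bigr)(1-\rme^{-t})$, so your claim is exactly \lref{lem:nut}, which is where the real analytic content of \lref{lem:SCMomega} sits. The paper proves it by expanding $\nu(t)\rme^{t}$ as a power series and bounding every coefficient $a_j$ from below, and explicitly notes that the corresponding step in \rcite{Mort10} was flawed — so this cannot simply be cited away, and endpoint asymptotics at $t\to0$ and $t\to\infty$ do not rule out an interior sign change. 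By contrast, $M_1(t)=\frac{1}{12}-\frac{1}{t}\bigl(\frac{1}{\rme^t-1}-\frac{1}{t}+\frac12\bigr)>0$ is the classical Binet-kernel inequality and $M_3>0$ is immediate, so only $h_2$ is at issue; until $M_2>0$ is certified, the monotonicity and convexity claims for $\varphi_+(n,k)/\varphi(n,k)$ and hence the upper bound in \eref{eq:varphi+/varphi} remain unproved. A minor further point: the \emph{strict} versions of all the monotonicity and convexity statements require strict complete monotonicity, i.e., strict positivity of the integrands $M_i$, not merely nonnegativity.
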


By virtue of \lref{lem:phi} we can evaluate the probability
$b_{n,k}$ as follows.
\begin{proposition}\label{pro:bnkpLUB}
	Suppose  $k,n\in \bbN$, $1\leq k\leq n$, and $0<p<1$. Then  $b_{n,k}(p)\rme^{nD(\frac{k}{n}\| p)}$ is strictly decreasing in $n$,  but is independent of $p$. 	If in addition $k\leq n-1$, then
	\begin{gather}	
\frac{1}{\rme \sqrt{k}} \rme^{-nD(\frac{k}{n}\| p)}\leq \frac{k^k}{\rme^k k!}\rme^{-nD(\frac{k}{n}\| p)}<    b_{n,k}(p)< \sqrt{\frac{n}{n-k}}\frac{k^k}{\rme^k k!}\rme^{-nD(\frac{k}{n}\| p)},
\label{eq:bnkpLUB}\\
\frac{1}{\sqrt{2n}} \rme^{-nD(\frac{k}{n}\| p)}\leq \frac{\sqrt{n}}{ \sqrt{8k(n-k)}} \rme^{-nD(\frac{k}{n}\| p)}\leq 	b_{n,k}(p)< \frac{\sqrt{n}}{\sqrt{2\pi k(n-k)}} \rme^{-nD(\frac{k}{n}\| p)},
\label{eq:bnkpLUB2}\\
\frac{\sqrt{n}\,\varphi_-(n,k)}{ \sqrt{k(n-k)}} \rme^{-nD(\frac{k}{n}\| p)}
< b_{n,k}(p) < \frac{\sqrt{n}\,\varphi_+(n,k)}{\sqrt{k(n-k)}} \rme^{-nD(\frac{k}{n}\| p)}. \label{eq:bnkpLUB3}
	\end{gather}	
\end{proposition}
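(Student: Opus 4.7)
The plan is to leverage Proposition~\ref{pro:bnkpphi} to reduce everything to statements about $\phi(n,k)$ and $\varphi(n,k)$, and then read off the desired bounds directly from Lemma~\ref{lem:phi}. Concretely, the identity
\[
b_{n,k}(p)\rme^{nD(\frac{k}{n}\|p)}=\phi(n,k)=\frac{\sqrt{n}}{\sqrt{k(n-k)}}\varphi(n,k)
\]
from \pref{pro:bnkpphi} makes the right-hand side manifestly independent of $p$ (it only involves $n$ and $k$), which settles one assertion immediately. Strict monotonicity in $n$ then follows because $\phi(n,k)$ is strictly decreasing in $n$ by \lref{lem:phi}. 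This preliminary step also fixes notation for all the bounds that follow.

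Next I would prove \eqref{eq:bnkpLUB}. The lower bound $\phi(n,k)>k^k\rme^{-k}/k!\ge 1/(\rme\sqrt{k})$ is exactly \eqref{eq:phiLB2}, so multiplying by $\rme^{-nD(\frac{k}{n}\|p)}$ yields the two lower bounds on $b_{n,k}(p)$. For the upper bound, I would write
\[
b_{n,k}(p)\rme^{nD(\frac{k}{n}\|p)}=\sqrt{\frac{n}{k(n-k)}}\,\varphi(n,k)<\sqrt{\frac{n}{k(n-k)}}\cdot\frac{k^{k+1/2}\rme^{-k}}{k!}=\sqrt{\frac{n}{n-k}}\cdot\frac{k^{k}\rme^{-k}}{k!},
\]
using the bound $\varphi(n,k)<k^{k+1/2}\rme^{-k}/\Gamma(k+1)$ from \eqref{eq:varphiLUB}. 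This is clean and involves no further estimation.

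For \eqref{eq:bnkpLUB2}, under the assumption $1\le k\le n-1$, I would invoke \eqref{eq:varphiLUB4}, namely $1/\sqrt{8}\le\varphi(n,k)<1/\sqrt{2\pi}$, and plug it into the identity $b_{n,k}(p)\rme^{nD(\frac{k}{n}\|p)}=\sqrt{n/(k(n-k))}\,\varphi(n,k)$. The outer bound $\sqrt{n/(8k(n-k))}\ge 1/\sqrt{2n}$ is the elementary inequality $k(n-k)\le n^2/4$. Similarly, \eqref{eq:bnkpLUB3} is the direct substitution of the sharper two-sided estimate $\varphi_-(n,k)<\varphi(n,k)<\varphi_+(n,k)$ from \eqref{eq:varphiLUB2} into the same identity.

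All the heavy lifting (Stirling's approximation, the $p$-independence of $\phi$, the range bounds on $\varphi$) is already contained in \pref{pro:bnkpphi} and \lref{lem:phi}; the proof of this proposition is therefore almost purely bookkeeping. The only step requiring a little attention is matching the inequality $\sqrt{n/(8k(n-k))}\ge 1/\sqrt{2n}$ in \eqref{eq:bnkpLUB2}, but this is immediate from $k(n-k)\le n^2/4$, so there is no genuine obstacle to overcome here.
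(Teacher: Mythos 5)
Your proposal is correct and follows essentially the same route as the paper's proof: both reduce everything to the identity $b_{n,k}(p)\rme^{nD(\frac{k}{n}\|p)}=\phi(n,k)=\sqrt{n/(k(n-k))}\,\varphi(n,k)$ from \pref{pro:bnkpphi} and then cite the bounds \eqref{eq:phiLB2}, \eqref{eq:varphiLUB}, \eqref{eq:varphiLUB4}, and \eqref{eq:varphiLUB2} from \lref{lem:phi}. The only detail the paper treats that you omit is the boundary case $n=k$ of the monotonicity claim, where \lref{lem:phi} does not directly apply and one notes separately that $b_{k,k}(p)\rme^{kD(1\|p)}=1$ exceeds $\phi(n,k)<1$ for $n>k$; this is a trivial check.
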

The constants in the three equations in \pref{pro:bnkpLUB} cannot be improved without further assumptions. Incidentally,   $b_{n,k}\rme^{nD(\frac{k}{n}\| p)}=1$ 
is independent of $n$ and~$p$ when $k=0$ or $k=n$. Note that this observation does not contradict the monotonicity property stated in \pref{pro:bnkpLUB} because here $k$ is proportional to $n$, but $k$ is fixed as a constant in \pref{pro:bnkpLUB}.
The lower bound in \eref{eq:bnkpLUB} still holds when $k=n$ given that $k^k/(\rme^k k!)<1$ for $k>0$ according to Theorem 1.1 in \rcite{Guo06}. 
Here
\eref{eq:bnkpLUB2}  follows from Lemma~4.7.1  in Ref.~\cite{Ash92book} and from [Chapter 10, Lemma 7] in Ref.~\cite{MacWS77book}; it implies the reverse Chernoff bound in \eref{eq:ChernoffRev2}.  The lower bound $\rme^{-nD(\frac{k}{n}\| p)}/\sqrt{2n}$ for $b_{n,k}$ is applicable whenever $n\geq1$ and implies the reverse Chernoff bound   in \eref{eq:ChernoffRevType}. In addition, \pref{pro:bnkpLUB}  provides several other alternative reverse Chernoff bounds, which improve slightly over
reverse Chernoff bounds presented in 
\rscite{CsisK11,Csis98,Haya17book,WataH17}.

\begin{proof}[Proof of \pref{pro:bnkpLUB}]
When $n>k\geq 1$,	according to \lref{lem:phi} and \eref{eq:bnkphi},  $b_{n,k}\rme^{nD(\frac{k}{n}\| p)}<1$	is strictly decreasing in $n$, but is independent of $p$. When $n=k$, we have $b_{n,k}\rme^{nD(\frac{k}{n}\| p)}=1$. So  $b_{n,k}\rme^{nD(\frac{k}{n}\| p)}$ is strictly decreasing in $n$,  but is independent of $p$ when $n\geq k\geq 1$.
	
 \Eref{eq:bnkpLUB} follows from \eqssref{eq:bnkphi}{eq:varphiLUB}{eq:phiLB2}.
\Eref{eq:bnkpLUB2}  follows from \eqsref{eq:bnkphi}{eq:varphiLUB4}. 
 	\Eref{eq:bnkpLUB3} follows from \eqsref{eq:bnkphi}{eq:varphiLUB2}. 
\end{proof}

\subsection{\label{sec:TailBnkp}Nearly tight bounds for the lower tail probability $B_{n,k}(p)$}
The main aim of this section is to prove Theorem \ref{TH2}, that is, 
to derive nearly tight bounds for the lower tail probability $B_{n,k}$. 
Before presenting our main results, we point out that 
the discussions in the previous sections can easily reproduce two existing results, the asymptotic limit in \eref{ACP} \cite{AG,BlacH59,BahaR60,DembZ10} and 
an upper bound for $B_{n,k}$ that is asymptotically tight  \cite{Ferr21}, as follows.

\begin{proposition}\label{pro:BnfRE}
	Suppose  $0<f<1$ and $n\in \bbN_f$, then   $\sqrt{n}B_{n,fn}(p)\rme^{nD(f\| p)}$ is strictly increasing in $n$. In addition,
	\begin{gather}
	b_{n,fn}(p)\leq  B_{n,fn}(p)< \frac{\rme^{-nD(f\| p)}}{\sqrt{2\pi n f(1-f)}}\frac{(1-f)p}{p-f}=\sqrt{\frac{1-f}{2\pi n f}}\,\frac{p}{p-f}\rme^{-nD(f\| p)}
	, \label{eq:BnkpREUB}\\
	\lim_{n\to \infty} \sqrt{n} B_{n,fn}(p)\rme^{nD(f\| p)}=\frac{1}{\sqrt{2\pi  f(1-f)}}\frac{(1-f)p}{p-f}=\sqrt{\frac{1-f}{2\pi f}}\,\frac{p}{p-f}. \label{eq:BnkpRElim0}
	\end{gather} 
\end{proposition}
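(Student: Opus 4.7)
The plan is to factor $\sqrt{n}\,B_{n,fn}(p)\,\rme^{nD(f\|p)}$ into two pieces that are already controlled by earlier results. Specializing the identity \eref{eq:bnkphi} in \pref{pro:bnkpphi} to $k=fn$, I obtain
\[
\sqrt{n}\, B_{n,fn}(p)\, \rme^{nD(f\|p)} \;=\; \frac{B_{n,fn}(p)}{b_{n,fn}(p)} \cdot \frac{\varphi(n,fn)}{\sqrt{f(1-f)}}.
\]
All three assertions of the proposition will follow by reading off the behavior of each of the two factors on the right-hand side.

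For the monotonicity claim, I would first observe that $0<f<1$ and $n>0$ with $n\in\bbN_f$ force $fn$ to be a positive integer strictly less than $n$, so $1\leq fn\leq n-1$ and both factors are strictly positive. By \lref{lem:Bnkpbnkpf}, the ratio $B_{n,fn}(p)/b_{n,fn}(p)$ is strictly increasing in $n\in\bbN_f$ when $f>0$; by \lref{lem:phi}, $\varphi(n,fn)$ is strictly increasing in $n$. Hence their product, and therefore $\sqrt{n}\,B_{n,fn}(p)\,\rme^{nD(f\|p)}$, is strictly increasing in $n\in\bbN_f$.

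For the upper bound, under the assumption $f<p$ (implicit in the finiteness of the stated bound), \eref{eq:BnfnbnfnUBlim} in \lref{lem:Bnkpbnkpf} gives the strict inequality $B_{n,fn}(p)/b_{n,fn}(p)<(1-f)p/(p-f)$, while \eref{eq:varphiLUB4} in \lref{lem:phi} gives $\varphi(n,fn)<1/\sqrt{2\pi}$ (valid thanks to $1\leq fn\leq n-1$). Multiplying these strict inequalities and dividing by $\sqrt{n f(1-f)}$ yields the stated strict upper bound for $B_{n,fn}(p)$; the trivial inequality $b_{n,fn}(p)\leq B_{n,fn}(p)$ is immediate from the definition of the tail probability. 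Finally, for the limit I would pass to $n\to\infty$ in each factor, using $B_{n,fn}(p)/b_{n,fn}(p)\to(1-f)p/(p-f)$ from \eref{eq:BnfnbnfnUBlim} and $\varphi(n,fn)\to 1/\sqrt{2\pi}$ from \eref{eq:philim}, whose product is exactly the stated limit.

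There is no substantial obstacle here: the entire proposition is essentially a direct corollary of the factorization above combined with \lsref{lem:Bnkpbnkpf} and \ref{lem:phi}. The only care needed is the bookkeeping check that $fn\in[1,n-1]$ throughout the range $n\in\bbN_f$ with $n>0$ (so that the $\varphi$-bound \eref{eq:varphiLUB4} applies and both monotone factors stay strictly positive), which is automatic from $0<f<1$ and the integrality of $fn$.
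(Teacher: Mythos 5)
Your proof is correct and takes essentially the same route as the paper's: the paper likewise factors $\sqrt{n}\,B_{n,fn}(p)\,\rme^{nD(f\|p)}$ as $\bigl(B_{n,fn}/b_{n,fn}\bigr)\cdot\sqrt{n}\,b_{n,fn}\,\rme^{nD(f\|p)}$ and invokes \lref{lem:Bnkpbnkpf} and \lref{lem:phi} for the monotonicity, \eref{eq:BnfnbnfnUBlim} together with the $\varphi$-bound (via \eref{eq:bnkpLUB2}) for the upper bound, and \eref{eq:philim} for the limit. Writing the second factor explicitly as $\varphi(n,fn)/\sqrt{f(1-f)}$ is just the identity \eref{eq:bnkphi} made visible, so the two arguments coincide.
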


\begin{proof}[Proof of \pref{pro:BnfRE}]
	According to \eref{eq:bnkphi} and \lref{lem:phi},  $\sqrt{n}\,b_{n,fn}\rme^{nD(f\| p)}$	is strictly increasing in $n$.  Meanwhile,  $B_{n,fn}/b_{n,fn}$ is strictly increasing in $n$  according to \lref{lem:Bnkpbnkpf} in \sref{sec:BinProb}, so $\sqrt{n}B_{n,fn}\rme^{nD(f\| p)}$ is strictly increasing in $n$.

	\Eref{eq:BnkpREUB} follows from  \eqsref{eq:BnfnbnfnUBlim}{eq:bnkpLUB2}.
	\Eref{eq:BnkpRElim0} follows from \eqssref{eq:BnfnbnfnUBlim}{eq:philim}{eq:bnkpLUB3}.
\end{proof}

\Eref{eq:BnkpREUB} reproduces  Eq.~(14) in \rcite{Ferr21}, which improves the familiar Chernoff bound for $B_{n,k}$ presented in \eref{eq:ChernoffB}. Although this bound is asymptotically tight, it is not so accurate when $n$ is not so large. To construct much better bounds, we need to introduce several additional functions. Define
\begin{align} 
\tilde{L}(n,k,p):=&\frac{n\varphi(n,k)}{\sqrt{k(n-k)}}L(n,k,p)=\sqrt{n}\phi(n,k) L(n,k,p),\label{eq:LTnkp}\\
\tilde{L}_-(n,k,p):=&\frac{n\varphi_-(n,k)}{\sqrt{k(n-k)}}L(n,k,p)=
\frac{nL(n,k,p)}{ \sqrt{2\pi k(n-k)}}\rme^{\frac{1}{12n}-\frac{1}{12k}-\frac{1}{12(n-k)}},\label{eq:LT-nkp}\\
\tilde{U}(n,k,p):=&\frac{n\varphi(n,k)}{\sqrt{k(n-k)}}U(n,k,p)=\sqrt{n}\phi(n,k) U(n,k,p), \label{eq:UTnkp}\\
\tilde{U}_+(n,k,p):=&\frac{n\varphi_+(n,k)}{\sqrt{k(n-k)}}U(n,k,p)=
\frac{nU(n,k,p)}{ \sqrt{2\pi k(n-k)}}\rme^{\frac{1}{12n+1}-\frac{1}{12k+1}-\frac{1}{12(n-k)+1}}, \label{eq:UT+nkp}
\end{align}
where  $L(n,k,p)$ and $U(n,k,p)$ are defined in \sref{sec:summary}. 
Then, using \thref{thm:BnkbnkLBUB}, \lsref{lem:phi}, \ref{lem:varphipm}, and \pref{pro:bnkpLUB} 
we can show the following evaluation of $B_{n,k}$
as a refinement  of \thref{TH2}.

\begin{theorem}\label{thm:TailBound}
	Suppose  $k,n\in \bbN$,   $0<p<1$, $k\leq pn$, and  $f=k/n$. Then 
	\begin{align}
	\frac{1}{\rme \sqrt{k}} <	\frac{L(n,k,p)}{\rme \sqrt{k}}& \leq \frac{k^kL(n,k,p)}{\rme^k k!}< B_{n,k}(p) \rme^{nD(\frac{k}{n}\| p)}< \sqrt{\frac{n}{n-k}}\frac{k^kU(n,k,p)}{\rme^k k!},	\label{eq:BnkpLUB1} 	\\[1ex] 
	\frac{1}{\sqrt{8}}< \frac{L(n,k,p)}{\sqrt{8}}  &< \sqrt{\frac{k(n-k)}{n}}B_{n,k}(p)\rme^{nD(\frac{k}{n}\| p)}	< \frac{U(n,k,p)}{\sqrt{2\pi}} < \frac{2L(n,k,p)}{\sqrt{2\pi }},  \label{eq:BnkpLUB2} \\
	\tilde{L}_-(n,k,p)&< \tilde{L}(n,k,p)
<\sqrt{n} B_{n,k}(p) \rme^{nD(\frac{k}{n}\| p)}	< \tilde{U}(n,k,p)  \label{eq:BnkpLUB3}\\  
&
< \tilde{U}_+(n,k,p) < \frac{89}{44}	\tilde{L}_-(n,k,p). \nonumber
\end{align}
If in addition  $f<p$, then 
\begin{gather}	
\sqrt{n} B_{n,k}(p) \rme^{nD(\frac{k}{n}\| p)}	<\tilde{U}(n,k,p)< \tilde{U}_+(n,k,p)< 	
	\sqrt{\frac{1-f}{2\pi f}}\,\frac{p}{p-f}. \label{eq:BnkpLUB4}
	\end{gather}
\end{theorem}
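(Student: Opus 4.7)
The proof is essentially a modular assembly of the bounds on the ratio $B_{n,k}(p)/b_{n,k}(p)$ supplied by \thref{thm:BnkbnkLBUB} with the bounds on $b_{n,k}(p)$ collected in \sref{S5-1}. The plan is to write $B_{n,k}(p)=[B_{n,k}(p)/b_{n,k}(p)]\cdot b_{n,k}(p)$, sandwich the first factor between $L(n,k,p)$ and $U(n,k,p)$ using \thref{thm:BnkbnkLBUB}, and then plug in the three successively sharper evaluations of $b_{n,k}(p)$ from \pref{pro:bnkpLUB} (namely \eref{eq:bnkpLUB}, \eref{eq:bnkpLUB2}, and \eref{eq:bnkpLUB3}) to obtain \eref{eq:BnkpLUB1}, \eref{eq:BnkpLUB2}, and the middle two inequalities of \eref{eq:BnkpLUB3}, respectively. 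The leftmost inequalities in \eref{eq:BnkpLUB1} and \eref{eq:BnkpLUB2} then follow from the explicit lower bounds $L(n,k,p)\geq 1$ (\thref{thm:BnkbnkLBUB}) and $k^k/(\rme^k k!)\geq 1/(\rme\sqrt{k})$ (used via \eref{eq:phiLB2}); the rightmost inequality of \eref{eq:BnkpLUB2} uses $U(n,k,p)<2L(n,k,p)$ from the same theorem.

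For the chain in \eref{eq:BnkpLUB3} the outer inequalities $\tilde L_-(n,k,p)<\tilde L(n,k,p)$ and $\tilde U(n,k,p)<\tilde U_+(n,k,p)$ are immediate from the definitions \eqref{eq:LTnkp}--\eqref{eq:UT+nkp} combined with the envelope $\varphi_-(n,k)<\varphi(n,k)<\varphi_+(n,k)$ from \eref{eq:varphiLUB2}. The crucial step is the final bound $\tilde U_+(n,k,p)<(89/44)\tilde L_-(n,k,p)$. Cancelling the common factor $n/\sqrt{k(n-k)}$ gives
\[
\frac{\tilde U_+(n,k,p)}{\tilde L_-(n,k,p)}=\frac{U(n,k,p)}{L(n,k,p)}\cdot\frac{\varphi_+(n,k)}{\varphi_-(n,k)}<2\,\rme^{29/2600},
\]
where the first factor is bounded by $2$ via \lref{lem:U1nkp2L} (equivalently the last inequality in \eref{eq:BnkbnkLBUB}) and the second is bounded by $\rme^{29/2600}$ via \eref{eq:varphi+/varphi-} in \lref{lem:varphipm} (which applies because $1\leq k\leq n-1$ under the theorem's hypotheses). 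So the argument reduces to checking numerically that $2\,\rme^{29/2600}\leq 89/44$; equivalently $29/2600\leq \ln(89/88)$, which is straightforward from $\ln(1+1/88)>1/88-1/(2\cdot 88^2)$.

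Finally, for \eref{eq:BnkpLUB4} under the additional hypothesis $f<p$, the middle inequality $\tilde U(n,k,p)<\tilde U_+(n,k,p)$ is already in hand, so what remains is $\tilde U_+(n,k,p)<\sqrt{(1-f)/(2\pi f)}\cdot p/(p-f)$. Here I would invoke two sharp bounds: the upper bound $U(n,k,p)\leq V(n,k,p,0)<(1-f)p/(p-f)$ from \eref{eq:BnkbnkUBasymp}, and $\varphi_+(n,k)<1/\sqrt{2\pi}$ from \eref{eq:varphiLUB2}. Substituting these into \eref{eq:UT+nkp} and using $n/\sqrt{k(n-k)}=1/\sqrt{f(1-f)}$ with $k=fn$ yields exactly the required inequality.

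The proof is thus largely bookkeeping; the only nontrivial step is the numerical inequality $2\,\rme^{29/2600}\leq 89/44$ that pins down the explicit constant $89/44$ in \eref{eq:BnkpLUB3}. Everything else is substitution into previously established sharp inequalities, with the side conditions (in particular $1\leq k\leq n-1$, which holds automatically since $k\leq pn<n$ and $k\geq 1$) ensuring that each auxiliary lemma applies.
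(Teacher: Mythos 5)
Your proposal is correct and follows essentially the same route as the paper's own proof: both decompose $B_{n,k}(p)=[B_{n,k}(p)/b_{n,k}(p)]\cdot b_{n,k}(p)$, sandwich the ratio via \thref{thm:BnkbnkLBUB}, plug in the three evaluations of $b_{n,k}(p)$ from \pref{pro:bnkpLUB}, and close the $89/44$ bound with exactly the chain $U/L<2$ combined with $\varphi_+/\varphi_-\leq\rme^{29/2600}$ from \eref{eq:varphi+/varphi-}. The side conditions and the final numerical check $2\,\rme^{29/2600}<89/44$ are handled correctly.
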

The  bounds $L(n,k,p)/\sqrt{8}$ and $U(n,k,p)/\sqrt{2\pi}$   are tight within a factor of $4/\sqrt{\pi}\approx 2.25676$ 
by \eref{eq:BnkpLUB2}. The bounds $\tilde{L}(n,k,p)$ and  $\tilde{U}(n,k,p)$ are tight within  a factor of 2 according to their definitions above and the inequality $U(n,k,p)< 2L(n,k,p)$ in 
\eref{eq:BnkbnkLBUB} in \thref{thm:BnkbnkLBUB} (cf. \lref{lem:U1nkp2L}). The  bounds $\tilde{L}_-(n,k,p)$ and $\tilde{U}_+(n,k,p)$ are tight within  a factor of $89/44$ by \eref{eq:BnkpLUB3}. In conjunction with \eref{eq:varphi/varphi-}, we can actually deduce that the  bound $\tilde{L}_-(n,k,p)$ is  tight within  a factor of $\sqrt{\pi}\,\rme^{1/8}\approx 2.00845$. In addition, the four bounds $\tilde{L}(n,k,p)$, $\tilde{U}(n,k,p)$,  $\tilde{L}_-(n,k,p)$,  and $\tilde{U}_+(n,k,p)$
 are  asymptotically tight.

Note that Eqs.~\eqref{eq:TH2E0}-\eqref{eq:TH2E2} in  \thref{TH2} are simple corollaries  of Eqs.~\eqref{eq:BnkpLUB2}-\eqref{eq:BnkpLUB4}, respectively.  In conjunction with \eqsref{eq:LUnkplim}{eq:LUnfplim} in \sref{sec:summary}, \thref{thm:TailBound} yields the following corollary.

\begin{corollary}\label{cor:TailLim}
	Suppose  $k,n\in \bbN$ and   $0<f<p<1$. Then
	\begin{gather}
	\lim_{n\to \infty}	\frac{L(n,k,p)}{(\rme/k)^k k!}= \lim_{n\to \infty} B_{n,k}(p) \rme^{nD(\frac{k}{n}\| p)}= \lim_{n\to \infty} \sqrt{\frac{n}{n-k}}\frac{U(n,k,p)}{(\rme/k)^k k!}=
	\frac{k^k }{\rme^{k} k!}, \label{eq:BnkpRElim}\\
	\lim_{n\rightarrow \infty} \tilde{L}_-(n,f n,p)
=\lim_{n\rightarrow \infty} \tilde{U}_+(n,f n,p)=\sqrt{\frac{1-f}{2\pi f}}\,\frac{p}{p-f}. \label{eq:BnfpRElim}		
\end{gather} 	
\end{corollary}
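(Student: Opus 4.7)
The plan is to obtain both limit statements as direct consequences of \thref{thm:TailBound} combined with the asymptotic formulas \eqref{eq:LUnkplim} and \eqref{eq:LUnfplim} for the auxiliary functions $L(n,k,p)$ and $U(n,k,p)$. Both arguments are squeeze-theorem applications, so the work is in simplifying the prefactors appearing in the chain of inequalities.

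For the first equality in \eqref{eq:BnkpRElim}, with $k$ held fixed and $n\to\infty$, I would start from the two-sided estimate \eqref{eq:BnkpLUB1} in \thref{thm:TailBound},
\begin{equation*}
\frac{k^k L(n,k,p)}{\rme^k k!}<B_{n,k}(p)\rme^{nD(\frac{k}{n}\| p)}<\sqrt{\frac{n}{n-k}}\,\frac{k^k U(n,k,p)}{\rme^k k!}.
\end{equation*}
By \eqref{eq:LUnkplim} we have $L(n,k,p)\to 1$ and $U(n,k,p)\to 1$ as $n\to\infty$, and plainly $\sqrt{n/(n-k)}\to 1$ since $k$ is fixed. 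Hence both outer expressions tend to $k^k/(\rme^k k!)$, which by the squeeze theorem forces the middle expression to the same limit, establishing all three equalities in \eqref{eq:BnkpRElim}.

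For \eqref{eq:BnfpRElim}, with $0<f<p<1$ fixed and $n\to\infty$ (say along $n\in \bbN_f$ so that $fn$ is integral; the general case follows by noting that $\lfloor fn\rfloor$ changes the denominator by lower-order corrections), I would insert $k=fn$ into the definitions \eqref{eq:LT-nkp} and \eqref{eq:UT+nkp}. This gives
\begin{equation*}
\tilde{L}_-(n,fn,p)=\frac{L(n,fn,p)}{\sqrt{2\pi f(1-f)}}\,\exp\!\Bigl(\tfrac{1}{12n}-\tfrac{1}{12fn}-\tfrac{1}{12(1-f)n}\Bigr),
\end{equation*}
and an analogous formula for $\tilde{U}_+(n,fn,p)$ with $U$ in place of $L$ and the exponent terms replaced by $1/(12n+1)$, $1/(12fn+1)$, $1/(12(1-f)n+1)$. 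In both cases the exponential factor tends to $1$, while \eqref{eq:LUnfplim} gives $L(n,fn,p),\,U(n,fn,p)\to (1-f)p/(p-f)$. Combining these, both $\tilde{L}_-(n,fn,p)$ and $\tilde{U}_+(n,fn,p)$ tend to
\begin{equation*}
\frac{1}{\sqrt{2\pi f(1-f)}}\cdot\frac{(1-f)p}{p-f}=\sqrt{\frac{1-f}{2\pi f}}\,\frac{p}{p-f},
\end{equation*}
which is \eqref{eq:BnfpRElim}.

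There is no real obstacle here; the only mild subtlety is ensuring that the prefactors $\sqrt{n/(n-k)}$ (in the first limit) and the exponential correction factor $\exp(O(1/n))$ (in the second) indeed tend to $1$ in the appropriate regime, and then invoking the squeeze theorem. All other ingredients are already packaged in \thref{thm:TailBound} and in the limits \eqref{eq:LUnkplim}--\eqref{eq:LUnfplim}.
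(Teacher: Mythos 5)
Your proposal is correct and follows exactly the paper's route: \eref{eq:BnkpRElim} is deduced from \eref{eq:BnkpLUB1} together with the limits \eref{eq:LUnkplim} via squeezing, and \eref{eq:BnfpRElim} from the definitions \eqsref{eq:LT-nkp}{eq:UT+nkp} together with \eref{eq:LUnfplim}. The parenthetical about $\lfloor fn\rfloor$ is unnecessary since $\tilde{L}_-$ and $\tilde{U}_+$ are defined for non-integer second argument, but it does no harm.
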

In conjunction with \eref{eq:BnkpLUB3},
\eref{eq:BnfpRElim} yields an alternative proof of \eref{eq:BnkpRElim0}. It
implies that the bounds $B_{n,k}^{\downarrow}(p)$ and $B_{n,k}^{\uparrow}(p)$ defined in \eref{eq:BnkpArrow} satisfy the condition of asymptotic tightness in \eqsref{eq:ATratio}{eq:TIGHT}.

\begin{proof}[Proof of \crref{cor:TailLim}]
	\Eref{eq:BnkpRElim} is a simple corollary of 
	\eqsref{eq:LUnkplim}{eq:BnkpLUB1}.
	\Eref{eq:BnfpRElim} is a simple corollary of  \eref{eq:LUnfplim} in addition to the definitions in \eqsref{eq:LT-nkp}{eq:UT+nkp}. 
\end{proof}

\begin{proof}[Proof of \thref{thm:TailBound}]
Thanks to \eref{eq:bnkphi},
	$B_{n,k}\rme^{nD(\frac{k}{n}\| p)}$ can be expressed as follows,
	\begin{gather}\label{eq:BnkRE}
	B_{n,k}\rme^{nD(\frac{k}{n}\| p)}=\frac{B_{n,k}}{b_{n,k}}b_{n,k}\rme^{nD(\frac{k}{n}\| p)}=\frac{B_{n,k}}{b_{n,k}}\phi(n,k)=\frac{B_{n,k}}{b_{n,k}}\frac{\sqrt{n}}{\sqrt{ k(n-k)}} \varphi(n,k),
	\end{gather}	
so  \thref{thm:TailBound} follows from \thref{thm:BnkbnkLBUB}, 
 \lsref{lem:phi}, \ref{lem:varphipm}, and \pref{pro:bnkpLUB}.	
	More specifically,  \eref{eq:BnkpLUB1} follows from \eqsref{eq:BnkbnkLBUB}{eq:bnkpLUB}, note that all inequalities in \eref{eq:BnkbnkLBUB} are strict given the assumption $k\geq 1$. 
\Eref{eq:BnkpLUB2} follows from \eqsref{eq:BnkbnkLBUB}{eq:bnkpLUB2}.

The first and fourth inequalities in 
\eref{eq:BnkpLUB3} 	follow from 
\eref{eq:varphiLUB2} in addition to the definitions in Eqs.~\eqref{eq:LTnkp}-\eqref{eq:UT+nkp}. 
The second and third inequalities in 
\eref{eq:BnkpLUB3} follow from \eqsref{eq:BnkbnkUBasymp}{eq:bnkphi}. 
The last inequality in \eref{eq:BnkpLUB3} can be proved as follows,
	\begin{align}
	\tilde{U}_+(n,k,p)\leq \frac{2\varphi_+(n,k)}{\varphi_-(n,k)}\tilde{L}_-(n,k,p)\leq 2\rme^{\frac{29}{2600}}\tilde{L}_-(n,k,p)
	<\frac{89}{44}\tilde{L}_-(n,k,p),\label{NZD}
	\end{align}	
	where the first inequality  follows from the last inequality in  \eref{eq:BnkbnkLBUB} and the definitions in \eqsref{eq:LT-nkp}{eq:UT+nkp}, while the second inequality follows from \eref{eq:varphi+/varphi-}.
	
The first two inequalities in \eref{eq:BnkpLUB4} follow from \eref{eq:BnkpLUB3}, and the last inequality in \eref{eq:BnkpLUB4} follows from
\eref{eq:BnkbnkUBasymp} and the fact that $\varphi_+(n,k)<1/\sqrt{2\pi}$. 
\end{proof}

\subsection{Auxiliary results}
\Eref{eq:BnkpLUB3} in \thref{thm:TailBound} offers four bounds  for the quantity $\sqrt{n} B_{n,k} \rme^{nD(\frac{k}{n}\| p)}$
that are universally bounded and asymptotically tight. Here we discuss the properties of these bounds, which may be useful in certain applications. As in \sref{S4-2},
here we do not assume that $k$ and $n$ are integers.

\begin{proposition}\label{pro:LUTmono}
	Suppose $0<f\leq p<1$ and $n\geq 0$.  Then $\tilde{L}(n,fn,p)$, $\tilde{L}_-(n,fn,p)$, $\tilde{U}(n,fn,p)$, and $\tilde{U}_+(n,fn,p)$ are  strictly increasing in $n$. 	
\end{proposition}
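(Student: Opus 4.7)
The crucial simplification is that setting $k=fn$ makes the normalizing factor $n/\sqrt{k(n-k)}=1/\sqrt{f(1-f)}$ independent of $n$, so each of the four quantities in the proposition equals $1/\sqrt{f(1-f)}$ times one of the four products
\[
\varphi(n,fn)L(n,fn,p),\quad \varphi_-(n,fn)L(n,fn,p),\quad \varphi(n,fn)U(n,fn,p),\quad \varphi_+(n,fn)U(n,fn,p).
\]
It therefore suffices to show that each of these four products is a strictly increasing function of $n$ on $n>0$.

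The monotonicity of the $L$- and $U$-type factors is already available: under the hypothesis $0<f\le p<1$, \lref{lem:Lnfpmono} gives strict monotonicity of $L(n,fn,p)$ in $n$, and \lref{lem:U1nfpMono} gives the same for $U(n,fn,p)$. From \lref{lem:phi}, $\varphi(n,fn)$ is strictly increasing in $n$. Hence the products $\varphi(n,fn)L(n,fn,p)$ and $\varphi(n,fn)U(n,fn,p)$, each a product of two positive strictly increasing functions, are strictly increasing in $n$, which proves the claim for $\tilde L(n,fn,p)$ and $\tilde U(n,fn,p)$.

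For $\tilde L_-(n,fn,p)$, substituting $k=fn$ into the definition of $\varphi_-$ and simplifying the exponent gives
\[
\varphi_-(n,fn)=\frac{1}{\sqrt{2\pi}}\exp\!\left(-\,\frac{1-f+f^2}{12\,f(1-f)\,n}\right),
\]
whose coefficient of $-1/n$ is strictly positive for $0<f<1$. So $\varphi_-(n,fn)$ is manifestly strictly increasing in $n>0$, and combined with the monotonicity of $L(n,fn,p)$ we obtain the claim for $\tilde L_-(n,fn,p)$.

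The remaining case $\tilde U_+(n,fn,p)$ is the main obstacle, because when $k=fn$ the exponent of $\varphi_+(n,fn)$ does not factor as a constant times $1/n$: the $+1$'s in the denominators of $\varphi_+$ interfere and block the elementary trick that worked for $\varphi_-$. My plan is to show directly that, writing $t=12n$ and $c=f(1-f)\in(0,1/4]$, the function
\[
h(t)=\frac{1}{t+1}-\frac{1}{ft+1}-\frac{1}{(1-f)t+1}
\]
is strictly increasing on $t>0$. Combining the last two fractions over the common denominator $ct^2+t+1$ rewrites $h(t)=-N(t)/D(t)$ with $N(t)=(1-c)t^2+2t+1$ and $D(t)=(t+1)(ct^2+t+1)$; a direct expansion shows that $N'(t)D(t)-N(t)D'(t)$ collapses, after the cross-terms cancel, to the clean expression $-ct\bigl[(1-c)t^3+4t^2+7t+4\bigr]$, which is strictly negative on $t>0$ for $c\in(0,1)$. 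Therefore $N/D$ is strictly decreasing, $h$ is strictly increasing, and $\varphi_+(n,fn)$ is strictly increasing in $n$. Multiplying by the strictly increasing factor $U(n,fn,p)$ yields the claim for $\tilde U_+(n,fn,p)$. The only nontrivial step is the polynomial manipulation—the algebra is forced to work by the symmetry between $f$ and $1-f$, but writing it out neatly is where I would spend the most care.
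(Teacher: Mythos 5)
Your proof is correct and follows essentially the same route as the paper: reduce to the monotonicity of the factors $L(n,fn,p)$, $U(n,fn,p)$ (via Lemmas \ref{lem:Lnfpmono} and \ref{lem:U1nfpMono}), $\varphi(n,fn)$ (via Lemma \ref{lem:phi}), and $\varphi_\pm(n,fn)$, after observing that $n/\sqrt{fn(n-fn)}=1/\sqrt{f(1-f)}$ is constant in $n$. The only difference is that the paper dismisses the monotonicity of $\varphi_\pm(n,fn)$ as "straightforward calculation," whereas you actually carry it out — and your algebra for the $\varphi_+$ case (the identity $N'D-ND'=-ct\bigl[(1-c)t^3+4t^2+7t+4\bigr]$ with $c=f(1-f)$) checks out.
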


\begin{proof}[Proof of \pref{pro:LUTmono}]
By \lsref{lem:Lnfpmono} and \ref{lem:U1nfpMono}, $L(n,fn,p)$ and $U(n,fn,p)$ are strictly increasing in $n$. In addition, $\varphi(n,fn)$ is strictly increasing in $n$ by \lref{lem:phi}, while  $\varphi_+(n,fn)$ and $\varphi_-(n,fn)$ are strictly increasing in $n$ by straightforward calculation. 
 Therefore, $\tilde{L}(n,fn,p)$, $\tilde{L}_-(n,fn,p)$,  $\tilde{U}(n,fn,p)$, and $\tilde{U}_+(n,fp,p)$ are  strictly increasing in $n$ given their definitions in Eqs.~\eqref{eq:LTnkp}-\eqref{eq:UT+nkp}.  
\end{proof}

When $0<f<p<1$ and $n$ is sufficiently large, $\tilde{L}(n,fn,p)$,  $\tilde{L}_-(n,fn,p)$,  $\tilde{U}(n,fn,p)$, and $\tilde{U}_+(n,fn,p)$ can be approximated 
as  follows according to their definitions: 
\begin{align}
\tilde{L}_-(n,fn,p)&=\tilde{L}(n,fn,p)+O(n^{-2})=\sqrt{\frac{1-f}{2\pi f}}\,\frac{p}{p-f}-\frac{\beta_1}{n}+O(n^{-2}),\\
\tilde{U}_+(n,fn,p)&=\tilde{U}(n,fn,p)+O(n^{-2})=\sqrt{\frac{1-f}{2\pi f}}\,\frac{p}{p-f}
-\frac{\beta_2}{n}+O(n^{-2}),
\end{align}
where the coefficients $\beta_1$ and $\beta_2$ are defined as
\begin{align}
\beta_1&:=\frac{p[13 f^2 - 13 f^3 + f^4 + (-2 f - 10 f^2 + 10 f^3) p + (1 - f + f^2) p^2]}{12\sqrt{2\pi f(1-f)}\,f(p-f)^3},\\ 
\beta_2&:=\frac{p[-f + 13 f^2 - f^3 + (1 - f - 11 f^2) p]}{12\sqrt{2\pi f(1-f)}\,f(p-f)^2}.
\end{align}
For all these bounds, the deviations from the asymptotic limits have order $O(1/n)$.

\subsection{Nearly tight bounds for the upper tail probability $\bar{B}_{n,k}(p)$}
Here  we clarify the  properties of the upper tail probability $\bar{B}_{n,k}(p)$ defined as follows,
\begin{align}
\bar{B}_{n,k}(p):=& \sum_{j=k}^nb_{n,j}(p)= \sum_{j=k}^n {n \choose j} p^j q^{n-j}=B_{n,n-k}(q)=1-B_{n,k-1}(p), \label{eq:UTailBinom}
\end{align} 
where $q=1-p$.
Thanks to this equation, 
most results on the lower tail probability $B_{n,k}$ have analogs for the upper tail probability $\bar{B}_{n,k}$. 
For simplicity here we present a few main results. The odds ratio tied to the upper tail probability is defined as  
\begin{align}
\bar{r}:=\frac{(1-f)p }{fq}=\frac{1}{r},
\end{align}
where $r$ is the odds ratio tied to the lower tail probability as presented in \eref{eq:OddsRatio}. By virtue of  Eqs.~\eqref{eq:Lnkp}-\eqref{eq:Unkp} we can 
define
\begin{align}
\bar{L}(n,k,p)&:=L(n,n-k,q)=\frac{p n-k+1+\sqrt{(p n-k+1)^2+4q k}}{2} \label{eq:LnkpBar}
\end{align}
and define $\bar{V}(n,k,p,a)$ and $\bar{U}(n,k,p)$  in a similar way. The following theorem is a simple corollary of  \eref{eq:UTailBinom} and \thref{thm:BnkbnkLBUB}. 
\begin{theorem}\label{thm:BnkbnkLBUBbar}
	Suppose  $k,n\in \bbN$, $0<p<1$,    $p n \leq k\leq n$, and  $f=k/n$. Then 
	\begin{gather}
	1\leq \bar{L}(n,k,p)\leq 	\frac{\bar{B}_{n,k}(p)}{b_{n,k}(p)}\leq \bar{U}(n,k,p)<  2\bar{L}(n,k,p),
	\end{gather}	
where all inequalities are strict when $k\leq n-1$.	If in addition $p<f<1$, then 	
	\begin{gather}		
	1<\bar{L}(n,k,p)<\frac{\bar{B}_{n,k}(p)}{b_{n,k}(p)}< \bar{U}(n,k,p)\leq  \bar{V}(n,k,p,0)< \frac{fq}{f-p}=\frac{1}{1-\bar{r}}=\frac{r}{r-1}.
	\end{gather}
\end{theorem}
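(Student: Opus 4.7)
The plan is to reduce \thref{thm:BnkbnkLBUBbar} to the already-proved \thref{thm:BnkbnkLBUB} via the elementary symmetry identity
\begin{equation*}
\bar{B}_{n,k}(p)=B_{n,n-k}(q),\qquad b_{n,k}(p)=b_{n,n-k}(q),
\end{equation*}
where the second follows from $\binom{n}{k}=\binom{n}{n-k}$ and the first is \eref{eq:UTailBinom}. Consequently
\begin{equation*}
\frac{\bar{B}_{n,k}(p)}{b_{n,k}(p)}=\frac{B_{n,n-k}(q)}{b_{n,n-k}(q)}.
\end{equation*}

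First I would check the hypotheses after the substitution $(n,k,p)\mapsto(n,n-k,q)$. The assumptions $k,n\in\bbN$, $0<p<1$, and $pn\le k\le n$ become $n-k\in\bbN_0$, $n\in\bbN$, $0<q<1$, and $0\le n-k\le qn$, which are exactly the hypotheses of \thref{thm:BnkbnkLBUB}. Moreover $k\le n-1$ corresponds to $n-k\ge1$, which is the condition under which all inequalities in \eref{eq:BnkbnkLBUB} are strict. Analogously, the added hypothesis $p<f<1$, i.e.\ $p<k/n<1$, translates to $0<(n-k)/n<q$, which is precisely the condition under which the strengthened chain \eref{eq:BnkbnkUBasymp} applies.

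Second, I would match the bound functions. By \eref{eq:LnkpBar} the function $\bar{L}(n,k,p)$ is defined as $L(n,n-k,q)$, and the analogous definitions (stated immediately after \eref{eq:LnkpBar}) give $\bar{V}(n,k,p,a)=V(n,n-k,q,a)$ and $\bar{U}(n,k,p)=U(n,n-k,q)$. Hence the conclusions of \thref{thm:BnkbnkLBUB} applied to $(n,n-k,q)$ read
\begin{gather*}
1\le\bar{L}(n,k,p)\le\frac{\bar{B}_{n,k}(p)}{b_{n,k}(p)}\le\bar{U}(n,k,p)<2\bar{L}(n,k,p),
\end{gather*}
with all inequalities strict when $n-k\ge1$, and in the strict regime
\begin{gather*}
\bar{U}(n,k,p)\le\bar{V}(n,k,p,0)<\frac{(1-f')q}{q-f'},
\end{gather*}
with $f'=(n-k)/n=1-f$. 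A one-line algebraic check then gives $(1-f')q/(q-f')=fq/(f-p)$, and the definition of $\bar{r}=(1-f)p/(fq)=1/r$ yields $1/(1-\bar{r})=r/(r-1)=fq/(f-p)$, reproducing the exact form asserted in the theorem.

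There is no substantial obstacle; the proof is essentially a dictionary translation. The only care required is book-keeping: verifying the strictness in the boundary cases ($k=n-1$ or $f=p$ degenerating) matches correctly under the substitution, and confirming the three equivalent expressions for the final upper bound via the identities $\bar{r}=1/r$ and $f'=1-f$. I would therefore write the proof as a single paragraph invoking \thref{thm:BnkbnkLBUB} on the tuple $(n,n-k,q)$, followed by the algebraic identification of the bound functions and the odds-ratio expressions.
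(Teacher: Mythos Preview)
Your proposal is correct and matches the paper's approach exactly: the paper states just before the theorem that it ``is a simple corollary of \eref{eq:UTailBinom} and \thref{thm:BnkbnkLBUB},'' and your substitution $(n,k,p)\mapsto(n,n-k,q)$ together with the identifications $\bar L,\bar V,\bar U$ and the odds-ratio algebra is precisely that corollary spelled out.
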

In analogy to \thref{thm:BnkbnkLBUB},
here  the upper bound $\bar{U}(n,k,p)$ and lower bound $\bar{L}(n,k,p)$ are tight within a factor of 2 and are  asymptotically tight; in addition, they  can be computed in $O(1)$ time. Previously,  McKay also derived good upper and lower bounds for the ratio $\bar{B}_{n,k}(p)/b_{n,k}(p)$ based on a completely different approach \cite{McKa89}. Comparison 
between our bounds and his bounds is presented in \aref{app:comparison}.

Next, by virtue of Eqs.~\eqref{eq:LTnkp}-\eqref{eq:UT+nkp} we can define
\begin{align}
\begin{split}
\hat{L}(n,k,p)&:=\tilde{L}(n,n-k,q)
\end{split}
\end{align}
and define $\hat{L}_-(n,k,p)$,  $\hat{U}(n,k,p)$,  $\hat{U}_+(n,k,p)$ in a similar way. Thanks to \eref{eq:UTailBinom} and the equality $D(f\|p)=D(1-f\|q)$,  \thref{thm:UTailBound} and \crref{cor:UTailLim} below  are simple corollaries of \thref{thm:TailBound} and \crref{cor:TailLim}, respectively. 
\begin{theorem}\label{thm:UTailBound}
	Suppose  $k,n\in \bbN$,   $0<p<1$,  $pn\leq k<n$, and  $f=k/n$. Then 
	\begin{align}
\frac{1}{\rme \sqrt{m}}& <	\frac{\bar{L}(n,k,p)}{\rme \sqrt{m}} \leq \frac{m^m\bar{L}(n,k,p)}{\rme^{m}m!}
< \bar{B}_{n,k}(p) \rme^{nD(\frac{k}{n}\| p)}< \sqrt{\frac{n}{k}}\frac{m^m\bar{U}(n,k,p)}{\rme^{m}m!},	\label{eq:BnkpLUB1Bar} \\[1ex]
\frac{1}{\sqrt{8}}&< \frac{\bar{L}(n,k,p)}{\sqrt{8}}  < \sqrt{\frac{km}{n}}\bar{B}_{n,k}(p)\rme^{nD(\frac{k}{n}\| p)}	< \frac{\bar{U}(n,k,p)}{\sqrt{2\pi}} 
< \frac{2\bar{L}(n,k,p)}{\sqrt{2\pi }}, \label{eq:BnkpLUB2Bar}\\[1ex]
&\hat{L}_-(n,k,p)< \hat{L}(n,k,p)<	\sqrt{n} \bar{B}_{n,k}(p) \rme^{nD(\frac{k}{n}\| p)}	< \hat{U}(n,k,p) \label{eq:BnkpLUB3Bar}\\
&\hphantom{\hat{L}_-(n,k,p)}< \hat{U}_+(n,k,p)< \frac{89}{44}	\hat{L}_-(n,k,p), \nonumber
\end{align}	
where $m=n-k$.	
If in addition  $f>p$, then 
	\begin{gather}
\sqrt{n} \bar{B}_{n,k}(p) \rme^{nD(\frac{k}{n}\| p)}< \hat{U}(n,k,p)< \hat{U}_+(n,k,p)<	
	\sqrt{\frac{f}{2\pi (1-f)}}\,\frac{q}{f-p}. \label{eq:BnkpLUB4Bar}
	\end{gather}	
\end{theorem}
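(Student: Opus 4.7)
The plan is to deduce Theorem~\ref{thm:UTailBound} as a direct corollary of Theorem~\ref{thm:TailBound} by exploiting the duality $\bar{B}_{n,k}(p)=B_{n,n-k}(q)$ recorded in Eq.~\eqref{eq:UTailBinom}. Introduce the dual parameters $k':=n-k$, $p':=q$, and $f':=k'/n=1-f$. Under this substitution, the hypothesis $pn\leq k\leq n$ of the upper-tail theorem translates to $0\leq k'\leq p'n$, which is precisely the standing hypothesis of Theorem~\ref{thm:TailBound}; similarly $k\leq n-1$ becomes $k'\geq 1$, and $f>p$ becomes $f'<p'$.

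The first step is to record the symmetry identities
\begin{gather*}
b_{n,k}(p)=b_{n,k'}(p'),\quad D\!\left(\tfrac{k}{n}\Big\|p\right)=D\!\left(\tfrac{k'}{n}\Big\|p'\right),\\
\bar{L}(n,k,p)=L(n,k',p'),\quad \bar{U}(n,k,p)=U(n,k',p'),\\
\hat{L}(n,k,p)=\tilde{L}(n,k',p'),\quad \hat{L}_-(n,k,p)=\tilde{L}_-(n,k',p'),\\
\hat{U}(n,k,p)=\tilde{U}(n,k',p'),\quad \hat{U}_+(n,k,p)=\tilde{U}_+(n,k',p').
\end{gather*}
The first two are immediate from $\binom{n}{k}=\binom{n}{k'}$ and the symmetry of the relative entropy, and the remaining equalities hold by the very definitions of $\bar{L},\bar{U}$ and $\hat{L},\hat{L}_-,\hat{U},\hat{U}_+$, which are modeled on Eqs.~\eqref{eq:Lnkp}--\eqref{eq:Unkp} and \eqref{eq:LTnkp}--\eqref{eq:UT+nkp}.

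Next, I would apply Theorem~\ref{thm:TailBound} with $(n,k,p)$ replaced by $(n,k',p')$. Using the identities above together with $\bar{B}_{n,k}(p)=B_{n,k'}(p')$, each of Eqs.~\eqref{eq:BnkpLUB1}--\eqref{eq:BnkpLUB4} translates term-by-term into the corresponding Eqs.~\eqref{eq:BnkpLUB1Bar}--\eqref{eq:BnkpLUB4Bar}, with the parameter $m=n-k=k'$ playing the role of $k$ and the prefactor $\sqrt{n/(n-k')}$ becoming $\sqrt{n/k}$ as required in Eq.~\eqref{eq:BnkpLUB1Bar}. The only algebraic verification worth highlighting is that the asymptotic upper bound in Eq.~\eqref{eq:BnkpLUB4}, namely $\sqrt{(1-f')/(2\pi f')}\,p'/(p'-f')$, becomes $\sqrt{f/(2\pi(1-f))}\,q/(f-p)$ after substituting $f'=1-f$ and $p'=q$, matching the bound in Eq.~\eqref{eq:BnkpLUB4Bar}.

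There is no real obstacle: the result is purely a reflection of the lower-tail theorem across the axis $k\mapsto n-k$, and all the analytic work (bounds on $B_{n,k'}(p')/b_{n,k'}(p')$, on $b_{n,k'}(p')$, and on the combined quantities $\tilde{L},\tilde{L}_-,\tilde{U},\tilde{U}_+$) has already been carried out in Theorems~\ref{thm:BnkbnkLBUB} and~\ref{thm:TailBound}. The only care needed is bookkeeping to ensure that each occurrence of $k$, $n-k$, $p$, and $q$ in the lower-tail statements maps to the intended quantity after the swap.
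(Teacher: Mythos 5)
Your proposal is correct and follows exactly the paper's route: the paper likewise obtains Theorem~\ref{thm:UTailBound} as an immediate corollary of Theorem~\ref{thm:TailBound} via the reflection $\bar{B}_{n,k}(p)=B_{n,n-k}(q)$, the identity $D(f\|p)=D(1-f\|q)$, and the definitions $\bar{L}(n,k,p)=L(n,n-k,q)$, $\hat{L}(n,k,p)=\tilde{L}(n,n-k,q)$, etc. Your bookkeeping of the hypotheses ($pn\leq k<n$ mapping to $1\leq k'\leq p'n$) and of the prefactor $\sqrt{n/(n-k')}=\sqrt{n/k}$ is exactly what is needed.
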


In analogy to \thref{thm:TailBound}, the  bounds $\bar{L}(n,k,p)/\sqrt{8}$ and $\bar{U}(n,k,p)/\sqrt{2\pi}$   are tight within a factor of $4/\sqrt{\pi}$. The bounds $\hat{L}(n,k,p)$ and  $\hat{U}(n,k,p)$ are tight within  a factor of 2. The  bounds $\hat{L}_-(n,k,p)$ and $\hat{U}_+(n,k,p)$ are tight within  a factor of $89/44$.  In addition, the four bounds $\hat{L}(n,k,p)$, $\hat{U}(n,k,p)$,  $\hat{L}_-(n,k,p)$,  and $\hat{U}_+(n,k,p)$
are asymptotically tight.

\begin{corollary}\label{cor:UTailLim}
	Suppose  $k,n\in \bbN$ and   $0<p<f<1$; then 
	\begin{gather}
	\lim_{n\to \infty}	\frac{k^kL(n,k,p)}{\rme^k k!}= \lim_{n\to \infty} \bar{B}_{n,n-k}(q) \rme^{nD(\frac{k}{n}\| p)}= \lim_{n\to \infty} \frac{\sqrt{n}\,k^kU(n,k,p)}{\sqrt{n-k}\,\rme^k k!}=
	\frac{k^k }{\rme^{k} k!},\\
	\lim_{n\rightarrow \infty} \hat{L}_-(n,f n,p)=\lim_{n\rightarrow \infty} \hat{U}_+(n,f n,p)=\sqrt{\frac{f}{2\pi (1-f)}}\,\frac{q}{f-p}.  \label{eq:BnfpRElimBar} 
	\end{gather}
\end{corollary}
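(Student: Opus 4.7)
The plan is to reduce Corollary \ref{cor:UTailLim} to Corollary \ref{cor:TailLim} via the lower/upper tail duality. The three ingredients are the identity $\bar{B}_{n,k}(p) = B_{n,n-k}(q)$ from \eqref{eq:UTailBinom}, the symmetry $D(f\|p) = D(1-f\|q)$ of the relative entropy, and the definitions $\hat{L}_-(n,k,p) = \tilde{L}_-(n,n-k,q)$ and $\hat{U}_+(n,k,p) = \tilde{U}_+(n,n-k,q)$ (stated just before \thref{thm:UTailBound}). Together these convert every statement about the upper tail at parameters $(n,k,p)$ into the corresponding statement about the lower tail at $(n,n-k,q)$.

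For the first equation, I would observe that $\bar{B}_{n,n-k}(q) = B_{n,n-(n-k)}(1-q) = B_{n,k}(p)$ by \eqref{eq:UTailBinom}, so the middle term equals $B_{n,k}(p)\rme^{nD(k/n\|p)}$. The two outer expressions are \emph{literally} the same functions of $(n,k,p)$ as those in \eqref{eq:BnkpRElim}, so the entire first equation coincides with \eqref{eq:BnkpRElim} and thus follows from \crref{cor:TailLim}. The hypothesis $k\le pn$ needed there is automatic for fixed $k$ and all sufficiently large $n$, so the assumption $p<f$ plays no role here.

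For the second equation, I would substitute $k = fn$ into the definitions above to obtain $\hat{L}_-(n,fn,p) = \tilde{L}_-(n,(1-f)n,q)$ and $\hat{U}_+(n,fn,p) = \tilde{U}_+(n,(1-f)n,q)$. Under $0 < p < f < 1$ one has $0 < 1-f < q < 1$, so \eqref{eq:BnfpRElim} of \crref{cor:TailLim} applies with $(f,p)$ replaced by $(1-f,q)$, yielding
\begin{align*}
\lim_{n\to\infty}\tilde{L}_-(n,(1-f)n,q)
&= \sqrt{\frac{1-(1-f)}{2\pi(1-f)}}\,\frac{q}{q-(1-f)}
= \sqrt{\frac{f}{2\pi(1-f)}}\,\frac{q}{f-p},
\end{align*}
and identically for $\tilde{U}_+$, which matches the claimed common limit on the right-hand side of \eqref{eq:BnfpRElimBar}.

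There is no substantive obstacle: the entire argument is a symbolic translation of \crref{cor:TailLim} under the involution $(k,p)\mapsto(n-k,q)$. The only bookkeeping worth pausing over is the algebraic identity $q-(1-f) = f-p$ and the verification that the hypothesis $0 < p < f < 1$ of \crref{cor:UTailLim} corresponds precisely to the hypothesis $0 < 1-f < q < 1$ required to invoke \eqref{eq:BnfpRElim}.
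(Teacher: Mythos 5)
Your proposal is correct and follows exactly the route the paper takes: the paper states that \crref{cor:UTailLim} is a simple corollary of \crref{cor:TailLim} via the identity $\bar{B}_{n,k}(p)=B_{n,n-k}(q)$, the symmetry $D(f\|p)=D(1-f\|q)$, and the definitions $\hat{L}_-(n,k,p)=\tilde{L}_-(n,n-k,q)$, $\hat{U}_+(n,k,p)=\tilde{U}_+(n,n-k,q)$, which is precisely your translation under $(k,p)\mapsto(n-k,q)$. Your bookkeeping (recognizing the first chain as literally \eqref{eq:BnkpRElim}, and checking $q-(1-f)=f-p$ and the hypothesis correspondence $0<p<f<1\Leftrightarrow 0<1-f<q<1$) is accurate and complete.
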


If $f$ is a rational number that satisfies $p<f<1$ and $n\in \bbN_f$, then \eqsref{eq:BnkpLUB3Bar}{eq:BnfpRElimBar} imply the following result \cite{AG,BlacH59,BahaR60,DembZ10}:
\begin{gather}
\lim_{n\to \infty}\sqrt{n} \bar{B}_{n,fn}(p) \rme^{ nD(f\| p)}=\sqrt{\frac{f}{2\pi (1-f)}}\,\frac{q}{f-p}. 
\end{gather}

Thanks to \eref{eq:UTailBinom} again, the following two propositions are simple corollaries of  \psref{pro:BnfRE} and \ref{pro:LUTmono}, respectively. 
\begin{proposition}
	Suppose  $0<f<1$ and $n\in \bbN_f$, then  $\sqrt{n}\bar{B}_{n,fn}(p)\rme^{nD(f\| p)}$ is strictly increasing in $n$. 
\end{proposition}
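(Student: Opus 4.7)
The plan is to reduce the claim directly to \pref{pro:BnfRE} via the duality between upper and lower tail probabilities encoded in \eref{eq:UTailBinom}. Recall that $\bar{B}_{n,k}(p)=B_{n,n-k}(q)$, so applying this with $k=fn$ gives $\bar{B}_{n,fn}(p)=B_{n,(1-f)n}(q)$. Moreover, $D(f\|p)=D(1-f\|q)$ from the definition of the binary relative entropy. Hence
\begin{align*}
\sqrt{n}\,\bar{B}_{n,fn}(p)\,\rme^{nD(f\|p)}=\sqrt{n}\,B_{n,(1-f)n}(q)\,\rme^{nD(1-f\|q)}.
\end{align*}

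Next, set $f':=1-f$ and $p':=q$. Then $0<f'<1$ and $0<p'<1$, and the membership condition $n\in\bbN_f$ is equivalent to $n\in\bbN_{f'}$, since $fn\in\bbN_0$ iff $(1-f)n=n-fn\in\bbN_0$. Therefore the quantity on the right-hand side is of the form $\sqrt{n}\,B_{n,f'n}(p')\,\rme^{nD(f'\|p')}$ with $f',p'\in(0,1)$ and $n\in\bbN_{f'}$, so \pref{pro:BnfRE} applies and shows that it is strictly increasing in $n$. Translating back through the identity above yields the claim.

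There is essentially no technical obstacle here: the entire proof is a change of variables $(f,p)\mapsto(1-f,1-p)$ together with the complementarity \eref{eq:UTailBinom} and the symmetry of the binary relative entropy. The only point worth double-checking is the invariance of the lattice condition $\bbN_f=\bbN_{1-f}$, which is immediate from the definition \eref{eq:Nf}.
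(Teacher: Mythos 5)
Your proposal is correct and is exactly the paper's argument: the paper derives this proposition as a "simple corollary" of \pref{pro:BnfRE} via the identity $\bar{B}_{n,k}(p)=B_{n,n-k}(q)$ from \eref{eq:UTailBinom} together with $D(f\|p)=D(1-f\|q)$. Your additional check that $\bbN_f=\bbN_{1-f}$ is a sensible (and valid) detail that the paper leaves implicit.
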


\begin{proposition}
	Suppose $0<p\leq f<1$ and $n\geq 0$.  Then $\hat{L}(n,fn,p)$, $\hat{L}_-(n,fn,p)$, $\hat{U}(n,fn,p)$, and $\hat{U}_+(n,fn,p)$ are  strictly increasing in $n$. 	
\end{proposition}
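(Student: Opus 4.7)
The plan is to obtain this proposition as an immediate corollary of \pref{pro:LUTmono} via the symmetry between the upper and lower tail probabilities expressed in \eref{eq:UTailBinom}, namely $\bar{B}_{n,k}(p)=B_{n,n-k}(q)$. This symmetry is already built into the definitions of $\hat{L}(n,k,p)$, $\hat{L}_-(n,k,p)$, $\hat{U}(n,k,p)$, $\hat{U}_+(n,k,p)$, which are obtained from the corresponding tilded quantities by swapping $k \leftrightarrow n-k$ and $p \leftrightarrow q$. So the task is purely to translate the hypothesis of \pref{pro:LUTmono} through this substitution.

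Concretely, I would start by writing
\begin{align*}
\hat{L}(n,fn,p) &= \tilde{L}(n,(1-f)n,q), \\
\hat{L}_-(n,fn,p) &= \tilde{L}_-(n,(1-f)n,q), \\
\hat{U}(n,fn,p) &= \tilde{U}(n,(1-f)n,q), \\
\hat{U}_+(n,fn,p) &= \tilde{U}_+(n,(1-f)n,q),
\end{align*}
using only the definitions of the hat quantities. Then set $f' := 1-f$ and $p' := q = 1-p$. The assumption $0 < p \leq f < 1$ yields $0 < 1-f \leq 1-p < 1$, i.e., $0 < f' \leq p' < 1$, which is exactly the hypothesis of \pref{pro:LUTmono} with the primed parameters.

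Since \pref{pro:LUTmono} asserts that $\tilde{L}(n,f'n,p')$, $\tilde{L}_-(n,f'n,p')$, $\tilde{U}(n,f'n,p')$, and $\tilde{U}_+(n,f'n,p')$ are strictly increasing in $n$, the claim follows immediately after unwinding the substitution. There is no genuine obstacle here; the only thing to be careful about is that the parameter $f$ enters the hat quantities exclusively through the combination $(1-f)n$, so that the monotonicity in $n$ (at fixed $f$) on the hat side corresponds cleanly to the monotonicity in $n$ (at fixed $f'=1-f$) on the tilde side, without any hidden $n$-dependence in $p' = q$.
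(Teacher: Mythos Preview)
The proposal is correct and takes essentially the same approach as the paper, which explicitly states that this proposition is a simple corollary of \pref{pro:LUTmono} via \eref{eq:UTailBinom}. Your substitution $f' = 1-f$, $p' = q$ and verification that $0 < f' \leq p' < 1$ is exactly the translation the paper has in mind.
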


\subsection{\label{sec:phiProof}Proofs of \lsref{lem:phi} and \ref{lem:varphipm}}
Similar to  \sref{S5-1},
in this section we do not assume that $k$ and $n$ are integers.

To prove \lref{lem:phi}, we need to prepare several auxiliary  lemmas. 
Recall that a function $\omega(x)$ is  completely monotonic \cite{SchiSV12} over an open interval $I$ if it has derivatives of all orders and 
\begin{align}
(-1)^m\omega^{(m)}(x)\geq 0\quad \forall x\in I,\quad  m=0,1,2,\ldots.  \label{eq:CMcondition}
\end{align}
The function  $\omega(x)$ is strictly completely monotonic if the inequality in \eref{eq:CMcondition} is always strict. Note that a (strictly) completely monotonic function is  in particular (strictly) decreasing and (strictly) convex. A function $g(x)$  is  (strictly) logarithmically completely monotonic if $-[\ln g(x)]'$ is (strictly) completely monotonic \cite{SchiSV12}. It is known that any function that is (strictly) logarithmically completely monotonic  is (strictly) completely monotonic \cite{QiC04}. By definition the sum of two (strictly) completely monotonic functions is (strictly) completely monotonic; the product of two (strictly) logarithmically completely monotonic functions is (strictly) logarithmically completely monotonic. The following lemma is also a simple corollary of the above definitions. 

\begin{lemma}\label{lem:SCM}
 Suppose $\omega(y)$ is (strictly) completely monotonic in $y\in (0,\infty)$ and $0<f<1$; then $\omega(fy)+\omega((1-f)y)-\omega(y)$ is (strictly) decreasing in $y\in (0,\infty)$ and (strictly) convex in $f$. If in addition $0<x<y$, then $\omega(y-x)-\omega(y)$ and 
$\omega(x)+\omega(y-x)-\omega(y)$ are (strictly) completely monotonic in $y$ and (strictly) convex in $x$. 
\end{lemma}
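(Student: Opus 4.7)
The plan is to reduce each claim to the sign patterns $(-1)^m\omega^{(m)} \geq 0$ that define complete monotonicity. The key elementary observation I will use repeatedly is that for every $m \geq 0$ the function $(-1)^m\omega^{(m)}$ is itself (strictly) completely monotonic on $(0,\infty)$: its $j$-th derivative is $(-1)^m\omega^{(m+j)}$, so $(-1)^j$ times it equals $(-1)^{m+j}\omega^{(m+j)} \geq 0$. In particular $(-1)^m\omega^{(m)}$ is (strictly) positive and (strictly) decreasing.

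For the first part, set $g(y,f) := \omega(fy) + \omega((1-f)y) - \omega(y)$ and $\psi := -\omega'$. A direct computation gives $\partial_y g(y,f) = \psi(y) - f\psi(fy) - (1-f)\psi((1-f)y)$, and since $\psi$ is (strictly) decreasing with $fy,(1-f)y \leq y$, both $\psi(fy)$ and $\psi((1-f)y)$ are (strictly) at least $\psi(y)$, which yields the claimed (strict) monotonicity in $y$. Convexity in $f$ follows from $\partial_f^2 g(y,f) = y^2\bigl[\omega''(fy) + \omega''((1-f)y)\bigr]$, which is $\geq 0$ (strictly so when $\omega''$ is strictly positive).

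For the second part, fix $x$ with $0 < x < y$ and set $h_1(y) := \omega(y-x) - \omega(y)$ and $h_2(x,y) := \omega(x) + h_1(y)$. Differentiating in $y$, we obtain $(-1)^m h_1^{(m)}(y) = (-1)^m\omega^{(m)}(y-x) - (-1)^m\omega^{(m)}(y) \geq 0$, because $(-1)^m\omega^{(m)}$ is nonnegative and decreasing and $y - x < y$; for $h_2$ the added term $\omega(x)$ is constant in $y$ and only adjusts the $m=0$ value, which stays nonnegative since $\omega(x) \geq 0$. Convexity in $x$ is immediate from $\partial_x^2 h_1 = \omega''(y-x) \geq 0$ and $\partial_x^2 h_2 = \omega''(x) + \omega''(y-x) \geq 0$. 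The strict variants follow by upgrading each weak inequality to its strict counterpart.

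There is no serious obstacle in this proof; the only conceptual step is recognizing that complete monotonicity passes to each $(-1)^m\omega^{(m)}$, after which all four claims collapse to a one-line derivative calculation combined with a pointwise comparison of $\psi$ or $(-1)^m\omega^{(m)}$ at $y$, $y-x$, $fy$, and $(1-f)y$.
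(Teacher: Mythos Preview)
Your proof is correct and follows exactly the approach the paper intends: the paper states that the lemma is ``a simple corollary of the above definitions'' and gives no further argument, and your expansion---computing derivatives explicitly and invoking the sign patterns $(-1)^m\omega^{(m)}\ge 0$ together with the fact that each $(-1)^m\omega^{(m)}$ is itself (strictly) completely monotonic and hence (strictly) decreasing---is precisely the routine verification that makes this corollary explicit.
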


 Define
\begin{align}
\omega_-(x)&:=\ln \Gamma(x+1)-\ln\sqrt{2\pi}-\Bigl(x+\frac{1}{2}\Bigr)\ln x+x-\frac{1}{12x},\\
\omega_+(x)&:=\ln \Gamma(x+1)-\ln\sqrt{2\pi}-\Bigl(x+\frac{1}{2}\Bigr)\ln x+x-\frac{1}{12x+1}, \\
\varrho_+(x)&:=\rme^{\frac{1}{12x}}\varrho(x)=\frac{1}{\sqrt{2\pi}} \rme^{-\omega_-(x)}
,\quad \varrho_-(x):=\rme^{\frac{1}{12x+1}}\varrho(x)=\frac{1}{\sqrt{2\pi}} \rme^{-\omega_+(x)}, \label{eq:varrhopm}
\end{align}
where $\varrho(x)$ is defined in \eref{eq:rhoLim}.

\begin{lemma}\label{lem:SCMomega}
	The functions $\omega_+(x)$ and $-\omega_-(x)$ are strictly completely monotonic over over $x\in (0,\infty)$. 
\end{lemma}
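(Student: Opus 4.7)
The plan is to recognize $\omega_+$ and $-\omega_-$ as classical Stirling remainders and to represent each as a Laplace transform with a pointwise positive density. Introduce Binet's remainder $\mu(x) := \ln\Gamma(x) - (x - \tfrac12)\ln x + x - \tfrac12\ln(2\pi)$; using $\ln\Gamma(x+1) = \ln x + \ln\Gamma(x)$, one checks directly that
\begin{equation*}
\omega_-(x) = \mu(x) - \frac{1}{12x}, \qquad \omega_+(x) = \mu(x) - \frac{1}{12x+1},
\end{equation*}
so the lemma is equivalent to saying that $\tfrac{1}{12x} - \mu(x)$ and $\mu(x) - \tfrac{1}{12x+1}$ are each strictly completely monotonic on $(0,\infty)$.

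Binet's first integral formula gives
\begin{equation*}
\mu(x) = \int_0^\infty \frac{1}{t}\left(\frac{1}{e^t - 1} - \frac{1}{t} + \frac{1}{2}\right) e^{-xt}\, dt,
\end{equation*}
while $\tfrac{1}{12x} = \int_0^\infty \tfrac{1}{12}\, e^{-xt}\, dt$ and $\tfrac{1}{12x+1} = \int_0^\infty \tfrac{1}{12}\, e^{-t/12} e^{-xt}\, dt$. Combining these, both target functions take the Laplace form $\int_0^\infty g(t)\, e^{-xt}\, dt$ with explicit densities
\begin{align*}
g_-(t) &:= \frac{1}{12} - \frac{1}{t}\left(\frac{1}{e^t - 1} - \frac{1}{t} + \frac{1}{2}\right), \\
g_+(t) &:= \frac{1}{t}\left(\frac{1}{e^t - 1} - \frac{1}{t} + \frac{1}{2}\right) - \frac{e^{-t/12}}{12}.
\end{align*}
Differentiating $m$ times under the integral sign produces the identity $(-1)^m \bigl[\int_0^\infty g(t)\, e^{-xt}\, dt\bigr]^{(m)} = \int_0^\infty t^m g(t)\, e^{-xt}\, dt$, so strict complete monotonicity on $(0,\infty)$ is equivalent to the pointwise positivity $g_\pm(t) > 0$ for all $t > 0$.

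For $g_-$, multiplying through by $12 t^2 (e^t - 1) > 0$ reduces positivity to the polynomial-exponential inequality $(t^2 - 6t + 12) e^t > t^2 + 6t + 12$ for $t > 0$; writing $\psi(t)$ for the difference of the two sides, one verifies $\psi(0) = \psi'(0) = \psi''(0) = 0$ together with $\psi'''(t) = t^2 e^t > 0$, so three integrations from $0$ deliver $\psi > 0$ on $(0,\infty)$. This is in essence Robbins's classical bound $\mu(x) < \tfrac{1}{12x}$ promoted to the complete-monotonicity level.

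The positivity of $g_+$ is the main obstacle, because its two summands share both value and first derivative at $t = 0$ while decaying at incomparable rates as $t \to \infty$. My plan is to clear denominators by multiplying through by $12 t^2 (e^t - 1) e^{t/12}$, reducing the claim to an inequality of the form $P_1(t) e^{13 t/12} + P_2(t) e^{t/12} + P_3(t) > 0$ with polynomial coefficients; repeated differentiation, while carefully tracking at each stage that the boundary values at $t = 0$ vanish to the matching order, eventually exposes a manifestly positive summand, after which successive integration from $0$ recovers the desired inequality. Once both $g_- > 0$ and $g_+ > 0$ are secured, Bernstein's theorem (equivalently, the Laplace-differentiation identity displayed above) immediately yields $(-1)^m [-\omega_-]^{(m)}(x) > 0$ and $(-1)^m \omega_+^{(m)}(x) > 0$ for every $m \geq 0$ and every $x > 0$, completing the proof.
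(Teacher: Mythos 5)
Your reduction via Binet's first integral is sound: the identities $\omega_-(x)=\mu(x)-\tfrac{1}{12x}$ and $\omega_+(x)=\mu(x)-\tfrac{1}{12x+1}$ are correct, the three Laplace representations are correct, and by Bernstein's theorem strict complete monotonicity of $-\omega_-$ and $\omega_+$ is indeed equivalent to the pointwise positivity of $g_-$ and $g_+$ on $(0,\infty)$. Your treatment of $g_-$ is complete and correct: clearing denominators does give $(t^2-6t+12)e^t>t^2+6t+12$, and the triple-differentiation argument closes it (this even supplies a self-contained proof of the half of the lemma that the paper simply imports from Mortici's Theorem~1). Note also that your $g_+$ is exactly the paper's key object in disguise: one checks $g_+(t)=\nu(t)/[t^2(1-e^{-t})]$ with $\nu(t)=t-\bigl(1+\tfrac t2+\tfrac{t^2}{12}e^{-t/12}\bigr)(1-e^{-t})$, so the positivity of $g_+$ is precisely the content of the paper's Lemma~\ref{lem:nut}, the crux of the whole proof.

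The gap is that you do not prove $g_+>0$; you only announce a plan, and the plan as stated is unlikely to go through literally. After multiplying by $12t^2(e^t-1)e^{t/12}$ the inequality becomes $(6t-12)e^{13t/12}+(6t+12)e^{t/12}-t^2e^t+t^2>0$; your displayed target form omits the $e^t$ term, which suggests the computation was not carried out. The summand $-t^2e^t$ differentiates to $-(t^2+2mt+m(m-1))e^t$, which is strictly negative for every $m\geq 0$ and every $t>0$, so no finite derivative of this function is term-by-term positive: ``repeated differentiation until a manifestly positive summand appears'' cannot terminate without an additional comparison between the growing $e^{13t/12}$ coefficient and the persistent negative $e^t$ term, and that comparison is exactly the hard part. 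The paper resolves it by expanding $e^t\nu(t)=\sum_{j\ge4}a_jt^j/j!$ with $a_j=\tfrac j2-1-\tfrac{j(j-1)}{12}\bigl(\tfrac{11}{12}\bigr)^{j-2}+\tfrac{j(j-1)}{12}\bigl(-\tfrac1{12}\bigr)^{j-2}$, verifying $a_j>0$ directly for $4\le j\le12$ and by an elementary bound for $j\ge13$. Some such coefficient analysis (or an explicit dominance argument) must be supplied for $g_+$ before your proof is complete; everything else in your write-up is fine.
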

This lemma is a combination of Theorems 1 and 2 in  \rcite{Mort10}, which state that $-\omega_-(x)$ and $\omega_+(x)$  are  completely monotonic  over $x\in (0,\infty)$. The proof of Theorem 1  in  \rcite{Mort10} actually shows that  $-\omega_-(x)$ is strictly completely monotonic. A mistake in the proof of Theorem 2 in  \rcite{Mort10} is corrected in \aref{app:SCM}. Note that the Stirling approximation in 
\eref{eq:Stirling} is a simple corollary of 
\lref{lem:SCMomega}.
The following lemma is also proved in \aref{app:SCM}.

\begin{lemma}\label{lem:SCMrho}
	Suppose $y>0$, then $1/\rho(y)$, $1/\varrho(y)$, $1/\varrho_-(y)$, and $\varrho_+(y)$	are strictly logarithmically completely monotonic and strictly completely monotonic in $y$. If in addition $0<x<y$, then 
	\begin{align}
	\rme^{\frac{1}{12y}-\frac{1}{12x}}< \frac{\varrho(x)}{\varrho(y)}=\frac{x^{x+1/2}\rme^y\Gamma(y+1)}{y^{y+1/2}\rme^x\Gamma(x+1)}< \rme^{\frac{1}{12y+1}-\frac{1}{12x+1}}. \label{eq:varrhokn}
	\end{align}
\end{lemma}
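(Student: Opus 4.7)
The plan is to establish strict logarithmic complete monotonicity (strict log-CM) for each of the four functions, from which strict complete monotonicity follows automatically (as noted just before the lemma), and then to extract the two-sided bound on $\varrho(x)/\varrho(y)$ from the resulting monotonicities. My main tool is Bernstein's theorem: a positive function $h(y)$ on $(0,\infty)$ is strictly completely monotonic whenever it admits a Laplace representation $h(y)=\int_0^\infty \rme^{-yt} g(t)\,\rmd t$ with a density $g\ge 0$ that is not almost-everywhere zero.

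For $\varrho_+$ and $1/\varrho_-$ the claim follows immediately from \lref{lem:SCMomega}. Since $-\omega_-$ is strictly CM, the inequality $(-1)^{k+1}\omega_-^{(k+1)}(y)>0$ (which is the defining CM inequality for $-\omega_-$ at order $k+1$) rewrites as $(-1)^k(\omega_-')^{(k)}(y)>0$ for all $k\ge 0$, so $\omega_-'$ is itself strictly CM; and from \eref{eq:varrhopm} we have $-[\ln \varrho_+(y)]'=\omega_-'(y)$, so $\varrho_+$ is strictly log-CM. The analogous derivative trick using strict CM of $\omega_+$ shows $-\omega_+'$ is strictly CM, and since $-[\ln(1/\varrho_-(y))]'=-\omega_+'(y)$, the function $1/\varrho_-$ is strictly log-CM.

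For $1/\rho$ and $1/\varrho$ I would invoke Binet's integral formula, which after substitution into the definitions \eref{eq:rhovarrho} and one differentiation in $y$ produces the clean Laplace representations
\[
-[\ln(1/\varrho(y))]'=\int_0^\infty \psi(t)\,\rme^{-yt}\,\rmd t,\qquad -[\ln(1/\rho(y))]'=\int_0^\infty \Bigl(\frac{1}{t}-\frac{1}{\rme^t-1}\Bigr)\rme^{-yt}\,\rmd t,
\]
where $\psi(t):=1/(\rme^t-1)-1/t+1/2$ and the second representation follows from the first together with $1/(2y)=\int_0^\infty(1/2)\rme^{-yt}\,\rmd t$. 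Both densities are strictly positive on $(0,\infty)$: the inequality $1/t>1/(\rme^t-1)$ is just $\rme^t>1+t$, while $\psi(t)>0$ is trivial for $t\ge 2$ (since $1/t\le 1/2$) and for $0<t<2$ reduces to $2+t>(2-t)\rme^t$, verified by checking that $h(t):=2+t-(2-t)\rme^t$ satisfies $h(0)=h'(0)=0$ and $h''(t)=t\rme^t>0$. Bernstein's theorem then delivers strict complete monotonicity of both logarithmic derivatives, i.e.\ strict log-CM of $1/\rho$ and $1/\varrho$.

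For the two-sided bound on $\varrho(x)/\varrho(y)$, the definitions \eref{eq:varrhopm} yield the algebraic identities
\[
\frac{\varrho(x)}{\varrho(y)}=\rme^{\frac{1}{12y}-\frac{1}{12x}}\frac{\varrho_+(x)}{\varrho_+(y)}=\rme^{\frac{1}{12y+1}-\frac{1}{12x+1}}\frac{\varrho_-(x)}{\varrho_-(y)},
\]
so the two claimed inequalities are equivalent to $\varrho_+(y)<\varrho_+(x)$ and $\varrho_-(x)<\varrho_-(y)$ for $0<x<y$. Both are direct consequences of \lref{lem:SCMomega}: strict CM of $-\omega_-$ makes $-\omega_-$ strictly decreasing, forcing $\varrho_+=(1/\sqrt{2\pi})\,\rme^{-\omega_-}$ to be strictly decreasing; likewise strict CM of $\omega_+$ makes $\omega_+$ strictly decreasing, so $1/\varrho_-=\sqrt{2\pi}\,\rme^{\omega_+}$ is strictly decreasing and hence $\varrho_-$ is strictly increasing. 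The one delicate point in the whole plan is the global positivity $\psi(t)>0$, handled by the elementary case split above; everything else is bookkeeping around Bernstein's theorem and the derivative trick on completely monotonic functions.
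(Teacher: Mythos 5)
Your proof is correct and follows the same overall architecture as the paper's: establish strict logarithmic complete monotonicity of each of the four functions, then read off the two-sided bound \eqref{eq:varrhokn} from the strict monotonicity of $\varrho_+$ and $\varrho_-$ (your algebraic reduction of the two inequalities to $\varrho_+(x)>\varrho_+(y)$ and $\varrho_-(x)<\varrho_-(y)$ is exactly the paper's last step). The one place you genuinely diverge is the treatment of $1/\rho$ and $1/\varrho$: the paper computes $[\ln\rho(y)]'=\psi(y)+\tfrac1y-\ln y$ and $[\ln\varrho(y)]'=\ln y-\psi(y)-\tfrac{1}{2y}$ and then cites Theorem~1.3 of \cite{Qi07} for their strict complete monotonicity, whereas you derive the same conclusion self-containedly from the classical Laplace representation of the digamma function, reducing everything to the positivity of the densities $\tfrac1t-\tfrac{1}{\rme^t-1}$ and $\tfrac{1}{\rme^t-1}-\tfrac1t+\tfrac12$, which you verify by elementary means (your second-derivative argument for $2+t>(2-t)\rme^t$ is fine, and the relation between the two densities via $\tfrac{1}{2y}=\int_0^\infty\tfrac12\rme^{-yt}\,\rmd t$ checks out). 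This buys independence from the external reference at the cost of a page of classical analysis; it is essentially an unpacking of what the cited theorem proves. Two cosmetic remarks: the implication you need from the Laplace representation (nonnegative density $\Rightarrow$ completely monotonic) is the easy direction obtained by differentiating under the integral sign, not Bernstein's theorem proper, which is the converse; and your "derivative trick" showing that $\omega_-'$ and $-\omega_+'$ inherit strict complete monotonicity from $-\omega_-$ and $\omega_+$ is a correct index shift, matching the paper's implicit use of the definitions in \eqref{eq:varrhopm}.
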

This lemma in particular implies that
$\rho(y)$ and  $\varrho(y)$ defined in \eref{eq:rhovarrho} are strictly increasing and strictly logarithmically concave  in $y$ for $y>0$ \cite{Guo06}.

\begin{proof}[Proof of \lref{lem:phi}]
	To prove \lref{lem:phi}, we first establish the monotonicity and convexity/concavity properties of $\phi(n,k)$, $\phi(n,f n)$, $\varphi(n,k)$, and $\varphi(n,f n)$. 
	The definitions in  \eqsref{eq:phi(nk)}{eq:varphi(nk)} imply that
	\begin{align}
	\ln \phi(n,k)=\ln \rho(n)-\ln \rho(k)-\ln \rho(n-k),\quad \ln \varphi(n,k)=\ln \varrho(k)+\ln \varrho(n-k)-\ln \varrho(n). \label{eq:phi(nk)Proof}
	\end{align}
	In addition,  $[\ln\rho(n)]'$ and  $[\ln\varrho(n)]'$ are strictly  completely monotonic by \lref{lem:SCMrho}, from  which it is straightforward to  deduce  
	the monotonicity and convexity/concavity properties of $\phi(n,k)$, $\phi(n,f n)$, $\varphi(n,k)$, and $\varphi(n,f n)$ stated in \lref{lem:phi} (cf. \lref{lem:SCM}). 
	
    \Eref{eq:phiLUB} follows from the limits in  \eref{eq:philim} and the fact that  $\lim_{n\to k}\phi(n,k)=1$, give that $\phi(n,k)$ is strictly decreasing in $n$. The first and fourth inequalities in  \eref{eq:varphiLUB} are obvious; the second inequality   
follows from \eref{eq:philim}, given that $\varphi(n,k)$ is strictly increasing in $n$; the third inequality follows from the Stirling approximation in \eref{eq:Stirling}.

	The first  and second inequalities in \eref{eq:varphiLUB2}  follow from the Stirling approximation in \eref{eq:Stirling} and \lref{lem:SCMrho}; 
	the third inequality in  \eref{eq:varphiLUB2}  is straightforward to verify and is saturated when $k=n/2$.

	Finally, we consider \eqsref{eq:phiLB2}{eq:varphiLUB4}. 
	The first inequality in \eref{eq:phiLB2} follows from
 \eref{eq:phiLUB};	the second inequality follows from the fact that $\varrho(k)$ is strictly increasing in $k$ by \lref{lem:SCMrho} and the fact that $\varrho(1)=1/\rme$. The third  inequality in  \eref{eq:varphiLUB4} follows from \eref{eq:varphiLUB}; the second and first inequalities  in \eref{eq:varphiLUB4} can be proved as follows,
	\begin{align}
\begin{split}
	\varphi(n,k)\geq &\varphi(j+k,k)= \varphi(j+k,j)\geq  \varphi(2j,j)\geq \varphi(j+1,j)\\
	=&
	\varphi(j+1,1)\geq \varphi(2,1)=
	\frac{1}{\sqrt{8}}.
\end{split}
	\end{align}
	Here all the inequalities follow from the assumption $1\leq j\leq k\leq n-j$ and the fact that $\varphi(n,k)$ is strictly increasing in $n$; the first two equalities follow from the fact that $	\varphi(n,k)=	\varphi(n,n-k)$. Incidentally, the first inequality  in \eref{eq:varphiLUB4} can also be regarded as a special case of the second inequality.
\end{proof}

To prove \lref{lem:varphipm}, we need to introduce one more auxiliary lemma. Define
\begin{align}
\tau(n):=\frac{1}{12n}-\frac{1}{12n+1}=\frac{1}{12n(12n+1)},\quad  \xi(n,k):=\tau(k)+\tau(n-k)-\tau(n). \label{eq:uv}
\end{align}
\begin{lemma}\label{lem:xi}
	Suppose $0<k<n$ and $0<f<1$; then $\tau(n)$ is
strictly completely monotonic. Meanwhile, 	$\xi(n,k)$ is strictly completely monotonic in $n$ and strictly convex in $k$, while $\xi(n,fn)$ is strictly decreasing in $n$ and  strictly convex in $f$.  If in addition $1\leq k\leq n-1$, then 
\begin{align}
	0< \xi(n,k)\leq \frac{29}{2600}. \label{eq:xiUB}
	\end{align}
\end{lemma}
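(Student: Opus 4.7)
The plan is to verify strict complete monotonicity of $\tau(n)$ by hand, then to deduce every monotonicity and convexity statement about $\xi$ as an immediate application of Lemma~\ref{lem:SCM}, and finally to pin down the sharp numerical bound by exploiting the symmetry and convexity in $k$ already established. First I would write
\begin{align}
\tau(n)=\int_0^\infty e^{-12nt}(1-e^{-t})\,dt,
\end{align}
which follows by pairing the two Laplace integrals $\int_0^\infty e^{-12nt}\,dt=1/(12n)$ and $\int_0^\infty e^{-(12n+1)t}\,dt=1/(12n+1)$. Differentiation under the integral sign then yields $(-1)^m\tau^{(m)}(n)=\int_0^\infty(12t)^m e^{-12nt}(1-e^{-t})\,dt>0$ for every $m\ge 0$ and every $n>0$, which is the strict complete monotonicity of $\tau$.

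With this in hand, I would apply Lemma~\ref{lem:SCM} to $\omega=\tau$. The assertion that $\omega(x)+\omega(y-x)-\omega(y)$ is strictly completely monotonic in $y$ and strictly convex in $x$ (taking $x=k$, $y=n$) is exactly the claim that $\xi(n,k)$ is strictly completely monotonic in $n$ and strictly convex in $k$. The assertion that $\omega(fy)+\omega((1-f)y)-\omega(y)$ is strictly decreasing in $y$ and strictly convex in $f$ (taking $y=n$) is exactly the claim about $\xi(n,fn)$.

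For the two-sided bound \eref{eq:xiUB}, the lower bound is immediate: since $\tau$ is in particular strictly decreasing, $\tau(n-k)>\tau(n)$ for any $k>0$, so $\xi(n,k)>\tau(k)>0$. For the upper bound I would use the symmetry $\xi(n,k)=\xi(n,n-k)$ together with strict convexity of $\xi(n,\cdot)$ on $(0,n)$ to conclude that on $[1,n-1]$ the maximum is attained at the endpoints, giving $\xi(n,k)\le\xi(n,1)=\tau(1)+\tau(n-1)-\tau(n)$ for $1\le k\le n-1$. Invoking Lemma~\ref{lem:SCM} once more (with $\omega=\tau$ and $x=1$), the function $n\mapsto\tau(n-1)-\tau(n)$ is strictly completely monotonic, hence strictly decreasing, in $n>1$, so the supremum of $\xi(n,1)$ over $n\ge 2$ is attained at $n=2$. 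A direct calculation then yields
\begin{align}
\xi(2,1)=2\tau(1)-\tau(2)=\tfrac{2}{156}-\tfrac{1}{600}=\tfrac{29}{2600},
\end{align}
which completes the argument. I expect no serious obstacle: the only subtlety is propagating \emph{strict} rather than weak complete monotonicity through Lemma~\ref{lem:SCM}, but the explicit integral representation of $\tau$ makes the strictness transparent.
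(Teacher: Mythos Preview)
Your proof is correct and follows essentially the same architecture as the paper: strict complete monotonicity of $\tau$, then Lemma~\ref{lem:SCM} with $\omega=\tau$ for all the $\xi$ statements, then symmetry plus monotonicity/convexity to reduce the upper bound to $\xi(2,1)=29/2600$.

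Two minor differences are worth noting. For the strict complete monotonicity of $\tau$, the paper observes that $1/(12n)$ is strictly completely monotonic and writes $\tau(n)=\omega(n)-\omega(n+\tfrac{1}{12})$ with $\omega(y)=1/(12y)$, invoking Lemma~\ref{lem:SCM} one more time; your Laplace representation $\tau(n)=\int_0^\infty e^{-12nt}(1-e^{-t})\,dt$ is a clean direct alternative that makes the strictness transparent. For the upper bound, the paper runs the chain $\xi(n,k)\le\xi(k+1,k)=\xi(k+1,1)\le\xi(2,1)$ (reduce $n$ first, then swap via symmetry, then reduce $n$ again), whereas you reduce in $k$ first via convexity on $[1,n-1]$ and then in $n$; both orderings land at $\xi(2,1)$ for the same reasons.
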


\begin{proof}[Proof of \lref{lem:xi}]
By definition it is easy to verify that the function $1/(12n)$ is strictly completely monotonic, so $\tau(n)$ is strictly completely monotonic  according to \lref{lem:SCM}. 
Thanks to \lref{lem:SCM} again, 	$\xi(n,k)$ is strictly completely monotonic in $n$ and strictly convex in $k$, while $\xi(n,fn)$ is strictly decreasing in $n$ and  strictly convex in $f$. 

If in addition $1\leq k\leq n-1$, then 
	\begin{align}
	\xi(n,k)\leq \xi(k+1,k)=\xi(k+1,1)\leq \xi(2,1)=\frac{29}{2600}, 
	\end{align}
given that	$\xi(n,k)$	is strictly decreasing in $n$ and that 	$\xi(n,k)=\xi(n,n-k)$.  
\end{proof}

\begin{proof}[Proof of \lref{lem:varphipm}]
Straightforward calculation shows that
	\begin{align}
	\frac{\varphi(n,k)}{\varphi_-(n,k)}&=\frac{\sqrt{2\pi}\varrho_+(k)\varrho_+(n-k)}{\varrho_+(n)}=\rme^{\omega_-(n)-\omega_-(k)-\omega_-(n-k)},\\
\frac{\varphi_+(n,k)}{\varphi(n,k)}&=\frac{\varrho_-(n)}{\sqrt{2\pi}\varrho_-(k)\varrho_-(n-k)}=\rme^{-\omega_+(n)+\omega_+(k)+\omega_+(n-k)}.
\end{align}
In addition, $\omega_+(x)$ and $-\omega_-(x)$ are strictly completely monotonic over $x\in (0,\infty)$ according to \lref{lem:SCMomega}, so $\omega_-(n)-\omega_-(k)-\omega_-(n-k)$ and $-\omega_+(n)+\omega_+(k)+\omega_+(n-k)$
are  strictly  completely monotonic in $n$ and strictly  convex in $k$ by  \lref{lem:SCM}. It follows that
 $\varphi(n,k)/\varphi_-(n,k)$ and
	$\varphi_+(n,k)/\varphi(n,k)$	are  strictly logarithmically completely monotonic in $n$ and strictly logarithmically convex in $k$; in particular, they are strictly decreasing in $n$ and strictly logarithmically convex in $n$ and $k$.  Meanwhile, $\varphi(n,fn)/\varphi_-(n,fn)$ and $\varphi_+(n,fn)/\varphi(n,fn)$
	are strictly decreasing in $n$ and strictly  logarithmically convex in  $f$.

 According to \lref{lem:xi} and the following equation 
	\begin{align}
	\frac{\varphi_+(n,k)}{\varphi_-(n,k)}&=\rme^{\xi(n,k)},
	\end{align}
	the function $\varphi_+(n,k)/\varphi_-(n,k)$	is
strictly logarithmically completely monotonic in $n$ and strictly logarithmically convex in $k$; in particular, it is strictly decreasing in $n$ and strictly logarithmically convex in $n$ and $k$. Meanwhile, $\varphi_+(n,fn)/\varphi_-(n,fn)$ is strictly decreasing in $n$ and strictly logarithmically convex in $f$. 

As a corollary of the above discussions, the functions $\varphi(n,k)/\varphi_-(n,k)$,
	$\varphi_+(n,k)/\varphi(n,k)$, and $\varphi_+(n,k)/\varphi_-(n,k)$ are strictly decreasing in  $k$ when $0<k\leq n/2$ and strictly increasing in $k$ when $n/2\leq k<n$, given that  these functions are invariant when $k$ is replaced by $n-k$. 
	
	The first inequality  in \eref{eq:varphi/varphi-} follows from \eref{eq:varphiLUB2}, and the second inequality  follows from the monotonicity property of $\varphi(n,k)/\varphi_-(n,k)$ with respect to $k$ as established above. The third inequality in \eref{eq:varphi/varphi-} can be proved as follows,
	\begin{gather}
	\frac{\varphi(n,k)}{\varphi_-(n,k)}\leq \frac{\varphi(k+1,k)}{\varphi_-(k+1,k)}\leq \frac{\varphi(k+1,1)}{\varphi_-(k+1,1)}\leq \frac{\varphi(2,1)}{\varphi_-(2,1)}, 
	\end{gather}
	and the equality in \eref{eq:varphi/varphi-} can be verified by straightforward calculation. \Eqsref{eq:varphi+/varphi}{eq:varphi+/varphi-} follow from a similar reasoning. 
\end{proof}

\bigskip
\section{A conjecture on the tail probability}\label{sec:conjecture}
\subsection{The conjecture}

\Thref{thm:BnkbnkLBUB} establishes  upper and lower bounds for the ratio $B_{n,k}(p)/b_{n,k}(p)$  that are tight within a factor of 2, which lead to nearly tight upper and lower bounds for the tail probability $B_{n,k}(p)$ itself. Numerical calculation illustrated in \fref{fig:BbLratio} shows that the lower bound $L(n,k,p)$ in \thref{thm:BnkbnkLBUB}  is tight within a factor of $180451625/143327232\approx 1.25902$. 
To stimulate further progresses, here we formulate the conjecture and prove this conjecture in a special case by virtue of a surprising connection with Ramanujan's equation \cite{Rama27,JogdS68}.

\begin{conjecture}\label{con:RatioConjecture}
	Suppose  $k,n\in \bbN_0$, $k\leq n$,   $0<p<1$. Then $B_{n,k}(p)/[b_{n,k}(p) L(n,k,p)]$  is nondecreasing in $k$ and nonincreasing in $n$ and $p$. In addition,
\begin{align}
	\frac{B_{n,k}(p)}{b_{n,k}(p)}&< \frac{180451625}{143327232} L(n,k,p) \quad \forall k\leq p n, \label{eq:RatioConjecture1}\\
	\frac{B_{n,k}(p)}{b_{n,k}(p)}&< \sqrt{\frac{\pi}{2}}L(n,k,p)\quad \forall k\leq pn-1. \label{eq:RatioConjecture2}
	\end{align}	
\end{conjecture}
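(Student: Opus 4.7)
The plan has three stages: (i) prove the asserted monotonicities of $G(n,k,p) := B_{n,k}(p)/[b_{n,k}(p)L(n,k,p)]$; (ii) use them to reduce both numerical bounds to an extremal case; (iii) treat that extremal case via Ramanujan's equation. The overall strategy parallels the usual route for uniform bounds in strong large deviations: push the ratio to its worst case, then evaluate there explicitly.

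For stage (i), set $R_k := B_{n,k}(p)/b_{n,k}(p)$ and $L_k := L(n,k,p)$. These satisfy the coupled relations
\begin{equation*}
R_k = 1 + \frac{qk}{p(n-k+1)}\, R_{k-1}, \quad R_0 = 1, \qquad L_k^2 + (pn-k-1)L_k = p(n-k),
\end{equation*}
and I would aim to prove the target inequality $R_k L_{k-1} \geq R_{k-1} L_k$ by substituting the $R$-recursion on the left, clearing denominators, and then reducing to a polynomial inequality in $L_{k-1}$, $L_k$, and $R_{k-1}$ that can be verified using the quadratic identity for $L$ together with the enclosure $L_{k-1} \leq R_{k-1} \leq 2L_{k-1}$ from Theorem~\ref{thm:BnkbnkLBUB}. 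Monotonicity in $n$ (discrete) would use the recursion $B_{n+1,k} = pB_{n,k-1} + qB_{n,k}$ alongside the convexity/monotonicity of $L$ in $n$ from Lemma~\ref{lem:Lnkpmono}, while monotonicity in $p$ (continuous) would come from logarithmic differentiation combined with the $p$-monotonicity of $L$ from the same lemma.

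Granted stage (i), the supremum of $G$ over $\{k \leq pn\}$ (respectively $\{k \leq pn-1\}$) is attained at the largest admissible integer $k$, and then driving $n \to \infty$ with $\lambda := pn$ fixed (a positive integer) converts the binomial into $\mathrm{Poi}(\lambda)$. In this Poisson limit, Ramanujan's equation \cite{Rama27,JogdS68} yields
\begin{equation*}
\frac{B^{\mathrm{Poi}}_{\lambda,\lambda}}{b^{\mathrm{Poi}}_{\lambda,\lambda}} = \frac{\lambda!\, e^\lambda}{2\lambda^\lambda} + (1-\theta_\lambda), \qquad \theta_\lambda \in (1/3,\,1/2],
\end{equation*}
which together with $L^{\mathrm{Poi}}(\lambda,\lambda) = (1 + \sqrt{1+4\lambda})/2$ gives a fully explicit $G^{\mathrm{Poi}}(\lambda)$. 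Ramanujan's classical asymptotic $\theta_\lambda = 1/3 + 4/(135\lambda) + O(\lambda^{-2})$ combined with Stirling produces the expansion $G^{\mathrm{Poi}}(\lambda) = \sqrt{\pi/2} + c\lambda^{-1/2} + O(\lambda^{-1})$ with $c = 2/3 - \tfrac{1}{2}\sqrt{\pi/2} > 0$, so the function attains its maximum at some explicit finite $\lambda^{*}$ and a direct numerical check combined with a tail estimate for large $\lambda$ should deliver $\sup_{\lambda} G^{\mathrm{Poi}}(\lambda) \leq 180451625/143327232$. For \eqref{eq:RatioConjecture2} the extremal index is $k = \lambda - 1$; using the identity $b^{\mathrm{Poi}}_{\lambda,\lambda-1} = b^{\mathrm{Poi}}_{\lambda,\lambda}$ and $B^{\mathrm{Poi}}_{\lambda,\lambda-1} = B^{\mathrm{Poi}}_{\lambda,\lambda} - b^{\mathrm{Poi}}_{\lambda,\lambda}$ one obtains an analogous explicit expression that is monotone in $\lambda$ and bounded above by $\sqrt{\pi/2}$.

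The main obstacle is stage (i), in particular the monotonicity of $G$ in the discrete variable $k$: the algebraic $L_k$ (a square root of a quadratic in $k$) must be compared against a rational recurrence for $R_k$, and I do not see a short path that avoids a careful polynomial manipulation. This is almost certainly why the paper establishes the conjecture only in a special case; I expect that case to be precisely the Poisson limit, where $G$ collapses to the explicit $G^{\mathrm{Poi}}(\lambda)$ and Ramanujan's equation delivers the bound directly, bypassing the binomial monotonicity altogether. Extending to arbitrary finite binomials appears to demand either a new discrete identity for $G$ or a Jogdeo--Samuels-style sharpening of Ramanujan's equation that is uniform in $(n,p)$ with $pn$ integer.
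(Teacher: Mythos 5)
This statement is a \emph{conjecture}: the paper itself does not prove the monotonicity claims, and your stage~(i) is precisely the open part, so nothing in your plan closes it. What the paper does establish is the boundary case $p=k/n$ for \emph{all finite} $n,k$ (\lrefs{lem:Bnk1bnk1nk}{lem:Bnkbnknk}), which is exactly the input needed so that the single conjectured monotonicity in $p$ would yield \eqref{eq:RatioConjecture1} and \eqref{eq:RatioConjecture2} via \eqsref{eq:RatioConjecture11}{eq:RatioConjecture22}. Your guess that the proven special case is ``the Poisson limit'' undershoots what is actually done: the Poisson limit appears only in \lref{lem:Bnjbnjk/n} and \eref{eq:BnkbnkLlim}, to show the constants are optimal (the value $180451625/143327232$ is attained exactly in the limit at $k=12$), whereas the quantitative work is a finite-$n$ analysis at $p=k/n$ using Stirling bounds, the Jogdeo--Samuels quantity $\zeta_{n,k}$, Ramanujan's $\theta_k$, and explicit case analysis in Appendices~\ref{app:Bnjbnjk/n} and~\ref{app:Bnjbnjk/n2}.

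The concrete gap in your stage~(ii): even granting all three conjectured monotonicities, the reduction to the Poisson limit does not follow. After pushing $p$ down to $k/n$ (or $k$ up to $\lfloor pn\rfloor$), you must compare $G(n,k,k/n)$ across different $n$; but moving from $(n,k,k/n)$ to $(n+1,k,k/(n+1))$ increases $n$ (which by the conjecture pushes $G$ down) while decreasing $p$ (which pushes $G$ up), so the conjectured monotonicities give no control of $G$ along the curve $p=k/n$, and ``driving $n\to\infty$'' is not justified as the extremal case. You would need the additional, separate claim that $G(n,k,k/n)$ is nondecreasing in $n$; this is plausible but unproven --- the numerator $B_{n,k}(k/n)/b_{n,k}(k/n)$ is increasing in $n$ by \lref{lem:Bnjbnjk/n}, but so is $L(n,k,k/n)$, so the ratio's monotonicity is genuinely open. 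On the positive side, your recursion $R_k=1+\frac{qk}{p(n-k+1)}R_{k-1}$ with the quadratic $L_k^2+(pn-k-1)L_k=p(n-k)$ is the right starting point, and your expansion $G^{\mathrm{Poi}}(\lambda)=\sqrt{\pi/2}+\bigl(2/3-\tfrac{1}{2}\sqrt{\pi/2}\bigr)\lambda^{-1/2}+O(\lambda^{-1})$ checks out; but note that since \eqref{eq:RatioConjecture1} is saturated in the limit at $k=12$, any ``numerical check plus tail estimate'' must be exact there, with no slack.
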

Incidentally,
\begin{align}
1.25901<\frac{180451625}{143327232}=\frac{5^3\times 1443613}{2^{16}\times3^7}<1.25902, \quad 1.25331 <\sqrt{\frac{\pi}{2}}<1.25332. 
\end{align}
If \cref{con:RatioConjecture} holds, then by virtue of  \eref{eq:UTailBinom} we can deduce that
\begin{align}
\frac{\bar{B}_{n,k}(p)}{b_{n,k}(p)}&< \frac{180451625}{143327232} \bar{L}(n,k,p) \quad \forall k\geq p n,\\
\frac{\bar{B}_{n,k}(p)}{b_{n,k}(p)}&< \sqrt{\frac{\pi}{2}}\bar{L}(n,k,p) \quad \forall k\geq pn+1.
\end{align}	
In addition, many results in \thsref{thm:BnkbnkLBUB}, \ref{TH2}, \ref{thm:TailBound}, and \ref{thm:UTailBound} can be improved. Notably, the lower bound  $\tilde{L}(n,k,p)$ in \thref{thm:TailBound} will be tight within a factor of $180451625/143327232$, and the lower bound
$\tilde{L}_-(n,k,p)$ will be tight within a factor of 
\begin{align}
\frac{180451625}{143327232}\frac{\sqrt{\pi}\,\rme^{1/8}}{2}< 1.26434
\end{align}
thanks to \eref{eq:varphi/varphi-}. Accordingly,
the lower bound $B_{n,k}^{\downarrow}(p)$ in \thref{TH2} will be tight within this factor,  that is,
\begin{align}
B_{n,k}^{\downarrow}(p)< B_{n,k}(p)<   \frac{180451625}{143327232}\frac{\sqrt{\pi}\,\rme^{1/8}}{2}B_{n,k}^{\downarrow}(p)< 1.26434 B_{n,k}^{\downarrow}(p). 
\end{align}

\Eqsref{eq:RatioConjecture1}{eq:RatioConjecture2} will hold if $B_{n,k}(p)/[b_{n,k}(p) L(n,k,p)]$  is indeed nonincreasing in $p$. In that case we have
\begin{align}
\frac{B_{n,k}(p)}{b_{n,k}(p)L(n,k,p)}&\leq \frac{B_{n,k}(k/n)}{b_{n,k}(k/n)L(n,k,k/n)}
< \frac{180451625}{143327232}  \quad \forall k\leq p n, \label{eq:RatioConjecture11} \\
\frac{B_{n,k-1}(p)}{b_{n,k-1}(p)L(n,k-1,p)}&\leq \frac{B_{n,k-1}(k/n)}{b_{n,k-1}(k/n)L(n,k-1,k/n)}< \sqrt{\frac{\pi}{2}}\quad \forall 1\leq k\leq pn,  \label{eq:RatioConjecture22}
\end{align}
which imply \eqsref{eq:RatioConjecture1}{eq:RatioConjecture2}. Here the second inequality in \eref{eq:RatioConjecture11} follows from \lref{lem:Bnkbnknk} below, and the second
 inequality in \eref{eq:RatioConjecture22} follows from \lref{lem:Bnk1bnk1nk} below.

\subsection{Evidences for \cref{con:RatioConjecture}}
Next, provide three lemmas that resolve \cref{con:RatioConjecture} in certain special case. The proofs of  \lsref{lem:Bnjbnjk/n}-\ref{lem:Bnkbnknk} below are relegated to Appendices \ref{app:Bnjbnjk/n} and \ref{app:Bnjbnjk/n2}. 
Our analysis also shows that the constants in the two equations in \cref{con:RatioConjecture} are best possible. 

\begin{lemma}\label{lem:Bnjbnjk/n}
	Suppose $j\in \bbN_0$ and $k,n\in \bbN$ satisfy $j\leq k\leq n-1$, and $0<f<1$; then $B_{n,j}(k/n)/b_{n,j}(k/n)$ is equal to 1 when $j=0$ and is strictly increasing in $n$ when $j\geq 1$. In addition,
\begin{gather}
\lim_{n\to\infty}\frac{B_{n,j}(k/n)}{b_{n,j}(k/n)}=\sum_{l=0}^{j}\frac{\Gamma(j+1)}{k^l\Gamma(j-l+1)}, \label{eq:Bnj/bnjlim}\\
\lim_{n\to\infty}\frac{B_{n,k-1}(k/n)}{b_{n,k-1}(k/n)}
=\sum_{l=0}^{k-1}\frac{\Gamma(k)}{k^l\Gamma(k-l) }
=\frac{\rme^k k!}{2k^k}-\theta_k
\leq \sqrt{\frac{\pi k}{2}}-\sqrt{\frac{\pi }{2}}+1
,\label{eq:Bnkbnkm1lim}\\
\lim_{n\to\infty}\frac{B_{n,k}(k/n)}{b_{n,k}(k/n)}
=1+\sum_{l=0}^{k-1}\frac{\Gamma(k)}{k^l\Gamma(k-l) }=\frac{\rme^k k!}{2k^k}+1-\theta_k 
\leq  \sqrt{\frac{\pi k}{2}}-\sqrt{\frac{\pi }{2}}+2,\label{eq:Bnkbnknklim}\\
\lim_{n\to\infty}\frac{B_{n,\lfloor fn-j\rfloor}(f)}{b_{n,\lfloor fn-j\rfloor}(f)L(n,fn-j,f)}= \sqrt{\frac{\pi }{2}}, \label{eq:Bnfbnfnflim}
\end{gather}	
where $\theta_k$ is defined by  Ramanujan's equation \cite{Rama27,JogdS68}:
	\begin{align}\label{eq:Ramanujan}
	\frac{\rme^k}{2}=\frac{\theta_k k^k}{k!}+\sum_{i=0}^{k-1} \frac{k^i}{i!}.
	\end{align}
\end{lemma}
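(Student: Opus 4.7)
Everything hangs off the explicit formula obtained from \eref{eq:bnk-j/bnk} after substituting $p=k/n$ and $q=(n-k)/n$:
\begin{equation*}
\frac{B_{n,j}(k/n)}{b_{n,j}(k/n)} \;=\; \sum_{i=0}^{j}\prod_{l=0}^{i-1}\frac{(n-k)(j-l)}{k(n-j+l+1)}.
\end{equation*}
Writing each factor as $\frac{j-l}{k}\bigl[1-\frac{k-j+l+1}{n-j+l+1}\bigr]$ and using $j\leq k$ (so $k-j+l+1\geq 1$) shows that every factor is strictly increasing in $n$ at fixed $k$. The $j=0$ sum is $1$ (hence constant in $n$); for $j\geq 1$ the $i=1$ summand alone already depends strictly on $n$, giving the claimed monotonicity. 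Letting $n\to\infty$ with $j,k$ fixed, each factor converges to $(j-l)/k$, so termwise passage (a finite sum) yields
\begin{equation*}
\lim_{n\to\infty}\frac{B_{n,j}(k/n)}{b_{n,j}(k/n)} \;=\; \sum_{i=0}^{j}\frac{j!/(j-i)!}{k^i} \;=\; \sum_{i=0}^{j}\frac{\Gamma(j+1)}{k^i\,\Gamma(j-i+1)},
\end{equation*}
which is \eref{eq:Bnj/bnjlim}.

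To obtain \eref{eq:Bnkbnkm1lim}, I would specialize $j=k-1$ and reindex by $m=k-1-i$, turning the finite sum into $\frac{(k-1)!}{k^{k-1}}\sum_{m=0}^{k-1}\frac{k^m}{m!}$; Ramanujan's identity \eref{eq:Ramanujan} replaces the partial exponential series by $\rme^k/2-\theta_k k^k/k!$, producing $\rme^k k!/(2k^k)-\theta_k$. The case $j=k$ is identical except for an extra $i=k$ term contributing $+1$, giving \eref{eq:Bnkbnknklim}. The upper bounds in both displays reduce to
\begin{equation*}
\frac{\rme^k k!}{2k^k}\;\leq\;\sqrt{\pi k/2}+\theta_k-\sqrt{\pi/2}+1.
\end{equation*}
The Stirling estimate \eref{eq:Stirling} gives $\rme^k k!/(2k^k)<\sqrt{\pi k/2}\,\rme^{1/(12k)}$, so it suffices to control $\theta_k$ from below. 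The classical analysis of Ramanujan's equation shows $\theta_k$ is strictly decreasing in $k$ with $\lim_{k\to\infty}\theta_k=1/3$, hence $\theta_k>1/3>\sqrt{\pi/2}-1$ for all $k\geq 1$; combined with a finite direct check at small $k$ (the bound is saturated at $k=1$), this closes both upper bounds.

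The limit \eref{eq:Bnfbnfnflim} is qualitatively different because $p=f$ is fixed and $k/n\to f$, so the one-step ratios $(1-f)(k-i)/[f(n-k+i+1)]$ all tend to $1$ rather than to a constant strictly less than one; this is the CLT regime. Setting $k=\lfloor fn-j\rfloor$ I would rearrange
\begin{equation*}
\frac{b_{n,k-i-1}(f)}{b_{n,k-i}(f)}\;=\;1-\frac{j+i+f}{f(1-f)n+f(j+i+1)},
\end{equation*}
so the $i$-fold product is asymptotically $\exp\bigl(-(i+j)^2/[2f(1-f)n]\bigr)$. A Riemann-sum comparison with $\int_0^{\infty}\exp\bigl(-x^2/[2f(1-f)n]\bigr)\,\rmd x=\sqrt{\pi f(1-f)n/2}$ yields $B_{n,k}(f)/b_{n,k}(f)\sim\sqrt{\pi f(1-f)n/2}$, while the definition \eref{eq:Lnkp} with $pn-k=j+O(1)$ gives $L(n,fn-j,f)\sim\sqrt{f(1-f)n}$, so the ratio tends to $\sqrt{\pi/2}$.

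\textbf{Main obstacle.} The tightest step is the upper bound in \eref{eq:Bnkbnkm1lim}-\eref{eq:Bnkbnknklim}: equality is attained at $k=1$, so a bare Stirling estimate is not sharp enough and the argument must invoke a precise quantitative lower bound on Ramanujan's $\theta_k$. The CLT-type limit \eref{eq:Bnfbnfnflim} is the other delicate point, since the Gaussian heuristic above has to be upgraded to a uniform estimate valid for all $i\in\{0,1,\dots,k\}$, not merely for $i$ of order $\sqrt{n}$, in order to justify interchanging the sum with the integral comparison.
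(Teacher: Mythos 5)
Your treatment of \eref{eq:Bnj/bnjlim}--\eref{eq:Bnkbnknklim} coincides with the paper's: the product formula from \eref{eq:bnk-1/bnk} at $p=k/n$, termwise monotonicity and limits, reindexing via Ramanujan's identity, and then Stirling plus the lower bound $\theta_k>1/3$ for the inequality (with $k=1$ checked directly). One small imprecision there: $\theta_k>\sqrt{\pi/2}-1$ alone does not close the bound — you must also verify that the Stirling correction $\sqrt{\pi k/2}\,\bigl[\rme^{1/(12k)}-1\bigr]$ stays below $\theta_k-(\sqrt{\pi/2}-1)>1/3-(\sqrt{\pi/2}-1)\approx 0.08$ for $k\geq 2$ (it is about $0.075$ at $k=2$ and decreases in $k$); this is exactly the estimate the paper carries out, so your argument is the same once that step is made explicit. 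The genuine divergence is \eref{eq:Bnfbnfnflim}. You attack the sum directly, approximating the $i$-fold products by Gaussians and comparing with $\int_0^\infty\rme^{-x^2/[2f(1-f)n]}\,\rmd x$; as you yourself note, this requires a uniform de Moivre--Laplace--type estimate over all $i\in\{0,\dots,k\}$, which you have not supplied — the tail $i\gg\sqrt{n}$ must be shown to contribute $o(\sqrt{n})$ to the sum, and the error in replacing $\prod_l(1-\epsilon_l)$ by $\exp(-\sum_l\epsilon_l)$ must be controlled uniformly. The paper sidesteps all of this by factoring the ratio as $B_{n,\lfloor fn-j\rfloor}(f)\big/\bigl[\sqrt{n}\,b_{n,\lfloor fn-j\rfloor}(f)\cdot L(n,fn-j,f)/\sqrt{n}\bigr]$ and invoking three already-available limits: $B_{n,\lfloor fn-j\rfloor}(f)\to 1/2$ by the CLT, $\sqrt{n}\,b_{n,\lfloor fn-j\rfloor}(f)\to 1/\sqrt{2\pi f(1-f)}$ from \eref{eq:philim} and \eref{eq:bnkpLUB3}, and $L(n,fn-j,f)/\sqrt{n}\to\sqrt{f(1-f)}$ from \eref{eq:Lnkp}. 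Your route can be completed, but it amounts to reproving the local limit theorem; the paper's factorization is shorter and reuses machinery already established, so I would adopt it for the last display.
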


According to \rcite{Szeg28,Wats29}, $\theta_k$ is strictly decreasing in $k$ and satisfies 
\begin{align}\label{eq:thetakLBUB}
\frac{1}{3}<\theta_k\leq \theta_1=\frac{\rme-2}{2}. 
\end{align}
\Lref{lem:Bnjbnjk/n} implies that (given the assumptions in the lemma)
\begin{gather}
\frac{B_{n,j}(k/n)}{b_{n,j}(k/n)}\leq \sum_{l=0}^{j}\frac{\Gamma(j+1)}{k^l\Gamma(j-l+1)}, \label{eq:Bnj/bnjUB}\\
\frac{B_{n,k-1}(k/n)}{b_{n,k-1}(k/n)}
\leq \sum_{l=0}^{k-1}\frac{\Gamma(k)}{k^l\Gamma(k-l) }
=\frac{\rme^k k!}{2k^k}-\theta_k
\leq \sqrt{\frac{\pi k}{2}}-\sqrt{\frac{\pi }{2}}+1, \\
\frac{B_{n,k}(k/n)}{b_{n,k}(k/n)}
< 1+\sum_{l=0}^{k-1}\frac{\Gamma(k)}{k^l\Gamma(k-l) }=\frac{\rme^k k!}{2k^k}+1-\theta_k 
\leq  \sqrt{\frac{\pi k}{2}}-\sqrt{\frac{\pi }{2}}+2,
\end{gather}
and the inequality in \eref{eq:Bnj/bnjUB} is strict when $j\geq 1$.

\begin{lemma}\label{lem:Bnk1bnk1nk}
	Suppose $k,n\in\bbN$  satisfy $1\leq k\leq n-1$; then
	\begin{gather}
	\frac{B_{n,k-1}(k/n)}{b_{n,k-1}(k/n)}< \sqrt{\frac{\pi}{2}}L(n,k-1,k/n). \label{eq:Bnk1bnk1nk}
	\end{gather}
\end{lemma}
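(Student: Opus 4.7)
The case $k=1$ is trivial because $B_{n,0}(1/n)/b_{n,0}(1/n)=1$ and $L(n,0,1/n)=1$, so the inequality reduces to $1<\sqrt{\pi/2}$. Assume henceforth that $k\ge 2$ and write $p=k/n$. Since $pn=k$, a direct computation from \eref{eq:Lnkp} gives the pleasant closed form
\begin{align*}
L(n,k-1,k/n)=\sqrt{k(n-k+1)/n}=\sqrt{k-k(k-1)/n},
\end{align*}
which increases strictly from $\sqrt{2k/(k+1)}$ at $n=k+1$ to $\sqrt{k}$ as $n\to\infty$. Meanwhile, by \lref{lem:Bnjbnjk/n}, the ratio $B_{n,k-1}(k/n)/b_{n,k-1}(k/n)$ is strictly increasing in $n$ with limit $e^k k!/(2k^k)-\theta_k$, where $\theta_k$ is defined by Ramanujan's equation \eqref{eq:Ramanujan}.

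The plan is to promote this one-sided monotonicity to monotonicity of the full ratio
\begin{align*}
R(n,k):=\frac{B_{n,k-1}(k/n)}{b_{n,k-1}(k/n)\,L(n,k-1,k/n)}
\end{align*}
itself. Granting that $R(n,k)$ is strictly increasing in $n$ for fixed $k\ge 2$, the conclusion would follow from
\begin{align*}
R(n,k)<\lim_{n\to\infty} R(n,k)=\frac{1}{\sqrt{k}}\left(\frac{e^k k!}{2k^k}-\theta_k\right)\le \sqrt{\pi/2}+\frac{1-\sqrt{\pi/2}}{\sqrt{k}}<\sqrt{\pi/2},
\end{align*}
where the penultimate bound is the Ramanujan estimate \eref{eq:Bnkbnkm1lim} and the final strict inequality uses $1<\sqrt{\pi/2}$. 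The naive simpler argument of replacing $B/b$ by its limit and then comparing to $\sqrt{\pi/2}\,L$ cannot work for small $n$: already $(n,k)=(3,2)$ yields $\lim_{n'\to\infty} B_{n',1}(2/n')/b_{n',1}(2/n')=3/2$ while $\sqrt{\pi/2}\,L(3,1,2/3)=\sqrt{2\pi/3}\approx 1.447$, so promoting the monotonicity to $R$ is indispensable.

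The main obstacle is proving the strict increase of $R$ in $n$: both the numerator and denominator are themselves strictly increasing, so a quantitative argument is needed rather than a monotonicity sandwich. I would write the numerator as the explicit series
\begin{align*}
\frac{B_{n,k-1}(k/n)}{b_{n,k-1}(k/n)}=\sum_{l=0}^{k-1}c_l(n,k),\quad c_l(n,k):=\prod_{i=1}^l \frac{(n-k)(k-i)}{k(n-k+i+1)},
\end{align*}
obtained by iterating \eref{eq:bnk-1/bnk} with $p=k/n$, and then analyze the discrete difference $R(n+1,k)-R(n,k)$ using this series together with the closed form $L^2(n,k-1,k/n)=k-k(k-1)/n$ and the explicit ratio $c_{l+1}/c_l=(n-k)(k-l-1)/[k(n-k+l+2)]$. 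Factoring out $c_l(n,k)$ and using the two-term recurrences for $B_{n,\cdot}$ and $b_{n,\cdot}$ from \sref{sec:BinProb} should express $R(n+1,k)-R(n,k)$ as a linear combination of ratios $c_l(n+1,k)/c_l(n,k)$ whose positivity reduces to a rational inequality in $n$ and $k$, which I expect to verify either by induction on $l$ or by direct algebraic manipulation. Ramanujan's equation and the Szegő–Watson bound $\theta_k>1/3$ enter only at the final step, to certify that the limiting value is strictly below $\sqrt{\pi/2}$.
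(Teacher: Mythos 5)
Your reduction is set up correctly: the closed form $L(n,k-1,k/n)=\sqrt{k(n-k+1)/n}$ is right, the limit of the ratio $R(n,k)$ as $n\to\infty$ is indeed $\frac{1}{\sqrt{k}}\bigl(\frac{\rme^k k!}{2k^k}-\theta_k\bigr)$, and your estimate showing this limit is $<\sqrt{\pi/2}$ via \eref{eq:Bnkbnkm1lim} is valid. You have also correctly diagnosed that the naive substitution of the $n\to\infty$ limit of $B/b$ fails at small $n$, so everything hinges on the monotonicity of $R(n,k)$ in $n$. But that monotonicity is precisely what you do not prove: you "grant" it, and your plan for establishing it (discrete differencing of a ratio whose numerator is a sum of $k$ products and whose denominator carries a square root, with positivity to be checked "by induction on $l$ or by direct algebraic manipulation") is a sketch, not an argument. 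This is not a routine verification. \Lref{lem:Bnjbnjk/n} gives monotonicity of the numerator only; since $L(n,k-1,k/n)$ is also strictly increasing in $n$, the ratio of two increasing quantities need not be monotone. Nor would the claim follow from \cref{con:RatioConjecture} even if that conjecture were proven: along the diagonal $p=k/n$ the conjectured nonincrease in $n$ and nonincrease in $p$ act in opposite directions, so your needed "diagonal" monotonicity is an independent, unproven assertion. The gap sits exactly at the heart of the proof.

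It is worth noting that the paper's proof deliberately avoids any such monotonicity statement. It disposes of $k=1,2,n-1,n-2$ by explicit computation, and for $3\le k\le n-3$ writes $B_{n,k-1}(k/n)=\tfrac12-\zeta_{n,k}b_{n,k}(k/n)$ using the median-type quantity $\zeta_{n,k}$ of \eref{eq:zetank}, invokes the Jogdeo--Samuels bounds \eref{eq:zetaLBUB}, combines them with the Stirling lower bound \eref{eq:bnknkLB0} on $b_{n,k}(k/n)$, and reduces the claim to elementary inequalities plus a finite check for $n\le 13$. If you want to complete your route, you must actually prove that $R(n+1,k)>R(n,k)$ for all $k\ge 2$ and $n\ge k+1$; until then the proposal is incomplete at its central step.
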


\begin{lemma}\label{lem:Bnkbnknk}
	Suppose $k,n\in\bbN$  satisfy  $1\leq k\leq n-1$; then
	\begin{gather}
	\frac{B_{n,k}(k/n)}{b_{n,k}(k/n)}< \frac{180451625}{143327232} L(n,k,k/n). \label{eq:Bnkbnknk}
	\end{gather}
	If in addition $n\leq 2k$, then 
	\begin{gather}
	\frac{B_{n,k}(k/n)}{b_{n,k}(k/n)}< \sqrt{\frac{\pi}{2}}L(n,k,k/n). \label{eq:Bnkbnknk2}
	\end{gather}
\end{lemma}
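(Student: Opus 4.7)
The plan is to reduce Lemma~\ref{lem:Bnkbnknk} to the analysis of a normalized ratio and its limit. By Lemma~\ref{lem:Bnjbnjk/n}, the quantity $B_{n,k}(k/n)/b_{n,k}(k/n)$ is strictly increasing in $n$ for fixed $k\ge 1$, with limit $S(k):=\rme^{k}k!/(2k^k)+1-\theta_k$. Similarly, $L(n,k,k/n)=\tfrac12(1+\sqrt{1+4k(n-k)/n})$ is strictly increasing in $n$, with limit $L_\infty(k):=\tfrac12(1+\sqrt{1+4k})$. Setting $R(n,k):=B_{n,k}(k/n)/[b_{n,k}(k/n)L(n,k,k/n)]$ and $T(k):=\lim_{n\to\infty}R(n,k)=S(k)/L_\infty(k)$, both assertions of the lemma reduce to (i) $R(n,k)<T(k)$ for all $n\ge k+1$, combined with (ii) $T(k)\le 180451625/143327232$ for part~(a), and a sharper comparison under the extra hypothesis $n\le 2k$ for part~(b).

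The technical heart, and the main obstacle, is to prove strict monotonicity of $R(n,k)$ in $n$ for fixed $k$; once this holds, (i) is immediate. I would expand $B_{n,k}(k/n)/b_{n,k}(k/n)=\sum_{i=0}^{k}\prod_{l=0}^{i-1}\frac{(n-k)(k-l)}{k(n-k+l+1)}$ via the recursion \eqref{eq:bnk-1/bnk}, and then compare its $n$-growth term by term with that of $L$, exploiting the identity $L(L-1)=k(n-k)/n$ specific to $p=k/n$. Alternatively, the partial-mean formula \eqref{eq:BnkbnkMunk} yields $B_{n,k}/b_{n,k}=p(n+1-\mu_{n+1,k})/[p(n+1)-\mu_{n+1,k}]$, so $R(n,k)$ becomes a rational function of $\mu_{n+1,k}$, and its discrete difference in $n$ can in principle be shown positive by invoking Lemma~\ref{lem:munk}.

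Granted (i), part~(a) reduces to the universal bound $T(k)\le 180451625/143327232$. The crucial observation is that this constant is exactly $T(12)$: direct expansion gives $S(12)=180451625/35831808$, and since $4\cdot 12+1=49$ is a perfect square one has $L_\infty(12)=4$, so $T(12)=S(12)/4=180451625/143327232$. (In fact $4k+1$ is a perfect square iff $k=m(m+1)$ for some $m\in\bbN$, giving rational $T(k)$ at $k=2,6,12,20,\dots$; numerics locate the global maximum at $k=12$.) To prove $T(k)\le T(12)$ for all $k\in\bbN$, I would (a1) verify it directly for $1\le k\le N_0$ for some modest threshold $N_0$, each $T(k)$ being an explicit algebraic number; (a2) for $k>N_0$, use Stirling's formula to get $S(k)=\sqrt{\pi k/2}(1+\tfrac{1}{12k}+O(k^{-2}))+1-\theta_k$ with $\theta_k\to 1/3$, and $L_\infty(k)=\sqrt{k}+\tfrac12+O(k^{-1/2})$, hence $T(k)=\sqrt{\pi/2}+O(k^{-1/2})$ with explicit remainder sufficient to conclude $T(k)<T(12)$ once $N_0$ is chosen suitably.

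For part~(b), since $T(k)>\sqrt{\pi/2}$ for $k$ near $12$, the constraint $n\le 2k$ (equivalently $p=k/n\ge \tfrac12$) must enter essentially. I would write $B_{n,k}(k/n)=\tfrac12+b_{n,k}(k/n)(1-\theta_{n,k})$, thereby defining $\theta_{n,k}$, and prove as a subsidiary result that $\theta_{n,k}\ge\tfrac12$ whenever $n\le 2k$, with equality only at the symmetric point $n=2k$, $p=\tfrac12$. Given this, $B_{n,k}(k/n)/b_{n,k}(k/n)\le 1/[2b_{n,k}(k/n)]+\tfrac12$; combining the Stirling-type lower bound $b_{n,k}(k/n)>\sqrt{n}\,\varphi_-(n,k)/\sqrt{k(n-k)}$ from Proposition~\ref{pro:bnkpLUB} with $L(L-1)=k(n-k)/n$ then yields $1/[2b_{n,k}(k/n)]<L(n,k,k/n)\sqrt{\pi/2}$ up to an explicitly bounded Stirling correction, which suffices for the strict inequality $B_{n,k}(k/n)/b_{n,k}(k/n)<\sqrt{\pi/2}\,L(n,k,k/n)$.
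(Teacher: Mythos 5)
Your identification of the constant is right: $180451625/143327232$ is exactly $T(12)=S(12)/L_\infty(12)$ with $L_\infty(12)=4$, which is also how the paper explains why the constant cannot be improved (cf.\ \eref{eq:BnkbnkLlim}). Your part (b) is essentially the paper's own Step 1: with $\zeta_{n,k}=\theta_{n,k}\geq 1/2$ for $n\leq 2k$ (quoted from \cite{JogdS68}, not reproved), one bounds $B_{n,k}/b_{n,k}\leq 1/(2b_{n,k})+1/2$, applies the Stirling lower bound for $b_{n,k}(k/n)$, and uses $L(L-1)=k(n-k)/n$; the paper packages this as monotonicity of a single-variable function $g(z)$ of $z=k(n-k)/n$.

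The genuine gap is in part (a). Your entire reduction rests on the claim that $R(n,k)=B_{n,k}(k/n)/[b_{n,k}(k/n)L(n,k,k/n)]$ never exceeds its $n\to\infty$ limit $T(k)$, which you propose to obtain from strict monotonicity of $R(n,k)$ in $n$. You correctly flag this as ``the main obstacle,'' but neither suggested route closes it. The numerator $B_{n,k}(k/n)/b_{n,k}(k/n)$ and the denominator $L(n,k,k/n)$ are \emph{both} strictly increasing in $n$ (\lref{lem:Bnjbnjk/n}), so monotonicity of the ratio is not automatic and cannot be read off term by term. The partial-mean route fails for a structural reason: \lref{lem:munk} gives monotonicity of $\mu_{n,k}(p)$ in $n$ and in $p$ \emph{separately}, whereas along the diagonal $p=k/n$ these two effects pull in opposite directions, so the lemma yields no sign for the discrete difference of $\mu_{n+1,k}(k/n)$ in $n$. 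This monotonicity claim is in fact a diagonal instance of the paper's unproven \cref{con:RatioConjecture} (cf.\ \eref{eq:RatioConjecture11}); treating it as a provable lemma imports the hardest open part of the paper into your proof. The paper avoids it entirely by deriving explicit, non-asymptotic upper bounds on $B_{n,k}(k/n)/b_{n,k}(k/n)$ from the Stirling inequalities \eref{eq:Stirling} together with the median bounds $1/3<\zeta_{n,k}<2/3$ and $\zeta_{n,k}\geq\theta_k$, reducing to monotone auxiliary functions $g(z)$ and $h_k(z)$ of $z=k(n-k)/n$ plus finite case checks for small $k$ and small $n/k$ (via \lsref{lem:Exp} and \ref{lem:LnkInverse}). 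Your step (a2), bounding $T(k)\leq T(12)$ by a finite check plus an asymptotic expansion with explicit remainder, is feasible but is also only sketched; the margin $T(12)-\sqrt{\pi/2}\approx 0.0057$ is small, so the threshold $N_0$ and the remainder constants would need to be made fully explicit. As written, the proposal does not constitute a proof of \eref{eq:Bnkbnknk}.
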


\Eref{eq:Bnfbnfnflim} shows that the constants in \eqsref{eq:Bnk1bnk1nk}{eq:Bnkbnknk2} cannot be improved. It turns out the constant in \eref{eq:Bnkbnknk} cannot be improved either. To see this, note that
\begin{align}
\lim_{n\to\infty} L(n,k,k/n)&=\lim_{n\to\infty}\frac{1}{2}\biggl(1+\sqrt{1+\frac{4k(n-k)}{n}}\,\biggr)=\frac{1}{2}\bigl(1+\sqrt{1+4k}\,\bigr).
\end{align}
In conjunction with \lref{lem:Bnjbnjk/n} we can deduce that 
\begin{align}
\lim_{n\to\infty}\frac{B_{n,k}}{b_{n,k}L(n,k,k/n)}&= \frac{\rme^k k!}{k^k (1+\sqrt{1+4k}\,)}+\frac{2(1-\theta_k)}{1+\sqrt{1+4k}}. \label{eq:BnkbnkLlim}
\end{align}
Now, direct calculation shows that this limit is equal to the constant in \eref{eq:Bnkbnknk}, that is, $180451625/143327232$, when $k=12$. However, this value cannot be approached when $k$ deviates from 12, in sharp contrast with \eqsref{eq:Bnk1bnk1nk}{eq:Bnkbnknk2}.

\subsection{Additional evidence for  \cref{con:RatioConjecture}}
Here we provide an additional evidence for \cref{con:RatioConjecture} by considering the regime of small deviation in connection with the CLT theorem. Let $k_n=pn-x\sqrt{pqn}$ with $x\geq 0$, then by definitions in Eqs.~\eqref{eq:Lnkp}-\eqref{eq:Unkp} and \eqref{eq:BnkpArrow} we can deduce that
\begin{align}
\lim_{n\to\infty }B_{n,\lfloor k_n\rfloor}^{\downarrow}(p)=\lim_{n\to\infty }B_{n,k_n}^{\downarrow}(p)=\lim_{n\to \infty} \frac{ L(n,k_n,p)}{\sqrt{2\pi pqn}}\rme^{-x^2/2}&=\ell(x)\rme^{-x^2/2},\label{eq:BnkpLBlimCLT}\\
\lim_{n\to\infty }B_{n,\lfloor k_n\rfloor}^{\uparrow}(p)=\lim_{n\to\infty }B_{n,k_n}^{\uparrow}(p)=\lim_{n\to \infty} \frac{ U(n,k_n,p)}{\sqrt{2\pi pqn}}\rme^{-x^2/2}&=\upsilon(x)\rme^{-x^2/2}, \label{eq:BnkpUBlimCLT}
\end{align}
where 
\begin{align}\label{eq:ellupsilon}
\ell(x):=\frac{1}{2\sqrt{2\pi}} \bigl(\sqrt{4 + x^2}-x\bigr), \quad \upsilon(x):=\frac{1}{\sqrt{2\pi}}\begin{cases}
2-x & x\leq 1,\\
\frac{1}{x} &x\geq 1.
\end{cases}
\end{align}
So  \thref{TH2} implies that 
\begin{align}
\ell(x)\rme^{-x^2/2}\leq \lim_{n\to\infty }B_{n,\lfloor k_n\rfloor}(p)\leq \upsilon(x)\rme^{-x^2/2}. \label{eq:nknLimLUB}
\end{align}
On the  other hand, the CLT implies that
\begin{align}
\lim_{n\to\infty }B_{n,\lfloor k_n\rfloor}(p)&=\Phi(-x):=\int_{-\infty}^{-x} \frac{1}{\sqrt{2\pi}}\rme^{-t^2/2}dt=\int_{x}^{\infty} \frac{1}{\sqrt{2\pi}}\rme^{-t^2/2}dt, \label{eq:BnkpLimCLT}
\end{align}
where $\Phi(x)$ is the cumulative distribution function of the standard Gaussian distribution. 
The two equations above together imply that
\begin{align}
\ell(x)\rme^{-x^2/2}\leq\Phi(-x)\leq \upsilon(x)\rme^{-x^2/2}. \label{eq:PhixLUB}
\end{align}
Here the lower bound was originally derived by Birnbaum \cite{Birn42}, but we are not aware of any previous derivation that is based on lower bounds for the binomial distribution, note that it is much more common to derive bounds for the binomial distribution based on the counterparts for the Gaussian distribution, but not in the other way.

To see the connection between the above discussion and \cref{con:RatioConjecture}, note that
\begin{align}
\lim_{n\to\infty }\sqrt{n}\, b_{n,\lfloor k_n\rfloor}(p)&=\frac{1}{\sqrt{2\pi pq}}\rme^{-x^2/2}
\end{align}
according to \eqsref{eq:philim}{eq:bnkpLUB3}. In conjunction with \eqsref{eq:BnkpLBlimCLT}{eq:BnkpLimCLT} we can deduce that
\begin{align}
\lim_{n\to\infty }\frac{B_{n,\lfloor k_n\rfloor}(p)}{b_{n,\lfloor k_n\rfloor}(p)L(n,k_n,p)}=\frac{\Phi(-x)}{\ell(x)\rme^{-x^2/2}}, \label{eq:BnkpbnkpLnkpLim}
\end{align}
so the properties of the ratio $\Phi(-x)\rme^{x^2/2}/\ell(x)$ are suggestive of the properties of the ratio $B_{n,\lfloor k_n\rfloor}(p)/[b_{n,\lfloor k_n\rfloor}(p)L(n,k_n,p)]$.

 According to \eqsref{eq:BnkpLBlimCLT}{eq:BnkpUBlimCLT} and the inequality $U(n,k,p)< 2L(n,k,p)$ in  \thref{thm:BnkbnkLBUB}, the upper and lower bounds in \eqsref{eq:nknLimLUB}{eq:PhixLUB} are  tight within a factor of~2. 
The following proposition proved in \aref{app:ellPhiProof} further shows that these bounds are asymptotically tight and that the lower bound in each equation is actually  tight within a factor of $\sqrt{\pi/2}$, which together with \eref{eq:BnkpbnkpLnkpLim} offers an additional evidence for \eref{eq:RatioConjecture2} in  \cref{con:RatioConjecture}.
In addition, $\Phi(-x)\rme^{x^2/2}/\ell(x)$ is strictly decreasing in $x$, so  the lower bound $\ell(x)\rme^{-x^2/2}$ becomes more and more accurate as $x$ increases, which is also
 reminiscent of the monotonicity properties stated in \cref{con:RatioConjecture}.
\begin{proposition}\label{pro:ellPhi}
	Suppose $x\geq 0$; then $\upsilon(x)/\ell(x)$ and  $\Phi(-x)\rme^{x^2/2}/\ell(x)$ are strictly decreasing in $x$. In addition,
	\begin{gather}
	\lim_{x\to \infty}\frac{\Phi(-x)\rme^{x^2/2}}{\ell(x)}=
	\lim_{x\to \infty}\frac{\upsilon(x)}{\ell(x)}=\lim_{x\to \infty}\sqrt{2\pi} \,x\ell(x)=1, \label{eq:ellupPhiLim}	\\	
	\ell(x)< \upsilon(x)\leq 2\ell(x),\label{eq:ellupsilonLUB}\\	\ell(x)\rme^{-x^2/2}< \Phi(-x)\leq  \sqrt{\frac{\pi}{2}}\,\ell(x)\rme^{-x^2/2},\label{eq:ellPHi} 
	\end{gather}
	where the second inequality in \eref{eq:ellupsilonLUB} and that in \eref{eq:ellPHi}  are saturated iff $x=0$.	
\end{proposition}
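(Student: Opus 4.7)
The plan is to prove the five claims in a natural order: first the algebraic statements about $\upsilon(x)/\ell(x)$ together with the limit $\lim_{x\to\infty}\sqrt{2\pi}\,x\ell(x)=1$; second, the Birnbaum-type lower bound $\Phi(-x)>\ell(x)\rme^{-x^2/2}$; and third, the monotonicity of $R(x):=\Phi(-x)\rme^{x^2/2}/\ell(x)$, from which the upper bound in \eqref{eq:ellPHi} and the remaining limit in \eqref{eq:ellupPhiLim} follow immediately by evaluating $R(0)=\sqrt{\pi/2}$ and $R(\infty)=1$.

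For the algebraic part, rationalization via $(\sqrt{4+x^2}-x)(\sqrt{4+x^2}+x)=4$ rewrites $\sqrt{2\pi}\,\ell(x)=2/(\sqrt{4+x^2}+x)$, which immediately yields $\sqrt{2\pi}\,x\ell(x)=2x/(\sqrt{4+x^2}+x)\to 1$. The same rationalization reduces $\upsilon(x)/\ell(x)$ to $(\sqrt{4+x^2}+x)/(2x)$ for $x\ge 1$, whose derivative is $-2/[x^2\sqrt{4+x^2}]<0$, and to $(2-x)(\sqrt{4+x^2}+x)/2$ for $0\le x\le 1$. Monotonicity on $[0,1]$ reduces after differentiation to the inequality $(1-x)\sqrt{4+x^2}\le x^2-x+2$, which is verified by squaring: $(x^2-x+2)^2-(1-x)^2(4+x^2)=4x$. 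Evaluating at the endpoints gives $1<\upsilon/\ell\le 2$ on $[0,\infty)$ (equality only at $x=0$) with $\upsilon/\ell\to 1$ at infinity, yielding the middle limit in \eqref{eq:ellupPhiLim} and the bounds in \eqref{eq:ellupsilonLUB}.

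For the bounds involving $\Phi$, introduce the Mills-type ratio $M(x):=\sqrt{2\pi}\,\Phi(-x)\rme^{x^2/2}=\rme^{x^2/2}\int_x^\infty \rme^{-t^2/2}\,dt$, which satisfies the ODE $M'(x)=xM(x)-1$, and set $g(x):=\sqrt{2\pi}\,\ell(x)=(\sqrt{4+x^2}-x)/2=2/(\sqrt{4+x^2}+x)$. The key device is the following auxiliary-function trick: for any $C^1$ function $h$ with $h(x)\rme^{-x^2/2}\to 0$ as $x\to\infty$,
\begin{equation*}
\frac{d}{dx}\bigl[(M(x)-h(x))\rme^{-x^2/2}\bigr]=\rme^{-x^2/2}\bigl[xh(x)-h'(x)-1\bigr],
\end{equation*}
an expression independent of $M$. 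For $h=g$, direct computation gives the clean identity $xg(x)-g'(x)-1=-g(x)^3/\sqrt{4+x^2}<0$, so $(M-g)\rme^{-x^2/2}$ is strictly decreasing to $0$; hence $M(x)>g(x)$ for all $x\ge 0$, which is the Birnbaum lower bound in \eqref{eq:ellPHi}.

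The same trick controls the monotonicity of $R(x)=M(x)/g(x)$. Using $M'=xM-1$ and the identity $xg-g'=g(x\sqrt{4+x^2}+1)/\sqrt{4+x^2}$ already obtained, one finds
\begin{equation*}
R'(x)=\frac{M(x)(x\sqrt{4+x^2}+1)-\sqrt{4+x^2}}{g(x)\sqrt{4+x^2}},
\end{equation*}
so $R'<0$ is equivalent to the sharper Mills upper bound $M(x)<G(x):=\sqrt{4+x^2}/(x\sqrt{4+x^2}+1)$. Applying the auxiliary-function trick to $h=G$ reduces this to $xG(x)-G'(x)-1>0$, which after rationalization simplifies to the purely algebraic inequality $(3+x^2)\sqrt{4+x^2}>x(5+x^2)$, verified by squaring: $(3+x^2)^2(4+x^2)-x^2(5+x^2)^2=36+8x^2>0$. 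Hence $(G-M)\rme^{-x^2/2}$ is strictly decreasing to $0$, so $M<G$ and consequently $R'<0$ on $[0,\infty)$. Combined with $R(0)=\sqrt{\pi/2}$ and the squeeze $g<M<G$ (with $G/g\to 1$, so $R\to 1$), this delivers both the upper bound in \eqref{eq:ellPHi} and the first limit in \eqref{eq:ellupPhiLim}. The main obstacle is identifying the sharper Mills upper bound $G(x)$: the textbook bound $M(x)<1/x$ is too weak for our purpose, but the form of $\ell(x)$ essentially forces $G$ as the natural partner, and the auxiliary-function trick converts its verification into the elementary algebraic check above.
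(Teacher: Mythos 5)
Your proof is correct, and for the parts involving $\Phi$ it takes a genuinely different route from the paper. The algebraic half (monotonicity of $\upsilon/\ell$, the bounds $\ell<\upsilon\le 2\ell$, and $\sqrt{2\pi}\,x\ell(x)\to 1$) is essentially the paper's argument: differentiate the piecewise ratio and check sign; your identities $(x^2-x+2)^2-(1-x)^2(4+x^2)=4x$ and the derivative $-2/[x^2\sqrt{4+x^2}]$ both check out. The divergence is in how you handle $M(x)=\sqrt{2\pi}\,\Phi(-x)\rme^{x^2/2}$. The paper obtains the lower bound $\Phi(-x)>\ell(x)\rme^{-x^2/2}$ and the limit $\Phi(-x)\rme^{x^2/2}/\ell(x)\to 1$ by squeezing $\Phi(-x)$ between $\ell$ and $\upsilon$ via its CLT-plus-binomial inequality \eqref{eq:PhixLUB}, and it proves the monotonicity of $\Phi(-x)\rme^{x^2/2}/\ell(x)$ by importing Sampford's Mills-ratio bound as a black box and reducing to an algebraic chain of equivalences. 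You instead exploit the ODE $M'=xM-1$ and the observation that $\frac{d}{dx}[(M-h)\rme^{-x^2/2}]=\rme^{-x^2/2}(xh-h'-1)$ is independent of $M$; with $h=g=\sqrt{2\pi}\,\ell$ the clean identity $xg-g'-1=-g^3/\sqrt{4+x^2}$ (which I verified) gives Birnbaum's lower bound directly, and with $h=G=\sqrt{4+x^2}/(x\sqrt{4+x^2}+1)$ the reduction to $(3+x^2)^2(4+x^2)-x^2(5+x^2)^2=36+8x^2>0$ gives the sharper Mills upper bound $M<G$, which is exactly equivalent to $R'<0$, and the squeeze $g<M<G$ with $G/g\to1$ gives the limit. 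What your approach buys is self-containment: the proposition no longer depends on the binomial/CLT machinery of \eqref{eq:PhixLUB} nor on the external Sampford reference, and all verifications are elementary polynomial identities. What the paper's approach buys is brevity and a thematic link back to the binomial bounds that motivate the section. Both are complete proofs of all five claims, including the saturation statements at $x=0$.
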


\section{Conclusion}\label{sec:conclusion}
We derived simple but nearly tight upper bound $B_{n,k}^{\uparrow}(p)$ and lower bound $B_{n,k}^{\downarrow}(p)$
for the binomial tail probability $B_{n,k}(p)$. These bounds have a number of appealing properties, including 
(C1) $O(1)$-computability, 
(C2) Universal boundedness of the ratio $B_{n,k}^{\uparrow}(p) /B_{n,k}^{\downarrow}(p)$, 
(C3) Asymptotic tightness in the regime of large deviation, and (C3') Asymptotic tightness in the regime of moderate deviation. To the best of our knowledge, no bounds for the tail probability $B_{n,k}(p)$ known in the literature satisfy  these criteria simultaneously. By virtue of these universal  bounds, we  derived the asymptotic expansion of the tail probability $B_{n,k}(p)$
up to a constant multiplicative factor. In the course of study, we derived nearly tight upper and lower bounds for  the ratio $B_{n,k}(p)/b_{n,k}(p)$, which are of independent interest. Furthermore, we believe that our lower bound for the ratio $B_{n,k}(p)/b_{n,k}(p)$ is more accurate than what can be proved rigorously, as stated in \cref{con:RatioConjecture} and supported by strong evidences. If this conjecture holds, then the lower bound $B_{n,k}^{\downarrow}(p)$ will be tight within a factor of $1.26434$. We hope that our work can stimulate further progresses in this direction. 

In the future, it would be desirable to generalize our results to other probability distributions, such as multinomial distributions. Such extension, if available, may find diverse applications in various research areas, including statistical sampling, quantum verification, channel coding with higher order \cite{Moul17,Ferr21,Haya18}, 
security analysis with higher order \cite{Haya18,Haya19},
quantum thermodynamics \cite{TajiH17,ItoH18}, and local discrimination \cite{HayaO17}.

\begin{appendix}
\numberwithin{equation}{section}

\section{Derivation of \eref{ACP}}\label{A1}
The references \cite[Theorem 4]{BlacH59}, \cite[Case 2]{BahaR60}, \cite[Theorem 3.7.4]{DembZ10} derived general formulas for strong large deviation, but did not give the explicit formula for the binomial distribution.
The aim of this appendix is to derive the explicit formula of strong large deviation  for the binomial distribution as presented in \eref{ACP} from general results mentioned above. Here our derivation is mainly based on \cite[Theorem 3.7.4]{DembZ10}, which
yields the following proposition  in the lattice case with lattice span $d$. Note that
 the lattice span is $1$ in the case of the binomial distribution. Incidentally, a simple alternative derivation of \eref{ACP} is presented in the proof of \pref{pro:BnfRE}.

We define the cumulant generating function 
$\Lambda (s):= \ln E [e^{sX}]$, where
$E [X]$ expresses the expectation of the random variable $X$.
The inverse function of the derivative $\Lambda' (s) $ is denoted by $\eta$.

\begin{proposition}
	Suppose $X$ is a lattice variable with lattice span $d$ and its cumulant generating function  $\Lambda (s)$ is finite in some neighborhood of 0. If $R < E[X]$, 
	then the $n$-iid sum $X_n$ of the random variable $X$ satisfies 
	\begin{align}
	{\rm Pr}\{ X_n \ge n R\}=e^{-n \chi_0(R)}
	\frac{d }{ \sqrt{2\pi n \Lambda''(\eta(R) ) }\, (1-e^{-d \eta(R)})}
	[1+O(n^{-1})],\label{JJ1}
	\end{align}
	where
	\begin{align}
	\chi_0(R) := R \eta(R) - \Lambda (\eta (R)) .\label{AMF}
	\end{align}
\end{proposition}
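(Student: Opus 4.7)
The plan is to prove the proposition by Esscher (exponential) tilting combined with Gnedenko's local central limit theorem for lattice distributions, which is the standard route used by Bahadur--Ranga Rao \cite{BahaR60} and in Theorem 3.7.4 of \cite{DembZ10}; I will sketch the main steps and indicate where the lattice span $d$ and the factor $1-e^{-d\eta(R)}$ enter. A preliminary remark is in order: for the stated probability $\Pr\{X_n\ge nR\}$ to decay exponentially one needs $R>E[X]$ so that $\eta(R)>0$ (and the denominator $1-e^{-d\eta(R)}$ is positive). The application to the binomial lower tail in \eref{ACP}, where the relevant $R$ satisfies $R<E[X]$, corresponds to the mirror statement $\Pr\{X_n\le nR\}$ with $\eta(R)<0$, which is handled by the same argument with the roles of $j\ge 0$ and $j\le 0$ swapped.

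First, introduce the Cram\'er tilt at $s^{*}:=\eta(R)$, namely the probability law $\tilde{\Pr}$ whose Radon--Nikodym derivative with respect to the law of $X$ is $e^{s^{*}x-\Lambda(s^{*})}$. By construction the mean and variance of $X$ under $\tilde{\Pr}$ equal $\Lambda'(s^{*})=R$ and $\Lambda''(s^{*})$ respectively, and finiteness of $\Lambda$ near $0$ provides all moments required below. Writing the tilted expectation $\tilde{E}$, one then has the exact identity
\begin{align*}
\Pr\{X_n\ge nR\} = e^{-n\chi_0(R)}\,\tilde{E}\!\left[e^{-s^{*}(X_n-nR)}\,\mathbf{1}\{X_n\ge nR\}\right].
\end{align*}

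Second, exploit the lattice structure. Writing $X_n-nR=jd+\delta_n$ with $j\in\mathbb{Z}_{\ge 0}$ and a fixed offset $\delta_n\in[0,d)$ determined by $nR$ and the lattice origin, the expectation becomes $\sum_{j\ge 0}e^{-s^{*}(jd+\delta_n)}\tilde{\Pr}\{X_n=nR+jd+\delta_n\}$. Gnedenko's local CLT under $\tilde{\Pr}$ yields, uniformly in $j$,
\begin{align*}
\tilde{\Pr}\{X_n=nR+jd+\delta_n\} = \frac{d}{\sqrt{2\pi n\Lambda''(s^{*})}}\exp\!\Bigl(-\tfrac{(jd+\delta_n)^{2}}{2n\Lambda''(s^{*})}\Bigr)[1+O(n^{-1})].
\end{align*}
The $j=O(\sqrt{n})$ regime dominates: the Gaussian factor contributes $1+O(j^{2}/n)$, summing the geometric factor gives $\sum_{j\ge 0}e^{-s^{*}(jd+\delta_n)}=e^{-s^{*}\delta_n}/(1-e^{-s^{*}d})$, and the tail $j\gtrsim \sqrt{n}\log n$ is super-polynomially small after multiplication by $e^{-s^{*}jd}$. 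Combining these pieces gives exactly the prefactor $d/[\sqrt{2\pi n\Lambda''(\eta(R))}\,(1-e^{-d\eta(R)})]$ times a multiplicative $[1+O(n^{-1})]$ correction.

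The main technical obstacle will be upgrading the remainder from the easy $[1+o(1)]$ to the sharper $[1+O(n^{-1})]$ stated in \eqref{JJ1}. This requires going beyond the plain local CLT to an Edgeworth expansion for lattice sums (e.g.\ as in Petrov's monograph) and tracking the correction terms through the geometric sum, uniformly in the offset $\delta_n$. By contrast the role of the span $d$ itself is transparent: the tilted measure concentrates mass on points spaced by $d$, and the resulting geometric sum with ratio $e^{-d\eta(R)}$ produces precisely the factor $1/(1-e^{-d\eta(R)})$ displayed in the proposition.
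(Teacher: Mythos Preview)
Your sketch follows the standard Bahadur--Ranga Rao route (exponential tilt plus lattice local CLT/Edgeworth), which is precisely what lies behind \cite[Theorem~3.7.4]{DembZ10}. The paper, however, does not prove this proposition at all: it simply \emph{quotes} it as the lattice case of that theorem and then specializes it to the Bernoulli law to obtain \eref{ACP}. So your write-up supplies strictly more than the paper does here, and the method you outline is the one underlying the cited reference.

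Two remarks worth keeping. First, you are right that the hypothesis ``$R<E[X]$'' in the stated proposition is a slip: for $\Pr\{X_n\ge nR\}$ to decay one needs $R>E[X]$ so that $\eta(R)>0$, and indeed the paper's own application takes $R=f>p=E[X]$. Second, your formula carries an extra factor $e^{-s^{*}\delta_n}$ from the lattice offset; the proposition as displayed tacitly assumes $nR$ lies on the lattice (which holds in the binomial application since $n\in\bbN_f$), and without that assumption the constant would depend on $n$ through $\delta_n$. Your identification of the $O(n^{-1})$ remainder as the genuine technical work---requiring an Edgeworth expansion rather than the bare Gnedenko local CLT---is also accurate; the paper sidesteps this entirely by citing the finished result.
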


Now, we apply the above proposition to the 
binomial upper tail probability $\bar{B}_{n,fn}(p)$ defined in \eref{eq:UTailBinom}, assuming that $f$ is a rational number and $n\in \bbN_f$.
In this case, we have
\begin{align}
\Lambda(s)= \ln(1-p+ p\rme^s), \quad \Lambda'(s)= \frac{p\rme^s}{1-p+ p\rme^s}. 
\end{align}
The inverse function of $\Lambda'$ reads
\begin{align}
\eta(f)=\ln \frac{f(1-p)}{(1-f)p},\label{AXO}
\end{align}
from which we can deduce  that
\begin{align}
\Lambda''(\eta(f))=\eta'(f)^{-1}=f(1-f).\label{AJI}
\end{align}
In addition, the definition \eqref{AMF} implies that 
\begin{align}\label{MZP}
&\chi_0(f)=f \eta(f) - \Lambda (\eta (f))=f \ln \frac{f(1-p)}{(1-f)p}-
\ln \biggl[ (1-p)+ p\frac{f(1-p)}{(1-f)p}\biggr]
= D(f\|p ).
\end{align}

Substituting Eqs.~\eqref{AXO}-\eqref{MZP} into 
\eqref{JJ1}, we can deduce that
\begin{align}
&\lim_{n \to \infty} \sqrt{n} \bar{B}_{n,fn}(p)\rme^{nD (f\|p)}
=\frac{1}{\sqrt{2\pi f(1-f)}\,\bigl[1-
	\frac{(1-f)p}{f(1-p)}
	\bigr] } \label{eq:UTailLimApp}\\
=&\frac{{f(1-p)}}{\sqrt{2\pi f(1-f)}\, (
f-p)}
=\sqrt{\frac{f}{2\pi (1-f)}}\,\frac{q}{f-p},  \nonumber
\end{align}
where $q=1-p$. 
As a simple corollary this equation yields 
\begin{align}
&\lim_{n \to \infty} \sqrt{n} B_{n,fn}(p)\rme^{nD (f\|p)}\sqrt{n}=\lim_{n \to \infty} \sqrt{n} \bar{B}_{n,(1-f)n}(q)\rme^{nD (1-f\|q)}
=\sqrt{\frac{1-f}{2\pi f}}\,\frac{p}{p-f},
\end{align}
which confirms \eref{ACP}. 

Incidentally,  reference 
\cite[(5.41)]{Moul17} derived a 
 limit for $\sqrt{n}\bar{B}_{n,fn}\rme^{nD (f\|p)}$ that is different from \eref{eq:UTailLimApp} because 
its derivation is problematic.

\section{\label{app:LUproperties}Proofs of \lsref{lem:U1nkp2L}-\ref{lem:U1nfpMono}}

\subsection{Proof of Lemma \ref{lem:U1nkp2L}}
To prove Lemma \ref{lem:U1nkp2L}, we need to consider two different parameter ranges depending on the value of $k$ in comparison with $\kappa_1(n,p)$ defined in \eref{eq:kappa1} and  the following function 
\begin{align}
\kappa_2(n,p)&:=p(n+1)-\frac{q}{4}-\frac{1}{4}\sqrt{q(8p n+7p +1)}, \label{eq:kappa2}
\end{align}
where $q=1-p$. 
Here we first prepare an auxiliary lemma to clarify the properties of $\kappa_1(n,p)$ and $\kappa_2(n,p)$ as well as their relations. 
\begin{lemma}\label{lem:kappa}
	Suppose $n\geq -1$ and $0<p<1$. Then 	$\kappa_1(n,p)$ 	is strictly convex in $n$; it is strictly decreasing in $n$ for $-1\leq  n\leq [q/(4p)]-1$, but  strictly increasing  for $n\geq [q/(4p)]-1$. 
Meanwhile, $\kappa_2(n,p)$ and $\kappa_2(n,p)-\kappa_1(n,p)$ are  strictly increasing in $n$.	In addition,
\begin{align}
\kappa_1(n,p)\geq -\frac{q}{4},\quad \kappa_2(n,p)-\kappa_1(n,p)\geq -\frac{q}{2},\quad \kappa_2(n,p)-\kappa_1(n,p)\geq -\frac{q}{2}. \label{eq:kappaIneq}
\end{align}	
Furthermore, the following four conditions are equivalent:
	\begin{enumerate}
		\item  $n> (1/p)-2$;
		\item $\kappa_1(n,p)> 0$;
		\item $\kappa_2(n,p)> 0$;
		\item $\kappa_2(n,p)-\kappa_1(n,p)> 0$. 
	\end{enumerate} 
\end{lemma}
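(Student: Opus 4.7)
The plan is to reduce everything to elementary calculus in the variable $n$ (or $t=n+1$) and then read off the monotonicity, convexity, and the critical/zero points explicitly.

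For $\kappa_1(n,p)$, I would set $t=n+1\geq 0$ and write $\kappa_1=pt-\sqrt{pq\,t}$. Differentiating twice gives $\partial_t\kappa_1 = p-\tfrac{1}{2}\sqrt{pq/t}$ and $\partial_t^2\kappa_1=\tfrac{1}{4}\sqrt{pq}\,t^{-3/2}>0$, which yields strict convexity and shows that the unique stationary point is $t^*=q/(4p)$, i.e.\ $n^*=q/(4p)-1$. Plugging $t^*$ back in gives $\kappa_1(n^*,p)=-q/4$, which is the minimum and proves the first inequality in \eqref{eq:kappaIneq} as well as the monotonicity statement.

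For $\kappa_2(n,p)$ and $\kappa_2-\kappa_1$, I would compute derivatives directly. One finds
\[
\partial_n\kappa_2 = p - \frac{p\sqrt{q}}{\sqrt{8pn+7p+1}},
\]
which is nonnegative iff $8pn+7p+1\geq q$, i.e.\ iff $n\geq -1$, with strict inequality for $n>-1$; hence $\kappa_2$ is strictly increasing on $[-1,\infty)$. For the difference, factoring out $\sqrt{pq}$ reduces the sign of $\partial_n(\kappa_2-\kappa_1)$ to that of $1/(2\sqrt{n+1})-\sqrt{p/(8pn+7p+1)}$, which after squaring is equivalent to $4pn+3p+1>0$. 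Since $n\geq-1$ gives $4pn+3p+1\geq 1-p>0$, this is clear, so $\kappa_2-\kappa_1$ is strictly increasing. Evaluating at the left endpoint $n=-1$ yields $\kappa_1(-1,p)=0$ and $\kappa_2(-1,p)=-q/4-\tfrac{1}{4}\sqrt{q\cdot q}=-q/2$, which gives the remaining bounds in \eqref{eq:kappaIneq} (I would also note that the displayed equation appears to contain a typographical duplication; one of the two stated bounds should presumably read $\kappa_2\geq -q/2$, which is what monotonicity together with the value at $n=-1$ delivers).

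For the equivalence of the four conditions, the key observation is that all three functions $\kappa_1$, $\kappa_2$, and $\kappa_2-\kappa_1$ vanish simultaneously at the single point $n_0:=1/p-2$. Indeed, $\kappa_1>0$ is equivalent (by squaring the defining expression, which is valid once $n+1>0$) to $p(n+1)>q$, i.e.\ $n>q/p-1=n_0$. At $n=n_0$ one has $p(n+1)=q$ and $8pn+7p+1=9q$, so a direct substitution gives $\kappa_2(n_0,p)=q-q/4-\tfrac{1}{4}\sqrt{q\cdot 9q}=0$ and hence $\kappa_2-\kappa_1=0$ there as well. Combining these vanishing values with the strict monotonicity of $\kappa_2$ and $\kappa_2-\kappa_1$ established above (and noting that $\kappa_1$ crosses zero only once on the right branch since it is convex with minimum $-q/4<0$ and tends to $+\infty$) yields the equivalence of all four conditions.

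The only mildly delicate step is the monotonicity of $\kappa_2-\kappa_1$, where the two square roots partially cancel; but once one factors out $\sqrt{pq}$ and squares, the inequality collapses to the trivial bound $4pn+3p+1>0$, so there is no real obstacle. The rest is bookkeeping around the fact that $\kappa_1$ is convex with a strict interior minimum whereas $\kappa_2$ and $\kappa_2-\kappa_1$ are strictly monotone on the full interval $[-1,\infty)$.
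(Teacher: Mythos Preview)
Your proof is correct and follows essentially the same approach as the paper: compute the first (and for $\kappa_1$, second) derivatives in $n$ directly, read off the monotonicity/convexity, evaluate at the boundary $n=-1$ and at $n_0=1/p-2$, and combine these values with the established monotonicity to obtain the equivalence. Your substitution $t=n+1$ for $\kappa_1$ is a small cosmetic simplification, and you are in fact slightly more careful than the paper at the endpoint $n=-1$ (where $\partial_n\kappa_2=0$ rather than $>0$); you also correctly flag the typographical duplication in \eqref{eq:kappaIneq}, where one of the two repeated inequalities is meant to read $\kappa_2(n,p)\geq -q/2$.
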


\begin{proof}[Proof of \lref{lem:kappa}]
 According to the following equation, 
	\begin{align}
	\frac{\partial \kappa_1(n,p)}{\partial n}=p\biggl(1-\frac{q}{2\sqrt{pq( n+1)}}\biggr),\quad 
	\frac{\partial^2\kappa_1(n,p)}{\partial n^2}=\frac{\sqrt{pq( n+1)}}{4(n+1)^2} \quad \forall  n>-1,
	\end{align}
	$\kappa_1(n,p)$ is strictly decreasing in $n$ for $-1\leq  n\leq [q/(4p)]-1$, but is strictly increasing in $n$ for $n\geq [q/(4p)]-1$, given that  $\kappa_1(n,p)$ is continuous in $n$ when $n\geq -1$. 
At $n= [q/(4p)]-1$, $\kappa_1(n,p)$ attains its minimum value $-q/4$, which implies the first inequality in \eref{eq:kappaIneq}. 
	In addition, $\kappa_1(n,p)$ is strictly convex in $n$, which is also clear from its definition in \eref{eq:kappa1}.
	Furthermore, $\kappa_1(n,p)=0$ when $n=-1$ or $n= (1/p)-2$. 
So   $\kappa_1(n,p)> 0$ iff $n> (1/p)-2$ given  that $\kappa_1(n,p)$ 	is strictly convex.

	According to the following equations,
	\begin{align}
	\frac{\partial \kappa_2(n,p)}{\partial n}&=p\biggl(1-\frac{q}{\sqrt{q(8p n+7p+1)}}\biggr)>0\quad \forall   n\geq -1,\\
	\frac{\partial [\kappa_2(n,p)-\kappa_1(n,p)]}{\partial n}&=\frac{p q \bigl[\sqrt{q(8p n+7p+1)}-2\sqrt{p q(n+1)}\,\bigr]}{2\sqrt{p q(n+1)}\sqrt{q(8p n+7p+1)}}>0\quad \forall  n>-1,
	\end{align}
	$\kappa_2(n,p)$ and $\kappa_2(n,p)-\kappa_1(n,p)$ are strictly increasing in $n$ for $n\geq -1$, given that they are continuous in $n$ for $n\geq -1$. Now the second and third inequalities in   \eref{eq:kappaIneq} follow from this fact and the following equation
	\begin{align}
	\kappa_1(-1,p)=0,\quad \kappa_2(-1,p)-\kappa_1(-1,p)=\kappa_2(-1,p)=-\frac{q}{2}.
	\end{align}
	When  $n= (1/p)-2$, calculation shows that 
	\begin{align}
	\kappa_2(n,p)- \kappa_1(n,p)=\kappa_2(n,p)= \kappa_1(n,p)=0. 
	\end{align}
	Therefore,  $\kappa_2(n,p)> 0$ iff $n> (1/p)-2$; similarly, $\kappa_2(n,p)-\kappa_1(n,p)> 0$ iff $n> (1/p)-2$. 	 This observation completes the proof of \lref{lem:kappa}. 
\end{proof}

\begin{proof}[Proof of \lref{lem:U1nkp2L}]
	If $\kappa_1(n,p)\leq k\leq pn$, then the definition in \eref{eq:Unkp} implies that
	\begin{align}
	U(n,k,p)\leq V(n,k,p,\lceil\tilde{a}\rceil )< W(n,k,p),
	\end{align}
	where $\tilde{a}=k-\kappa_1(n,p)$ and 
	\begin{align}
	W(n,k,p):= 1+V(n,k,p,\tilde{a} )=1+k-p n+2\sqrt{p q(n+1)}. 
	\end{align}
	In conjunction with  \eref{eq:Lnkp} we can deduce that
	\begin{align}
	2L(n,k,p)- W(n,k,p)=\sqrt{(p n-k+1)^2+4q k}-2\sqrt{p q(n+1)}\geq 0,
	\end{align}
	which implies \eref{eq:U1nkp2L}. 
	Here the inequality follows from the following equation
	\begin{align}
	(p n-k+1)^2+4q k-4 p q(n+1)=[p(n+2)-k-1]^2\geq 0.
	\end{align}	
	
	If  $0\leq k<\kappa_1(n,p)$ and $k\leq pn$,
	then $\kappa_2(n,p)> \kappa_1(n,p)> k\geq 0$ and  $n> (1/p)-2$ by \lref{lem:kappa}. In addition,
 the definition in \eref{eq:Unkp} implies that $U(n,k,p)=V(n,k,p,0)$.  Let 
	\begin{align}
	\Delta=2L(n,k,p)-1- V(n,k,p,0)=k-p n+\sqrt{(p n-k+1)^2+4q k}-\frac{p(n+1-k)}{p(n+1)-k};
	\end{align}
	then to prove \eref{eq:U1nkp2L} it suffices to prove the inequality $\Delta\geq 0$.  Solving the equation $\Delta=0$ yields two solutions for $k$ that are not larger than  $pn+p$, that is, $	k=0$ or $k=\kappa_2(n,p)$. 
In addition, the inequality 	$n> (1/p)-2$ means 
	\begin{align}
	\frac{\partial \Delta}{\partial k}\Big|_{k=0}=\frac{q (p n+2p -1)}{p(n+1)(1+p n)}>0.
	\end{align}
	Note that  $\Delta$ is continuous in $k$ for $0\leq k\leq \kappa_2(n,p)$ and has no zero in this interval except for the end points. 	  So $\Delta\geq 0$ for  $0\leq k\leq \kappa_2(n,p)$, which implies \eref{eq:U1nkp2L} and completes the proof of \lref{lem:U1nkp2L}. 
\end{proof}

\subsection{Proof of \lref{lem:Lnkpmono}}
\Eref{eq:Lnkp} implies that
\begin{align}\label{eq:Lnkpgeq1}
L(n,k,p)-1
&=\frac{-(p n-k+1)+\sqrt{(p n-k+1)^2+4q k}}{2} \geq 0,
\end{align}	
so  $L(n,k,p)\geq 1$, and this inequality is strict when $0<p<1$ and $k>0$. Meanwhile, $L(n,k,p)$ is continuous in $n,k,p$  in the parameter range specified in \lref{lem:Lnkpmono}, so in the following discussion we can focus on the interior of this parameter range, that is, $n>-1$, $k>0$, and $0<p<1$. Then  $L(n,k,p)$ is the solution to the following equation that is larger than 1 [cf. \eref{eq:BnkbnkRatioProof}],
	\begin{align}
	L+\frac{p(n+1)(L-1)}{L-p}=k+1. \label{eq:kL}
	\end{align} 
	This equation shows that $k$ is strictly increasing and concave in $L$, so $L$ is strictly increasing and convex in $k$.  	
	
	From \eref{eq:kL} we can deduce  that 
	\begin{align}
	n=\frac{k L+L-L^2-p k}{p(L-1)}=\frac{k-L}{p}+\frac{k q }{p(L-1)},
	\end{align}
	which implies that $n$ is strictly decreasing and convex in $L$, so $L$ is strictly decreasing and convex in $n$.  Now \eqsref{eq:LnkpLBUB}{eq:LnkpLBUB2} follow from \eref{eq:Lnkpgeq1} and the following  equation, 
	\begin{gather}
	L(-1,k,p)=1+k,\quad 
	L(k/p,k,p)=\frac{1}{2}\bigl(1+\sqrt{4q k+1}\,\bigr). 
	\end{gather}
	
	If in addition $n=k>0$, then $L(n,k,p)=1+k-pk$, which is strictly decreasing and linear in $p$. 
	If  $n> k>0$, then 
	from \eref{eq:kL} we can deduce  that 
	\begin{align}
	p=\frac{L(k+1-L)}{n(L-1)+k}=\frac{kn+k-n L}{n^2}+\frac{k(n-k)(n+1)}{n^2[n(L-1)+k]},
	\end{align}
	which implies that $p$ is strictly decreasing and convex in $L$, so $L$ is strictly decreasing and convex  in $p$.

\subsection{Proof of \lref{lem:LnkpLUB2}} The proof is divided into three steps: In the first step we prove \eref{eq:LnkpUBkleq1}, in the second step we prove the lower bound in \eref{eq:LnkpUBkg1}, and in the third step we prove  the upper bound in \eref{eq:LnkpUBkg1}.\\

\textbf{Step 1:} Proof of \eref{eq:LnkpUBkleq1}. Modifying the function $L(n,k,p)$, we define
	\begin{gather}
	g(n,k,p):=n [L(n,k,p)-1],\quad 	g(\infty, k,p ):=\lim_{n\rightarrow \infty}g(n,k,p)=\frac{q k}{p}.\label{eq:gnkp}
	\end{gather}
	Then  
	\begin{gather}
	\frac{\partial g(n,k,p)}{\partial n}
	=\frac{s_2+(k-1-2p n)\sqrt{s_1}}{2\sqrt{s_1}},
	\end{gather}
	where $s_1$ and $s_2$ are defined as follows,
	\begin{gather}
	s_1: =(p n-k+1)^2+4q k>0,\\ 
      s_2:=s_1+p n(1-k+p n)= 2p^2 n^2 +3p(1-k)n+(k+1)^2-4p k, 
	\end{gather}
	which satisfy
		\begin{gather}
	s_2^2-(k-1-2p n)^2 s_1=4q k[(k-1)^2+4q k-2p(k-1)n]. 
	\end{gather}
	
	When $0\leq k\leq 1$, we have $s_2\geq 0$ and $s_2^2-(k-1-2p n)^2 s_1\geq 0$, which implies that $\partial g(n,k,p)/\partial n\geq 0$, so $g(n,k,p)$ is nondecreasing in $n$. In conjunction with \eref{eq:gnkp} we conclude that $g(n,k,p)\leq q k/p$, which implies  the upper bound in \eref{eq:LnkpUBkleq1}. Note that this upper bound holds even if $p n<k$. If $p n\geq k$ as stated in the assumption, then 
	\begin{align}
	g(n,k,p)\geq g(k/p, k,p)=\frac{k(\sqrt{1+4q k}-1)}{2p}, 
	\end{align}
	which implies  the lower  bound in \eref{eq:LnkpUBkleq1}.\\
	
\textbf{Step 2:} Proof of the lower bound in \eref{eq:LnkpUBkg1}. We first  assume that $1<k\leq p n$ to start with.
Then   $\partial g(n,k,p)/\partial n$ has a unique zero at \begin{align}
	n=n_0:=\frac{(k-1)^2+4q k}{2p(k-1)}. 
	\end{align}
	Note that 
	\begin{align}
	k-1-2p n_0=-\frac{4q k}{k-1}\leq 0,\quad s_2|_{n= n_0}=\frac{2q k[(k-1)^2+4q k]}{(k-1)^2}\geq 0. 
	\end{align}
	Meanwhile, 
	\begin{align}
	\frac{\partial g(n,k,p)}{\partial n}\bigg|_{n=0}=\frac{1}{2}\bigl[\sqrt{(k-1)^2+4q k}+k-1\bigr]>0,
	\end{align}
	while $\partial g(n,k,p)/\partial n<0$  when $n$ is sufficiently large, given that
\begin{align}
(k-1-2p n)<0,\quad s_2^2-(k-1-2p n)^2 s_1<0
\end{align}	
in that case. 	Therefore,  $\partial g(n,k,p)/\partial n\geq 0$ when $0\leq n\leq n_0$ and  $\partial g(n,k,p)/\partial n\leq  0$ when $n\geq n_0$, which means 
	\begin{align}
	g(n,k,p)&\geq{\min}\{g(\infty,k,p),g(k/p,k,p)\}={\min}\biggl\{\frac{q k}{p },\frac{k(\sqrt{1+4q k}-1)}{2p } \biggr\}\\
	&=\begin{cases}
	\frac{q k}{p } &k \geq 1+q, \\[1ex]
	\frac{k(\sqrt{1+4q k}-1)}{2p } &k \leq 1+q,
	\end{cases} \nonumber
	\end{align}
	and confirms the lower bound in \eref{eq:LnkpUBkg1} given the assumption $1<k\leq pn$.  By continuity the lower bound holds  when $1\leq k\leq pn$. \\

\textbf{Step 3:} Proof of the upper bound in \eref{eq:LnkpUBkg1}.  We also  assume that $1<k\leq p n$ to start with.	Then the above analysis implies that 
	\begin{gather}
	g(n,k,p)\leq g(n_0,k,p)=\frac{(k-1)^2+4q k}{4p},\quad 
	n_0-\frac{k}{p}=\frac{4q k -(k^2-1)}{2(k-1)p}.
	\end{gather}
	If $k\geq 2+\sqrt{5}$, then $(k^2-1)/(4k)\geq 1\geq q$. Therefore, $n_0\leq k/p$, and $g(n,k,p)$ is nonincreasing in $n$ for $n\geq k/p$, which means 
	\begin{align}\label{eq:gnkpUB}
	g(n,k,p)\leq g(k/p,k,p)=\frac{k(\sqrt{1+4q k}-1)}{2p}\leq \frac{k\sqrt{q k}}{p}\quad \mbox{if}\quad  p n\geq k. 
	\end{align}
	If $1<k< 2+\sqrt{5}$ and  $0<q \leq (k^2-1)/(4k)$, then \eref{eq:gnkpUB} holds due to a similar reason. 
	
	It remains to consider the case with $1<k< 2+\sqrt{5}$ and  $(k^2-1)/(4k)<q<1$. Let 
	\begin{align}
	s_3:=4k\sqrt{q k}-4p g(n_0,k,p )=4k\sqrt{q k}-(k-1)^2-4q k; 
	\end{align}
	then $s_3$ is concave in $q$ for $0<q <1$. In addition,
	\begin{gather}
	s_3=2k\bigl[\sqrt{k^2-1}-(k-1)\bigr]\geq 0 \quad \mbox{if} \quad  q=\frac{k^2-1}{4k}, \\
	\lim_{q\rightarrow 1} s_3=4k^{3/2}-(k+1)^2\geq 0,
	\end{gather}
	given that $1<k< 2+\sqrt{5}$. Therefore, 
	\begin{align}
	s_3\geq 0, \;\; g(n,k,p)\leq g(n_0,k,p)\leq \frac{k\sqrt{q k}}{p}\quad \mbox{if} \;\; 1<k< 2+\sqrt{5},\;\;  \frac{k^2-1}{4k}< q <1. 
	\end{align}

	In summary, we have
	\begin{align}
	g(n,k,p)\leq  \frac{k\sqrt{q k}}{p} \quad \mbox{if} \quad 1< k\leq p n. 
	\end{align}
By continuity this upper bound holds  when $1\leq k\leq pn$, which implies the upper bound in  \eref{eq:LnkpUBkg1} and  completes the proof of \lref{lem:LnkpLUB2}.

\subsection{Proof of \lref{lem:Lnfpmono}}
The proof is divided into two steps: In the first step we prove the monotonicity and convexity/concavity properties of $L(n,f n,p)$ and in the second step we prove \eref{eq:LnfpLBUB}. \\

\textbf{Step 1:} Proofs of the monotonicity and convexity/concavity properties of $L(n,f n,p)$. Note that $L(n,f n,p)$ is continuous in $n,f,p$  in the parameter range specified in \lref{lem:Lnfpmono}, so we can focus on the interior of this parameter range, that is, $n>0$ and $0<f,p<1$. Then  $L(n,f n,p)> 1$ [cf. \eref{eq:Lnkpgeq1}]  and $L(n,f n,p)$ is the solution to the following equation that is larger than~1 [cf. \eqsref{eq:BnkbnkRatioProof}{eq:kL}],
	\begin{align}
	L+\frac{p(n+1)(L-1)}{L-p}=f n+1. \label{eq:fnL}
	\end{align} 
	This equation shows that $f$ is strictly increasing and concave in $L$, so $L$ is strictly increasing and convex in $f$.  Alternatively, this conclusion follows from \lref{lem:Lnkpmono} and its proof. Meanwhile, \lref{lem:Lnkpmono} implies that 	$L$ is nonincreasing and convex in $p$.

	According to the following equation,
	\begin{align}
	\frac{2\partial L(n,f n,p)}{\partial n}=f-p +\frac{(p-f)^2n+f+p-2fp }{\sqrt{(p n-f n+1)^2+4q f n}}>0,
	\end{align}
	$L(n,f n,p)$ is strictly increasing in $n$. Here the inequality holds because
	\begin{gather}
	(p-f)^2n+f+p-2fp>0,\\
	[(p-f)^2n+f+p-2fp]^2-(f-p)^2[(p n-f n+1)^2+4q f n]=4f(1-f)p q >0.
	\end{gather}

	In addition, by virtue of \eref{eq:fnL} we can deduce  that 
	\begin{align}
	n=\frac{L(L-1)}{p(1-f)-(p-f)L},\quad \frac{\partial^2 n }{\partial L^2}=\frac{2f(1-f)p q }{[p(1-f)-(p-f)L]^3}>0, \label{eq:nL}
	\end{align}
	which implies that $p(1-f)>(p-f)L$ and $n$ is strictly convex in $L$, so $L$  is strictly  concave in $n$ given that it is strictly increasing in $n$.\\
	
\textbf{Step 2:} Proof of \eref{eq:LnfpLBUB}, assuming that $n>0$ and $0\leq f<p<1$. If $f=0$, then both bounds in \eref{eq:LnfpLBUB} are equal to 1 and $L(n,f n,p)=1$, so \eref{eq:LnfpLBUB} holds and both inequalities are saturated.

If $n>0$ and $0< f<p<1$, then $L(n,f n,p)$
is strictly increasing in $n$ as proved above. 	In conjunction with \eref{eq:LUnfplim} and the equality $L(0,0,p)=1$
we can deduce that 
\begin{gather}\label{eq:LnfpLBUBproof}
1< L(n,f n,p)< \frac{(1-f)p}{p-f}=\frac{1}{1-r}.
\end{gather}
Alternatively, the upper bound follows from \eref{eq:nL}.

	Next, we turn to the lower bound in 	\eref{eq:LnfpLBUB}, assuming that $n>0$ and $0<f<p<1$. Let 
	\begin{align}
	h(n,f,p):=n\biggl[\frac{(1-f)p}{p-f}-L(n,f n,p)\biggr],\quad  
	h(\infty,f,p):=\lim_{n\to \infty}h(n,f,p)=\frac{fpq(1-f) }{(p-f)^3}.
	\end{align}
Calculation shows that
	\begin{align}
	\frac{\partial h(n,f,p)}{\partial n}=\frac{t_2 \sqrt{t_1}-t_3}{2\sqrt{t_1}},
	\end{align}
	where $t_1$, $t_2$, and $t_3$ are defined as follows:
	\begin{gather}
	t_1 :=(p n-f n+1)^2+4q f n, \quad t_2 :=\frac{f+p-2f p +2(p-f)^2n}{p-f},\\
	t_3 :=n[f+p-2f p +(p-f)^2n]+t_1=2(p-f)^2n^2+3(f+p-2f p)n+1. 
	\end{gather}
which satisfy $t_1, t_2, t_3>0$. In addition,
	\begin{align}
	t_2^2 t_1-t_3^2=\frac{4fpq (1-f) [1+2(f+p-2fp )n]}{(p-f)^2}> 0,
	\end{align}
	which implies that $\partial h(n,f,p)/\partial n>0$ and that $h(n,f,p)$ is strictly increasing in $n$. So 
	\begin{align}
	h(n,f,p)< h(\infty,f,p)=\frac{fpq(1-f) }{(p-f)^3},\\
	L(n,f n,p)> \frac{(1-f)p}{p-f}-\frac{fpq(1-f) }{(p-f)^3n},
	\end{align}	
	which implies \eref{eq:LnfpLBUB} given \eref{eq:LnfpLBUBproof}. Moreover,  both inequalities  in \eref{eq:LnfpLBUB} are strict when $n, f>0$.

\subsection{Proof of \lref{lem:U1nkpAlt}}
The proof is divided into two steps, in the first step we prove \eref{eq:U1nkpAlt} and in the second step we prove \eref{eq:U1nkpAlt3}.\\

\textbf{Step 1:} Proof of \eref{eq:U1nkpAlt}. From \eref{eq:Vnkpa} we can deduce that
\begin{align}
\frac{\partial V(n,k,p,x)}{\partial x}&=1-\frac{p q (n+1)}{(p n+p-k+x)^2},\quad 
\frac{\partial^2 V(n,k,p,x)}{\partial x^2}=\frac{2q p (n+1)}{(p n+p-k+x)^3},
\end{align}
which means $V(n,k,p,x)$ is strictly convex in $x$ for $x\geq k-p n-p$  and has a unique minimum point at $x=\tilde{a}=k-\kappa_1(n,p)$, where $\kappa_1(n,p)$ is defined in \eref{eq:kappa1}. In addition, $V(n,k,p,x)$ is strictly decreasing in $x$ when $k-p n-p<x\leq \tilde{a}$ and strictly increasing in $x$ when  $x\geq \tilde{a}$. In conjunction with the  following equation
\begin{align}\label{eq:Vk+1k}
V(n,k,p,k+1)-V(n,k,p,k)=\frac{p(n+2)}{p n+p +1}>0,
\end{align}
we can deduce that the minimum of $V(n,k,p,a)$ over $a\in \bbN_0$ is attained when $a< k+1$, which implies  \eref{eq:U1nkpAlt}.\\

\textbf{Step 2:} Proof of \eref{eq:U1nkpAlt3}.	
If in addition $k$ is  a nonnegative integer, say $k=0$, then \eref{eq:Vk+1k} implies that the minimum of $V(n,k,p,a)$ over $a\in \bbN_0$  is attained when $a\leq k$, which implies \eref{eq:U1nkpAlt3}. 

If $p(n+2)\geq 2$, then 
\begin{align}
V(n,k,p,k)-V(n,k,p,k-1)=\frac{p(n+2)-2}{p n+p -1}\geq 0,
\end{align}
so the minimum of $V(n,k,p,a)$ over $a\in \bbN_0$ is attained when $a\leq k$, which implies \eref{eq:U1nkpAlt3}. 
If $k\geq 2$, then $p(n+2)>p n\geq k\geq 2$, so \eref{eq:U1nkpAlt3} holds. 

Finally, we consider the case $1\leq k< 2$. If $p n\geq 2$, then \eref{eq:U1nkpAlt3} holds according to the above discussion, so we can assume that $1\leq k\leq p n< 2$ in the following discussion. Calculation shows that
\begin{align}
V(n,k,p,2)-V(n,k,p,1)=\frac{k^2-k[3+2p(n+1)]+p^2(n^2+3n+2)+2p(n+1)+2}{(p n+p-k+1)(p n+p-k+2)}.
\end{align}
Here the denominator is positive; the numerator is strictly decreasing in $k$ for $0\leq k< 2$ and is equal to 
\begin{align}
2+2p -p n+p^2 (n+2)>0 \quad \mbox{if}\quad k=p n.
\end{align}
Therefore, $V(n,k,p,2)-V(n,k,p,1)\geq 0$ when $1\leq k\leq p n\leq 2$, which implies \eref{eq:U1nkpAlt3} and completes the proof of \lref{lem:U1nkpAlt}. 

\subsection{Proof of \lref{lem:U1nkpMonoUB}}
From \eref{eq:Vnkpa} we can deduce that
	\begin{align}
	\frac{\partial V(n,k,p,a)}{\partial k}&=\frac{p q (n+1)}{(p n+p-k+a)^2},
	\end{align}
which implies that	$V(n,k,p,a)$ is strictly increasing in $k$ when $a\geq 0$ given that $ k\leq p n$ by assumption. So 	$U(n,k,p)$ is strictly increasing in $k$ according to the definition in \eref{eq:Unkp} and \lref{lem:U1nkpAlt}. Consequently,	
	\begin{align}
	U(n,k,p)\leq U(n,pn,p)\leq V(n,pn,p,\lceil\tilde{a}\rceil)\leq 1+V(n,pn,p,\tilde{a})= 1+2\sqrt{p q (n+1)},
	\end{align}	
	where $\tilde{a}=pn-\kappa_1(n,p)=\sqrt{pq(n+1)}-p>-1$.

	According to the following equation,
	\begin{align}
	\frac{\partial V(n,k,p,a)}{\partial n}&=\frac{p q (a-k)}{(p n+p-k+a)^2},\quad 
	\frac{\partial V(n,k,p,a)}{\partial p}=\frac{ (a-k)(n-k+a+1)}{(p n+p-k+a)^2},
	\end{align}
	$V(n,k,p,a)$ is nonincreasing in $n$ and $p$ when $0\leq a\leq k$. If in addition  $p(n+2)\geq 2$, $k=0$, or $k\geq 1$, then	$U(n,k,p)=\min_{a\in \bbN_0,\, a\leq k} V(n,k,p,a)$ according to \eref{eq:U1nkpAlt3} in \lref{lem:U1nkpAlt}, so $U(n,k,p)$ is nonincreasing in $n$ and $p$. Consequently,	
	\begin{align}
	U(n,k,p)\leq U(k/p,k,p)\leq V(k/p,k,p,\lceil\tilde{a}\rceil)\leq 1+V(k/p,k,p,\tilde{a})=1+2\sqrt{q (k+p)},
	\end{align}		
	where $\tilde{a}=k-\kappa_1(k/p,p)=\sqrt{q (k+p)}-p>-1$.

\subsection{Proof of \lref{lem:U1nfpMono}}
From \eref{eq:Vnkpa} we can deduce that
	\begin{align}
	\frac{\partial V(n,f n,p,a)}{\partial n}&=\frac{p q (f+a)}{(p n-f n+p+a)^2},\quad
	\frac{\partial V(n,f n,p,a)}{\partial f}=\frac{p q n (n+1)}{(p n-f n+p+a)^2},
	\end{align}
which implies that	$V(n,f n,p,a)$ is strictly increasing in $n$ when $a\geq 0$.  So 	$U(n,f n,p)$ is strictly increasing in $n$ according to the definition in \eref{eq:Unkp} and \eref{eq:U1nkpAlt} in \lref{lem:U1nkpAlt}. If in addition $n>0$, then $V(n,f n,p,a)$ is strictly increasing in $f$ when $a\geq 0$, so 	$U(n,f n,p)$ is strictly increasing in  $f$.   Alternatively, this conclusion follows from \lref{lem:U1nkpMonoUB}.
	
From \eref{eq:Vnkpa} we can deduce that
	\begin{align}
	\frac{\partial V(n,f n,p,a)}{\partial p}&=-\frac{ (f n-a)(n-f n+a+1)}{(p n-f n+p+a)^2},
	\end{align}
so 	$V(n,f n,p,a)$ is nonincreasing in $p$  when $f n\geq a$. If in addition $p(n+2)\geq 2$, $f n=0$, or $f n\geq 1$, then $U(n,fn,p)=\min_{a\in \bbN_0,\, a\leq fn} V(n,fn,p,a)$ by \eref{eq:U1nkpAlt3} in \lref{lem:U1nkpAlt}, 
so 	$U(n,f n,p)$ is nonincreasing in $p$. Alternatively, this conclusion follows from \lref{lem:U1nkpMonoUB}.
	
	Next, suppose $ 0<f<p<1$. The first  inequality in \eref{eq:U12fUB} follows from  the definition in \eref{eq:Unkp}; the second inequality in  \eref{eq:U12fUB} follows from \eref{eq:LUnfplim} and the fact that  $V(n,f n,p,0)$ is strictly increasing in $n$.

\section{\label{app:munkBBnkLBUB}Proof of \pref{pro:munkBBnkLBUB}}
By virtue of \eqsref{eq:BnkbnkMunk}{eq:BnkbnkRatioLB} we can deduce that
\begin{align}\label{eq:mun+1kLB}
\mu_{n+1,k}&\geq \frac{p(n+1)[L(n,k,p)-1]}{L(n,k,p)-p}=\frac{p n+k+1-\sqrt{(p n-k+1)^2+4q k}}{2}\\
&=k+1-L(n,k,p) \quad \forall k=0,1,\ldots, n.  \nonumber
\end{align}
This equation also holds when $k=n+1$ given that $\mu_{n+1,n+1}=p(n+1)$ and
\begin{align}
k+1-L(n,n+1,p)&= \frac{1}{2}\bigl[2+n+p n-\sqrt{q (q n^2+4n+4)}\,\bigr]\\
&\leq  \frac{1}{2}\bigl[2+n+p n-(q n+2q)\bigr]=p(n+1). \nonumber
\end{align}
Therefore, 
\begin{align}\label{eq:muLBproof}
\mu_{n,k}&\geq k+1-L(n-1,k,p)\quad \forall k=0,1,\ldots, n,
\end{align}
which confirms \eref{eq:mnkLBGen}.

The lower bound in \eref{eq:BnkBn+1kLBUB1} follows from \lref{lem:Bbn+mkp}, and the upper bound follows from \eqsref{eq:BnkBn+1kMunk}{eq:mun+1kLB}.

Next, we assume that $k\leq pn$.  Then 
\begin{align}
\mu_{n,k}&\geq k+1-L(n-1,k,p)\geq k+1-L((k/p)-1,k,p)\\
&=k+\frac{q}{2}-\frac{1}{2}\sqrt{q (4k+q )}\geq k-\sqrt{q k},\nonumber
\end{align}
which confirms \eref{eq:mnkLBsp}. Here the first inequality follows from \eref{eq:muLBproof}, and the second inequality follows from the assumption that  $k\leq pn$ and the fact that $L(n-1,k,p)$ is nonincreasing in $n$ for $n\geq 0$ by \lref{lem:Lnkpmono} 
presented in \sref{S4-2}.

\Eref{eq:BnkBn+1kLBUB2} follows from \eref{eq:LnkpUBkg1} in \lref{lem:LnkpLUB2} and \eref{eq:BnkBn+1kLBUB1}.
\Eref{eq:BnkBn+1kLBUB3} follows from
\eref{eq:LnfpLBUB} in \lref{lem:Lnfpmono} and \eref{eq:BnkBn+1kLBUB1}.

\section{\label{app:SCM}Proofs of \lsref{lem:SCMomega} and \ref{lem:SCMrho}}

\subsection{Proof of \lref{lem:SCMomega}}
Before proving \lref{lem:SCMomega}, we need to prepare an auxiliary result. Define
\begin{align}
\nu(t):=t-\biggl(1+\frac{t}{2}+\frac{t^2}{12}\rme^{-t/12}\biggr)(1-\rme^{-t}). \label{eq:nut}
\end{align}
\begin{lemma}\label{lem:nut}
	Suppose $t>0$, then $\nu(t)>0$. 
\end{lemma}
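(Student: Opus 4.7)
The inequality $\nu(t) > 0$ is equivalent to
\begin{align*}
\frac{t}{1-\rme^{-t}} - 1 - \frac{t}{2} > \frac{t^2}{12}\,\rme^{-t/12}, \quad t>0.
\end{align*}
My plan starts with the Mittag--Leffler partial fraction expansion
\begin{align*}
\frac{t}{1-\rme^{-t}} - 1 - \frac{t}{2} = \sum_{n=1}^\infty \frac{2t^2}{t^2 + 4\pi^2 n^2},
\end{align*}
a direct consequence of the identity $\coth z = 1/z + \sum_{n\geq 1} 2z/(z^2+\pi^2 n^2)$. Dividing by $t^2$, it suffices to prove
\begin{align*}
\sum_{n=1}^\infty \frac{1}{t^2 + 4\pi^2 n^2} > \frac{1}{24}\,\rme^{-t/12}.
\end{align*}
The crucial observation is that at $t=0$ both sides equal $1/24$ by Euler's identity $\sum n^{-2} = \pi^2/6$, and this numerical coincidence is precisely what pins down the otherwise mysterious constant $1/12$ appearing in the definition of $\nu$.

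Next I introduce $G(t) := \rme^{t/12}\sum_{n\geq 1}(t^2+4\pi^2 n^2)^{-1}$ with $G(0)=1/24$, and aim to show $G(t) > 1/24$ for $t > 0$. Termwise differentiation gives
\begin{align*}
\frac{d}{dt}\!\left[\frac{\rme^{t/12}}{t^2+4\pi^2 n^2}\right] = \frac{\rme^{t/12}\bigl(t^2 - 24t + 4\pi^2 n^2\bigr)}{12\,(t^2+4\pi^2 n^2)^2},
\end{align*}
and since the discriminant $576 - 16\pi^2 n^2$ is negative as soon as $n\geq 2$, each summand with $n\geq 2$ is strictly increasing on $[0,\infty)$. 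Only the $n=1$ summand may dip, on the window $(t_-,t_+) = (12-\sqrt{144-4\pi^2},\,12+\sqrt{144-4\pi^2}) \approx (1.78,\,22.22)$. For $t$ outside this window every summand is non-decreasing and the tail is strictly increasing, hence $G(t) > G(0)$ is immediate.

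The main obstacle is the intermediate range $t \in (t_-, t_+)$, where the decrease of the $n=1$ summand must be outweighed by the increase of the tail. Here I plan to invoke the closed form $\sum_{n\geq 1}(t^2+4\pi^2 n^2)^{-1} = \frac{1}{4t}\coth(t/2) - \frac{1}{2t^2}$ (read off the same partial fraction) to reduce the problem to the scalar inequality
\begin{align*}
6\rme^{t/12}\bigl(t\coth(t/2)-2\bigr) > t^2
\end{align*}
on the bounded interval $(t_-,t_+)$. The left-hand side expands as $t^2 + t^3/12 + O(t^4)$ near $0$ and grows like $6t\rme^{t/12}$ for large $t$, so the inequality is loose at both ends; a quantitative derivative estimate on the compact window then rules out an interior failure. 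As a backup, a piecewise bound---Taylor expansion at $0$ showing $\nu(t) = t^4/144 + O(t^5)$, together with $(1-\rme^{-t})\leq 1$ and the exponential decay of $t^2\rme^{-t/12}$ for large $t$---would also close the argument, but with messier case-by-case intermediate estimates.
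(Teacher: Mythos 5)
Your reduction is correct as far as it goes: the equivalence of $\nu(t)>0$ with $\tfrac{t}{1-\rme^{-t}}-1-\tfrac{t}{2}>\tfrac{t^2}{12}\rme^{-t/12}$, the partial-fraction identity $\tfrac{t}{1-\rme^{-t}}-1-\tfrac{t}{2}=\sum_{n\geq1}2t^2/(t^2+4\pi^2n^2)$, the observation that both sides of $\sum_{n\ge1}(t^2+4\pi^2n^2)^{-1}>\tfrac{1}{24}\rme^{-t/12}$ equal $1/24$ at $t=0$, and the termwise monotonicity analysis showing that only the $n=1$ summand of $G$ can decrease, and only on $(t_-,t_+)\approx(1.78,22.22)$, all check out. (One small logical slip: for $t>t_+$ the conclusion $G(t)>1/24$ is not ``immediate'' from $G(0)$; it follows from $G(t)\geq G(t_+)$ only after the window has been handled, so the window is a prerequisite for the right tail as well.)

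The genuine gap is that the crux --- the scalar inequality $6\rme^{t/12}\bigl(t\coth(t/2)-2\bigr)>t^2$ on the compact window --- is asserted, not proved. Note that this inequality is exactly equivalent to $G(t)>1/24$, i.e.\ to the original claim, so restricting it to $(t_-,t_+)$ is not in itself a simplification; all the work has been deferred to the phrase ``a quantitative derivative estimate.'' Moreover the estimate is not soft: near $t\approx 3$--$4$ the margin is only about $10\%$, and the crude bounds you gesture at fail there. For instance $\coth(t/2)>1$ gives only $6\rme^{1/3}\cdot 2\cdot 1.5\approx 12.05<12.25=t^2$ at $t=3.5$, and in your backup route the bound $1-\rme^{-t}\leq 1$ yields $\nu(t)\geq t/2-1-\tfrac{t^2}{12}\rme^{-t/12}$, which is negative for $t\in[3,3.7]$. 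So a genuinely finer bound on $\coth(t/2)$ (or a grid-plus-derivative argument with explicit constants) is required on the middle range, and none is supplied. By contrast, the paper's proof avoids any compact-interval estimation: it expands $\nu(t)\rme^t=\sum_{j\geq4}a_jt^j/j!$ with $a_j=\tfrac{j}{2}-1-\tfrac{j(j-1)}{12}(\tfrac{11}{12})^{j-2}+\tfrac{j(j-1)}{12}(-\tfrac{1}{12})^{j-2}$, checks $a_j>0$ directly for $4\leq j\leq12$, and bounds $a_j\geq(j-7)/12>0$ for $j\geq13$. Your Mittag--Leffler route is more illuminating (it explains the constant $1/12$ via $\zeta(2)=\pi^2/6$) and could be completed, but as written it is a plan rather than a proof precisely where the difficulty is concentrated.
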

\begin{proof}
	Straightforward calculation shows that
	\begin{align}
	\nu(t)\rme^t=t\rme^t-\biggl(1+\frac{t}{2}+\frac{t^2}{12}\rme^{-t/12}\biggr)(\rme^{t}-1)
	=\sum_{j=4}^\infty \frac{a_j t^j } {j!},
	\end{align}
	where the second equality is derived by considering the tailor expansion of the exponential function and $a_j$ reads
	\begin{align}
	a_j=\frac{j}{2}-1-\frac{j(j-1)}{12}\Bigl(\frac{11}{12}\Bigr)^{j-2}+\frac{j(j-1)}{12}\Bigl(-\frac{1}{12}\Bigr)^{j-2}. 
	\end{align}
	When $4\leq j\leq 12$, it is straightforward  to verify that $a_j>0$. When $j\geq 13$, the function $j(11/12)^{j-2}+j(1/12)^{j-2}$ decreases monotonically with $j$ and is bounded from above by 5, which implies that
	\begin{align}
	a_j&\geq \frac{j}{2}-1-\frac{j(j-1)}{12}\Bigl(\frac{11}{12}\Bigr)^{j-2}-\frac{j(j-1)}{12}\Bigl(\frac{1}{12}\Bigr)^{j-2}\geq \frac{j}{2}-1-\frac{5(j-1)}{12}\\
	&=\frac{j-7}{12}>0. \nonumber
	\end{align}
	In a word, $a_j>0$ for $j\geq 4$, which means $\nu(t)\rme^t>0$ and $\nu(t)>0$ for $t>0$. 
\end{proof}

\begin{proof}[Proof of \lref{lem:SCMomega}]
	According to Theorem 1  in \rcite{Mort10} and its proof,   $-\omega_-(x)$ is strictly completely monotonic. So it remains to prove that $\omega_+(x)$ is strictly completely monotonic. Here our proof follows the proof of Theorem 2 in  \rcite{Mort10} with a mistake corrected. Calculation shows that
	\begin{align}
	\omega_+'(x)&=\psi(x)+\frac{1}{2x}-\ln x+\frac{1}{12}\frac{1}{\bigl(x+\frac{1}{12}\bigr)^2},\\
	\omega_+''(x)&=\psi'(x)-\frac{1}{2x^2}-\frac{1}{x}-\frac{1}{6}\frac{1}{\bigl(x+\frac{1}{12}\bigr)^3}=
	\int_{0}^\infty\frac{\rme^{-xt}}{1-\rme^{-t}}\nu(t)dt,\\
	\omega_+^{(m)}(x)&=(-1)^m\int_{0}^\infty\frac{\rme^{-xt}}{1-\rme^{-t}}t^{m-2}\nu(t)dt \quad \forall m=2, 3, 4,\ldots,
	\end{align}
	where $\psi(x)$ is the digamma function, that is, the logarithmic derivative of the gamma function, and $\nu(t)$ is defined in \eref{eq:nut}. 
	According to \lref{lem:nut}, we have $\nu(t)>0$ for $t>0$ (the proof of this fact in  \rcite{Mort10} is problematic), which implies that $\omega_+''(x)$ is strictly completely monotonic. In particular, we have $\omega_+''(x)>0$ for $x>0$. 
	
	In addition,
	\begin{align}
	\lim_{x\to\infty} \omega_+'(x)=\lim_{x\to\infty} \omega_+(x)=0. 
	\end{align}
	Therefore, $\omega_+'(x)<0$ for $x>0$ given that $\omega_+''(x)>0$ for $x>0$. This result in turn implies that $\omega_+(x)>0$ for $x>0$, so  $\omega_+(x)$ is strictly completely monotonic. 
\end{proof}

\subsection{Proof of \lref{lem:SCMrho}}
	Direct calculation shows that
	\begin{align}
	[\ln \rho(y)]'=\psi(y)+\frac{1}{y}-\ln y,\quad
	[\ln \varrho(y)]'=\ln y-\psi(y)-\frac{1}{2y},
	\end{align}
	where $\psi(y)$ is the digamma function. 
	Both $\psi(y)+(1/y)-\ln y$ and $\ln y-\psi(y)-[1/(2y)]$ are strictly completely monotonic  according to Theorem 1.3 in \rcite{Qi07} (in the theorem the word "strictly" is not mentioned explicitly, but its proof  actually shows this stronger result), so  $[\ln \rho(y)]'$ and $[\ln \varrho(y)]'$ are strictly completely monotonic, which imply that $1/\rho(y)$ and $1/\varrho(y)$  are strictly logarithmically completely monotonic.

	Next, according to \lref{lem:SCMomega} and the definitions in \eref{eq:varrhopm}, $1/ \varrho_-(y)]$, and $\varrho_+(y)$ are strictly logarithmically completely monotonic. 
	
	Recall that any function that is strictly logarithmically completely monotonic  is strictly completely monotonic \cite{QiC04}. So $1/\rho(y)$,  $1/\varrho(y)$, $1/\varrho_-(y)$, and $ \varrho_+(y)$	are  strictly completely monotonic given that they are  strictly logarithmically completely monotonic as shown above. 
	
	Finally, \eref{eq:varrhokn} follows from the following equation
	\begin{align}
	\varrho_+(y)<\varrho_+(x),\quad \varrho_-(y)>\varrho_-(x),
	\end{align}
	given that $\varrho_+(y)$ is strictly decreasing, while $\varrho_-(y)$ is strictly increasing.

\section{\label{app:comparison}Comparison with bounds of McKay \cite{McKa89}}\label{APE}

\subsection{Comparison of asymptotic bounds for the ratio $\bar{B}_{n,k}(p)/b_{n,k}(p)$}
Here we compare bounds for the ratio $\bar{B}_{n,k}(p)/b_{n,k}(p)$ presented in  \thref{thm:BnkbnkLBUBbar} with the counterparts derived by McKay \cite{McKa89}, assuming that $n,k\in \bbN$, $0<p<1$, and $pn<k\leq n$. To simplify the discussion we will focus on the ratio of the upper bound over the lower bound in the large-$n$ limit.
 
Theorem 2 in  \rcite{McKa89} states that 
\begin{align}
&\sigma Y(x)
\le \frac{\bar{B}_{n,k}(p)}{b_{n-1,k-1}(p)} \le \sigma Y(x)
\rme^{E_{n,k}(p)},\label{eq:McKay}
\end{align}	
where 
\begin{gather}
\sigma=\sqrt{npq},\quad q=1-p, \quad x=\frac{|k-pn|}{\sigma},\\ Y(x)=\rme^{x^2/2}\int_x^\infty \rme^{-t^2/2}dt, \quad E_{n,k}(p)=\min\biggl\{\sqrt{\frac{\pi}{8npq}},
\frac{1}{|k-pn|}\biggr\}. 
\end{gather}
Thanks to  the equality $b_{n-1,k-1}(p)=(k/pn)b_{n,k}(p)$, which follows from the definition in \eref{eq:Bnkp}, \eref{eq:McKay} implies that
\begin{align}
&k\sqrt{\frac{q}{pn}}\,  Y(x)
\le \frac{\bar{B}_{n,k}(p)}{b_{n,k}(p)} \le k\sqrt{\frac{q}{pn}}\, Y(x) \rme^{E_{n,k}(p)}.\label{eq:McKay2}
\end{align}

\begin{figure}[t]
	\includegraphics[width=7cm]{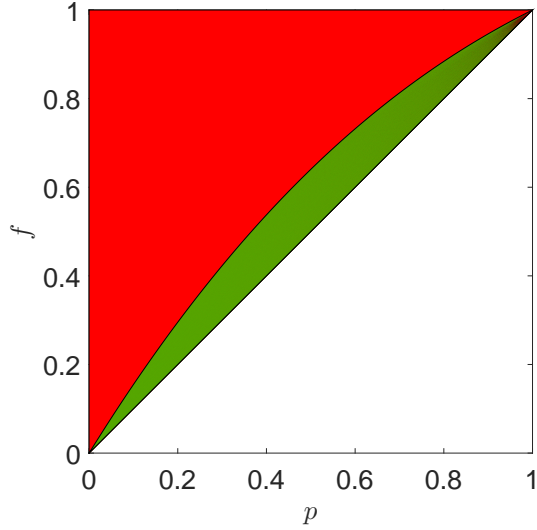}
	\caption{\label{fig:comp} 
		Comparison between our bounds for  the ratio $\bar{B}_{n,fn}(p)/b_{n,fn}(p)$	presented in \thref{thm:BnkbnkLBUBbar} and the counterparts derived by McKay \cite{McKa89} in the large-$n$ limit, where $\bar{B}_{n,fn}(p)$ is the upper tail probability. In the green region (with $\gamma_2>\gamma_1$) the bounds in \rcite{McKa89} are more accurate; in the red region (with $\gamma_2<\gamma_1$), our bounds are more accurate. The boundary is determined by \eref{eq:f*}.
	}
\end{figure}

Suppose $k=fn$ with $p<f\leq 1$ and $n$ is sufficiently large; then the ratio of the upper bound over the lower bound  in \eref{eq:McKay2} reads 
\begin{align}
\exp\biggl(\frac{1}{(f-p)n}\biggr)=1+ \frac{\gamma_1}{n}+O(n^{-2}),\quad \gamma_1 =\frac{1}{f-p}. 
\end{align}	
By contrast, our \thref{thm:BnkbnkLBUBbar} yields the following bounds,
\begin{gather}
\bar{L}(n,fn,p)\leq 	\frac{\bar{B}_{n,fn}(p)}{b_{n,fn}(p)}\leq \bar{U}(n,fn,p).
\end{gather}
The ratio of the upper bound over the lower bound reads
\begin{align}
\frac{\bar{U}(n,fn,p)}{\bar{L}(n,fn,p)}=  \frac{U(n,(1-f)n,q)}{L(n,(1-f)n,q)}=1+\frac{\gamma_2}{n}+O(n^{-2}), \quad \gamma_2= \frac{(1-f)p^2}{f(f-p)^2}. 
\end{align}	
Note that $\gamma_2/\gamma_1=(1-f)p^2/[f(f-p)]$ decreases monotonically with $f$ for $p<f\leq 1$. In addition,  $\gamma_2=\gamma_1$ iff $f=f^*$ with 
\begin{align}
f^*=\frac{p}{2}\bigl(q+\sqrt{4+q^2}\,\bigr).  \label{eq:f*}
\end{align}
If  $p<f<f^*$  ($f$ is close to $p$), then  $\gamma_2>\gamma_1$, so the bounds in \rcite{McKa89} are more accurate. If instead $f^*<f\leq 1$  ($f$ is not so  close to $p$), then $\gamma_2<\gamma_1$, so our bounds are more accurate.
The two parameter ranges are illustrated in \fref{fig:comp}.  In addition, our bounds do not involve integrals and are more explicit than the bounds  in \rcite{McKa89}.

\subsection{Derived bounds based on \rcite{McKa89}}
Here we  derive a number of related bounds for the ratio $\bar{B}_{n,k}(p)/b_{n,k}(p)$ and the upper tail probability  $\bar{B}_{n,k}(p)$
that are of independent interest.

First, we present a simple upper bound for  the function $E_{n,k}(p)$,  assuming that $n,k\in \bbN$, $0<p<1$, and $pn<k\leq n-1$ (the special case with $k=n$ is not essential).
\begin{proposition}\label{pro:Enkp}
	If $n,k\in \bbN$, $0<p<1$, and $pn< k\leq n-1$, then $E_{n,k}(p)\leq 3/2$. 
\end{proposition}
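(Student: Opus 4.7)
The plan is to argue by contradiction. Suppose $E_{n,k}(p) > 3/2$; unraveling the minimum in the definition and using $k > pn$ (so $|k-pn| = k-pn$) forces \emph{both} $\sqrt{\pi/(8npq)} > 3/2$ and $1/(k-pn) > 3/2$, which rearrange to
\begin{equation*}
npq < \frac{\pi}{18} \qquad \text{and} \qquad k - pn < \frac{2}{3}.
\end{equation*}

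Next I would extract two elementary lower bounds on the parameters from these inequalities plus the ambient hypotheses. Since $k\in\bbN$ means $k\geq 1$, the second inequality above yields $pn > k - 2/3 \geq 1/3$. On the other hand, $nq = (n-k) + (k-pn)$, and the hypothesis $k \leq n-1$ gives $n-k\geq 1$ while $k>pn$ gives $k-pn>0$, so $nq > 1$.

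Then I would invoke the elementary fact that the function $f(\alpha,\beta) = \alpha\beta/(\alpha+\beta)$ is strictly increasing in each argument on $(0,\infty)^2$, since $\partial_\alpha f = \beta^2/(\alpha+\beta)^2 > 0$ and $\partial_\beta f = \alpha^2/(\alpha+\beta)^2 > 0$. Applying this with $\alpha = pn > 1/3$ and $\beta = nq > 1$ gives
\begin{equation*}
npq \;=\; \frac{(pn)(nq)}{pn+nq} \;=\; f(pn,nq) \;>\; f\!\left(\tfrac{1}{3},1\right) \;=\; \frac{1/3}{4/3} \;=\; \frac{1}{4}.
\end{equation*}
Since $\pi/18 < 1/4$ (equivalently $\pi < 9/2$), this contradicts $npq < \pi/18$, so the hypothesis $E_{n,k}(p) > 3/2$ was untenable.

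I do not anticipate any real obstacle: the claim is essentially a short numerical check, and the only mildly non-obvious point is recognizing that the two failure conditions simultaneously force matched lower bounds on $pn$ and $nq$, which combine through the harmonic-mean-like quantity $(pn)(nq)/(pn+nq) = npq$ to beat the threshold $\pi/18$.
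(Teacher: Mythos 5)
Your argument is correct. It is essentially the contrapositive of the paper's proof: the paper splits into the cases $p\geq 1/(3n)$ and $p<1/(3n)$, bounding $\sqrt{\pi/(8npq)}$ in the first case and $1/(k-pn)$ in the second, whereas you assume both terms of the minimum exceed $3/2$ and derive a contradiction. The genuine difference is in how the lower bound on $npq$ is obtained once $pn>1/3$ is known. The paper views $npq=np(1-p)$ as a concave function of $p$ on the interval forced by $1/(3n)\leq p< 1-1/n$ and evaluates it at the left endpoint, getting $npq\geq \tfrac{1}{3}\bigl(1-\tfrac{1}{3n}\bigr)$; you instead use the identity $npq=(pn)(nq)/(pn+nq)$ together with $nq=(n-k)+(k-pn)>1$ (from $k\leq n-1$) and monotonicity of $\alpha\beta/(\alpha+\beta)$, getting $npq>1/4$. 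Your bound is slightly weaker numerically but still comfortably beats the threshold $\pi/18\approx 0.1745$, and the harmonic-mean identity makes the estimate self-contained without any discussion of where $np(1-p)$ attains its minimum. Both routes use the hypotheses $k\geq 1$ and $k\leq n-1$ in essential ways, so nothing is missing.
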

\begin{proof}
	By assumption we have $p\leq 1-(1/n)$.  If in addition $p\geq 1/(3n)$, then 
	\begin{align}
	E_{n,k}(p)\leq \sqrt{\frac{\pi}{8npq}}\leq \sqrt{\frac{\pi}{\frac{8}{3}\bigl(1-\frac{1}{3n}\bigr)}}\leq \frac{3}{4}\sqrt{\pi}<\frac{3}{2}. 
	\end{align}
	If $p<1/(3n)$ instead, then 
	\begin{align}
	E_{n,k}(p)\leq \frac{1}{k-pn}< \frac{1}{1-\frac{1}{3}}=\frac{3}{2}.
	\end{align}
\end{proof}

Next, we provide upper and lower bounds for the function $Y(x)$ by virtue of   \eqssref{eq:ellupsilon}{eq:BnkpLimCLT}{eq:PhixLUB} in \sref{sec:conjecture},  
\begin{align}\label{eq:YxLUB}
\tilde{\ell}(x)\leq Y(x)\leq \tilde{\upsilon}(x) \quad \forall x\geq 0,
\end{align}
where 
\begin{align}
\tilde{\ell}(x):=\sqrt{2\pi}\,\ell(x)=\frac{1}{2} \bigl(\sqrt{4 + x^2}-x\bigr), \quad \tilde{\upsilon}(x):=\sqrt{2\pi}\,\upsilon(x)=\begin{cases}
2-x & x\leq 1,\\
\frac{1}{x} &x\geq 1.
\end{cases}
\end{align}
Combining \eqsref{eq:McKay2}{eq:YxLUB} we can obtain bounds for $\bar{B}_{n,k}(p)/b_{n,k}(p)$ that do not involve integrals and are easy to compute, 
\begin{align}
&k\sqrt{\frac{q}{pn}}\, \tilde{\ell}(x)
\le \frac{\bar{B}_{n,k}(p)}{b_{n,k}(p)} \le k\sqrt{\frac{q}{pn}}\, \tilde{\upsilon}(x) \rme^{E_{n,k}(p)}\leq 2\rme^{3/2}k\sqrt{\frac{q}{pn}}\, \tilde{\ell}(x), 
\end{align}	
where the last inequality follows from \pref{pro:Enkp} above and \eref{eq:ellupsilonLUB} in \pref{pro:ellPhi}. The two propositions also show that the lower bound and the first upper bound for $\bar{B}_{n,k}(p)/b_{n,k}(p)$ in this equation are asymptotically tight and  universally bounded.

Combining \eqsref{eq:McKay2}{eq:YxLUB} with \eref{eq:bnkpLUB3} in \pref{pro:bnkpLUB}, we can further deduce upper and lower bounds for the upper tail probability $\bar{B}_{n,k}(p)$ as follows,
\begin{gather}\label{eq:UTailLUBa}
\sqrt{\frac{qk}{pm}}\, \varphi_-(n,k)\tilde{\ell} (x)
\rme^{-nD(\frac{k}{n}\| p)}\leq \sqrt{\frac{qk}{pm}}\, \varphi_-(n,k)Y(x)
\rme^{-nD(\frac{k}{n}\| p)}
< \bar{B}_{n,k}(p) \\
< \sqrt{\frac{qk}{pm}}\, \varphi_+(n,k)Y(x) \rme^{E_{n,k}(p)}\rme^{-nD(\frac{k}{n}\| p)} \leq  \sqrt{\frac{qk}{pm}}\, \varphi_+(n,k)\tilde{\upsilon} (x) \rme^{E_{n,k}(p)}\rme^{-nD(\frac{k}{n}\| p)},\nonumber
\end{gather}
where $m=n-k$ and $\varphi_\pm(n,k)$ are defined in \eref{eq:varphipm(nk)}. It is not difficult to very that  the final upper bound and the final lower bound in \eref{eq:UTailLUBa} satisfy criteria (C1-C3) presented in the introduction. To be specific, the ratio of the upper bound over the  lower bound reads 
\begin{align}
\frac{\varphi_+(n,k)\tilde{\upsilon} (x) \rme^{E_{n,k}(p)}}{\varphi_-(n,k)\tilde{\ell} (x)}\leq 2\rme^{\frac{3}{2}+\frac{29}{2600}}=2\rme^{\frac{3929}{2600}}\approx 9.06391,
\end{align}
where the inequality follows from \eref{eq:varphi+/varphi-} in  \lref{lem:varphipm}, \eref{eq:ellupsilonLUB} in  \pref{pro:ellPhi}, and \pref{pro:Enkp}. This bound is much larger than the upper bound $89/44$ that appears in \thref{thm:UTailBound} (cf. \thref{thm:TailBound}).

Thanks to the relation between the upper and lower tail probabilities presented in \eref{eq:UTailBinom}, all the above results have analogs for the lower tail probability $B_{n,k}(p)$. Notably, \eref{eq:UTailLUBa} has the following analog, assuming that $n,k\in \bbN$, $0<p<1$, and $1\leq k<pn$,
\begin{gather}
\sqrt{\frac{pm}{qk}}\, \varphi_-(n,k)\tilde{\ell} (x)
\rme^{-nD(\frac{k}{n}\| p)}\leq \sqrt{\frac{pm}{qk}}\, \varphi_-(n,k)Y(x)
\rme^{-nD(\frac{k}{n}\| p)}
< B_{n,k}(p) \\
< \sqrt{\frac{pm}{qk}}\, \varphi_+(n,k)Y(x) \rme^{E_{n,k}(p)}\rme^{-nD(\frac{k}{n}\| p)} \leq  \sqrt{\frac{pm}{qk}}\, \varphi_+(n,k)\tilde{\upsilon} (x) \rme^{E_{n,k}(p)}\rme^{-nD(\frac{k}{n}\| p)},\nonumber
\end{gather}
where all the functions involved are defined as before.

\section{\label{app:Bnjbnjk/n}Proofs of \lsref{lem:Bnjbnjk/n} and \ref{lem:Bnk1bnk1nk}}

\subsection{Proof of \lref{lem:Bnjbnjk/n}}
	When $j=0$, we have $B_{n,j}(k/n)/b_{n,j}(k/n)=1$	by definition and \eref{eq:Bnj/bnjlim} holds. 
	
	When $j\geq 1$, from \eref{eq:bnk-1/bnk} we can deduce that
	\begin{align}
	\frac{b_{n,j-1}(k/n)}{b_{n,j}(k/n)}=\frac{j(n-k)}{k(n-j+1)},
	\end{align}
	which implies that $b_{n,j-1}(k/n)/b_{n,j}(k/n)$ and $B_{n,j}(k/n)/b_{n,j}(k/n)$ for $j=1,2,\ldots, k$ are strictly increasing in $n$. In addition, the above equation implies that
	\begin{align}
	\lim_{n\to\infty}\frac{b_{n,j-1}(k/n)}{b_{n,j}(k/n)}=\frac{j}{k},\quad 
	\lim_{n\to\infty}\frac{B_{n,j}(k/n)}{b_{n,j}(k/n)}=\sum_{l=0}^{j}\frac{\Gamma(j+1)}{k^l\Gamma(j-l+1)},
	\end{align}
	which confirms \eref{eq:Bnj/bnjlim}. 
	
	The equalities in \eqsref{eq:Bnkbnkm1lim}{eq:Bnkbnknklim} follow from \eref{eq:Bnj/bnjlim} and the definition of $\theta_k$ in \eref{eq:Ramanujan}. When $k=1$, the inequality in \eref{eq:Bnkbnkm1lim} can be verified directly. When $k\geq 2$, the inequality can be proved by virtue of the Stirling approximation in \eref{eq:Stirling}	
	and the lower bound for $\theta_k$ in \eref{eq:thetakLBUB} as follows,
	\begin{align}
	\frac{\rme^k k!}{2k^k}-\theta_k- \sqrt{\frac{\pi k}{2}}< & \sqrt{\frac{\pi k }{2}}\biggl[\exp\Bigl(\frac{1}{12k}\Bigr)-1\biggr]-\theta_k 
	< \sqrt{\frac{\pi }{2}} \frac{2\sqrt{k}}{23k}-\frac{1}{3}<1-\sqrt{\frac{\pi }{2}}.
	\end{align}
	The inequality in \eref{eq:Bnkbnknklim} follows from the counterpart in  \eref{eq:Bnkbnkm1lim}.

	Next, by virtue of \eqssref{eq:Lnkp}{eq:philim}{eq:bnkpLUB3} we can deduce that 
	\begin{align}
	\lim_{n\to\infty}\frac{L(n,\lfloor fn-j\rfloor,f)}{\sqrt{n}}&=	\lim_{n\to\infty}\frac{L(n,fn-j,f)}{\sqrt{n}}= \sqrt{ f(1-f)},\\
\lim_{n\to\infty} \sqrt{n}\, b_{n,\lfloor fn-j\rfloor}(f)&= \frac{1}{\sqrt{2\pi f(1-f)}},
	\end{align}
	which imply  \eref{eq:Bnfbnfnflim}, given that $\lim_{n\to\infty}B_{n,\lfloor fn-j\rfloor}(f)= 1/2$.

\subsection{Proof of \lref{lem:Bnk1bnk1nk}}
The proof is divided into two steps: In the first step  we prove   \eref{eq:Bnk1bnk1nk} in the four special cases $k=1,2,n-1,n-2$ and in the second step we prove   \eref{eq:Bnk1bnk1nk} in the case $3 \le k \le n-3$.\\

\textbf{Step 1:} Proof of \eref{eq:Bnk1bnk1nk} in the four special cases $k=1,2,n-1,n-2$. 
If $k=1$, then 	$n\geq 2$ and
	\begin{align}
	B_{n,k-1}(k/n)=b_{n,k-1}(k/n)=\Bigl(\frac{n-1}{n}\Bigr)^n,\quad L(n,k-1,k/n)=1,
	\end{align}
	so \eref{eq:Bnk1bnk1nk} holds in the case $k=1$. 
	
	If $k=2$, then $n\geq 3$ and
	\begin{gather}
	B_{n,k-1}(k/n)=\frac{3n-2}{n-2}\Bigl(\frac{n-2}{n}\Bigr)^n,\quad b_{n,k-1}(k/n)=2\Bigl(\frac{n-2}{n}\Bigr)^{n-1},\\
	 L(n,k-1,k/n)=\sqrt{\frac{2n-2}{n}}.
	\end{gather}
	Therefore,
	\begin{align}
	\frac{B_{n,k-1}(k/n)}{b_{n,k-1}(k/n)L(n,k-1,k/n)}=\frac{3n-2}{\sqrt{8n(n-1)}}
\leq \sqrt{\frac{9}{8}}<\sqrt{\frac{\pi}{2}}, 
	\end{align}
	which confirms \eref{eq:Bnk1bnk1nk} in the case $k=2$.
	
	If $k=n-1$, then 
	\begin{gather}
	B_{n,k-1}(k/n)=1-\Bigl(\frac{k}{k+1}\Bigr)^{k+1}-\Bigl(\frac{k}{k+1}\Bigr)^{k},\\ b_{n,k-1}(k/n)=\frac{1}{2}\Bigl(\frac{k}{k+1}\Bigr)^k,\quad
	L(n,k-1,k/n)=\sqrt{\frac{2k}{k+1}}.
	\end{gather}
	Therefore,
	\begin{align}
	\frac{B_{n,k-1}(k/n)}{b_{n,k-1}(k/n)L(n,k-1,k/n)}&=\sqrt{2}\Biggl[\Bigl(\frac{k+1}{k}\Bigr)^{k+\frac{1}{2}}-\sqrt{\frac{k}{k+1}}-\sqrt{\frac{k+1}{k}}\,\Biggr]\\
	&\leq 4-2\sqrt{2}< \sqrt{\frac{\pi}{2}}, \nonumber
	\end{align}	
	which confirms \eref{eq:Bnk1bnk1nk} in the case $k=n-1$.
	
	If $k=n-2$, then 
	\begin{gather}
	B_{n,k-1}(k/n)=1-\Bigl(\frac{k}{k+2}\Bigr)^{k+2}-2\Bigl(\frac{k}{k+2}\Bigr)^{k+1}-2\frac{k+1}{k+2}\Bigl(\frac{k}{k+2}\Bigr)^k,\\ b_{n,k-1}(k/n)=\frac{4}{3}\frac{k+1}{k+2}\Bigl(\frac{k}{k+2}\Bigr)^k,\quad 
	L(n,k-1,k/n)=\sqrt{\frac{3k}{k+2}}.
	\end{gather}
	Therefore,
	\begin{align}
	&\frac{B_{n,k-1}(k/n)}{b_{n,k-1}(k/n)L(n,k-1,k/n)}=\frac{\sqrt{3}}{4}\Biggl[\frac{k+2}{k+1}\Bigl(\frac{k+2}{k}\Bigr)^{k+\frac{1}{2}}-\frac{3k+4}{k+1}\sqrt{\frac{k}{k+2}}-2\sqrt{\frac{k+2}{k}}\,\Biggr]\\
	&\leq \frac{\sqrt{3}}{4}\biggl(\frac{9\sqrt{3}}{2}-5\biggr)
	=\frac{27}{8}-\frac{5\sqrt{3}}{4}< \sqrt{\frac{\pi}{2}}, \nonumber
	\end{align}	
	which confirms \eref{eq:Bnk1bnk1nk} in the case $k=n-2$. Here
	the first inequality follows from the following inequalities:
	\begin{align}
	\frac{k+2}{k+1}\Bigl(\frac{k+2}{k}\Bigr)^{k+\frac{1}{2}}\leq \frac{9\sqrt{3}}{2},\quad \frac{3k+4}{k+1}\sqrt{\frac{k}{k+2}}+2\sqrt{\frac{k+2}{k}}\geq 5.
	\end{align}	
	Above analysis shows that \eref{eq:Bnk1bnk1nk} holds when 	$k=1,2 $, $n-1$, or $n-2$, so we can exclude these cases in the following discussion. \\

\textbf{Step 2:} Proof of \eref{eq:Bnk1bnk1nk} in the case $3 \le k \le n-3$. Define $\zeta_{n,k}$ by the following equation
	\begin{align}\label{eq:zetank}
	\frac{1}{2}=B_{n,k-1}(k/n)+\zeta_{n,k} b_{n,k}(k/n);
	\end{align}
	then 
	\begin{align}
	B_{n,k-1}(k/n)=\frac{1}{2}-\zeta_{n,k} b_{n,k}(k/n). \label{eq:Bnkzeta}
	\end{align}
	It is known that \cite{JogdS68}
	\begin{align}\label{eq:zetaLBUB}
	\begin{aligned}
	\frac{1}{3}<\zeta_{n,k}\leq \frac{1}{2} \quad \mbox{if}\quad  n\geq 2k, \\
	\frac{1}{2}\leq \zeta_{n,k}< \frac{2}{3} \quad \mbox{if}\quad  n\leq 2k.
	\end{aligned}
	\end{align}
By definition in \eref{eq:Bnkp} and the Stirling approximation in 
	\eref{eq:Stirling} (cf. \pref{pro:bnkpLUB}) we can deduce that
	\begin{align}
	b_{n,k}(k/n)&
	\geq \sqrt{\frac{n}{2\pi k(n-k)}}\exp\biggl[\frac{1}{12n+1}-\frac{1}{12 k}-\frac{1}{12(n-k)}\biggr]. \label{eq:bnknkLB0}
	\end{align}	
In addition, \eqsref{eq:Lnkp}{eq:bnk-1/bnk} yield 
\begin{align}
L(n,k-1,k/n)=\sqrt{\frac{k(n-k+1)}{n}}, \quad 	b_{n,k-1}(k/n)&=\frac{n-k}{n-k+1}b_{n,k}(k/n).\label{eq:bnkLnk}
	\end{align}
	The above two equations together imply that 
	\begin{align}\label{eq:BbLratioProof}
	&\frac{B_{n,k-1}(k/n)}{\sqrt{\frac{\pi}{2}} b_{n,k-1}(k/n) L(n,k-1,k/n)}=\sqrt{\frac{2}{\pi}}\sqrt{\frac{n(n-k+1)}{k(n-k)^2}}\biggl(\frac{1}{2b_{n,k}(k/n)}-\zeta_{n,k}\biggr)
	\\
	&\leq \sqrt{\frac{n-k+1}{n-k}}\biggl\{\exp\biggl[\frac{1}{12 k}+\frac{1}{12(n-k)}-\frac{1}{12n+1}\biggr]-\sqrt{\frac{2}{\pi}}
	\sqrt{\frac{n}{k(n-k)}}
	\zeta_{n,k}\biggr\}\nonumber \\
	&\leq  \biggl(1+\frac{1}{2(n-k)}\biggr)\biggl(1+\frac{n}{11k(n-k)}\biggr) -\sqrt{\frac{2}{\pi}} \sqrt{\frac{n}{k(n-k)}}\,
	\zeta_{n,k} \nonumber \\
&	\leq  1+\sqrt{\frac{n}{k(n-k)}}\biggl(\frac{1}{2\sqrt{n-k}}+
	\frac{7\sqrt{6}}{198}-\sqrt{\frac{2}{\pi}}\,
	\zeta_{n,k}
	\biggr).\nonumber
	\end{align}
Here the first equality follows from \eqsref{eq:Bnkzeta}{eq:bnkLnk}. The first inequality follows from \eref{eq:bnknkLB0}. The second inequality  follows from the following equation
\begin{gather}
1<\sqrt{\frac{n-k+1}{n-k}}<1+\frac{1}{2(n-k)},\quad \exp\biggl[\frac{1}{12 k}+\frac{1}{12(n-k)}-\frac{1}{12n+1}\biggr]< 1+\frac{n}{11k(n-k)}. 
\end{gather}
The third inequality in \eref{eq:BbLratioProof} follows from the following equation
\begin{equation}
\begin{gathered}
\biggl(1+\frac{1}{2(n-k)}\biggr)\frac{n}{11k(n-k)}\leq \frac{7}{66}\sqrt{\frac{n}{k(n-k)}}\sqrt{\frac{n}{k(n-k)}}\leq 	\frac{7\sqrt{6}}{198} \sqrt{\frac{n}{k(n-k)}},\\
\frac{1}{n-k}\leq \sqrt{\frac{n}{k(n-k)}}\frac{1}{\sqrt{n-k}},
\end{gathered}
\end{equation}
given  that $3 \le k \le n-3$, so that $n/[k(n-k)]\leq 2/3$.

If $ n/2 \le k \le n-3$, that is, $3\leq n-k\leq k$, then  $\zeta_{n,k}\geq 1/2$ by \eref{eq:zetaLBUB}, which implies that
	\begin{align}
	\frac{1}{2\sqrt{n-k}}+
	\frac{7\sqrt{6}}{198}-\sqrt{\frac{2}{\pi}}\,
	\zeta_{n,k}\leq \frac{1}{2\sqrt{3}}+
	\frac{7\sqrt{6}}{198}-\frac{1}{2}\sqrt{\frac{2}{\pi}}<0. 
	\end{align} 
	If $k \le n-8$, that is, $n-k\geq 8$, then  $\zeta_{n,k}> 1/3$ by \eref{eq:zetaLBUB}, which implies that
	\begin{align}
	\frac{1}{2\sqrt{n-k}}+
	\frac{7\sqrt{6}}{198}-\sqrt{\frac{2}{\pi}}\,
	\zeta_{n,k}< \frac{1}{2\sqrt{8}}+
	\frac{7\sqrt{6}}{198}-\frac{1}{3}\sqrt{\frac{2}{\pi}}<0. 
	\end{align} 
	In both cases \eref{eq:Bnk1bnk1nk} holds. In the remaining case with 
	$n-7\le k < n/2$,
	\eref{eq:Bnk1bnk1nk} can be verified by direct calculation
	because such a case can happen only when $n \le 13$.
	 This observation completes the proof of \lref{lem:Bnk1bnk1nk}.

\section{\label{app:Bnjbnjk/n2}Proof of \lref{lem:Bnkbnknk}}
\subsection{Auxiliary lemmas}
Here we prove two auxiliary lemmas that are required to prove \lref{lem:Bnkbnknk}, without  assuming that $k$ and $n$ are integers. 

\begin{lemma}\label{lem:Exp}
	Suppose $k\geq 2/7$ and $n\geq 5k/3$. Then 
	\begin{align}
	\exp\biggl[\frac{1}{12 k}+\frac{1}{12(n-k)}-\frac{1}{12n+1}\biggr]
	&\leq1+ \frac{n}{n-k} \biggl[\exp\Bigl(\frac{1}{12 k}\Bigr)-1\biggr]. \label{eq:Exp}
	\end{align}
\end{lemma}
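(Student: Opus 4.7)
The plan is to introduce the shorthand $u = \frac{1}{12k}$, $v = \frac{1}{12(n-k)}$, $w = \frac{1}{12n+1}$, so the left-hand side becomes $\rme^{u+v-w}$ and $\frac{v}{u} = \frac{k}{n-k}$, giving $\frac{n}{n-k} = \frac{u+v}{u}$. Multiplying through by $u$, the claim is equivalent to
\[
u\,\rme^u(\rme^{v-w} - 1) \leq v(\rme^u - 1).
\]
I would then establish this reformulation via two intermediate facts plus the elementary monotonicity of $(\rme^x - 1)/x$ on $(0,\infty)$.

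First I would show $v - w \leq u$. Direct computation gives $v - w = \frac{12k+1}{12(n-k)(12n+1)}$, so the inequality reduces to $k(12k+1) \leq (n-k)(12n+1)$. The right-hand side is strictly increasing in $n \geq k$, so it suffices to check at $n = 5k/3$, where it simplifies to $k \geq 1/4$; this is implied by $k \geq 2/7$. Monotonicity of $(\rme^x-1)/x$ then yields $\rme^{v-w} - 1 \leq \tfrac{v-w}{u}(\rme^u - 1)$.

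Next I would show $(v-w)\rme^u \leq v$, equivalently $\rme^u \leq \frac{v}{v-w} = \frac{12n+1}{12k+1}$. The right-hand side is increasing in $n$, so again it suffices at $n = 5k/3$, i.e., one must verify $\rme^{1/(12k)} \leq \frac{20k+1}{12k+1}$. Setting $h(k) := \frac{20k+1}{12k+1} - \rme^{1/(12k)}$, a direct derivative calculation gives $h'(k) = \frac{8}{(12k+1)^2} + \frac{\rme^{1/(12k)}}{12k^2} > 0$, so $h$ is strictly increasing on $(0,\infty)$ and it suffices to check $h(2/7) \geq 0$, i.e., $\rme^{7/24} \leq 47/31$. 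This I would establish from the elementary bound $\rme^x \leq (1-x)^{-1}$ for $x \in [0,1)$, giving $\rme^{7/24} \leq 24/17 \leq 47/31$ (the last step being $24\cdot 31 = 744 < 799 = 17\cdot 47$).

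Combining the two facts yields
\[
u\,\rme^u(\rme^{v-w} - 1) \leq (v-w)\,\rme^u(\rme^u - 1) \leq v(\rme^u - 1),
\]
which is the reformulated claim. The main obstacle is the calibration of the threshold $k \geq 2/7$: the first step alone would only demand $k \geq 1/4$, so the stronger hypothesis is forced entirely by the worst-case numerical check $\rme^{7/24} \leq 47/31$ at $n = 5k/3$, $k = 2/7$. A coarser approach that drops $w$ (bounding $\rme^{v-w}$ by $\rme^{v}$) would sacrifice exactly the factor $\frac{v}{v-w} = \frac{12n+1}{12k+1}$ that makes the second step possible, and the argument would collapse.
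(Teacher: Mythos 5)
Your proof is correct, and it shares the paper's key mechanism while closing the estimate by a different and cleaner route. Both arguments hinge on the same two ingredients: the observation that $v-w\leq u$ (in your notation $u=\frac{1}{12k}$, $v=\frac{1}{12(n-k)}$, $w=\frac{1}{12n+1}$) and the chord bound $\rme^{v-w}-1\leq \frac{v-w}{u}(\rme^{u}-1)$ coming from the monotonicity of $(\rme^{x}-1)/x$. Where you diverge is in the second half: the paper rewrites the lemma as $k\rme^{-u}+(n-k)\rme^{v-w}\leq n$, substitutes the chord bound, and then controls the leftover terms with the estimates $\rme^{u}+\rme^{-u}-2\leq \frac{1}{40k}$ and $\rme^{u}-1\geq u$ together with the constraints $k\geq 2/7$, $n\geq 5k/3$; you instead factor the remaining work into the single clean inequality $(v-w)\rme^{u}\leq v$, i.e.\ $\rme^{1/(12k)}\leq \frac{12n+1}{12k+1}$, and dispose of it by monotonicity in $n$ and in $k$ plus one corner check $\rme^{7/24}\leq 47/31$. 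Your decomposition is more modular and makes the role of each hypothesis visible, at the cost of nothing; I verified all the algebra ($v-w=\frac{12k+1}{12(n-k)(12n+1)}$, the reduction of Fact~1 to $k\geq 1/4$ at $n=5k/3$, the sign of $h'$, and $24\cdot 31<17\cdot 47$) and it is sound. One small correction to your closing commentary: the corner check at $k=2/7$ passes with a comfortable margin (Fact~2 already holds for $k\gtrsim 0.215$, and Fact~1 for $k\geq 1/4$), so the hypothesis $k\geq 2/7$ is not actually forced by your argument — your proof in fact establishes the lemma under the slightly weaker assumption $k\geq 1/4$, $n\geq 5k/3$. This does not affect correctness, since the stated hypothesis is stronger.
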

\begin{proof}[Proof of \lref{lem:Exp}]
	The inequality in \eref{lem:Exp} is equivalent to the following inequality,
	\begin{align}
	k\exp\Bigl(-\frac{1}{12 k}\Bigr)+(n-k)\exp\biggl[\frac{1}{12(n-k)}-\frac{1}{12n+1}\biggr]
	&\leq n. \label{eq:Exp2}
	\end{align}

	By assumption we can deduce that 
	\begin{align}
	\frac{1}{12(n-k)}-\frac{1}{12n+1}&\leq \frac{1}{12k},\\ 
	\exp\biggl[\frac{1}{12(n-k)}-\frac{1}{12n+1}\biggr]&\leq 1+ 12k\biggl[\exp\Bigl(\frac{1}{12k}\Bigr)-1\biggr]\biggl[\frac{1}{12(n-k)}-\frac{1}{12n+1}\biggr].\label{eq:ExpProof1}
	\end{align}
	Therefore, 
	\begin{align}\label{eq:ExpProof2}
	&k\exp\Bigl(-\frac{1}{12 k}\Bigr)+(n-k)\exp\biggl[\frac{1}{12(n-k)}-\frac{1}{12n+1}\biggr]\\
	&\leq k\exp\Bigl(-\frac{1}{12 k}\Bigr)+n-k+12k(n-k)\biggl[\exp\Bigl(\frac{1}{12k}\Bigr)-1\biggr]\biggl[\frac{1}{12(n-k)}-\frac{1}{12n+1}\biggr]\nonumber\\
	&=n+k\biggl[\exp\Bigl(\frac{1}{12 k}\Bigr)+\exp\Bigl(-\frac{1}{12 k}\Bigr)-2\biggr]-\frac{12k(n-k)}{12n+1}\biggl[\exp\Bigl(\frac{1}{12 k}\Bigr)-1\biggr]\nonumber\\
	&\leq n+ \frac{1}{40}-\frac{n-k}{12n+1}\leq n+ \frac{1}{40}-\frac{2k}{3(20k+1)} \leq n+ \frac{1}{40}-\frac{4}{141}<n,    \nonumber
	\end{align}
	which confirms \eref{eq:Exp2} and implies \eref{eq:Exp}. Here the first inequality follows from \eref{eq:ExpProof1}; the
second inequality follows from the following two inequalities 
	\begin{align}
	\exp\Bigl(\frac{1}{12 k}\Bigr)+\exp\Bigl(-\frac{1}{12 k}\Bigr)-2\leq \frac{1}{40 k},\quad \exp\Bigl(\frac{1}{12 k}\Bigr)-1\geq \frac{1}{12 k},
	\end{align}
	given that $k\geq 2/7$ and $1/(12k)\leq 7/24$;  the third and fourth inequalities in \eref{eq:ExpProof2} follow from the assumption that $k\geq 2/7$ and $n\geq 5k/3$.
\end{proof}

\begin{lemma}\label{lem:LnkInverse}
	Suppose $k\geq 1$ and $n\geq k$. Then
	\begin{align}\label{eq:LnkInverse1}
	\frac{1+\sqrt{1+4k}}{2L(n,k,k/n)}\sqrt{\frac{n-k}{n}}\leq 1-\frac{k}{2n \sqrt{1+4k}}.
	\end{align}
	Meanwhile, the function 
	\begin{align}\label{eq:LnkInverse2}
	n\biggl[\frac{1+\sqrt{1+4k}}{2L(n,k,k/n)}-1\biggr]
	\end{align}
	is strictly decreasing in $n$. If in addition $n\geq jk$ with $j\geq 1$, then
	\begin{align}\label{eq:LnkInverse3}
	\frac{1+\sqrt{1+4k}}{2L(n,k,k/n)}\leq 1+ \frac{jk}{n}\biggl[\frac{\sqrt{j}\,(1+\sqrt{1+4k}\,)}{\sqrt{j}+\sqrt{j+4(j-1)k}}-1\biggr].
	\end{align}
\end{lemma}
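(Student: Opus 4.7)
The plan is to introduce the substitutions $v := \sqrt{1+4k}$ and $u := \sqrt{1+4k(n-k)/n}$, under which $L(n,k,k/n) = (1+u)/2$ and the hypothesis $n \geq k$ translates to $u \in [1,v]$ (with $u=1$ at $n=k$ and $u \to v$ as $n\to\infty$). The key identity is $v^2 - u^2 = 4k^2/n$. For \eqref{eq:LnkInverse2}, a direct manipulation gives
\begin{align*}
n\left[\frac{1+v}{1+u}-1\right] \;=\; n\cdot\frac{v-u}{1+u} \;=\; n\cdot\frac{v^2-u^2}{(v+u)(1+u)} \;=\; \frac{4k^2}{(v+u)(1+u)}.
\end{align*}
Since $u^2 = 1+4k-4k^2/n$ is strictly increasing in $n$ for $k>0$, so is $u$; hence $(v+u)(1+u)$ is strictly increasing in $n$ and the displayed expression is strictly decreasing in $n$.

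Part \eqref{eq:LnkInverse3} then follows at once. Writing $u_n$ for the value of $u$ at parameter $n$, the monotonicity established above applied to $n \geq jk$ gives $n[\frac{1+v}{1+u_n}-1] \leq jk[\frac{1+v}{1+u_{jk}}-1]$. At $n=jk$ one computes $u_{jk} = \sqrt{(j+4(j-1)k)/j}$, so $1+u_{jk} = (\sqrt{j}+\sqrt{j+4(j-1)k})/\sqrt{j}$ and $\frac{1+v}{1+u_{jk}} = \frac{\sqrt{j}(1+v)}{\sqrt{j}+\sqrt{j+4(j-1)k}}$. Dividing by $n$ yields \eqref{eq:LnkInverse3}.

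For \eqref{eq:LnkInverse1} (the main obstacle), I would rewrite the inequality using $(n-k)/n = (u^2-1)/(4k)$ and $k/n = (v^2-u^2)/(4k)$ as $\phi(u) \leq 1$ for $u\in[1,v]$, where
\begin{align*}
\phi(u) \;:=\; \frac{(1+v)\sqrt{u-1}}{2\sqrt{k(u+1)}} + \frac{v^2-u^2}{8kv}.
\end{align*}
Using $(1+v)^2(v-1) = (v-1)(v+1)^2 = 4k(v+1)$, one checks $\phi(v)=1$, and (via $\frac{d}{du}\sqrt{(u-1)/(u+1)} = 1/[(u+1)^{3/2}\sqrt{u-1}]$) also $\phi'(v)=0$. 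The condition $\phi'(u)>0$ is, after cross-multiplication and squaring, equivalent to
\begin{align*}
Q(u) \;:=\; u^2(u-1)(u+1)^3 - 4kv^2(1+v)^2 \;<\; 0.
\end{align*}
Expanding gives $Q(u) = u^6 + 2u^5 - 2u^3 - u^2 - 4kv^2(1+v)^2$ and $Q'(u) = 2u(3u^4+5u^3-3u-1)$, and the polynomial $3u^4+5u^3-3u-1$ has value $4$ at $u=1$ with strictly increasing positive derivative there, so $Q$ is strictly increasing on $[1,\infty)$. Combined with $Q(1) = -4kv^2(1+v)^2 < 0$ and $Q(v) = 0$ (the latter again from $(v-1)(v+1)=4k$), we get $Q<0$ on $[1,v)$, hence $\phi'>0$ on $(1,v)$ and $\phi \leq \phi(v) = 1$ throughout $[1,v]$. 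The main subtlety is that $\phi$ attains its maximum at the \emph{right} endpoint $u=v$ (i.e.\ as $n\to\infty$) with both $\phi(v)=1$ and $\phi'(v)=0$, so \eqref{eq:LnkInverse1} is tight to first order in $1/n$ and monotonicity of $\phi$ must be established globally by the polynomial analysis of $Q$ rather than by a local expansion near $u=v$.
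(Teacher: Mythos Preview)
Your proof is correct, but the route differs from the paper's in instructive ways. Your substitution $u=\sqrt{1+4k(n-k)/n}$, $v=\sqrt{1+4k}$ and the identity $v^2-u^2=4k^2/n$ yield the closed form $n\bigl[\frac{1+v}{1+u}-1\bigr]=\frac{4k^2}{(v+u)(1+u)}$, from which the monotonicity claim \eqref{eq:LnkInverse2} is immediate; the paper instead argues via strict convexity in $x=k/n$ of $\frac{1+\sqrt{1+4k}}{1+\sqrt{1+4k-4kx}}-1$ together with the secant-slope property of convex functions vanishing at the origin. Your approach is shorter here. For \eqref{eq:LnkInverse1}, the paper takes the more direct route: it shows that $s(k,x):=\frac{1+\sqrt{1+4k}}{1+\sqrt{1+4k-4kx}}\sqrt{1-x}$ is strictly concave in $x$ with $s(k,0)=1$ and $\partial_x s(k,0)=-1/(2\sqrt{1+4k})$, so the tangent line at $x=0$ gives the bound in one stroke. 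Your $\phi$ equals $s(k,x)-x\,\partial_x s(k,0)$, and showing $\phi$ increasing in $u$ (hence decreasing in $x$) is equivalent to showing $s$ has nonpositive derivative on $(0,1]$ --- precisely what concavity with $\partial_x s(k,0)$ would give without the polynomial analysis of $Q$. So the paper's second-derivative computation replaces your sign analysis of the degree-six polynomial; both are valid, but concavity is cleaner for \eqref{eq:LnkInverse1}. Part \eqref{eq:LnkInverse3} is handled identically in both.
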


\begin{proof}[Proof of \lref{lem:LnkInverse}]
	Let 
	\begin{align}
	s(k,x):=\frac{1+\sqrt{1+4k}}{1+\sqrt{1+4k-4kx}}\sqrt{1-x},\quad 0\leq x\leq 1;
	\end{align}
	then $s(k,x)$ is continuous in $x$ for $0\leq x\leq 1$. In addition, $s(k,0)=1$ and 
	\begin{gather}
	\frac{\partial s}{\partial x}=-\frac{1+\sqrt{1+4k}}{2\sqrt{1-x}\,(1+4y+\sqrt{1+4y}\,)}<0\;\; \forall 0\leq x<1,\;\;
\frac{\partial s}{\partial x}\bigg|_{x=0}=-\frac{1}{2\sqrt{1+4k}},\\[1ex]
\frac{\partial^2 s}{\partial x^2}=-\frac{(1+\sqrt{1+4k}\,)(2+12y-\sqrt{1+4y}\,)}{4(1-x)^{3/2}(1+4y)^{3/2}(1+\sqrt{1+4y}\,)}<0  \quad \forall 0\leq x<1,
\end{gather}
where $y=k(1-x)$.	Therefore, $s(k,x)$ is strictly decreasing and concave in $x$ for $0\leq x\leq 1$, which means
	\begin{align}
	s(k,x)\leq 1-\frac{x}{2\sqrt{1+4k}}\quad \forall 0\leq x\leq 1. 
	\end{align}
	This  equation in turn implies \eref{eq:LnkInverse1}, given that the left hand side in \eref{eq:LnkInverse1} is equal to $s(k,k/n)$.

	To prove the monotonicity of the function defined in \eref{eq:LnkInverse2}, it suffices to prove that
	\begin{align}
	n_1\biggl[\frac{1+\sqrt{1+4k}}{2L(n,k,k/n_1)}-1\biggr]> n_2\biggl[\frac{1+\sqrt{1+4k}}{2L(n,k,k/n_2)}-1\biggr]\quad \forall n_2 > n_1 \geq k. 
	\end{align}
	Let $x_1:=k/n_1$ and $x_2:=k/n_2$; then the above equation is equivalent to 
	\begin{align}	\frac{1}{x_1}\biggl[\frac{1+\sqrt{1+4k}}{1+\sqrt{1+4k-4kx_1}}-1\biggr]>\frac{1}{x_2}\biggl[\frac{1+\sqrt{1+4k}}{1+\sqrt{1+4k-4kx_2}}-1\biggr]
	\end{align}
	for $0< x_2 < x_1\leq 1$.
	Now this conclusion follows from the fact that the function
	\begin{align}
	\frac{1+\sqrt{1+4k}}{1+\sqrt{1+4k-4kx}}-1
	\end{align} 
	is strictly increasing and strictly convex in $x$ for $0\leq x\leq 1$ and is equal to 0 when $x=0$. Therefore, the function defined in \eref{eq:LnkInverse2} is strictly decreasing in $n$.
	
	If in addition $n\geq jk$ with $j\geq 1$, then
	\begin{align} n\biggl[\frac{1+\sqrt{1+4k}}{2L(n,k,k/n)}-1\biggr]\leq jk\biggl[\frac{1+\sqrt{1+4k}}{2L(jk,k,1/j)}-1\biggr]=jk\biggl[\frac{\sqrt{j}\,(1+\sqrt{1+4k}\,)}{\sqrt{j}+\sqrt{j+4(j-1)k}}-1\biggr],
	\end{align}
	which implies \eref{eq:LnkInverse3}.
\end{proof}

\subsection{Proof of \lref{lem:Bnkbnknk}}
The proof is divided into three steps: In the first step  we prove  \eref{eq:Bnkbnknk2},  in the second step we prove   \eref{eq:Bnkbnknk} for the case $k\geq 16$, and in the third step we prove   \eref{eq:Bnkbnknk} for the case $k\leq 15$. To simplify the notation, $B_{n,k}(k/n)$  and $b_{n,k}(k/n)$ are abbreviated as $B_{n,k}$ and $b_{n,k}$, respectively, in the following proof. Several auxiliary functions defined in the  proof  are independent of those functions defined in the  proofs of previous results.  \\

\textbf{Step 1:} Proof of \eref{eq:Bnkbnknk2}, assuming that $n\leq 2k$. By definitions in \eqsref{eq:Bnkp}{eq:Lnkp} and the Stirling approximation in 
\eref{eq:Stirling} we can deduce that
	\begin{align}
	b_{n,k}&=\frac{\Gamma(n+1)}{\Gamma(k+1)\Gamma(n-k+1)}\frac{k^k (n-k)^{n-k}}{n^n} \label{eq:bnknkLB} \\
	&\geq \frac{k^k}{\rme^k\Gamma(k+1)}\sqrt{\frac{n}{n-k}}
	\exp\biggl[\frac{1}{12n+1}-\frac{1}{12(n-k)}\biggr]\nonumber\\
	&\geq \sqrt{\frac{n}{2\pi k(n-k)}}\exp\biggl[\frac{1}{12n+1}-\frac{1}{12 k}-\frac{1}{12(n-k)}\biggr], \nonumber
	\\
	B_{n,k}&=B_{n,k-1}(k/n)+b_{n,k}=\frac{1}{2}+(1-\zeta_{n,k}) b_{n,k},\\
	L(n,k,k/n)&=\frac{1}{2}\biggl(1+\sqrt{1+\frac{4k(n-k)}{n}}\,\biggr), \label{eq:Lnknk}
	\end{align}
	where the two inequalities in \eref{eq:bnknkLB} follow from the Stirling approximation in \eref{eq:Stirling}, and $\zeta_{n,k}$ is defined in \eref{eq:zetank}. Therefore,
	\begin{align}
	\frac{B_{n,k}}{b_{n,k}}&=\frac{1}{2b_{n,k}}+1-\zeta_{n,k} \label{eq:BnkbnkUB3}	\\
	&\leq 
	\frac{\rme^k\Gamma(k+1)}{2k^k}\sqrt{\frac{n-k}{n}}
	\exp\biggl[\frac{1}{12(n-k)}-\frac{1}{12n+1}\biggr]+1-\zeta_{n,k}
\nonumber	\\
	&\leq \sqrt{\frac{\pi k(n-k)}{2n}}\exp\biggl[\frac{1}{12 k}+\frac{1}{12(n-k)}-\frac{1}{12n+1}\biggr]+1-\zeta_{n,k} 
\nonumber	\\
	&\leq \sqrt{\frac{\pi k(n-k)}{2n}}\biggl[1+\frac{n}{11k(n-k)}\biggr]
+1-\zeta_{n,k}. \nonumber
	\end{align}

		If $\zeta_{n,k}\geq 1-\sqrt{\pi/8}$, which holds when $n\leq 2k$ by \eref{eq:zetaLBUB}, then  \eref{eq:Lnknk} and the third inequality in \eref{eq:BnkbnkUB3} yield
	\begin{gather}
	\frac{B_{n,k}}{b_{n,k}}
	\leq \Bigl(1+\frac{1}{11z}\Bigr)\sqrt{\frac{\pi z}{2}}+1-\zeta_{n,k}\leq 
	 \Bigl(1+\frac{1}{11z}\Bigr)\sqrt{\frac{\pi z}{2}}+\sqrt{\frac{\pi}{8}},\\	
	\frac{B_{n,k}}{ b_{n,k}L(n,k,k/n)}=\frac{2B_{n,k}}{(1+\sqrt{1+4z}\,) b_{n,k}}\leq \sqrt{\frac{\pi}{2}}\,g(z),
	\end{gather}
	where 
	\begin{align}
	z:=\frac{k(n-k)}{n},\quad g(z):=\frac{2(1+\frac{1}{11z})\sqrt{z} +1}{1+\sqrt{1+4z}}.\label{eq:xgx}
	\end{align}
	The derivative of $g(z)$ over $z$ reads
	\begin{align}
	g'(z)=\frac{(11 z-1)\sqrt{1+4z}+3z-22z^{3/2}-1}{11z^{3/2}\sqrt{1+4z}\,(1+\sqrt{1+4z}\,)^2}.
	\end{align}
	By assumption we have $z\geq 1/2$ and 
	\begin{align}
	&(11 z-1)\sqrt{1+4z}+3z-22z^{3/2}-1\\
	&\geq (11 z-1)2\sqrt{z}\biggl(1+\frac{\sqrt{6}-2}{4z}\biggr)+3z-22z^{3/2}-1 \nonumber
\\
	&=\frac{11\sqrt{6}-26}{2}\sqrt{z}-\frac{\sqrt{6}-2}{2\sqrt{z}}+3z-1
	\geq  \frac{9\sqrt{6}-22}{2}\sqrt{z}+3z-1> 3z-1> 0,\nonumber
	\end{align}
which implies that  $g'(z)> 0$. Therefore,
	\begin{align}
	\frac{B_{n,k}}{b_{n,k}L(n,k,k/n)}< \sqrt{\frac{\pi}{2}}\lim_{z\to\infty} g(z)=\sqrt{\frac{\pi}{2}},
	\end{align}
	which confirms \eref{eq:Bnkbnknk2}. \\

\textbf{Step 2:} Proof of \eref{eq:Bnkbnknk} in the case $k\geq 16$. Thanks to \eref{eq:Bnkbnknk2} proved above, we can assume that $n> 2k$, which means $n-k> k$ and $1/2\leq k/2< z< k$ given the assumption $1\leq k\leq n-1$. 	
 Then \lref{lem:Exp} implies that 
	\begin{align}\label{eq:ExpApp}
	\exp\biggl[\frac{1}{12 k}+\frac{1}{12(n-k)}-\frac{1}{12n+1}\biggr]\leq 1+c_k\frac{n}{k(n-k)}=1+\frac{c_k}{z},
	\end{align}
	where the coefficient $c_k$ is defined as
	\begin{align}
	c_k :=k\exp\Bigl(\frac{1}{12 k}\Bigr)-k,
	\end{align}
which is strictly decreasing in $k$ and satisfies 
	\begin{align}
	\frac{1}{12}<c_k \leq \exp\biggl(\frac{1}{12 }\biggr)-1<\frac{1}{11}.
	\end{align}
	In addition, it is known that $\zeta_{n,k}\geq \theta_k $, where $\theta_k$ is defined by Ramanujan's equation in \eref{eq:Ramanujan}  \cite{JogdS68}.
	Therefore, the second inequality of \eref{eq:BnkbnkUB3} and \eref{eq:ExpApp} imply that
	\begin{gather}
	\frac{B_{n,k}}{b_{n,k}}\leq \sqrt{\frac{\pi z}{2}}\Bigl(1+\frac{c_k}{z}\Bigr)+1-\zeta_{n,k}
	\leq \sqrt{\frac{\pi}{2}}\Bigl(1+\frac{c_k}{z}\Bigr)\sqrt{z}+1-\theta_k,\\
	\frac{B_{n,k}}{ b_{n,k}L(n,k,k/n)}=\frac{2B_{n,k}}{(1+\sqrt{1+4z}\,) b_{n,k}}
	\leq \sqrt{\frac{\pi}{2}}\, h_k(z),
	\end{gather}
	where the function $h_k(z)$ is defined as
	\begin{align}
	h_k(z):=\frac{2(1+\frac{c_k}{z})\sqrt{z} +a_k }{1+\sqrt{1+4z}},\quad 
	a_k:=2\sqrt{\frac{2}{\pi}}(1-\theta_k).
	\end{align}
Here $\theta_k$  is strictly decreasing in $k$ and satisfies \eref{eq:thetakLBUB} \cite{Szeg28,Wats29}, so 	 $a_k$ is strictly increasing in $k$ and satisfies
	\begin{align}
	1.02266\approx \sqrt{\frac{2}{\pi}}(4-\rme)=a_1\leq a_k< \frac{4}{3}\sqrt{\frac{2}{\pi}}\approx 1.06385.
	\end{align}
	
	Calculation shows that
	\begin{align}
	h_k'(z)=\frac{u_k(z)}{z^{3/2}\sqrt{1+4z}\,(1+\sqrt{1+4z}\,)^2},
	\end{align}
	where the function $u_k(z)$ is defined as
	\begin{align}
	u_k(z):=&\,(z-c_k)\sqrt{1+4z}-2a_kz^{3/2}+(1-8c_k)z-c_k\\
	\leq&\, (z-c_k)2\sqrt{z}\Bigl(1+\frac{1}{8z}\Bigr)-2a_kz^{3/2}+(1-8c_k)z-c_k \nonumber	\\
	=&\,-2(a_k-1)z^{3/2}+(1-8c_k)z+\frac{1-8c_k}{4}\sqrt{z}-c_k-\frac{c_k}{4\sqrt{z}}\leq v_k(z). \nonumber
	\end{align}
Here the function $v_k(z)$ is defined as
	\begin{align}
	v_k(z):=&-2(a_k-1)z^{3/2}+\frac{1}{3}z+\frac{1}{12}\sqrt{z}-\frac{1}{12}-\frac{1}{48\sqrt{z}}, 
\end{align}	
and its derivative over $z$ reads	
\begin{align}
	v_k'(z)=&-3(a_k-1)\sqrt{z}+\frac{1}{3}+\frac{1}{24\sqrt{z}}+\frac{1}{96z^{3/2}}.
	\end{align}
If $z\geq 9$ and $k\geq 9$, then 
	\begin{align}
	v_k'(z)\leq -9(a_9-1)+\frac{1}{3}+\frac{1}{72}+\frac{1}{2592}< 0,\quad u_k(z) \leq v_k(z)\leq  v_k(9)\leq  v_9(9)< 0,
	\end{align}
	which means $h_k(z)$ is strictly decreasing in $z$.

If in addition $z,k\geq 20$, then 
	\begin{align}
	\frac{B_{n,k}}{ b_{n,k}L(n,k,k/n)}\leq \sqrt{\frac{\pi}{2}}h_k(z)\leq \sqrt{\frac{\pi}{2}} h_k(20)\leq\sqrt{\frac{\pi}{2}}\frac{2\bigl(1+\frac{c_{20}}{20}\bigr)\sqrt{20}+a_\infty}{1+\sqrt{1+80}}< \frac{180451625}{143327232},
	\end{align}
	where $a_\infty=\lim_{k\to \infty}a_k=4\sqrt{2/\pi}/3$, given that $c_k$ is strictly decreasing in $k$, while $a_k$ is strictly increasing in $k$. Therefore, \eref{eq:Bnkbnknk} holds when $k\geq 40$ and $n\geq 2k$, in which case $z\geq k/2=20$ given the definition of $z$ in \eref{eq:xgx}. 
	
	When $25\leq k\leq 39$ and $z\geq 2k/3$, direct calculation shows that 
	\begin{align}
	\sqrt{\frac{\pi}{2}}h_k(z)\leq \sqrt{\frac{\pi}{2}} h_k(2k/3)< \frac{180451625}{143327232},
	\end{align}
	so  \eref{eq:Bnkbnknk} also holds when $25\leq k\leq 39$ and $n\geq 3k$. When $25\leq k\leq 39$ and $n< 3k$,  \eref{eq:Bnkbnknk} can be verified by direct calculation. 
	
	When $16\leq k\leq 24$ and $z\geq 9k/10$,
	direct calculation shows that 
	\begin{align}
	\sqrt{\frac{\pi}{2}}h_k(z)\leq \sqrt{\frac{\pi}{2}} h_k(9k/10)< \frac{180451625}{143327232},
	\end{align}
	so  \eref{eq:Bnkbnknk} also holds when $16\leq k\leq 24$ and $n\geq 10k$. When $16\leq k\leq 24$ and $n< 10k$,  \eref{eq:Bnkbnknk} can be verified by direct calculation. 
	
The above analysis shows that  \eref{eq:Bnkbnknk} holds when $k\geq 16$. \\
	
\textbf{Step 3:} Proof of \eref{eq:Bnkbnknk} in the case $k\leq 15$. First, suppose $n\geq jk$ with $j\geq 2$.  By virtue of  \eref{eq:BnkbnkUB3}, \lref{lem:LnkInverse}, and the following equation
	\begin{align}\label{eq:ExpApp2}
	\exp\biggl[\frac{1}{12(n-k)}-\frac{1}{12n+1}\biggr]&\leq \exp\biggl[\frac{1}{11(n-k)}-\frac{1}{11n}\biggr]\leq  1+\frac{1}{10(j-1)n},
	\end{align}
	we can deduce that
	\begin{align}\label{eq:BnkbnkLProof}
	\frac{B_{n,k}}{b_{n,k}L(n,k,k/n)}&\leq \frac{\rme^k \Gamma(k+1)}{2k^k L(n,k,k/n)}\sqrt{\frac{n-k}{n}}\exp\biggl[\frac{1}{12(n-k)}-\frac{1}{12n+1}\biggr]+\frac{1-\theta_k}{L(n,k,k/n)}
	\\&\leq \frac{\rme^k \Gamma(k+1)}{k^k (1+\sqrt{1+4k}\,)}\biggl(1-\frac{k}{2n\sqrt{1+4k}}\biggr)\biggl[1+\frac{1}{10(j-1)n}\biggr]\nonumber\\
	&\quad +\frac{2(1-\theta_k)}{1+\sqrt{1+4k}}\biggl\{1+\frac{jk}{n}\biggl[\frac{\sqrt{j}\,(1+\sqrt{1+4k}\,)}{\sqrt{j}+\sqrt{j+4(j-1)k}}-1\biggr]\biggr\}\nonumber \\
	&\leq  \frac{\rme^k \Gamma(k+1)}{k^k (1+\sqrt{1+4k}\,)}+\frac{2(1-\theta_k)}{1+\sqrt{1+4k}}
	+\frac{w_k}{n},\nonumber
	\end{align}
	where  $\theta_k$ is defined by Ramanujan's equation in \eref{eq:Ramanujan}  and $w_k$ is defined as 
	\begin{align}
	w_k:=&\,\frac{\rme^k \Gamma(k+1)}{k^k (1+\sqrt{1+4k}\,)}\biggl[\frac{1}{10(j-1)}-\frac{k}{2\sqrt{1+4k}}\biggr] \\
	&\, +\frac{2jk(1-\theta_k)}{1+\sqrt{1+4k}}\biggl[\frac{\sqrt{j}\,(1+\sqrt{1+4k}\,)}{\sqrt{j}+\sqrt{j+4(j-1)k}}-1\biggr].\nonumber
	\end{align}
Here the first inequality in \eref{eq:BnkbnkLProof} follows from the first inequality in \eref{eq:BnkbnkUB3} and the inequality  $\zeta_{n,k}\geq \theta_k $  \cite{JogdS68}; the second inequality  in \eref{eq:BnkbnkLProof}  follows from \eqsref{eq:LnkInverse1}{eq:LnkInverse3}	in \lref{lem:LnkInverse} and \eref{eq:ExpApp2},
given that $1-\theta_k>0$ by \eref{eq:thetakLBUB}; the third inequality in \eref{eq:BnkbnkLProof}  follows from straightforward calculation.

	Now we choose $j=11$, then direct calculation shows that $w_k<0$ for $k=1,2,\ldots, 15$, so 
	\begin{align}
	\frac{B_{n,k}}{b_{n,k}L(n,k,k/n)}&< \frac{\rme^k \Gamma(k+1)}{k^k (1+\sqrt{1+4k}\,)}+\frac{2(1-\theta_k)}{1+\sqrt{1+4k}}\leq \frac{180451625}{143327232} 
	\end{align}
	for $k=1,2,\ldots, 15$ and $ n\geq 11 k$, which confirms \eref{eq:Bnkbnknk}. 
	When $1\leq k\leq 15$ and  $n<11k$, \eref{eq:Bnkbnknk} can be verified directly. This observation completes the proof of \lref{lem:Bnkbnknk}.

\section{\label{app:ellPhiProof}Proof of \pref{pro:ellPhi}}

By the definitions of $\ell(x)$ and $\upsilon(x)$ in \eref{eq:ellupsilon} we can deduce that
\begin{align}
\frac{d\bigl[\frac{\ell(x)}{\upsilon(x)}\bigr]}{dx}=\begin{cases}
\frac{2+x-\sqrt{4 + x^2}}{(2-x)^2\sqrt{4 + x^2}} &0\leq x\leq 1,\\[1ex]
\frac{2+x^2-x\sqrt{4 + x^2}}{\sqrt{4 + x^2}} &x\geq 1,
\end{cases}
\end{align}
which shows that the derivative is positive when $x>0$. Since  $\ell(x),\upsilon(x)>0$
 are continuous in $x$ for $x\geq 0$. It follows that  $\ell(x)/\upsilon(x)$ is  strictly increasing in $x$, and
$\upsilon(x)/\ell(x)$  is  strictly decreasing in $x$. By definition it is also straightforward to verify that 
\begin{align}
\frac{\upsilon(0)}{\ell(0)}=2,\quad \lim_{x\to \infty}\frac{\upsilon(x)}{\ell(x)}=\lim_{x\to \infty}\sqrt{2\pi} \,x\ell(x)=1,\label{eq:ellupsilonLimProof}
\end{align}
which implies \eref{eq:ellupsilonLUB} given that $\upsilon(x)/\ell(x)$  is  strictly decreasing in $x$; in addition,  the second inequality in  \eref{eq:ellupsilonLUB} is saturated iff $x=0$. In conjunction with \eref{eq:PhixLUB} we can then deduce that 
\begin{align}
\lim_{x\to \infty}\frac{\Phi(-x)\rme^{x^2/2}}{\ell(x)}=1.
\end{align}
The two equations above 
together  confirm \eref{eq:ellupPhiLim}. 

Finally, we are ready to prove \eref{eq:ellPHi}. Direct calculation yields
\begin{align}
&\frac{d}{{d} x} \biggl[\frac{\Phi(-x) \mathrm{e}^{x^{2} / 2}}{\ell(x)}\biggr]
=
\frac{1}{\ell(x)}  \left[\left(x+\frac{1}{\sqrt{x^2+4}} \right)\mathrm{e}^{x^{2} / 2} \Phi(-x)  -\frac{1}{\sqrt{2\pi}} \right] \\
&\leq 
\frac{1}{\sqrt{2\pi}\,\ell(x)}  \left[\left(x+\frac{1}{\sqrt{x^2+4}} \right)\frac{4 }{3 x+\sqrt{8+x^{2}}}  - 1 \right]
< 0\quad \forall x\geq 0, \nonumber
\end{align}
so  $\Phi(-x)\rme^{x^2/2}/\ell(x)$ is strictly decreasing in $x$ for $x\geq 0$. 
Here the first inequality follows from the following inequality proved by Sampford \cite{Samp53},
\begin{align}
\Phi(-x)<\frac{4 }{3 x+\sqrt{8+x^{2}}} \frac{\mathrm{e}^{-x^{2} / 2}}{\sqrt{2 \pi}}  \quad \forall x\geq 0,
\end{align}
and the second inequality can be proved as follows, assuming that $x\geq0$,
\begin{align}
\begin{aligned}
\left(x+\frac{1}{\sqrt{x^2+4}} \right)\frac{4 }{3 x+\sqrt{8+x^{2}}}-1 < 0 
\quad&\Leftrightarrow\quad
x < \sqrt{8+x^{2}}-\frac{4}{\sqrt{x^2+4}} \\
\quad\Leftrightarrow\quad
x^2 < 8+x^{2}+\frac{16}{x^2+4}-8\sqrt{\frac{x^{2}+8}{x^2+4}}  
\quad&\Leftrightarrow\quad
1+\frac{2}{x^2+4} > \sqrt{\frac{x^{2}+8}{x^2+4}} \\
\Leftrightarrow\quad
1+\frac{4}{x^2+4}+ \left(\frac{2}{x^2+4}\right)^2 > \frac{x^{2}+8}{x^2+4}  
\quad&\Leftrightarrow\quad
\left(\frac{2}{x^2+4}\right)^2 > 0. 
\end{aligned}
\end{align}
Now \eref{eq:ellPHi} follows from \eref{eq:ellupPhiLim} and the equality $\Phi(0)=\sqrt{\pi/2}\, \ell(0)$, given that the function $\Phi(-x)\rme^{x^2/2}/\ell(x)$ is strictly decreasing in $x$ for $x\geq 0$. In addition,  the second inequality in \eref{eq:ellPHi} is saturated iff $x=0$.
\end{appendix}

\begin{acks}[Acknowledgments]	
HZ and ZL are also affiliated to Institute for Nanoelectronic Devices and Quantum Computing, Fudan University and Center for Field Theory and Particle Physics, Fudan University.
MH is also affiliated to International Quantum Academy (SIQA) and Graduate School of Mathematics, Nagoya University.
\end{acks}

\begin{funding}
The work at Fudan is  supported by   the National Natural Science Foundation of China (Grants No.~11875110 and No.~92165109) and  Shanghai Municipal Science and Technology Major Project (Grant No.~2019SHZDZX01).
MH is supported in part by the National Natural Science Foundation of China (Grants No. 62171212 and No.~11875110) and
Guangdong Provincial Key Laboratory (Grant No. 2019B121203002).
\end{funding}

\bibliographystyle{imsart-number}
\bibliography{all_references}
\end{document}